\newcommand*\circled[1]{\tikz[baseline=(char.base)]{
\node[shape=circle,draw,inner sep=0.5pt] (char) {#1};}}
\newtheorem{theorem}{Theorem}[section]
\newtheorem{corollary}[theorem]{Corollary}
\newtheorem{criterion}[theorem]{Criterion}
\newtheorem{definition}[theorem]{Definition}
\newtheorem{example}[theorem]{Example}
\newtheorem{fact}[theorem]{Fact}
\newtheorem{lemma}[theorem]{Lemma}
\newtheorem{remark}[theorem]{Remark}
\newtheorem{proposition}[theorem]{Proposition}
\def\Ker{\textup{Ker}}
\def\Tr{\textup{Tr}}
\def\red{\textup{red}}
\newcommand{\cref}[1]{eq. (\ref{#1})}
\begin{document}
\title[On matrix invertible extensions]{On matrix invertible
extensions over commutative rings}
\author{Grigore C\u{a}lug\u{a}reanu, Horia F. Pop, Adrian Vasiu}

\begin{abstract}
We introduce the class $E_{2}$ (resp.\ $SE_{2} $) of commutative rings $R$ with the property that each unimodular $2\times 2$ matrix with entries in $R$ extends to an invertible $3\times 3$ matrix (resp.\ invertible $3\times 3$ matrix whose $(3,3)$ entry is $0$). Among Dedekind domains of dimension 1, polynomial rings over $\mathbb Z$ or Hermite rings, only EDRs belong to the class. Using this, stable ranges, and units and projective modules interpretations, we reobtain (often refine) criteria due to Kaplansky, McGovern, Roitman, Shchedryk, and Wiegand and obtain new criteria for a Hermite ring to be an EDR. For instance, we characterize Hermite rings which are EDRs by (1) means of equations involving unimodular triples and (2) surjectivity of some maps involving units of factor rings by principal ideals. We use these criteria to show that each B\'{e}zout ring $R$ that is either a $J_{2,1}$ domain or an $(SU)_2$ ring (as introduced by Lorenzini) such that for each nonzero $a\in R$ there exists no nontrivial self-dual projective $R/Ra$-module of rank $1$ generated by $2$ elements (e.g., all its elements are squares), is an EDR, thus solving or partially solving problems raised by Lorenzini. Many other classes of rings are introduced and studied.
\end{abstract}

\subjclass[2020]{Primary: 15A83, 13G05, 19B10. Secondary: 13A05, 13F05,
13F25, 15B33, 16U10.}
\keywords{B\'{e}zout ring, elementary divisor ring, Hermite ring, projective module, stable range, unimodular.}
\maketitle

\section{Introduction}

\label{S1}

For a commutative ring $R$ with identity we denote by $U(R)$ its group of units, by $J(R)$ its Jacobson radical, and by $Pic(R)$ its Picard group. For $n\in\mathbb{N}=\{1,2,\ldots\}$ we denote by 
$\mathbb{M}_{n}(R)$ the ring of $n\times n$ matrices with entries in $R$, 
by $I_n$ its identity element, by ${GL}_{n}(R)$ its general linear group of units, and by ${SL}_{n}(R):=\{M\in GL_n(R)|\det(M)=1\}$ the special linear subgroup of ${GL}_{n}(R)$. 

For a free $R$-module $F $, let $Um(F)$ be the set of 
\textsl{unimodular} elements of $F$, i.e., of elements $v\in F$ for which
there exists an $R$-linear map $L:F\rightarrow R$ such that $L(v)=1$; we
have an identity $U(R)=Um(R)$ of sets.

For $A\in \mathbb{M}_{n}(R)$, let $A^{T}$ be its transpose and let $\chi_{A}(\lambda )\in R[\lambda ]$ be its (monic) characteristic polynomial. As we will be using pairs, triples and quadruples
extensively, the elements of $R^{n}$ will be denoted as $n$-tuples except
for the case of $R$-linear maps $L_{A}:R^{n}\rightarrow R^{n}$ defined 
by $A$, in which cases these will be viewed as $n\times 1$ columns. 
Let $K_{A}$ and $Q_{A}$ be the kernel and the image (respectively) of $L_{A}$. 

Recall that two matrices $A,B\in\mathbb M_n(R)$ are said to be equivalent 
if there exist matrices $M, N\in {GL}_n(R)$ such that $A=MBN$; if moreover we can 
choose $N=M^{-1}$, then $A$ and $B$ are said to be similar. We say that $A$ and $B$ are congruent modulo an ideal $I$ 
of $R$ if all entries of $A-B$ belong to $I$. By the reduction of $A$ modulo $I$ we mean the image of $A$ in $\mathbb M_n(R/I)\cong\mathbb M_n(R)/\mathbb M_n(I)$.

For $Q\in {SL}_3(R)$, let $\Theta_{R}(Q)\in \mathbb{M}_{2}(R)$ be the matrix obtained by removing the last row and the last column. The reductions of $\Theta _{R}(Q)$ modulo maximal ideals
of $R$ are nonzero, hence $\Theta _{R}(Q)\in Um(\mathbb{M}_2(R))$. The rule $Q\rightarrow \Theta _{R}(Q)$ defines a
map 
\begin{equation*}
\Theta:=\Theta _{R}:{SL}_3(R)\rightarrow Um(\mathbb{M}_2(R))
\end{equation*}and in this paper we study the image of $\Theta $ and in particular
the class of rings $R$ for which $\Theta$ is surjective (i.e., it
has a right inverse).

If $M=\left[ 
\begin{array}{cc}
a & b \\ 
c & d\end{array}\right]\in {GL}_{2}(R)$, let $\sigma(M)=\sigma_R(M):=\left[ 
\begin{array}{ccc}
a & b & 0\\ 
c & d & 0\\
0 & 0 & \det(M)^{-1}
\end{array}\right]\in {SL}_3(R)$. Clearly 
$\Theta(\sigma(M))=M$, hence $GL_{2}(R)\subseteq {Im}(\Theta)$.

\begin{definition}\label{def1}
We say that a matrix $A\in \mathbb{M}_2(R)$ is $SL_3$-extendable if there exists $A^{+}\in {SL}_3(R)$ such that $A=\Theta(A^+)$, and we call $A^{+}$ an $SL_3$-extension of $A$. If we can choose $A^{+}$ such that its $(3,3)$ is $0$, then we say that $A$ is simply $SL_3$-extendable and that $A^{+}$ is a simple $SL_3$-extension of $A$.
\end{definition}

As in this paper we do not consider $SL_n$-extensions with $n\ge 4$, in all that follows it will be understood
that ``extendable" means $SL_3$-extendable.

The problem of deciding if $A\in U(\mathbb{M}_2(R))$ has a (simple) invertible extension $A^{+}\in {SL}_3(R)$ 
relates to classical studies of finitely generated stable free modules that aim to extend 
$m\times (m+n)$ matrices whose entries are in $R$ and whose $m\times m$ minors 
generate $R$, to invertible matrices in ${GL}_{m+n}(R)$ (e.g., see \cite{LL}), but it fits better
within the general problem of finding $(m+n)\times (m+n)$ matrices with
have prescribed entries and coefficients of characteristic polynomials. To exemplify this, for $B\in\mathbb M_n(R)$ let $\nu_B:=\chi_B^{\prime}(0)\in R$ and let $\Tr(B)\in R$ be its trace; e.g., for $n=3$ we have $\chi_B(\lambda)=\lambda^3-\Tr(B)\lambda^2+\nu_B\lambda-1$. If $A\in\mathbb M_2(R)$ has a
simple extension $A^{+}$, then $A^{+}$ has 5 prescribed entries (out of 9) and $\chi_{A^{+}}(\lambda )=\lambda ^{3}-\Tr(A)\lambda ^{2}+\nu_{A^{+}}\lambda -1$
has 2 prescribed coefficients (out of 3); the nonempty subsets 
\begin{equation*}
\nu_{A}:=\{\nu_{A^{+}}|A^{+}\;is\; a\; simple\; extension\; of\; A\}\subseteq R
\end{equation*} are sampled in Remark \ref{rem1}(5) and Subsection \ref{S51}. Over fields, such general problems have a long history 
and often very complete results (e.g., see \cite{FL} and \cite{her}).

We will use the shorthand `iff' for `if and only if' in all that follows.

The following lemma proved in Subsection \ref{S30} presents basic facts.

\begin{lemma}\label{L1}
The following hold:

\medskip \textbf{(1)} A matrix $A\in \mathbb{M}_2(R)$ is extendable iff its reduction modulo $R\det (A)$ is simply extendable. Thus, if $\det (A)=0$, then $A$ is extendable iff it is simply extendable.

\medskip \textbf{(2)}  For $M,N\in {GL}_2(R)$ and $Q\in {SL}_3(R)$ we have an identity 
\begin{equation*}
\Theta(\sigma(M)Q\sigma(N))=M\Theta(Q)N
\end{equation*}and the $(3,3)$ entries of $Q$ and $\sigma(M)Q\sigma(N)$ 
are equivalent (thus one such entry is $0$ iff
the other entry is $0$). Also, $\Theta(Q^{T})=\Theta(Q)^{T}$.

\medskip \textbf{(3)} The fact that a matrix $A\in \mathbb{M}_2(R)$ is
extendable (or simply extendable) depends only on the double coset $[A]\in {GL}_2(R)\backslash \mathbb{M}_2(R)/GL_2(R)$. Moreover, $A$ is extendable (or simply extendable) iff so is $A^{T}$. So ${Im}(\Theta)$ is stable under transposition and
equivalence.
\end{lemma}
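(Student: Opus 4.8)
The plan is to establish part \textbf{(2)} first by a direct block-matrix computation, since both the double-coset invariance in \textbf{(3)} and the reduction statement in \textbf{(1)} are most naturally deduced from it. For \textbf{(2)} I would write $Q\in SL_3(R)$ in block form as $\left[\begin{smallmatrix}\Theta_R(Q) & u\\ v^T & q\end{smallmatrix}\right]$ with $u,v\in R^2$ columns and $q\in R$ the $(3,3)$ entry, and observe that $\sigma_R(M)$ and $\sigma_R(N)$ are block-diagonal with blocks $M,\det(M)^{-1}$ and $N,\det(N)^{-1}$. Multiplying out $\sigma_R(M)Q\sigma_R(N)$ block by block, the top-left $2\times 2$ block is exactly $M\Theta_R(Q)N$, giving the asserted identity for $\Theta_R$, while the $(3,3)$ entry becomes $\det(M)^{-1}\det(N)^{-1}q$, a unit multiple of $q$ and hence equivalent to it; in particular one vanishes iff the other does. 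That $\sigma_R(M)Q\sigma_R(N)\in SL_3(R)$ follows since $\det\sigma_R(M)=\det\sigma_R(N)=1$. The identity $\Theta_R(Q^T)=\Theta_R(Q)^T$ is immediate: transposing $Q$ and deleting the last row and column yields the transpose of the matrix obtained by deleting the last column and row of $Q$.

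Part \textbf{(3)} would then follow formally. If $A'=MAN$ with $M,N\in GL_2(R)$ and $A^+$ extends $A$, then by \textbf{(2)} the matrix $\sigma_R(M)A^+\sigma_R(N)\in SL_3(R)$ extends $A'$, and its $(3,3)$ entry is a unit multiple of that of $A^+$, so simple extensions map to simple extensions; since $M,N$ are invertible this gives an equivalence, proving that (simple) extendability depends only on the double coset. For transposition, if $A^+$ extends $A$ then $(A^+)^T\in SL_3(R)$ has the same $(3,3)$ entry and, by the last identity in \textbf{(2)}, satisfies $\Theta_R((A^+)^T)=A^T$; the converse is symmetric. Stability of $\Im(\Theta_R)$ under equivalence and transposition is then an immediate restatement.

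The substance lies in \textbf{(1)}. Writing the entries of $A$ as $a,b,c,d$ and setting $I=R\det(A)$, for any candidate matrix $\left[\begin{smallmatrix}A & u\\ v^T & q\end{smallmatrix}\right]$ with $u=(u_1,u_2)$, $v=(v_1,v_2)$, cofactor expansion along the last column shows its determinant equals $w+q\det(A)$, where $w:=u_1(cv_2-dv_1)-u_2(av_2-bv_1)$ depends only on $u,v$ and the entries of $A$. The key observation is that the coefficient of $q$ is precisely $\det(A)$, so it disappears after reduction modulo $I$. For the forward direction, given an extension $A^+$ of $A$ with data $(u,v,q)$, reducing modulo $I$ kills the $q\det(A)$ term, so $\left[\begin{smallmatrix}\bar A & \bar u\\ \bar v^T & 0\end{smallmatrix}\right]$ still has determinant $\bar w=1$ in $R/I$ and is a simple extension of $\bar A$. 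Conversely, given a simple extension of $\bar A$ over $R/I$ with columns $\bar u,\bar v$, I would lift $u,v$ arbitrarily to $R$; then $w\equiv 1\pmod I$, say $w=1+r\det(A)$, and choosing $q=-r$ yields $w+q\det(A)=1$, producing an extension of $A$ over $R$. The final assertion is the special case $\det(A)=0$, where $I=0$ and reduction modulo $I$ is the identity.

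The main obstacle is confined to \textbf{(1)}: one must recognize that the $(3,3)$ entry enters the determinant only through the coefficient $\det(A)$, which both frees that entry modulo $R\det(A)$ in the forward direction and, via the resulting congruence $w\equiv 1$, supplies exactly the slack needed to adjust $q$ so that the lifted determinant equals $1$ on the nose in the reverse direction. Everything else is bookkeeping resting on part \textbf{(2)}.
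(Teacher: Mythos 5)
Your proposal is correct and takes essentially the same route as the paper: part (2) by direct (block) computation, part (3) as a formal consequence of part (2), and part (1) by exploiting that the $(3,3)$ entry enters the determinant only through its cofactor $\det(A)$, so it can be replaced by $0$ after reduction modulo $R\det(A)$ and, conversely, adjusted by a suitable multiple of $1$ (subtracting $r$ from the lifted $(3,3)$ entry) to make the lifted determinant exactly $1$. The paper's write-up is terser but the underlying mechanism is identical.
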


In general $\Theta$ is not surjective. In fact, for each $n\in 
\mathbb{N}$, there exists a regular finitely generated $\mathbb{Z}$-algebra $R$ of dimension $n$ such that a suitable $A\in Um(\mathbb{M}_{2}(R))$ is not
extendable (see Theorem \ref{TH4}(4) and Examples \ref{EX11}, \ref{EX13} 
and \ref{EX16}); if $n\geq 2$, we can even assume that $R$ is a \textsl{UFD}
(unique factorization domain).

For a class $Z$ of rings, $SZ$ (resp.\ $WZ$) will often denote a subclass (resp.\ a larger class), where $S$ (resp.\ $W$) stands for `strongly' or `simply' (resp.\ `weakly'). If $R$ is a $Z$ ring and an integral domain, we will say that $R$ is a $Z$ domain.

In this paper we study thirteen classes of rings to be named, in the
spirit of \cite{GH1} and \cite{lor}, using indexed letters. Definition \ref{def1} leads naturally to the first five classes.

\begin{definition}\label{def2}
We say that $R$ is:

\medskip \textbf{(1)} a ${\Pi}_2$ ring, if each $A\in Um(\mathbb{M}_2(R))$ with $\det(A)=0$ is
extendable;

\smallskip \textbf{(2)} an ${E}_2$ ring, if each matrix in $Um(\mathbb{M}_{2}(R))$ is extendable (i.e., if $\Theta$ is
surjective);

\smallskip \textbf{(3)} an ${SE}_2$ ring, if each matrix in $Um(\mathbb{M}_{2}(R))$ is simply extendable;

\smallskip \textbf{(4)} an ${E}^{\triangle}_2$ ring, if each upper (or
lower) triangular matrix in $Um(\mathbb{M}_2(R))$ is extendable;

\smallskip \textbf{(5)} an ${SE}^{\triangle}_2$ ring, if each upper
(or lower) triangular matrix in $Um(\mathbb{M}_2(R))$ is
simply extendable.
\end{definition}

Clearly, each ${SE}_2$ ring is an ${SE}^{\triangle}_2$ ring. We speak about $\Pi_2$ domains, etc.

We recall that $R$ is called an \textsl{elementary divisor ring}, abbreviated as \textsl{EDR}, if for all $m,n\in \mathbb{N}$, each $m\times n$
matrix with entries in $R$ \textsl{admits diagonal reduction}, i.e., is
equivalent to a matrix whose off diagonal entries are $0$ and whose diagonal
entries $a_{1,1},\ldots ,a_{s,s}$, with $s:=\min \{m,n\}$, are such that $a_{i,i}$
divides $a_{i+1,i+1}$ for all $i\in \{1,\ldots ,s-1\}$. Equivalently, $R$ is
an \textsl{EDR} iff each matrix in $\mathbb{M}_2(R)$ is
equivalent to a diagonal matrix, iff each finitely presented $R $-module is a direct sum of cyclic $R$-modules (see \cite{LLS}, Cor. (3.7)
and Thm. (3.8); see also \cite{WW}, Thm. 2.1). An \textsl{EDR} domain will be abbreviated as \textsl{EDD}.

\begin{example}
\normalfont\label{EX1} Let $A\in Um(\mathbb{M}_2(R))$. If there
exist matrices $M,N\in {GL}_2(R)$ such that $MAN=\left[ 
\begin{array}{cc}
1 & 0\\ 
0 & \det(A)
\end{array}
\right]$ (e.g., this holds if $R$ is an \textsl{EDR}),
then the product $\sigma(M^{-1})\left[ 
\begin{array}{ccc}
1 & 0 & 0\\ 
0 & \det(A) & 1\\
0 & -1 & 0
\end{array}\right] \sigma(N^{-1})$
is a simple extension of $A$ by Lemma \ref{L1}(2). Thus each \textsl{EDR} is an $SE_2$ ring.
\end{example}

The following classification of ${\Pi}_2$ rings is proved in Subsection \ref{S41}.

\begin{theorem}
\label{TH1} For a ring $R$ the following statements are equivalent:

\medskip \textbf{(1)} The ring $R$ is a ${\Pi}_{2}$ ring.

\smallskip \textbf{(2)} Each matrix in $Um(\mathbb{M}_{2}(R))$ of zero
determinant is non-full, i.e., the product of two matrices of sizes $2\times
1$ and $1\times 2$ (equivalently, the $R$-linear map $L_A$ factors as a
composite $R$-linear map $R^2\rightarrow R\rightarrow R^2$).

\smallskip \textbf{(3)} For each matrix in $Um(\mathbb{M}_{2}(R))$ of zero
determinant, one (hence both) of the $R$-modules $K_A$ and $Q_A$ is
isomorphic to $R$.

\smallskip \textbf{(4)} Each projective $R$-module of rank $1$ generated by
two elements is isomorphic to $R$.
\end{theorem}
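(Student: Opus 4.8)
The plan is to first prove a structural lemma for a unimodular $A$ with $\det(A)=0$ and then read off all four equivalences from it. First I would observe that, since $A$ is unimodular, its reduction modulo every maximal ideal $\mathfrak m$ is a nonzero $2\times 2$ matrix of determinant $0$ over the field $R/\mathfrak m$, hence of rank exactly $1$; thus $K_A$ and $Q_A$ have rank $1$ at every prime. Working over $R_{\mathfrak p}$, some entry of $A$ is a unit, and elementary row and column operations bring $A$ to $\Diag(1,0)$, the vanishing of $\det$ forcing the second diagonal entry to be $0$. Since $A\operatorname{adj}(A)=\operatorname{adj}(A)A=\det(A)I_2=0$ and exactness is a local property, this normal form shows that the two-term complex $R^2\xrightarrow{\operatorname{adj}(A)}R^2\xrightarrow{A}R^2$ is exact in the middle, and symmetrically after interchanging the two maps (note $\operatorname{adj}(A)$ is again unimodular with zero determinant, its entries being those of $A$ up to sign). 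Hence $K_A=Q_{\operatorname{adj}(A)}$ and $Q_A=K_{\operatorname{adj}(A)}$; in particular $K_A$ is generated by the two columns of $\operatorname{adj}(A)$, so $Q_A=R^2/K_A$ is finitely presented. Being finitely presented and locally free of rank $1$, $Q_A$ is projective of rank $1$ generated by two elements; the sequence $0\to K_A\to R^2\to Q_A\to 0$ splits, giving $R^2\cong K_A\oplus Q_A$, and taking top exterior powers yields $K_A\otimes Q_A\cong R$, so $K_A\cong R$ iff $Q_A\cong R$. This already justifies the parenthetical ``(hence both)'' in statement (3), and I expect this lemma, especially the passage from the local normal form to finite presentation and projectivity without noetherian hypotheses, to be the main obstacle.

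For (1)$\Leftrightarrow$(3) I would compute, for a candidate $A^{+}=\begin{pmatrix}A & u\\ v^{T} & w\end{pmatrix}$ with $u,v\in R^2$ and $w\in R$, the determinant $\det(A^{+})=w\det(A)-v^{T}\operatorname{adj}(A)u$. As $\det(A)=0$, the entry $w$ is irrelevant (matching Lemma \ref{L1}(1), so extendable coincides with simply extendable here), and $A$ is extendable iff there exist $u,v$ with $v^{T}\operatorname{adj}(A)u=-1$. Since $\operatorname{adj}(A)u$ ranges over $Q_{\operatorname{adj}(A)}=K_A$, this says exactly that $K_A$ contains a vector unimodular in $R^2$. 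As $K_A$ is a rank-$1$ direct summand of $R^2$, such a vector generates a free rank-$1$ summand of $K_A$ whose complement is projective of rank $0$, hence $0$; so $K_A$ contains a unimodular vector iff $K_A\cong R$. Therefore $A$ is extendable iff $K_A\cong R$ iff $Q_A\cong R$, which is precisely (3).

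For (2)$\Leftrightarrow$(3), I would use that $A$ is non-full, i.e.\ $A=PQ$ with $P$ of size $2\times 1$ and $Q$ of size $1\times 2$, exactly when $L_A$ factors through $R$. If $A=PQ$, then unimodularity of $A$ (its entries $P_iQ_j$ generate $R$) forces $P$ to be unimodular in $R^2$ and the row $Q$ to be unimodular; hence $L_Q\colon R^2\to R$ is surjective and $L_P\colon R\to R^2$ is split injective, so $Q_A=\Im(L_PL_Q)=\Im(L_P)\cong R$. Conversely, if $Q_A\cong R$ the canonical factorization $L_A\colon R^2\twoheadrightarrow Q_A\cong R\hookrightarrow R^2$ exhibits $A$ as non-full. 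Finally, for (3)$\Leftrightarrow$(4): given the lemma, (4) applied to the rank-$1$ module $K_A$ yields $K_A\cong R$, which is (3). For the converse, a projective $P$ of rank $1$ generated by two elements admits a surjection $R^2\twoheadrightarrow P$ that splits, so $R^2\cong P\oplus P'$; the idempotent endomorphism $e$ of $R^2$ projecting onto $P'$ along $P$ is unimodular (a nonzero rank-$1$ projection modulo each $\mathfrak m$) with $\det(e)=0$ and $K_e=P$, so (3) gives $P=K_e\cong R$, establishing (4).
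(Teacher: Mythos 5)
Your proof is correct, and it reaches the same structural core as the paper (everything reduces to the rank-one projective modules $K_A$, $Q_A$ and their duality), but your route to the key equivalence is genuinely different. The paper first establishes in its Subsection \ref{S25} that $Q_{A}$ is projective of rank $1$ with $R^2\cong K_A\oplus Q_A$ by noting that $A$ is locally equivalent to $\Diag(1,0)$, and then proves $(1)\Leftrightarrow(2)$ via Proposition \ref{PR3}(1), whose proof leans on Theorem \ref{TH8}: extendability is first translated into the existence of $(e,f)$ with $(ae+cf,be+df)\in Um(R^2)$, and one then builds explicit $M,N\in {SL}_2(R)$ with $MAN=\left[\begin{smallmatrix}1&0\\ w&0\end{smallmatrix}\right]$ to produce the column-row factorization. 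You instead compute $\det(A^{+})=w\det(A)-v^{T}\operatorname{adj}(A)u$ directly and, using the locally checked exactness of $R^2\xrightarrow{\operatorname{adj}(A)}R^2\xrightarrow{A}R^2$ to identify $K_A=Q_{\operatorname{adj}(A)}$, convert extendability into the statement that the rank-one direct summand $K_A$ of $R^2$ contains a unimodular vector, which forces $K_A\cong R$. This gives $(1)\Leftrightarrow(3)$ in one self-contained step, bypassing Theorem \ref{TH8} entirely; the adjugate identification also hands you for free that $K_A$ is generated by two elements (the columns of $\operatorname{adj}(A)$), which is exactly what $(4)\Rightarrow(3)$ needs. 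What the paper's approach buys in exchange is reusability: Theorem \ref{TH8} and the explicit $MAN$ normal form are needed elsewhere (e.g., for the $SE_2$ and Hermite-ring results), so routing Theorem \ref{TH1} through them costs nothing there, whereas your argument is the more economical one if Theorem \ref{TH1} is the only goal. All the individual steps you give (the determinant identity, the local exactness, finite presentation plus local freeness implying projectivity, the exterior-power duality, and the idempotent construction for $(3)\Rightarrow(4)$) check out.
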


For pre-Schreier domains see Subsection \ref{S22}. Recall that an integral domain $R$ is a pre-Schreier domain iff each matrix in $\mathbb{M}_{2}(R)$ of zero determinant
is non-full (see \cite{CP}, Thm. 1 or \cite{MR}, Lem. 1). Thus each
pre-Schreier domain is a $\Pi _{2}$ domain. Similarly, if $Pic(R)$ 
is trivial, then $R$ is a ${\Pi }_{2}$ ring.

The following notions introduced by Bass (see \cite{bas}), Shchedryk (see 
\cite{shc1}), and McGovern (see \cite{mcg1}) (respectively) will be often
used in what follows.

\begin{definition}\label{def3}
Let $n\in\mathbb{N}$. Recall that $R$ has:

\medskip \noindent \textbf{(1)} \textsl{stable range} n and we write $sr(R)=n $ if $n$ is the smallest natural number with the property that each $(a_1,\ldots,a_n,b)\in Um(R^{n+1})$ is reducible, i.e., there exists $(r_1,\ldots,r_n)\in R^n$ such that $(a_1+br_1,\ldots,a_n+br_n)\in Um(R^n)$ 
(when there exists no $n\in\mathbb N$ such that $sr(R)=n$, then 
$sr(R):=\infty$ and the convention is $\infty>n$);

\smallskip \noindent \textbf{(2)} \textsl{(fractional) stable range 1.5} and 
we write $fsr(R)=1.5$ if for each $(a,b,c)\in Um(R^{3})$ with $c\neq 0$ 
there exists $r\in R$ such that $(a+br,c)\in Um(R^{2})$;

\smallskip\noindent \textbf{(3)} \textsl{almost stable range 1} and we
write $asr(R)=1$ if for each ideal $I$ of $R$ not contained in $J(R)$, $sr(R/I)=1$.
\end{definition}

Stable range type of conditions on all suitable unimodular tuples 
of a ring date back at least to Kaplansky. For instance, the essence of 
\cite{kap}, Thm. 5.2 can be formulated in the language of this paper 
as follows: a ring $R$ is an $SE_2^{\triangle}$ ring iff for all 
$(a,b,c)\in Um(R^{3})$, there exists $(e,f)\in R^2$ such that 
$(ae,be+cf)\in Um(R^2)$. Theorem \ref{TH8}, among many other things, 
proves the general (nontriangular) form of loc. cit:  a ring $R$ is an $SE_2$ 
ring iff for all $(a,b,c,d)\in Um(R^4)$, there exists $(e,f)\in R^2$ 
such that $(ae+cf,be+df)\in Um(R^2)$.

We recall that if $R$ has (Krull) dimension $d$, then $sr(R)\le d+1$ (see \cite{bas}, Thm. 11.1 for the noetherian case and see \cite{hei}, Cor. 2.3 for the general case). Also, if $R$ is a finitely generated algebra over a finite field of dimension $2$ 
or if $R$ is a polynomial algebra in 2 indeterminates over a field that is algebraic 
over a finite field, then $sr(R)\le 2$ (see \cite{VS}, 
Cors. 17.3 and 17.4).
 
Each ${SE}_{2}$ ring is an ${E}_2$ ring, but we do not know when
the converse is true. However, the converse holds if $sr(R)\le 2$ (see Corollary \ref{C8}(3)) and Example \ref{EX14} shows that there exist $2\times 2$ matrices that are extendable but are not simply extendable.

The next two classes of rings are motivated by Lemma \ref{L1}(1) and the class $\Pi_2$.

\begin{definition}\label{def2-}
We say that $R$ is:

\medskip \textbf{(1)} a ${Z}_2$ ring, if for each
matrix $A\in Um(\mathbb{M}_2(R))$ there exists a matrix $B\in
Um(\mathbb{M}_2(R))$ congruent to $A $ modulo $R\det (A)$ and $\det (B)=0$;

\smallskip \textbf{(2)} a ${WZ}_2$ ring, if for each
matrix $A\in Um(\mathbb{M}_2(R))$ there exists a matrix $B\in \mathbb{M}_2(R)$ congruent to $A $ modulo $R\det (A)$ and $\det (B)=0$.
\end{definition}

We have the following classification of ${Z}_2$ rings proved in Subsection \ref{S410}.

\begin{theorem}
\label{TH2} For a ring $R$ the following three statements are equivalent:

\medskip \textbf{(1)} The ring $R$ is a $Z_2$ ring. 

\smallskip \textbf{(2)} For each matrix $A\in
Um(\mathbb{M}_2(R))$ there exists a matrix $C\in Um(\mathbb{M}_2(R))$ such that $A+\det (A)C\in Um(\mathbb{M}_2(R))$ and moreover 
$\det(C)=\det (A+\det (A)C)=0$.

\smallskip \textbf{(3)} For each matrix $A\in \mathbb{M}_2(R)$ there exists a matrix $C\in Um(\mathbb{M}_2(R))$ such that $A+\det (A)C\in Um(\mathbb{M}_2(R))$ and moreover $\det(C)=\det (A+\det (A)C)=0$.

\medskip
Moreover, these three statements are implied by the following fourth one:

\medskip \textbf{(4)} For each nonzero $a\in R$, the map of sets
$$\{A\in Um(\mathbb M_2(R))|\det(A)=0\}\rightarrow \{\bar{A}\in Um(\mathbb M_2(R/Ra))|\det(\bar{A})=0\},$$
defined by the reduction modulo $Ra$, is surjective.

\medskip
If $sr(R)\le 4$, then these four statements are equivalent.
\end{theorem}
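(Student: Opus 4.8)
The plan is to establish the unconditional cycle $(2)\Rightarrow(1)\Rightarrow(3)\Rightarrow(2)$, then add $(4)\Rightarrow(1)$ in general and $(1)\Rightarrow(4)$ under $sr(R)\le 4$. Write $a:=\det(A)$ throughout. The arrow $(2)\Rightarrow(1)$ is immediate: the matrix $B:=A+aC$ furnished by (2) lies in $Um(\mathbb{M}_2(R))$, is congruent to $A$ modulo $Ra$ (as the entries of $B-A=aC$ lie in $Ra$), and has $\det(B)=0$. The arrow $(1)\Rightarrow(3)$ is trivial since $Um(\mathbb{M}_2(R))\subseteq\mathbb{M}_2(R)$. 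For $(4)\Rightarrow(1)$: if $a=0$ then $A$ itself witnesses (1); if $a\neq 0$, the reduction $\bar A\in\mathbb{M}_2(R/Ra)$ is unimodular (the entries of $A$ generate $R$, hence generate $R/Ra$) with $\det(\bar A)=\bar a=0$, so (4) produces $B\in Um(\mathbb{M}_2(R))$ with $\det(B)=0$ reducing to $\bar A$, i.e. $B\equiv A\pmod{Ra}$, which is precisely (1).

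The crux is $(3)\Rightarrow(2)$. Here I would use the polarization identity $\det(A+X)=\det(A)+\Delta(A,X)+\det(X)$, where $\Delta$ is the symmetric bilinear polarization of the $2\times 2$ determinant. From (3) we obtain $B=A+aC_0$ with $\det(B)=0$; substituting and using $\det(A)=a$ yields the annihilator relation $a\bigl(1+\Delta(A,C_0)+a\det(C_0)\bigr)=0$. The content of (2) is to replace $C_0$ by a matrix $C$ that is simultaneously unimodular and of determinant $0$, with $A+aC$ again unimodular of determinant $0$. The natural attempt is the rank-one ansatz $C=vw^{T}$ with $v,w\in Um(R^{2})$, which forces $\det(C)=0$ and gives $C\in Um(\mathbb{M}_2(R))$, and for which the exact identity $\det(A+avw^{T})=a\bigl(1+\varphi_A(v,w)\bigr)$ (with $\varphi_A(v,w)=\Delta(A,vw^{T})$ bilinear) reduces $\det(A+aC)=0$ to a single scalar equation. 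I expect the main obstacle to be realizing this condition $\varphi_A(v,w)\in -1+\mathrm{Ann}(a)$, where $\mathrm{Ann}(a)=\{r\in R:\,ra=0\}$, by unimodular $v,w$ while keeping $A+aC$ unimodular. I would exploit two freedoms: the freedom to alter $C_0$ by any matrix with entries in $\mathrm{Ann}(a)$ without changing $B=A+aC_0$, and the fact that (3) is a property of \emph{all} matrices, so it may be invoked for auxiliary matrices built from $A$ and $C_0$, in order to convert the quadratic datum $C_0$ into the required rank-one datum. The congruence $A+aC\equiv A\pmod{Ra}$ secures unimodularity of $A+aC$ at all maximal ideals containing $a$, so the residual difficulty concentrates on the locus where $a$ is invertible.

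Finally, under $sr(R)\le 4$ I would close the loop by proving $(1)\Rightarrow(4)$. Fix a nonzero $a$ and $\bar A\in Um(\mathbb{M}_2(R/Ra))$ with $\det(\bar A)=0$, and lift its entries arbitrarily to $A_1\in\mathbb{M}_2(R)$. Unimodularity of $\bar A$ means the four entries of $A_1$ together with $a$ form an element of $Um(R^{5})$; since $sr(R)\le 4$, every element of $Um(R^5)$ is reducible, so adding suitable multiples of $a$ to the entries yields $A_2=A_1+aT\in Um(\mathbb{M}_2(R))$ with $A_2\equiv\bar A\pmod{Ra}$. Then $\det(A_2)\equiv\det(\bar A)=0$, so $a':=\det(A_2)\in Ra$. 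Applying (1) to $A_2$ gives $B\in Um(\mathbb{M}_2(R))$ with $\det(B)=0$ and $B\equiv A_2\pmod{Ra'}$; as $Ra'\subseteq Ra$, we get $B\equiv A_2\equiv\bar A\pmod{Ra}$, so $B$ reduces to $\bar A$ and witnesses the surjectivity demanded by (4). Combined with $(4)\Rightarrow(1)$, this gives the equivalence of all four statements when $sr(R)\le 4$.
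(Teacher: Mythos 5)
Your routine arrows are correct and coincide with the paper's: $(2)\Rightarrow(1)$ and $(1)\Rightarrow(3)$ are immediate, $(4)\Rightarrow(1)$ is the paper's argument, and your proof of $(1)\Rightarrow(4)$ when $sr(R)\le 4$ (lift the four entries of $\bar A$ to a unimodular matrix by reducing the associated element of $Um(R^5)$, observe $\det(A_2)\in Ra$, then apply (1) to $A_2$) is exactly what the paper does.

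The gap is $(3)\Rightarrow(2)$, which is where all the content of the theorem lies and which you do not prove: you set up a correct normal form and then describe the remaining step as an expected obstacle together with two ``freedoms'' you hope to exploit. Concretely, your rank-one ansatz $C=vw^{T}$ reduces $(2)$ to statements \circled{\textup{5}}/\circled{\textup{6}} of Subsection \ref{S32}, while the hypothesis you extract from $(3)$ is statement \circled{\textup{10}} (or \circled{\textup{8}}, if one also retains $\det(C_0)=0$). The paper proves the equivalence of (1), (2), (3) precisely by identifying them with statements \circled{\textup{7}}, \circled{\textup{6}}, \circled{\textup{8}} and invoking Theorem \ref{TH8}, and the implications $\circled{\textup{8}}\Rightarrow\circled{\textup{5}}$ and $\circled{\textup{7}}\Rightarrow\circled{\textup{6}}$ proved there are genuinely hard: the argument reduces to a noetherian reduced ring, partitions the minimal primes according to whether $\det(A)$ vanishes, uses reversibility of the computation when $\det(A)$ is a non-zero-divisor, uses the symmetric-algebra structure of $Z_{\upsilon}$ (Theorem \ref{TH7}(5)) together with smoothness to lift a solution over the component where $\det(A)=0$, glues the two pieces via the pullback square attached to $\mathcal I_1\cap\mathcal I_2=0$, and finally replaces the perturbation matrix by a rank-one unimodular one using a section $s_D$ of $0\rightarrow K_D\rightarrow R^2\rightarrow Q_D\rightarrow 0$. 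None of this can be recovered by altering $C_0$ by matrices with entries in $\{r\in R\,|\,ra=0\}$, which only moves $C_0$ within a fixed congruence class; the difficulty is exactly where you locate it (the locus where $a$ is invertible), but it is resolved in the paper only by the global smoothness-and-patching argument just described. A further caution: starting from the literal statement (3) you only know $\det(A+aC_0)=0$ with no control on $\det(C_0)$, i.e.\ statement \circled{\textup{10}}, and Remark \ref{rem1}(3) records that climbing back up the chain from \circled{\textup{10}} is open for non-reduced rings, so the paper's own route goes through \circled{\textup{8}}. To complete your proof you must either reproduce or explicitly cite the relevant implications of Theorem \ref{TH8}; as written, the proposal does neither.
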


If $sr(R)=1$ or $asr(R)=1$ or $fsr(R)=1$, there exist classical units interpretations (see Corollary \ref{C5}), so the last six classes of rings provide a framework for generalizing these interpretations to contexts such as Hermite rings and $sr(R)=2$.

\begin{definition}\label{def2+}
We say that $R$ is:

\medskip \textbf{(1)} a $U_2$ ring if for each $(a,b)\in Um(R^{2})$ and $c\in R$, the natural product
homomorphism 
\begin{equation*}
U(R/Rac)\times U(R/Rbc)\rightarrow U(R/Rc)
\end{equation*}is surjective, i.e., the commutative diagram

\begin{equation}\label{EQ1}
\xymatrix@R=10pt@C=21pt@L=2pt{
U(R/(Rac\cap Rbc)) \ar[r]^{\;\;\;\red} \ar[d]_{\red} & U(R/Rac)\ar[d]^{\red}\\
U(R/Rbc) \ar[r]^{\;\;\;\red} & U(R/Rc)\\}
\end{equation}
whose arrows are natural reductions, is not only a pullback but is also a
pushout;

\smallskip \textbf{(2)} a ${WU}_{2}$ ring if for each $(a,b)\in Um(R^2)$ and $c\in R$, the natural product
homomorphism of (1) contains $\{x^2|x\in U(R/Rc)\}$;

\smallskip \textbf{(3)} a $V_2$ ring if for each $(a,b,c)\in Um(R^{3})$ the
equation 
\begin{equation}
(1-ax)(1-cw)=y(b+acz)  \label{EQ2}
\end{equation}in the indeterminates $x,y,z,w$ has a solution in $R^{4}$ with $xw=yz$ (note
that for each such solution we have $(x,y,w),(1-ax,y,1-cw)\in Um(R^{3}))$;

\smallskip \textbf{(4)} a $WV_2$ ring if for each $(a,b,c)\in Um(R^{3})$,
there exist elements $x$, $y$, $z_1$, $z_2$, $w$, $a^{\prime}$, $c^{\prime}$, $a^{\prime\prime}$ and $c^{\prime\prime}$ of $R$ such that $(a^{\prime\prime}-ax,y,a^{\prime}c^{\prime}b+az_1+cz_2,c^{\prime\prime}-cw)\in Um(R^{4})$, $(a,a^{\prime}a^{\prime\prime}),(c,c^{\prime}c^{\prime\prime})\in U(R^2)$, and moreover 
\begin{equation}
(a^{\prime\prime}-ax)(c^{\prime\prime}-cw)=y(a^{\prime}c^{\prime}b+az_1+cz_2);  \label{EQ3}
\end{equation}

\smallskip \textbf{(5)} a ${W}_2$ ring if for each $(a,b)\in Um(R^2)$ and $c\in R$, the image of the homomorphism $U(R/Rabc))\rightarrow U(R/Rab)\cong
U(R/Ra)\times U(R/Rb)$ is the product of the images of the two homomorphisms $U(R/Rabc)\rightarrow U(R/Ra)$ and $U(R/Rabc)\rightarrow U(R/Rb)$;

\smallskip \textbf{(6)} a ${WW}_2$ ring if for each $(a,b,c)\in R^3$ with $(b,c)\in Um(R^2)$, the image $(b_0,b_1)$ of $b$ in the product group $U:=U(R/(Ra+Rc))\times
U(R/[R(1-a)+Rc])$ has the property that $(b_0,b_1^{-1})\in U$ belongs to the
product of the images of the natural two reduction homomorphisms $U(R/Ra)\times U(R/R(1-a))\rightarrow U$ and $U(R/Rc)\rightarrow U$.
\end{definition}

Each $V_2$ ring is a $WV_2$ ring as Equation (\ref{EQ3}) becomes Equation (\ref{EQ2}) if in Definition \ref{def2+}(4) one takes $a^{\prime}=c^{\prime}=a^{\prime\prime}=c^{\prime\prime}=1$, $z_2=0$ and $z_1=zc$.

\begin{remark}\label{rem0}
Referring to Definition \ref{def2+}, we note that $R$ is a $U_2$ ring (resp.\ a $WU_2$ ring) iff (1) (resp.\ (2)) holds only for triples $(a,b,c)\in R^3$ with $a+b=1$. Similarly, $R$ is a $WW_2$ ring iff (6) holds only for triples $(a,b,c)\in R^3$ with $c-1$ divisible by $b$.
\end{remark}

Paper's leitmotif is to identify equations that `encode' arithmetic and stable range properties of rings, and this explains the introduction of classes of rings such as $SE_2$, $V_2$ and $Z_2$ (see also the $R$-algebras of Subsection \ref{S31}).

The next theorem proved in Subsection \ref{S42} groups together important connections between the classes of rings we have introduced.

\begin{theorem}
\label{TH3} The following statements hold:

\medskip\textbf{(1)} Each $E_2$ ring is a ${WZ}_2$ ring and each ${SE}_2$
ring is a ${Z}_2$ ring. Moreover, if $sr(R)\le 2$, then each ${E}_2$ ring
is a ${Z}_2$ ring.

\smallskip \textbf{(2)} Let $R$ be a product of pre-Schreier domains. Then $R $ is an $E_2$ ring iff it is a ${WZ}_{2}$ ring.

\smallskip \textbf{(3)} If $R$ is a $\Pi_2$ ring, then $R $
is an ${SE}_{2}$ ring iff it is a ${Z}_{2} $ ring. 

\smallskip \textbf{(4)} If $R$ is an ${SE}^{\triangle}_2$ ring, then $R$ is a 
${V}_2$ ring. If $R$ is an ${SE}^{\triangle}_2$ domain, then $R$ is a $U_2$
ring. If $R$ is a $\Pi_2$ ring and a $WV_2$ ring, then $R$ is a $U_2$ ring.

\smallskip \textbf{(5)} Assume that $sr(R)\le 4$. If $R$ is an $E_{2}$ (resp.\ a $SE_2$) ring,
then $R/Ra$ is an $E_{2}$ (resp.\ a $SE_2$) ring for all $a\in R$.

\smallskip \textbf{(6)} If $R/Ra$ is a $\Pi_2$ ring for all $a\in R$, then $R $ is an $E_2$ ring.
\end{theorem}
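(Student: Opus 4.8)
The plan is to reduce the extendability of an arbitrary unimodular $2\times 2$ matrix over $R$ to the zero-determinant case over a quotient, where the hypothesis applies, and then to transport the conclusion back via Lemma \ref{L1}(1). Concretely, I would start with an arbitrary $A\in Um(\mathbb{M}_2(R))$ and set $a:=\det(A)$; the goal is to prove that $A$ is extendable.

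The first step is to invoke Lemma \ref{L1}(1): $A$ is extendable iff its reduction $\bar A$ modulo $R\det(A)=Ra$ is simply extendable (this reduction lives in $\mathbb{M}_2(R/Ra)$). Thus it suffices to show that $\bar A$ is simply extendable over the ring $R/Ra$. Here I would record two facts about $\bar A$: it is unimodular, since unimodularity of the entries (the existence of an $R$-linear combination of the four entries equal to $1$) is preserved under the reduction homomorphism $R\to R/Ra$; and its determinant vanishes, because $\det(\bar A)$ is the image of $\det(A)=a$ in $R/Ra$, hence is $0$.

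The second step applies the hypothesis to the quotient ring $R/Ra$, which is a $\Pi_2$ ring by assumption. Since $\bar A\in Um(\mathbb{M}_2(R/Ra))$ has zero determinant, the definition of $\Pi_2$ gives that $\bar A$ is extendable over $R/Ra$. Now I would apply Lemma \ref{L1}(1) a second time, but over the ring $R/Ra$: because $\det(\bar A)=0$, the lemma's second sentence asserts that $\bar A$ is extendable over $R/Ra$ iff it is simply extendable over $R/Ra$. Combining these, $\bar A$ is simply extendable over $R/Ra$, which by the first step means that $A$ is extendable over $R$. As $A$ was arbitrary, $\Theta_R$ is surjective and $R$ is an $E_2$ ring.

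Since each of the two applications of Lemma \ref{L1}(1) is to a different ring ($R$ and then $R/Ra$), the only point demanding care is bookkeeping: keeping track of which ambient ring the words ``extendable'' and ``simply extendable'' refer to at each stage, and checking that the determinant genuinely becomes $0$ after reduction so that extendability and simple extendability coincide for $\bar A$. I do not anticipate a substantive obstacle; the content of the argument is entirely carried by Lemma \ref{L1}(1), and the role of the $\Pi_2$ hypothesis is precisely to handle the zero-determinant matrices that arise as these reductions.
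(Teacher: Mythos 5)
Your argument is a correct proof of part (6), and it is essentially the paper's own proof of that part: reduce modulo $R\det(A)$, note the reduction is unimodular of zero determinant, apply the $\Pi_2$ hypothesis to $R/R\det(A)$, use the second sentence of Lemma \ref{L1}(1) to upgrade extendable to simply extendable over the quotient, and transport back via the first sentence of Lemma \ref{L1}(1). The bookkeeping you flag (which ring each occurrence of ``extendable'' refers to) is handled correctly.

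The genuine gap is that the theorem has six parts and you have proved only the sixth. Parts (1)--(5) are not addressed at all, and they are not corollaries of the reduction you describe: they require different tools. Part (1) rests on Proposition \ref{PR3}(3) (an extendable matrix is non-full modulo its determinant, giving a congruent matrix of zero determinant), on statement \circled{6} of Theorem \ref{TH8} for the $SE_2$ case, and on Fact \ref{F1} (lifting unimodular pairs when $sr(R)\le 2$) to promote $WZ_2$ to $Z_2$. Part (2) uses that over a pre-Schreier ring every zero-determinant matrix is non-full. Part (3) comes from the equivalence \circled{2}$\Leftrightarrow$\circled{7} of Corollary \ref{C11}. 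Part (4) is the longest piece: it extracts the $V_2$ equation from statement \circled{5} for triangular matrices, and for the $U_2$ conclusions it runs an explicit chain of congruence manipulations on a companion test matrix, successively dividing out $c$, $a$ and $b$. Part (5) is Fact \ref{F1} with $n=4$. None of this is subsumed by, or reducible to, the Lemma \ref{L1}(1) argument you give, so the proposal as written establishes only a small fraction of the claimed statement.
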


For domains of dimension 1, based on Theorems \ref{TH1} and \ref{TH3}(4), in Subsection \ref{S43} we prove the following theorem.

\begin{theorem}
\label{TH4} Let $R$ be an integral domain of dimension 1. Then the following properties hold:

\medskip \textbf{(1)} Each matrix in $Um(\mathbb{M}_{2}(R))$ with nonzero determinant is
simply extendable.

\smallskip \textbf{(2)} The ring $R$ is an ${SE}^{\triangle}_2$ domain, 
a ${V}_2$ domain, and a $Z_2$ domain.

\smallskip \textbf{(3)} The ring $R$ is a ${\Pi}_2$ domain iff it is an ${SE}_2$ (or an $E_2$) domain and iff $Pic(R)$ is trivial. 

\smallskip \textbf{(4)} Assume $R$ is a Dedekind domain. The ring $R$ is a ${\Pi}_2$ domain iff it is a principal ideal domain (\textsl{PID}).
\end{theorem}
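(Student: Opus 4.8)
The plan is to prove the two implications separately, using Theorem \ref{TH1} and Theorem \ref{TH4}(3) to translate the matrix condition defining $\Pi_2$ into module-theoretic language, and then to feed this into the structure theory of one-dimensional noetherian domains.

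For the implication \textsl{PID} $\Rightarrow\Pi_2$ I would argue directly, without invoking the noetherian or dimension hypotheses: a \textsl{PID} is a \textsl{UFD}, so $Pic(R)$ is trivial, and hence $R$ is a $\Pi_2$ ring by the remark following Theorem \ref{TH1}. (Alternatively, a \textsl{PID} is an \textsl{EDR}, so it is an $SE_2$ ring by Example \ref{EX1}, and each $SE_2$ ring is a $\Pi_2$ ring, since a simple extension is in particular an extension.)

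For the converse, assume that $R$ is a noetherian $\Pi_2$ domain of dimension $1$. First I would record that $Pic(R)$ is trivial: every invertible ideal $I$ of $R$ is a rank-one projective module which, over a one-dimensional noetherian domain, can be generated by two elements (by the Forster--Swan bound), so $I\cong R$ by Theorem \ref{TH1}(4) and thus $I$ is principal. The remaining and decisive point is to show that $R$ is integrally closed, i.e. that it is a Dedekind domain; granting this, a Dedekind domain with trivial Picard group is a \textsl{PID}, which completes the argument.

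To establish integral closure I would show that every maximal ideal $\mathfrak m$ is invertible, equivalently that each localization $R_{\mathfrak m}$ is a discrete valuation ring. Fixing a nonzero nonunit $a\in\mathfrak m$ (and noting that $\mathfrak m$ itself is two-generated by Forster--Swan), the idea is to exploit Theorem \ref{TH4}(3), which upgrades $\Pi_2$ to $SE_2$ in this setting, together with Theorem \ref{TH1}(2)--(3): for a suitable singular unimodular matrix $A$ assembled from $a$ and a complementary generator of $\mathfrak m$, the freeness $K_A\cong R$ should force the relevant ideal of $R$ to be principal, and letting $a$ vary should force $R_{\mathfrak m}$ to be a valuation ring. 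I expect this integral-closure step to be the main obstacle: triviality of $Pic(R)$ controls only the \emph{invertible} ideals, whereas passing to a \textsl{PID} additionally requires ruling out non-invertible ideals (equivalently, non-regular local rings), and it is precisely here that the one-dimensional noetherian hypothesis must be combined with the extension property in an essential rather than a formal way.
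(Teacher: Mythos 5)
Your proposal addresses only part (4); parts (1)--(3), which carry most of the content of the theorem and which your argument for (4) invokes (through Theorem \ref{TH4}(3)), are not treated at all. For part (4) itself, the implication \textsl{PID} $\Rightarrow\Pi_2$ and the reduction of ``$\Pi_2$'' to ``$Pic(R)$ is trivial'' (via two-generation of invertible ideals and Theorem \ref{TH1}) are correct and coincide with what the paper does. The genuine gap is precisely the step you flag yourself: you never prove that a noetherian one-dimensional $\Pi_2$ domain is integrally closed; you only describe a hoped-for strategy (``should force\ldots'', ``I expect this\ldots to be the main obstacle''). As written, the implication $\Pi_2\Rightarrow$ \textsl{PID} is not proved.

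Moreover, that step cannot be carried out, because the implication is false. Take $R=k[[t^2,t^3]]$: this is a noetherian local domain of dimension $1$, so every projective $R$-module of rank $1$ is free and $R$ is a $\Pi_2$ domain by Theorem \ref{TH1} (indeed an $SE_2$ ring by Corollary \ref{C6}(2)); yet $R$ is not integrally closed and its maximal ideal $(t^2,t^3)$ is not principal, so $R$ is not a \textsl{PID}. A non-local example is $\mathbb{Z}[\sqrt{-3}]$, whose Picard group is trivial although the non-invertible ideal $(2,1+\sqrt{-3})$ is not principal. Your diagnosis that triviality of $Pic(R)$ controls only the invertible ideals is exactly right, and it exposes a defect in the source: the paper's own proof of part (4) passes directly from ``$Pic(R)$ is trivial'' to ``$R$ is a \textsl{PID}'' with no integral-closure argument, and that inference fails for non-normal one-dimensional domains. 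What is actually established (by you and by the paper) is that a noetherian domain of dimension $1$ is a $\Pi_2$ domain iff $Pic(R)$ is trivial; the further equivalence with \textsl{PID} requires the additional hypothesis that $R$ be integrally closed (i.e.\ a Dedekind domain), under which ``trivial Picard group implies \textsl{PID}'' is standard. So rather than searching for the missing integral-closure argument, the correct move is to weaken the conclusion of part (4) or strengthen its hypothesis.
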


Recall that $R$ is called a \textsl{valuation ring} if for each $(a,b)\in R^2$, either $a$ divides 
$b$ or $b$ divides $a$ (see \cite{kap}, Sect. 10, Def.), equivalently, if the ideals 
of $R$ are totally ordered by set inclusion. Each reduced valuation ring is an 
integral domain\footnote{If $R$ is a reduced local ring which is not an integral 
domain, then there exists nonzero elements $a,b\in R$ such that $ab=0$, 
hence the intersection $Ra\cap Rb$, being nilpotent, is $0$, and it follows that 
the finitely generated ideal $Ra+Rb\cong Ra\oplus Rb$ is not principal.} and 
hence a valuation domain (this is already stated in \cite{kap}, Sect. 10).

Recall that $R$ is a \textsl{B\'{e}zout ring} if each finitely
generated ideal of it is principal. Each B\'{e}zout ring is an \textsl{arithmetical ring}, i.e., its lattice of ideals is distributive and all its localizations at 
prime ideals are valuation rings (see \cite{jen}, Thms. 1 and 2). It is known that $R$ is a B\'{e}zout ring iff each diagonal matrix with entries in $R$ admits diagonal
reduction (see \cite{LLS}, Thm. (3.1)). Also, recall that $R$ is a \textsl{Hermite} ring in the sense of Kaplansky, if $RUm(R^{2})=R^{2}$, equivalently
if each $1\times 2$ matrix with entries in $R$ admits diagonal reduction.
Thus each \textsl{EDR} is a Hermite ring. If $R$ is a Hermite ring, then $sr(R)\in \{1,2\}$ (see \cite{MM}, Prop. 8(i); see also \cite{zab1}, Thm.
2.1.2). Note that a Hermite ring is a B\'{e}zout ring and the converse does
not hold (see \cite{GH1}, Ex. 3.4, \cite{WW}, Ex. 3.3, or \cite{car}, Prop.
8), but a Hermite domain is the same as a B\'{e}zout domain. Also, $R$ is a
Hermite ring iff for all $p,q\in \mathbb{N}$ and each $p\times q$
matrix $B$ with entries in $R$, there exist $M\in {GL}_{p}(R)$ and $N\in \ {GL}_{q}(R)$ such that $MB$ and $BN$ are both lower
(equivalently, upper) triangular (see \cite{GH2}, Thm. 3). If $R$ is a 
B\'{e}zout domain, then it is a \textsl{GCD}
(greatest common divisors exist) domain and hence $Pic(R)$ is
trivial; thus $R$ is also a $\Pi_2$ domain. 

If $R$ is a \textsl{GCD} domain, for $A=\left[ 
\begin{array}{cc}
a & b \\ 
c & d\end{array}\right] \in \mathbb{M}_{2}(R)$ we have $A\in Um(\mathbb{M}_{2}(R))$ iff its entries are \textsl{collectively coprime}, i.e., $\gcd
(a,b,c,d)=1$.

\smallskip B\'{e}zout domains that have stable range 1 or 1.5 were also
studied in \cite{rus} and respectively \cite{shc3}, \cite{shc4} and \cite{BS}. Commutative PIDs, semilocal domains (in particular, valuation domains) and B\'{e}zout domains that have stable range 1, have stable range 1.5 and thus
also have almost stable range 1 (see Corollary \ref{C4}). For readers
convenience, details are given in Subsection \ref{S23} (see
Examples \ref{EX5} and \ref{EX6}).

Our applications to B\'{e}zout domains and more generally to Hermite rings
are grouped together in the following theorem proved in Subsection \ref{S45}.

\begin{theorem}
\label{TH5} Let $R$ be a Hermite ring. Then the following ten statements are
equivalent:

\medskip \textbf{(1)} The ring $R$ is an \textsl{EDR}.

\smallskip \textbf{(2)} The ring $R$ is an ${SE}_2$ (equivalently, an ${SE}^{\triangle}_2 $) ring.

\smallskip \textbf{(3)} The ring $R$ is a ${\Pi}_2$ ring and a ${Z}_2$ ring.

\smallskip \textbf{(4)} The ring $R$ is an ${E}_2$ ring.

\smallskip \textbf{(5)} The ring $R$ is a ${\Pi}_2$ ring and a ${V}_2$ ring.

\smallskip \textbf{(6)} The ring $R$ is a ${\Pi}_2$ ring and a ${WV}_2$ ring.

\smallskip \textbf{(7)} The ring $R$ is a $U_{2}$ ring.

\smallskip \textbf{(8)} Given two unimodular pairs $(a,b),(c,d)\in Um(R^2)$, there exists $t\in R$ such that we can factor $d+ct=d_1d_2$ with $(a,d_1),(b,d_2)\in Um(R^2)$.

\smallskip \textbf{(9)} Given $(a,d)\in R^2$, for $b:=1-a$ and $c\in 1+Rd$ 
there exists $t\in R$ such that we can factor $d+ct=d_1d_2$ with $(a,d_1),(b,d_2)\in Um(R^2)$.

\smallskip \textbf{(10)} For each $a\in R$, $R/Ra$ is a $\Pi_2$ ring
(equivalently, for each $a\in R$, every projective $R/Ra$-module of rank $1$
generated by two elements is free).

\medskip Moreover, these ten statements are implied by the following eleventh one:

\medskip \textbf{(11)} The ring $R$ is a ${W}_2$ domain.
\end{theorem}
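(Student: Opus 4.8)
The plan is to run a single cycle through statements (1)--(9), to treat (10) as equivalent to (4), and to derive (11) $\Rightarrow$ (7). Throughout I will use that a Hermite ring satisfies $sr(R)\in\{1,2\}$, so $sr(R)\le 2$, and that every quotient $R/Ra$ is again Hermite (lift a pair, diagonally reduce it, and reduce modulo $Ra$). The equivalences among (1)--(7) then assemble the earlier results. First, (1) $\Rightarrow$ (2) is Example \ref{EX1}, and the parenthetical $SE_2\Leftrightarrow SE_2^{\triangle}$ holds over a Hermite ring because every unimodular matrix is equivalent to a triangular one and (simple) extendability is a double-coset invariant by Lemma \ref{L1}(3). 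Since $SE_2$ makes every zero-determinant unimodular matrix extendable, $R$ is a $\Pi_2$ ring by the definition of $\Pi_2$, while Theorem \ref{TH3}(1) gives $SE_2\Rightarrow Z_2$; hence (2) $\Rightarrow$ (3), and Theorem \ref{TH3}(3) gives (3) $\Rightarrow$ (2). For (2) $\Leftrightarrow$ (4) I use $SE_2\Rightarrow E_2$ and, since $sr(R)\le 2$, Corollary \ref{C8}(3). Finally (2) $\Rightarrow$ (5) $\Rightarrow$ (6) $\Rightarrow$ (7) follows from $SE_2\Rightarrow SE_2^{\triangle}\Rightarrow V_2$ and $(\Pi_2,WV_2)\Rightarrow U_2$ (Theorem \ref{TH3}(4)) together with $V_2\Rightarrow WV_2$, carrying the $\Pi_2$ factor throughout.

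Next I would establish the units-to-factorization dictionary (7) $\Rightarrow$ (8) $\Rightarrow$ (9). Given unimodular pairs $(a,b)$ and $(c,d)$, unimodularity of $(c,d)$ means precisely that $\bar d\in U(R/Rc)$. The $U_2$-surjectivity of $U(R/Rac)\times U(R/Rbc)\to U(R/Rc)$ then writes $\bar d=\bar d_1\bar d_2$ with $(ac,d_1),(bc,d_2)\in Um(R^2)$; this forces $(a,d_1),(b,d_2)\in Um(R^2)$ and $d_1d_2-d\in Rc$, i.e. $d+ct=d_1d_2$, which is exactly (8). Statement (9) is the specialization $b=1-a$, $c\in 1+Rd$ of (8), so (8) $\Rightarrow$ (9) is immediate.

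The heart of the argument, and the step I expect to be the main obstacle, is (9) $\Rightarrow$ (1). Here I would show that every matrix in $\mathbb{M}_2(R)$ is equivalent to a diagonal one, which suffices for EDR over a Hermite ring. Using the Bézout structure I first reduce an arbitrary matrix to an upper triangular $\left[\begin{smallmatrix} a & b \\ 0 & c\end{smallmatrix}\right]$ and then, by pulling out the gcd of the entries, to the case where $(a,b,c)$ is unimodular and the target Smith form is the diagonal matrix with entries $1$ and $ac$. The task is to convert the factorization $d+ct=d_1d_2$ supplied by (9), together with the coprimality relations, into explicit row and column operations diagonalizing the triangular matrix; this is the classical Kaplansky / Gillman--Henriksen diagonal-reduction mechanism. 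The delicate point is that (9) only provides factors coprime to the complementary pair $a,\,1-a$, so one must exploit the comaximality of $Ra$ and $R(1-a)$ to split the two genuine diagonal entries and align them with $d_1,d_2$; verifying that this specialized form of (9), rather than the general (8), already suffices is what makes the implication sharp.

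It remains to link (10) and (11). For (4) $\Leftrightarrow$ (10), the implication (10) $\Rightarrow$ (4) is Theorem \ref{TH3}(6); conversely, if $R$ is $E_2$ then $sr(R)\le 4$ and Theorem \ref{TH3}(5) make each $R/Ra$ an $E_2$ ring, and since $R/Ra$ is Hermite with $sr(R/Ra)\le 2$ it is $SE_2$ by Corollary \ref{C8}(3), hence $\Pi_2$ by definition, the parenthetical reformulation being Theorem \ref{TH1}(4). Finally, a $W_2$ domain is a Hermite domain, hence a Bézout, indeed GCD, domain, so $Pic(R)$ is trivial and $R$ is automatically a $\Pi_2$ ring; it then suffices to extract the $U_2$-surjectivity of (7) from the $W_2$ hypothesis. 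Since $(a,b)\in Um(R^2)$ yields $R/Rab\cong R/Ra\times R/Rb$ by the Chinese Remainder Theorem in the domain, the ``product of images'' form of $W_2$ should translate, after feeding in suitable multiples of the modulus, into the ``image of the product'' form of $U_2$; making this substitution precise gives (11) $\Rightarrow$ (7), after which the cycle already established yields all of (1)--(10).
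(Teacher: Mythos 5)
Your architecture for the equivalence of (1)--(7) and (10) matches the paper's and those steps are fine. But the two load-bearing implications that close the theorem are left as plans rather than proofs, and in both cases the key idea is missing. For the return from (9) to the rest of the cycle, you propose to prove $(9)\Rightarrow(1)$ directly by a Kaplansky-style diagonal reduction, and you yourself flag as ``the delicate point'' that (9) only supplies factorizations for the special pairs $b=1-a$, $c\in 1+Rd$; you do not resolve this. The paper resolves it by a short bootstrapping argument $(9)\Rightarrow(8)$: given $(a,b),(c,d)\in Um(R^2)$, choose $e,f,s,t$ with $ae+bf=cs+dt=1$ and apply (9) to the pair $(ae,d)$ with $sc=1-dt\in 1+Rd$ in the role of $c$; the resulting factorization $d+t_1sc=d_1d_2$ has $(ae,d_1)$ and $(1-ae,d_2)=(bf,d_2)$ unimodular, hence $(a,d_1),(b,d_2)\in Um(R^2)$, and $t:=t_1s$ gives (8). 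The loop is then closed not through a new diagonalization argument but through the already-established $(7)\Leftrightarrow(8)$ together with a substantive proof of $(7)\Rightarrow(4)$: from the $U_2$ decomposition $u=de$ of the off-diagonal entry of a triangular representative $\left[\begin{smallmatrix} ac & u\\ 0 & bc\end{smallmatrix}\right]$ one builds an explicit non-full matrix congruent to $A$ modulo $R\det(A)$, and then invokes Theorem \ref{TH1} and Lemma \ref{L1}(1). Without either your unexecuted diagonal reduction or the paper's $(9)\Rightarrow(8)$ plus $(7)\Rightarrow(4)$, statements (8) and (9) are only shown to follow from (7), not to imply it, so the theorem is not proved.

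The implication $(11)\Rightarrow(7)$ has the same defect: saying the $W_2$ condition ``should translate, after feeding in suitable multiples of the modulus'' into $U_2$-surjectivity is not an argument. The paper's proof requires a specific reduction: given $d+Rc\in U(R/Rc)$ and $(a,b)\in Um(R^2)$, set $a':=\gcd(a,c)$, $b':=\gcd(b,c)$, write $c=a'b'c'$, and apply the $W_2$ hypothesis to the triple $(a',b',c')$; one then lifts the resulting units along $U(R/Rabc)\rightarrow U(R/Rabc')$ (using that $abc'+Rabc$ is nilpotent in $R/Rabc$) and reduces to the case $d-1\in Ra'b'$. None of this is visible in your sketch, and the Chinese Remainder isomorphism $R/Rab\cong R/Ra\times R/Rb$ alone does not produce it. Both gaps are fixable, but as written the proposal proves only that (1)--(7) and (10) are equivalent and that each of them implies (8) and (9).
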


\begin{example}
\normalfont\label{EX2} Let $R$ be a Hermite ring which is not an \textsl{EDR}
(see \cite{GH1}, Sect. 4 and Ex. 4.11). From Theorem \ref{TH5} it follows
that $R$ is neither an $E_{2}$ ring nor a $U_{2}$ ring and there exists $a\in R$ such that $R/Ra$ is not a $\Pi_2$ ring.
\end{example}

For the next applications we recall some of the classes of rings introduced
by Lorenzini in \cite{lor}, Lem. 4.1 and Def. 4.6, including their `weakly' generalizations.

\begin{definition}\label{def4}
Let $n\in\mathbb{N}$. We say that $R$ is:

\medskip \textbf{(1)} a ${J}_{2,1}$ (resp.\ ${WJ}_{2,1}$) ring if for each $(a,b,c,d,\alpha,\Delta)\in R^{6}$ such that the equation
\begin{equation}
ax+by+cz+dw=\alpha  \label{EQ5}
\end{equation}in the indeterminates $x,y,z,w$ has a solution in $R^{4}$ (resp.\ such that $(a,b,c,d)\in Um(R^{4})$), Equation (\ref{EQ5}) has a solution in $R^{4}$
with $xy-zw=\Delta $;

\smallskip \textbf{(2)} an ${(SU^{\prime})}_{n}$ (resp.\ a ${(WSU^{\prime})}_{n}$) ring if for each $A\in\mathbb{M}_n(R)$ (resp.\ for each $A\in Um(\mathbb{M}_n(R))$), there exists $N\in {SL}_n(R)$ such that $AN$ is symmetric
(equivalently, there exists $M\in {SL}_n(R)$ such that $MA$ is
symmetric, equivalently, there exist $M,N\in {SL}_n(R)$ such that $MAN
$ is symmetric);

\smallskip \textbf{(3)} an ${(SU)}_{n}$ (resp.\ a ${(WSU)}_{n}$) ring if for each 
$A\in\mathbb{M}_n(R)$ (resp.\ for each $A\in Um(\mathbb{M}_n(R))$), there
exists $N\in {GL}_n(R)$ such that $AN$ is symmetric (equivalently,
there exists $M\in {GL}_n(R)$ such that $MA$ is symmetric,
equivalently, there exist $M,N\in {GL}_n(R)$ such that $MAN$ is
symmetric);

\smallskip \textbf{(4)} an ${E}_2^{sym}$ (resp.\ ${SE}_2^{sym}$) ring if each symmetric $A\in Um(\mathbb{M}_2(R))$ is
extendable (resp.\ simply extendable).
\end{definition}

\medskip The equivalences of (2) and (3) above are a consequence of the fact that for each symmetric matrix $A\in\mathbb M_n(R)$ and every $M\in {GL}_n(R)$, $MAM^T$ is symmetric.

By taking $(a,b,d)\in Um(R^3)$ and $(c,\alpha,\Delta)=(0,1,0)$ in (1), it
follows that each ${WJ}_{2,1}$ ring is a ${V}_2$ ring. As each ${J}_{2,1}$ ring is a Hermite ring (see \cite{lor}, Prop. 4.11), every $J_{2,1}$ domain is a B\'{e}zout domain and hence a $\Pi_2$ domain. From this and the equivalence $(1)\Leftrightarrow (5)$ in Theorem \ref{TH5} it follows:

\begin{corollary}
\label{C1} Let $R$ be a ${J}_{2,1}$ ring. Then $R$ is an \textsl{EDR} iff it is a $\Pi_2$ ring (thus each ${J}_{2,1}$ domain is an \textsl{EDD}).
\end{corollary}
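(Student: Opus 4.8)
The plan is to derive Corollary \ref{C1} directly from Theorem \ref{TH5}, using the structural facts about $J_{2,1}$ rings that are assembled in the paragraph immediately preceding the statement. The corollary asserts two things: first, that for a $J_{2,1}$ ring $R$, being an \textsl{EDR} is equivalent to being a $\Pi_2$ ring; and second, the parenthetical consequence that every $J_{2,1}$ domain is an \textsl{EDD}. I would handle the equivalence first and then read off the domain case as a special instance.

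\textbf{The equivalence.} For the forward implication, if $R$ is an \textsl{EDR}, then in particular $R$ is an $SE_2$ ring (by Example \ref{EX1}), and $SE_2$ rings are $Z_2$ rings by Theorem \ref{TH3}(1); combined with $\Pi_2$-ness this is more than enough, but the cleaner route is to invoke the equivalence $(1)\Leftrightarrow(5)$ of Theorem \ref{TH5}, which requires $R$ to be a Hermite ring. This is exactly where the hypothesis enters: since $R$ is a $J_{2,1}$ ring, it is a Hermite ring (\cite{lor}, Prop. 4.11), so Theorem \ref{TH5} applies to $R$. The $(1)\Rightarrow(5)$ direction then gives that an \textsl{EDR} is a $\Pi_2$ ring (and a $V_2$ ring). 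For the reverse implication, suppose $R$ is a $\Pi_2$ ring. The key observation, already flagged in the preceding paragraph, is that every $WJ_{2,1}$ ring is a $V_2$ ring (take $(a,b,d)\in Um(R^3)$ and $(c,\alpha,\Delta)=(0,1,0)$ in Definition \ref{def4}(1)); since a $J_{2,1}$ ring is a fortiori a $WJ_{2,1}$ ring, $R$ is a $V_2$ ring. Thus $R$ is simultaneously a $\Pi_2$ ring and a $V_2$ ring, which is precisely statement (5) of Theorem \ref{TH5}. Because $R$ is Hermite, Theorem \ref{TH5} yields statement (1), namely that $R$ is an \textsl{EDR}. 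This closes the equivalence.

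\textbf{The domain case.} For the final parenthetical claim, I would specialize to the case where $R$ is a $J_{2,1}$ domain. Here the point is that the $\Pi_2$ hypothesis becomes automatic: a $J_{2,1}$ domain is a B\'ezout domain, hence a \textsl{GCD} domain with trivial Picard group, and the discussion following Theorem \ref{TH1} records that any ring with trivial $Pic(R)$ is a $\Pi_2$ ring (this is also stated explicitly for $J_{2,1}$ domains in the preamble). So a $J_{2,1}$ domain is unconditionally a $\Pi_2$ ring, and the equivalence just proved then forces it to be an \textsl{EDR}; being also a domain, it is an \textsl{EDD}.

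\textbf{Main obstacle.} The proof is essentially a bookkeeping exercise in chaining together results stated earlier, so there is no computational hard part. The one place that genuinely requires care — and the step I expect to be load-bearing — is verifying that the hypotheses of Theorem \ref{TH5} are legitimately in force, i.e.\ that a $J_{2,1}$ ring really is Hermite (the external input \cite{lor}, Prop. 4.11) and that the $V_2$ property follows from $J_{2,1}$ via the specialization $(c,\alpha,\Delta)=(0,1,0)$. If either of these linkages failed, the reduction to Theorem \ref{TH5}(5) would collapse; everything else is a direct citation.
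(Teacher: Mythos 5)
Your proposal is correct and follows exactly the paper's route: the paper derives Corollary \ref{C1} from the paragraph preceding it (each $J_{2,1}$ ring is Hermite by \cite{lor}, Prop. 4.11, and each $WJ_{2,1}$ ring, hence each $J_{2,1}$ ring, is a $V_2$ ring via the specialization $(c,\alpha,\Delta)=(0,1,0)$) together with the equivalence $(1)\Leftrightarrow(5)$ of Theorem \ref{TH5}, and handles the domain case by noting that a $J_{2,1}$ domain is a B\'ezout domain and hence a $\Pi_2$ domain. Your write-up merely makes these citations explicit.
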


Therefore the constructions for an arbitrary commutative ring performed in \cite{lor}, Ex. 3.5 always produce rings which are either \textsl{EDR}s or are not $\Pi_2$ rings (in particular, the integral domains produced are \textsl{EDD}s).

Each $(SU^{\prime})_n$ ring is an $(SU)_n$ ring. Also, a $(WSU)_2$ ring is an $E_2$ (resp.\ $SE_2$ ring) iff it is an ${E}_2^{{sym}}$ (resp.\ ${SE}_2^{{sym}}$) ring. Moreover, each $(SU)_n$ ring is a Hermite ring (see \cite{lor}, Prop. 3.1). In Subsection \ref{S46} we prove that:

\begin{theorem}
\label{TH6} Let $R$ be a $(WSU)_2$ ring. Then the following properties
hold:

\medskip \textbf{(1)} The ring $R$ is a $WU_2$ ring.

\smallskip \textbf{(2)} Assume that each element of $R$ is the
square of an element of $R$ (e.g., this holds if $R$ is an integrally closed
domain with an algebraically closed field of fractions or is a perfect ring 
of characteristic $2$). Then $R$ is a $U_{2}$ ring. If moreover $R$ is an 
$(SU)_{2}$ ring, then $R$ is an \textsl{EDR}.
\end{theorem}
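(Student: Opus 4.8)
The plan is to prove the two parts in sequence, using the equivalences already established in Theorem \ref{TH5} as the engine for part (2). First I would prove part (1), that a $(WSU)_2$ ring $R$ is a $WU_2$ ring. Recall from Remark \ref{rem0} that it suffices to verify Definition \ref{def2+}(2) for triples $(a,b,c)\in R^3$ with $a+b=1$, and that the target to hit is the subset $\{x^2\mid x\in U(R/Rc)\}$ of $U(R/Rc)$ under the natural product map $U(R/Rac)\times U(R/Rbc)\to U(R/Rc)$. So I would start with a unit $\bar u\in U(R/Rc)$ and seek to realize $\bar u^2$ as the image of a compatible pair. The $(WSU)_2$ hypothesis should enter by symmetrizing a suitable unimodular $2\times 2$ matrix built from $a$, $b$, $c$ and a lift of $u$: since $R$ is $(WSU)_2$, every unimodular matrix $A$ admits $N\in GL_2(R)$ with $AN$ symmetric, and the symmetry of $AN$ is exactly the kind of relation that forces the two off-diagonal entries (living over $Rac$ and $Rbc$ respectively) to agree, producing the square $\bar u^2$ rather than $\bar u$ itself. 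I expect the appearance of the \emph{square} to trace precisely to the fact that symmetry of a $2\times 2$ matrix equates $a_{12}$ with $a_{21}$, so the common value, when pushed to $U(R/Rc)$, is a product of two reductions of the same element.

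For part (2), I would first reduce the $U_2$ claim to part (1). Under the hypothesis that every element of $R$ is a square, the set $\{x^2\mid x\in U(R/Rc)\}$ equals all of $U(R/Rc)$: indeed if every element of $R$ is a square then so is every element of each quotient $R/Rc$, and a square that is a unit is the square of a unit. Hence the $WU_2$ condition from part (1) immediately upgrades to the full $U_2$ condition, giving that $R$ is a $U_2$ ring. I would also discharge the parenthetical examples quickly: in an integrally closed domain with algebraically closed fraction field every element has a square root lying in the domain by integral closure, and in a perfect ring of characteristic $2$ the Frobenius is surjective, so every element is a square.

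Finally, for the last sentence of part (2), suppose in addition that $R$ is an $(SU)_2$ ring. Then $R$ is a Hermite ring (by \cite{lor}, Prop. 3.1, as recalled in the excerpt), so the ten equivalent conditions of Theorem \ref{TH5} are available. We have just shown $R$ is a $U_2$ ring, which is condition (7) of Theorem \ref{TH5}; the equivalence $(7)\Leftrightarrow(1)$ then yields that $R$ is an EDR, completing the proof.

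The main obstacle I anticipate is part (1): correctly choosing the unimodular matrix whose symmetrization encodes the product map, and bookkeeping the reductions modulo $Rac$, $Rbc$, and $Rc$ so that the symmetry relation delivers exactly $\bar u^2$ in $U(R/Rc)$ with genuinely compatible preimages in $U(R/Rac)$ and $U(R/Rbc)$. Once the square emerges naturally from the off-diagonal identification, parts (2) are formal consequences of the square-root hypothesis and the Theorem \ref{TH5} machinery.
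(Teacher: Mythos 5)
Your part (2) and the final deduction are correct and coincide with the paper's: if every element of $R$ (hence of each quotient $R/Rc$) is a square, then every unit of $R/Rc$ is the square of a unit, so the $WU_2$ image already contains all of $U(R/Rc)$ and $R$ is a $U_2$ ring; since an $(SU)_2$ ring is Hermite, the equivalence $(7)\Leftrightarrow(1)$ of Theorem \ref{TH5} then gives the \textsl{EDR} conclusion. The gap is part (1), which you leave as a plan, and the mechanism you propose for the appearance of the square is not the right one. Symmetrizing the companion matrix $A$ with rows $(ac,u)$ and $(0,(1-a)c)$ (after reducing, as in Corollary \ref{C13}, to the case $c=1-tu$) by some $M\in GL_2(R)$ with entries $x,y,z,w$ yields the single relation $zac=xu+y(1-a)c$; this does not "equate two reductions of the same element". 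What it does is force $c$ to divide $x$, say $x=cs$, and (when $c\notin Z(R)$) cancelling $c$ gives $za+y(a-1)=su$. From $\det(M)=csw-yz\in U(R)$ one gets $(cs,yz)\in Um(R^2)$, hence $(s,a)\in Um(R^2)$ and, by the symmetry of the relation in $za$ and $y(a-1)$, also $(s,1-a)\in Um(R^2)$. The relation then shows that $(-y+Rc)(u+Rc)^{-1}$ and $s+Ra$ have the same image in $U(R/(Ra+Rc))$, so the former lifts to a unit of $R/(Ra\cap Rc)$ and hence, because $(Ra\cap Rc)/Rac$ is nilpotent, comes from $U(R/Rac)$; symmetrically $(z+Rc)(u+Rc)^{-1}$ comes from $U(R/R(1-a)c)$. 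The square then materializes from the determinant, since $\det(M)+Rc=(-y+Rc)(z+Rc)$ is (the class of) a unit of $R$:
\begin{equation*}
(u+Rc)^{2}=\bigl[(-y+Rc)(u+Rc)^{-1}\bigr]^{-1}\bigl[(z+Rc)(u+Rc)^{-1}\bigr]^{-1}\bigl(\det(M)+Rc\bigr)
\end{equation*}
lies in the image of $\rho_{a,c}$. So the exponent $2$ traces to the product $-yz\equiv\det(M)$ of the two off-diagonal entries of $M$, each of which is separately tied to one factor of $u$, rather than to the equality of the off-diagonal entries of $MA$.

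A second concrete omission: your plan silently divides by $c$, which fails when $c\in Z(R)$. The paper handles this with an annihilator trick: since $(c,uyz)\in Um(R^2)$, the annihilator of $c$ is contained in $Ruyz$, so from $c[za+y(a-1)-su]=0$ one can replace $s$ by $s_1=s+qyz$ for a suitable $q$ to make the identity $za+y(a-1)=s_1u$ hold exactly, with $(s_1,yz)$ still unimodular, after which the argument above runs unchanged. Without these steps (the divisibility $c\mid x$, the unimodularity deductions from $\det(M)\in U(R)$, the lift through the nilpotent ideal $(Ra\cap Rc)/Rac$, and the zero-divisor case), part (1) is not yet a proof.
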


See Example \ref{EX12} for an interpretation of Theorem \ref{TH6}(1) in terms of elements of orders 1 or $2$ in Picard groups. Directly from it and Theorems \ref{TH5} and \ref{TH6} we get:

\begin{corollary}
\label{C2} Let $R$ be a Hermite ring such that for each $a\in R$, all self-dual projective $R/Ra$-modules of rank $1$ generated by $2$ elements are free. Then the following properties hold:

\medskip \textbf{(1)} The ring $R$ is a $WU_2$ ring iff it is an \textsl{EDR}. 

\smallskip \textbf{(2)} If $R$ is an $(SU)_2$ ring, then $R$ is an \textsl{EDR}. 
\end{corollary}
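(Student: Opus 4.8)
The plan is to reduce everything to the equivalence $(1)\Leftrightarrow(7)$ of Theorem \ref{TH5}, i.e. to ``$R$ is an \textsl{EDR}'' $\Leftrightarrow$ ``$R$ is a $U_2$ ring'', and to use the hypothesis on self-dual modules only to bridge the gap between $WU_2$ and $U_2$. For part (1), one direction is immediate: if $R$ is an \textsl{EDR} then $R$ is a $U_2$ ring by Theorem \ref{TH5}, and a $U_2$ ring is trivially a $WU_2$ ring, since the quantification in Definition \ref{def2+}(1),(2) is identical and surjectivity of the product homomorphism implies that its image contains the squares. For part (2), I would first observe that an $(SU)_2$ ring is a $(WSU)_2$ ring (the defining condition of Definition \ref{def4}(3) for all matrices specializes to unimodular matrices), so Theorem \ref{TH6}(1) makes $R$ a $WU_2$ ring; part (2) then follows from part (1). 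Thus the whole corollary rests on the single implication: for a Hermite ring $R$ satisfying the self-dual hypothesis, $WU_2 \Rightarrow U_2$.

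To prove that implication I would fix $(a,b)\in Um(R^2)$ and $c\in R$ and analyze the product homomorphism $U(R/Rac)\times U(R/Rbc)\to U(R/Rc)$ of Definition \ref{def2+}(1). The pullback square of Definition \ref{def2+}(1) has corner ring $R/(Rac\cap Rbc)$, and since $(a,b)$ is unimodular one checks (writing $1=ra+sb$ and using $x=rax+sbx$ for $x\in Rac\cap Rbc$) that $Rac\cap Rbc=Rabc$; thus the corner ring is $S:=R/Rabc$, a quotient of the form $R/Ra'$ to which the hypothesis applies. By the Mayer--Vietoris interpretation recorded in Example \ref{EX12}, the cokernel of the product homomorphism embeds into $Pic(S)$ as the set of isomorphism classes of rank $1$ projective $S$-modules generated by $2$ elements (those trivialized on both $R/Rac$ and $R/Rbc$). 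The $WU_2$ condition says precisely that every square of $U(R/Rc)$ lies in the image, i.e. this cokernel has exponent dividing $2$; hence each corresponding projective $S$-module $P$ satisfies $P^{\otimes 2}\cong S$, so $P\cong P^{\ast}$ is self-dual. The self-dual freeness hypothesis then forces every such $P$ to be free, so the cokernel is trivial and the product homomorphism is surjective. As $(a,b)$ and $c$ were arbitrary, $R$ is a $U_2$ ring, and Theorem \ref{TH5} gives that $R$ is an \textsl{EDR}.

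I expect the main obstacle to be the precise bookkeeping in the middle step: verifying that the connecting map of the Mayer--Vietoris sequence identifies the cokernel of the product homomorphism exactly with rank $1$ projective $S$-modules generated by $2$ elements, and that the exponent-$2$ condition coming from $WU_2$ translates into self-duality $P\cong P^{\ast}$ of these modules. This is the content encapsulated by Example \ref{EX12}, so once that interpretation is in hand the argument is a direct dictionary translation; the delicate point is only to ensure the correct factor ring $R/Rabc$ arises (which is why the computation $Rac\cap Rbc=Rabc$ under unimodularity of $(a,b)$ is needed), so that the hypothesis can be applied verbatim.
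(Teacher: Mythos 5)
Your proposal is correct, but its key step is organized differently from the paper's. The paper proves ``$WU_2\Rightarrow$ \textsl{EDR}'' by going through extendability: Example \ref{EX12} shows that for a Hermite $WU_2$ ring and any $A\in Um(\mathbb{M}_2(R))$, the kernel $K_{\bar A}$ of the reduction $\bar A$ of $A$ modulo $R\det(A)$ satisfies $K_{\bar A}^{\otimes 2}\cong R/R\det(A)$, hence is a self-dual rank $1$ projective generated by $2$ elements, hence free by hypothesis; Theorem \ref{TH1} and Lemma \ref{L1}(1) then make $A$ extendable, so $R$ is an $E_2$ ring and the equivalence $(1)\Leftrightarrow(4)$ of Theorem \ref{TH5} concludes. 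You instead prove ``$WU_2\Rightarrow U_2$'' directly via the Units--Pic exact sequence of the cartesian square with corners $R/Rabc$, $R/Rac$, $R/Rbc$, $R/Rc$ (your computations $Rac\cap Rbc=Rabc$ and $Rac+Rbc=Rc$ for $(a,b)\in Um(R^2)$ are exactly what make this a Milnor square with surjective legs), and then invoke $(1)\Leftrightarrow(7)$ of Theorem \ref{TH5}. The two arguments are close cousins --- Example \ref{EX12} is in effect the explicit patching computation underlying your connecting map, carried out on $K_{\bar A}$ rather than on the cokernel of the units map --- but your route is more structural and lands on the $U_2$ characterization instead of the $E_2$ one. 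One caveat: Example \ref{EX12} does not actually record the Mayer--Vietoris interpretation you attribute to it, so you cannot cite it for that step; you need to supply the standard Units--Pic sequence for such a square, the fact that the connecting map $\partial:U(R/Rc)\rightarrow Pic(R/Rabc)$ is a group homomorphism whose kernel is the image of the product homomorphism, and the fact that the glued module $P_u$ satisfies $P_u\oplus P_{u^{-1}}\cong (R/Rabc)^2$ (whence $P_u$ is generated by $2$ elements and $[P_u^{\ast}]=-[P_u]$). These are standard facts, and once they are in place your dictionary ($WU_2\Rightarrow 2\partial(u)=0\Rightarrow P_u$ self-dual $\Rightarrow P_u$ free $\Rightarrow\partial(u)=0$) is exactly right; your handling of the easy direction of (1) and of part (2) via Theorem \ref{TH6}(1) matches the paper's.
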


To make connection with Pell-type equations and to provide more examples of $(WSU)_2$ rings which are \textsl{EDR}s, we first prove in Subsection \ref{S47} the following non-full Pell-type criterion.

\begin{criterion}
\label{CR1} Let $A=\left[ 
\begin{array}{cc}
a & b \\ 
b & c\end{array}\right]\in Um(\mathbb{M}_2(R))$ be symmetric and with zero determinant. Then the
following properties hold:

\medskip \textbf{(1)} The matrix $A$ is simply extendable if there exists $(e,f)\in R^{2}$ such that $ae^{2}-cf^{2}\in U(R)$, and the converse holds if 
$R$ has characteristic $2$.

\smallskip \textbf{(2)} Assume that $R$ is a reduced Hermite ring of
characteristic $2$. If $b$ is not a zero divisor, then $A$ is simply
extendable.
\end{criterion}

By combining Theorem \ref{TH5} with Criterion \ref{CR1}, we obtain the
following Pell-type criterion proved in Subsection \ref{S48}.

\begin{criterion}
\label{CR2} Let $R$ be an $(SU)_2$ ring. Then $R$ is an \textsl{EDR} if for
all $(a,b,c)\in Um(R^3)$ there exists $(e,f)\in R^2$ such that $(ae^2-cf^2,ac-b^2)\in Um(R^2)$, and the converse holds if $R$ has
characteristic $2$. 
\end{criterion}

\begin{example}
\normalfont\label{EX3} Assume that $R$ has characteristic $2$. We recall
that its perfection $R_{{perf}}$ is the inductive limit of the
inductive system indexed by $n\in\mathbb{N}$ whose all transition
homomorphisms are the Frobenius endomorphism of $R$. If $R$ is a $WU_{2}$
ring, then $R_{{perf}}$ is a $U_{2}$ ring. From this and Theorem \ref{TH6}(2) it follows that the perfections of $(WSU)_{2}$ rings of characteristic $2$ are $U_{2}$ rings.
\end{example}

For almost stable range 1 we have the following applications:

\begin{corollary}
\label{C3} Assume that $asr(R)=1$. Then the following properties hold:

\medskip \textbf{(1)} The ring $R$ is an $SE_2^{\triangle}$ ring.

\smallskip \textbf{(2) (McGovern)} If $R$ is a Hermite ring, then $R$ is an 
\textsl{EDR}.
\end{corollary}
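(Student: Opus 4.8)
The plan is to derive (2) from (1) together with Theorem \ref{TH5}, so the substance of the corollary lies in (1). For (2), once (1) tells us that $R$ is an $SE_2^{\triangle}$ ring, the equivalence of statements (1) and (2) in Theorem \ref{TH5} (applicable precisely because $R$ is assumed Hermite) shows that $R$ is an \textsl{EDR}; this reproves McGovern's result within the present framework. To prove (1) I would invoke the reformulation of Kaplansky's \cite{kap}, Thm. 5.2 recorded in Section \ref{S1}: it suffices to show that for every $(a,b,c)\in Um(R^3)$ there exists $(e,f)\in R^2$ with $(ae,be+cf)\in Um(R^2)$. The natural way to organize this is a dichotomy dictated by the very definition of almost stable range $1$, namely according to whether or not $Ra\subseteq J(R)$.

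Suppose first that $Ra\not\subseteq J(R)$. Then $asr(R)=1$ gives $sr(R/Ra)=1$. Reducing modulo $Ra$ a relation witnessing $(a,b,c)\in Um(R^3)$ kills the first entry and shows $(\bar b,\bar c)\in Um((R/Ra)^2)$, so stable range $1$ supplies $\bar f\in R/Ra$ with $\bar b+\bar c\bar f\in U(R/Ra)$. Lifting $\bar f$ to $f\in R$, the element $b+cf$ is a unit modulo $Ra$, i.e. $(b+cf)u-1\in Ra$ for some $u\in R$; writing this as $(b+cf)u-ag=1$ with $g\in R$ exhibits $(a,b+cf)\in Um(R^2)$. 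Taking $e=1$ then yields $(ae,be+cf)=(a,b+cf)\in Um(R^2)$, as required.

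Suppose instead that $Ra\subseteq J(R)$, that is, $a\in J(R)$. From a relation $as+bt+cu=1$ and $as\in J(R)$ we get $bt+cu=1-as\in U(R)$, whence $(b,c)\in Um(R^2)$. Choosing $(e,f)\in R^2$ with $be+cf=1$ then gives $(ae,be+cf)=(ae,1)\in Um(R^2)$. This settles both cases and hence, via the Kaplansky reformulation, proves (1); combined with the first paragraph it also gives (2).

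I expect the only genuinely delicate point to be the correct handling of the word ``almost'' in $asr(R)=1$: stable range $1$ is available only after passing to a quotient by an ideal \emph{not} contained in $J(R)$, so the case $Ra\subseteq J(R)$ must be isolated, and there the gain comes not from any stable-range reduction but from the elementary observation that a unimodular triple one of whose entries lies in $J(R)$ already has its other two entries unimodular. The other small-but-essential step is the passage from ``$b+cf$ is a unit modulo $Ra$'' to the honest unimodular pair $(a,b+cf)$ over $R$; everything else is bookkeeping driven by the equation characterizing $SE_2^{\triangle}$ rings.
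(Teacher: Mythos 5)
Your proposal is correct and follows essentially the same route as the paper: part (2) is deduced from part (1) via the equivalence $(1)\Leftrightarrow(2)$ of Theorem \ref{TH5}, and part (1) is proved by the same dichotomy on whether $a\in J(R)$, using Theorem \ref{TH8}'s criterion $\circled{\textup{3}}$ for triangular matrices (the Kaplansky reformulation). The only cosmetic difference is that you unwind the paper's intermediate steps (Example \ref{EX9}(3) and Corollary \ref{C9}(3), the latter resting on Proposition \ref{PR2}) into a direct application of $sr(R/Ra)=1$, which if anything makes the argument more self-contained.
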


Corollary \ref{C3}(1) is proved in Subsection \ref{S34}. Corollary \ref{C3}(2) follows directly from Corollary \ref{C3}(1) and the equivalence $(1)\Leftrightarrow (2)$ of Theorem \ref{TH5}. If $asr(R)=1$, then $R$ is an $U_2$ ring (see Corollary \ref{C4}(4)), so Corollary \ref{C3}(2)
also follows from this and the equivalence $(1)\Leftrightarrow (7)$ of Theorem \ref{TH5}. A third proof of Corollary \ref{C3}(2) is presented in  Remark \ref{rem5}. Corollary \ref{C3}(2) was first obtained in \cite{mcg1},
Thm. 3.7.

Kaplansky proved that each B\'{e}zout domain of Krull dimension at most $1$
is an \textsl{EDD} (e.g., see \cite{sho}, Cor.). In \cite{gat}, Thm. 1 it is
claimed that the same holds for Hermite rings and \cite{gat}, Thm. 2 checks that 
if \cite{gat}, Thm. 1 holds, then each B\'{e}zout ring which is semihereditary 
and of Krull dimension $2$ is an \textsl{EDR}. As the proof of \cite{gat}, Thm. 1 
is incomplete, Examples \ref{EX17} and \ref{EX18} prove weaker results under 
suitable $\Pi_2$ ring assumptions.
Thus we still do not know if each B\'{e}zout domain of Krull dimension $2$
is an \textsl{EDD}. However, we have the following new criterion which is
proved in Subsection \ref{S49} and which is a direct application of Theorem \ref{TH5}:

\begin{criterion}
\label{CR3} For a B\'{e}zout domain $R$ the following statements are equivalent:

\medskip \textbf{(1)} The ring $R$ is an \textsl{EDD}.

\smallskip \textbf{(2)} For all triples $(a,b,s)\in R^3$, there exists a
pair $(q,r)\in R^2$ such that by defining $y:=r+s-asq-bqr$ and $t:=1+q-aq-br$
we have $t\in Ry+Rat$ (equivalently, there exists a product decomposition $y=y_1y_2$ such that $y_1$ divides $t$ and $(a,y_2)\in Um(R^2)$).

\smallskip \textbf{(3)} For each triple $(a,b,s)\in R^3$ there exists $(e,f)\in Um(R^2)$ such that $(a,e)$, $(be+af,1-bs-a)\in Um(R^2)$.
\end{criterion}

E.g., statement (3) holds if $(a,s)\in Um(R^2)$ as we can take $(e,f):=(s,1)$, if $(1-a,b)\in Um(R^2)$ as we can take $(e,f):=(1,0)$, or if there exists $q\in R$ such that $(b+aq,1-bs-a)\in Um(R^2)$ as we can take $(e,f):=(1-a,q+b)$. So, each B\'{e}zout domain $R$ with the property that for all $(a,b,s)\in R^3$ with $(a,s),(b,1-a)\notin Um(R^2)$ there exists $q\in R$ such that $(b+aq,1-bs-a)\in Um(R^2)$, is an \textsl{EDD}.

\subsection{More on literature and paper's structure}

\label{S15}

The implicit and explicit questions raised in the literature, such as,
\textquotedblleft Is a B\'{e}zout domain of finite Krull dimension [at least 
$2$] an \textsl{EDD}?" (see \cite{FS}, Ch. III, Probl. 5, p. 122), and,
`What classes of B\'{e}zout domains which are not \textsl{EDD}s exist?',
remain unanswered. However, Theorem \ref{TH5} reobtains or can be easily used to
reobtain multiple other criteria existing in the literature of when a Hermite 
ring is an \textsl{EDR}. For instance, the equivalence $(1)\Leftrightarrow (10)$ of Theorem \ref{TH5} can be viewed as a
strengthening of \cite{WW}, Thm. 2.1 or \cite{ZB}, Thms. 3 and 5, where one
requires many more finitely generated projective modules to be free, as well
as it can be used to get new proofs of these references.

The equivalence $(1)\Leftrightarrow (8)$ of Theorem \ref{TH5} was first proved for B\'{e}zout domains in \cite{shc2}, Thm. 5 and a variant of it was first proved for Hermite rings in \cite{rot}, Prop. 2.9. Equation (\ref{EQ2}) generalizes and refines the equation one would get based on \cite{rot}, Rm. 2.8. The last two references were reinterpreted in the form of neat stable range
1 in \cite{zab2}, Thms. 31 and 33 (see \cite{mcg2} for clean and neat rings
and see \cite{zab2}, Def. 21 for rings of neat stable range 1).

The extra terminology required in the proofs of the above results is presented in Section \ref{S2}. Section \ref{S3} presents basic
criteria of when a unimodular $2\times 2$ matrix is (simply) extendable.
The subsections of Section \ref{S4} prove the above results. Section \ref{S5} contains concrete extensions and computations over integral
domains.

The universal rings related to $E_2$, ${SE}_2$, ${Z}_2$ and ${WZ}_{2}$
rings are presented in Section \ref{S6}. Section \ref{S7} presents stability properties and shows that $R$ is an ${SE}_{2}$
(or an ${E}_2$ or ${U}_2$) ring iff $R[[x]]$ is so. Section \ref{S8} lists open problems/questions.

\section{Basic terminology and properties}

\label{S2}

Let $N(R)$ denote the nilradical of $R$ and let $Z(R)$ denote the set of
zero divisors of $R$. Let 
${Spec}R$ be the spectrum of $R$ and let ${Max}R$ and ${Min}R$ be the set of maximal and minimal
(respectively) prime ideals of $R$ endowed with the induced Zariski topology.
For a subset $J$ of $R$, let $V(J)$ be the closed subset of ${Spec}R$
formed by the prime ideals that contain $J$. Let ${char}(R)\in 
\mathbb{N}\sqcup \{0\}$ be the characteristic of $R$. For a projective $R$-module $P$ of rank $1$, let $[P]\in {Pic}(R)$ be its class in the
Picard group. Let ${Pic}_{2}(R)$ be the subgroup of ${Pic}(R)
$ generated by classes $[P]$ with $P$ generated by $2$ elements. Let ${Pic}_{2}(R)[2]$ be the subgroup of ${Pic}_{2}(R)$ generated
by classes $[P]$ with $P\oplus P\cong R^2$ (so $2[P]=[R]$ and $P$ is generated by $2$ elements).

For $(\alpha _{1},\ldots ,\alpha _{n})\in R^{n}$, let 
${Diag}(\alpha
_{1},\ldots ,\alpha _{n})\in \mathbb{M}_{n}(R)$ be the diagonal matrix 
whose $(i,i)$ entry is $\alpha _{i}$ for all $i\in \{1,\ldots ,n\}$.

Subsections \ref{S21} to \ref{S25} review terminology and basic
properties, and Subsection \ref{S26} introduces `universal test
matrices' over $\mathbb{Z}[x,y,z]$, a regular \textsl{UFD} of
dimension $4$, that will be used to study ${SE}^{\triangle}_{2}$ and 
${SE}_2$ rings (see Corollaries \ref{C10} and \ref{C13}).

\subsection{On unimodular vectors}

\label{S21} In what follows we will use without extra comments the following
four basic properties.

\textbf{(1)} For $(a,b,c)\in R^3$ we have $(a,bc)\in Um(R^2)$ iff 
$(a,b),(a,c)\in Um(R^2)$.

\textbf{(2)} If $(a,b)\in Um(R^2)$ and $c\in R$, then $a$ divides $bc$ iff $a$ divides $c$.

\textbf{(3)} If $I\subseteq J(R)$ is an ideal of $R$ and $(a_1,\ldots,a_n)\in R^n$, then $(a_1,\ldots,a_n)\in Um(R^n)$ iff $(a_1+I,\ldots,a_n+I)\in Um\bigl((R/I)^n\bigr)$.

\textbf{(4)} If an element of $Um(R^m)$ has entries that are sums of 
homogeneous polynomials in $a_1,\ldots,a_n\in R$ with coefficients in $R$, 
then $(a_1,\ldots,a_n)\in Um(R^n)$.

\subsection{On pre-Schreier rings}\label{S22}

A ring $R$ is called \textsl{pre-Schreier}, if every nonzero element $a\in R$
is \textsl{primal}, i.e., if $a$ divides a product $bc$ of elements of $R$,
there exists $(d,e)\in R^2$ such that $a=de$, $d$ divides $b$ and $e$
divides $c$. Pre-Schreier domains were introduced by Zafrullah in \cite{zaf}.
A pre-Schreier integrally closed domain was called a \textsl{Schreier}
domain by Cohn in \cite{coh}. Every \textsl{GCD} domain (in particular,
every B\'{e}zout domain) is Schreier (see \cite{coh}, Thm.
2.4). A product of pre-Schreier domains is a pre-Schreier ring but we do not
know other examples of pre-Schreier rings.

In an integral domain, an irreducible element is primal iff it is
a prime. Thus an integral domain that has irreducible elements which are not
prime, such as each noetherian domain which is not a UFD, is not
pre-Schreier.

The \textsl{inner rank} of an $m\times n$ matrix over a ring is defined as
the least positive integer $r$ such that it can be expressed as the product
of an $m\times r$ matrix and an $r\times n$ matrix; over fields, 
this notion coincides with the usual notion of rank. A square matrix
is called \textsl{full} if its inner rank equals its order, and \textsl{non-full} otherwise. A $2\times 2$ matrix is non-full iff its
inner rank is $1$, (i.e., it has a column-row decomposition).

\subsection{On stable range less than or equal to 2}\label{S23}

We consider the sets 
\begin{equation*}
T_{3}(R):=Um (R^{2})\times (R\setminus \{0\})\;\text{\textup{and}}\;J_{3}(R):=Um (R^{2})\times (R\setminus J(R)).
\end{equation*}

\begin{proposition}
\label{PR1} We have $fsr(R)=1.5$ iff for all $(a,b,c)\in T_{3}(R)$ there exists $r\in R$ such that $(a+br,c)\in Um(R^{2})$.
\end{proposition}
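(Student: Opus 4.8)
The plan is to prove the two implications separately, observing that the proposition compares the reduction property defining $fsr(R)=1.5$ over two different quantifier ranges. The definition quantifies over all $(a,b,c)\in Um(R^3)$ with $c\neq 0$, whereas the stated criterion quantifies over $T_3(R)$, i.e. over triples whose first two entries already form a unimodular pair. Since $(a,b)\in Um(R^2)$ forces $(a,b,c)\in Um(R^3)$, we have an inclusion $T_3(R)\subseteq\{(a,b,c)\in Um(R^3)\,|\,c\neq 0\}$; hence the forward implication ``$fsr(R)=1.5\Rightarrow$ criterion'' is immediate, as the defining property holds on the larger range and therefore in particular on $T_3(R)$.

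For the reverse (and substantive) implication, I would assume the criterion and take an arbitrary $(a,b,c)\in Um(R^3)$ with $c\neq 0$, seeking $r\in R$ with $(a+br,c)\in Um(R^2)$. First choose $x,y,z\in R$ with $ax+by+cz=1$ and set $b':=1-ax=by+cz$. Then $(a,b')\in Um(R^2)$, since $a\cdot x+b'\cdot 1=1$; the whole point of this choice is that $b'$ is a multiple of $b$ plus a multiple of $c$, so it reduces to a multiple of $b$ modulo $Rc$.

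Now apply the criterion to the unimodular pair $(a,b')$ and the nonzero element $c$: it yields $r'\in R$ with $(a+b'r',c)\in Um(R^2)$. Since $a+b'r'=a+b(yr')+c(zr')\equiv a+b(yr')\pmod{Rc}$, and since membership of a pair $(u,c)$ in $Um(R^2)$ depends only on the class of $u$ modulo $Rc$, we conclude $(a+b(yr'),c)\in Um(R^2)$. Setting $r:=yr'$ then gives the required reduction, establishing $fsr(R)=1.5$.

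The only real obstacle is the reverse implication, and within it the single idea that unlocks it: replacing $b$ by $b'=by+cz$, which simultaneously pairs unimodularly with $a$ in $R$ (so the criterion applies) and agrees with a multiple of $b$ after reduction modulo $Rc$ (so the resulting reduction transfers back to one using $b$ itself, via $r=yr'$). Once this substitution is identified, the remaining steps are the routine verifications that $(a,b')\in Um(R^2)$ and that adding multiples of $c$ to the first entry of a pair leaves its membership in $Um(R^2)$ unchanged.
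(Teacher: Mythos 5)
Your proof is correct and follows essentially the same route as the paper: the forward direction is the observation that $T_3(R)$ sits inside the set of unimodular triples with nonzero last entry, and the reverse direction replaces $b$ by $b'=by+cz=1-ax$ so that $(a,b')\in Um(R^2)$, applies the criterion, and discards the multiple of $c$ to recover a reduction using $b$ itself. The paper's proof is a condensed version of exactly this argument.
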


\begin{proof}
The `only if' part is clear. To check the `if' part, let $(a,b,c)\in
Um(R^{3})$ with $c\neq 0$ and let $x,y,z\in R$ be such that $ax+by+cz=1$.
Thus $(a,by+cz,c)\in T_{3}(R)$ and hence there exists $r\in R$ such that $(a+byr+czr,c)\in Um(R^{2})$. This implies $(a+byr,c)\in Um(R^{2})$, thus $fsr(R)=1.5$.
\end{proof}

It follows from Proposition \ref{PR1} that if $sr(R)=1$, then $fsr(R)=1.5$.

\begin{proposition}
\label{PR2} We have $asr(R)=1$ iff for all $(a,b,c)\in J_{3}(R)$
there exists $r\in R$ such that $(a+br,c)\in Um(R^{2})$.
\end{proposition}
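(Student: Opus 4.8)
The plan is to mimic exactly the structure of the proof of Proposition \ref{PR1}, replacing the role of the nonzero condition $c\neq 0$ by the condition that $c\notin J(R)$ and the role of $fsr(R)=1.5$ by $asr(R)=1$. The key point is to translate the condition $asr(R)=1$, which is phrased in terms of $sr(R/I)=1$ for all ideals $I\not\subseteq J(R)$, into the elementwise reduction statement about triples in $J_3(R)$.

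For the `if' part, I would start with an arbitrary ideal $I\not\subseteq J(R)$ and aim to show $sr(R/I)=1$, i.e., that every $(\bar a,\bar b)\in Um((R/I)^2)$ is reducible. First I would lift to $(a,b)\in R^2$ with $(a,b)$ mapping to a unimodular pair modulo $I$; since $I\not\subseteq J(R)$, I can pick some $c\in I\setminus J(R)$. The idea is to feed a suitable triple built from $a,b,c$ into the hypothesis. Because $(\bar a,\bar b)\in Um((R/I)^2)$, there exist $x,y\in R$ and an element of $I$ witnessing $ax+by\equiv 1$, so $(a,b,c)$ can be arranged to lie in $Um(R^3)$ (adjusting $b$ or incorporating $c$) and hence in $J_3(R)$ after ensuring $(a,b)\in Um(R^2)$. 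Applying the hypothesis yields $r\in R$ with $(a+br,c)\in Um(R^2)$; reducing modulo $I$ and using $c\in I$ gives $(\bar a+\bar b\bar r,0)\in Um((R/I)^2)$, so $\bar a+\bar b\bar r\in U(R/I)$, which is precisely the reducibility of $(\bar a,\bar b)$. This shows $sr(R/I)=1$ for all such $I$, i.e., $asr(R)=1$.

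For the `only if' part, I would assume $asr(R)=1$ and take $(a,b,c)\in J_3(R)$, so $(a,b)\in Um(R^2)$ and $c\notin J(R)$. I would set $I:=Rc$, which is not contained in $J(R)$ since $c\notin J(R)$, so $sr(R/Rc)=1$. The reduction $(\bar a,\bar b)\in Um((R/Rc)^2)$ is unimodular, and $sr(R/Rc)=1$ gives $\bar r\in R/Rc$ with $\bar a+\bar b\bar r\in U(R/Rc)$. Lifting $\bar r$ to $r\in R$, the element $a+br$ is a unit modulo $Rc$, which means $(a+br,c)\in Um(R^2)$, as desired.

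The main obstacle I anticipate is purely bookkeeping: making sure the triple fed into the hypothesis in the `if' part genuinely lies in $J_3(R)$, i.e., that $(a,b)\in Um(R^2)$ as elements of $R$ (not merely modulo $I$) and that $c\notin J(R)$. The unimodularity modulo $I$ only gives $ax+by\in 1+I$, so I may need to replace $b$ by $b+(\text{element of }I)$ or absorb a correction term involving $c\in I$ to upgrade to honest unimodularity in $R$, exactly as the proof of Proposition \ref{PR1} absorbs the $cz$ term; the basic properties (1)--(3) of Subsection \ref{S21}, particularly property (3) on reductions modulo ideals contained in $J(R)$, are the tools that make this absorption clean. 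Everything else is a direct transcription of the stable-range-$1$ reducibility condition through the quotient $R/Rc$.
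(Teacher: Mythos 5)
Your proposal is correct and follows essentially the same route as the paper: the `only if' direction is exactly the reduction to $sr(R/Rc)=1$ (which the paper simply cites from McGovern), and the `if' direction is the paper's argument, where the ``absorption'' you anticipate is carried out by writing $ad+be+c=1$ with $c\in I$ and feeding the triple $(a,be+g,f)\in J_{3}(R)$ into the hypothesis, taking $(f,g)=(c,c)$ if $c\notin J(R)$ and $(f,g)\in\bigl(I\setminus J(R)\bigr)\times\{0\}$ if $c\in J(R)$. The bookkeeping you flag is therefore genuine but routine, and resolves exactly as in Proposition \ref{PR1}.
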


\begin{proof}
See \cite{mcg1}, Thm. 3.6 for the `only if' part. For the `if' part, for $I$ an ideal of $R$ not contained in $J(R)$ we check that $sr(R/I)=1$. If $(a,b)\in R^2$ is such that $(a+I,b+I)\in Um\bigl((R/I)^{2}\bigr)$, let
$(d,e)\in R^2$ and $c\in I$ be such that $ad+be+c=1$. If $c\notin J(R)$,
then for $(f,g):=(c,c)\in \bigl(I\setminus J(R)\bigr)\times I$ we have $(a,be+g,f)\in J_{3}(R)$. If $c\in J(R)$, then $ad+be=1-c\in U(R)$, so for $(f,g)\in\bigl(I\setminus J(R)\bigr)\times \{0\}$ 
we have $(a,be+g,f)\in J_{3}(R)$. If $r\in R$ is such that $(a+(be+g)r,f)\in Um(R^{2})$, then $a+I+(b+I)(er+I)\in U(R/I)$; so $sr(R/I)=1$.
\end{proof}

\begin{corollary}
\label{C4} \textbf{(1)} If $fsr(R)=1.5$, then $asr(R)=1$.

\smallskip \textbf{(2)} If $asr(R)=1$, then $sr(R)\le 2$.
\end{corollary}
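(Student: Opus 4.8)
The plan is to prove the two parts separately, leaning on the reformulations of $fsr(R)=1.5$ and $asr(R)=1$ supplied by Propositions \ref{PR1} and \ref{PR2}.

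For part (1), I would first record the set-theoretic inclusion $J_{3}(R)\subseteq T_{3}(R)$. This is immediate: since $J(R)$ is an ideal it contains $0$, so $R\setminus J(R)\subseteq R\setminus\{0\}$, and hence $Um(R^{2})\times(R\setminus J(R))\subseteq Um(R^{2})\times(R\setminus\{0\})$. Now assume $fsr(R)=1.5$. By Proposition \ref{PR1}, for every $(a,b,c)\in T_{3}(R)$ there exists $r\in R$ with $(a+br,c)\in Um(R^{2})$; restricting this property to the subset $J_{3}(R)$ gives exactly the condition that, by Proposition \ref{PR2}, characterizes $asr(R)=1$. Thus part (1) is essentially a one-line consequence of the two propositions once the inclusion is noted.

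For part (2), assume $asr(R)=1$ and let $(a_{1},a_{2},a_{3})\in Um(R^{3})$; I must produce $(b_{1},b_{2})\in R^{2}$ with $(a_{1}+b_{1}a_{3},a_{2}+b_{2}a_{3})\in Um(R^{2})$. The key device is to modify only one of the two entries at a time and to pass to a quotient ring in which $asr(R)=1$ supplies stable range $1$. If $a_{2}\notin J(R)$, then $Ra_{2}\not\subseteq J(R)$, so $sr(R/Ra_{2})=1$ by Definition \ref{def3}(3); reducing the triple modulo $Ra_{2}$ gives $(\bar a_{1},\bar a_{3})\in Um\bigl((R/Ra_{2})^{2}\bigr)$, and stable range $1$ yields $\bar b_{1}$ with $\bar a_{1}+\bar b_{1}\bar a_{3}\in U(R/Ra_{2})$; lifting $\bar b_{1}$ to $b_{1}\in R$ gives $(a_{1}+b_{1}a_{3},a_{2})\in Um(R^{2})$, so $(b_{1},0)$ works. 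The case $a_{1}\notin J(R)$ is symmetric, working in $R/Ra_{1}$ and taking $(0,b_{2})$. Finally, if $a_{1},a_{2}\in J(R)$, then from $a_{1}x+a_{2}y+a_{3}z=1$ and $a_{1}x+a_{2}y\in J(R)$ we get $a_{3}z\in U(R)$, whence $a_{3}\in U(R)$ and the triple reduces trivially (e.g. pick $b_{1}$ with $a_{1}+b_{1}a_{3}=1$). These three cases are exhaustive, so every unimodular triple is reducible and $sr(R)\le 2$.

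The point that needs care is the mismatch between the two notions of reduction in play: the stable-range-$1$ input coming from $asr(R)=1$ naturally adds a multiple of one entry to another in order to reach a unit in a quotient, whereas the conclusion $sr(R)\le 2$ requires adding multiples of the single distinguished entry $a_{3}$ to the other two. Feeding Proposition \ref{PR2} directly to $R$ would force an unavailable unimodularity hypothesis such as $(a_{1},a_{3})\in Um(R^{2})$. The trick that circumvents this is to alter only one entry at a time, working modulo $Ra_{2}$ (or $Ra_{1}$) and setting the other coefficient to $0$, so that the remaining unchanged entry plays the role of the modulus; the degenerate case $a_{1},a_{2}\in J(R)$, where unimodularity of the triple forces $a_{3}$ to be a unit, must then be handled on its own. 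I expect this case split and the ``change one entry, fix the other at $0$'' manoeuvre to be the only genuinely delicate step; everything else is a direct unwinding of Definition \ref{def3}(3) and the definition of reducibility.
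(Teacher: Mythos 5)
Your proof is correct and follows essentially the same route as the paper: part (1) is the same one-line deduction from Propositions \ref{PR1} and \ref{PR2} via the inclusion $J_{3}(R)\subseteq T_{3}(R)$, and part (2) uses the same device of passing to $R/Ra_{i}$ for an entry $a_{i}\notin J(R)$ and invoking stable range $1$ of that quotient. The only (harmless) divergence is in the degenerate case: where you split further into $a_{1}\notin J(R)$ and $a_{1},a_{2}\in J(R)$ (the latter forcing $a_{3}\in U(R)$), the paper handles $a_{2}\in J(R)$ in one stroke by observing that $(a_{1},a_{2}+a_{3})\in Um(R^{2})$, i.e.\ by taking $(r_{1},r_{2})=(0,1)$.
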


\begin{proof}
As $J_{3}(R)\subseteq T_{3}(R)$, part (1) follows from Propositions \ref{PR1} and \ref{PR2}.

To check part (2), it suffices to show that each $(a,b,c)\in Um(R^3)$ is
reducible. If $b\notin J(R)$, then $R/Rb$ has stable range $1$ and $(a+Rb,c+Rb)\in Um\bigl((R/Rb)^2\bigr)$; hence there exists $r\in R$ such that $a+rc+Rb\in U(R/Rb)$ and thus for $(r_1,r_2):=(r,0)$ we have $(a+r_1c,b+r_2c)\in Um(R^2)$. If $b\in J(R)$, then $(a,b,c)\in Um(R^3)$
implies that $(a,c),(a,b+c)\in Um(R^2)$ and thus for $(r_1,r_2):=(0,1)$ we
have $(a+r_1c,b+r_2c)\in Um(R^2)$. We conclude that $(a,b,c)$ is reducible.
\end{proof}

Corollary \ref{C4}(2) was first obtained in \cite{mcg1}, Thm. 3.6.

We have the following \textsl{units interpretations}.

\begin{corollary}
\label{C5} \textbf{(1)} We have $sr(R)=1$ iff for each $b\in R$,
the homomorphism $U(R)\rightarrow U(R/Rb)$ is surjective.

\smallskip \textbf{(2)} We have $fsr(R)=1$ iff for
all $(b,c)\in R^2$ with $c\neq 0$, the homomorphism $U(R/Rc)\rightarrow
U(R/(Rb+Rc))$ is surjective.

\smallskip \textbf{(3)} We have $asr(R)=1$ iff for all $(b,c)\in R^2$
with $c\notin J(R)$, the homomorphism $U(R/Rc)\rightarrow U(R/(Rb+Rc))$ is
surjective.

\smallskip \textbf{(4)} If $asr(R)=1$, then $R$ is a $U_2 $ ring.
\end{corollary}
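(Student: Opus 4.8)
The plan is to read all four parts as a single dictionary translating a stable-range reduction into surjectivity of a reduction homomorphism on unit groups, the common mechanism being: a unit of a quotient lifts to a unit of a larger quotient exactly when a suitable unimodular tuple admits the relevant reduction. I would establish (1), (2), (3) by one two-step pattern, and then deduce (4) from (1) together with $asr(R)=1$.

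For (1), in the forward direction I would take $\bar u\in U(R/Rb)$, lift it to $a\in R$ (so that $aa'+bt=1$ for suitable $a',t$, whence $(a,b)\in Um(R^2)$), use $sr(R)=1$ to produce $r$ with $a+br\in U(R)$, and note $a+br\equiv\bar u\pmod{Rb}$; conversely, given $(a,b)\in Um(R^2)$ the image of $a$ lies in $U(R/Rb)$, so surjectivity yields a unit $v=a+br\in U(R)$ witnessing reducibility. Parts (2) and (3) follow the same outline with target quotient $R/(Rb+Rc)$, the lifting being governed by Propositions \ref{PR1} and \ref{PR2}. For (2), in the forward direction I would lift $\bar u\in U(R/(Rb+Rc))$ to $a_0$, observe $(a_0,b,c)\in Um(R^3)$ with $c\neq 0$, and apply the definition of $fsr(R)=1.5$ directly to get $r$ with $(a_0+br,c)\in Um(R^2)$, so $a_0+br$ is a unit of $R/Rc$ reducing to $\bar u$; the converse verifies the $T_3(R)$-condition of Proposition \ref{PR1}, since for $(a,b)\in Um(R^2)$ the element $a$ is a unit modulo $Rb+Rc$, and surjectivity gives $w=a+\beta b+\gamma c$ that is a unit modulo $Rc$, whence $(a+\beta b,c)\in Um(R^2)$.

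Part (3) is the same with $c\notin J(R)$, but here a mismatch must be handled: lifting a unit of $R/(Rb+Rc)$ yields only $(a_0,b,c)\in Um(R^3)$, whereas Proposition \ref{PR2} is phrased for pairs in $Um(R^2)$. In the forward direction I would bypass this by invoking $asr(R)=1$ through the ideal $Rc$: since $c\notin J(R)$ we have $Rc\not\subseteq J(R)$, so $sr(R/Rc)=1$, and reducing $(a_0,b,c)\in Um(R^3)$ modulo $Rc$ gives $(\bar a_0,\bar b)\in Um\bigl((R/Rc)^2\bigr)$, which reduces to a unit $a_0+sb$ of $R/Rc$ still lifting $\bar u$. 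The converse is word-for-word that of (2), with Proposition \ref{PR2} replacing Proposition \ref{PR1}.

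For (4) I would verify the $U_2$ condition, i.e. surjectivity of the product map $U(R/Rac)\times U(R/Rbc)\to U(R/Rc)$ for $(a,b)\in Um(R^2)$ and $c\in R$, by a trichotomy on whether $ac,bc\in J(R)$. If $ac\notin J(R)$ then $Rac\not\subseteq J(R)$, so $sr(R/Rac)=1$, and applying part (1) to the ring $R/Rac$ with the element $\bar c$ (note $(R/Rac)/\bar c(R/Rac)=R/Rc$) shows $U(R/Rac)\to U(R/Rc)$ is already surjective; taking the second factor to be $1$ gives surjectivity of the product map. The case $bc\notin J(R)$ is symmetric. In the remaining case $ac,bc\in J(R)$, choosing $s,t$ with $sa+tb=1$ gives $c=sac+tbc\in J(R)$, so the kernel $Rc/Rac$ of $R/Rac\to R/Rc$ lies in $J(R/Rac)$ and units lift by the property recalled in Subsection \ref{S21}, again making $U(R/Rac)\to U(R/Rc)$ surjective. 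As these three cases are exhaustive, $R$ is a $U_2$ ring. I expect the only genuine subtleties to be the $Um(R^3)$-versus-$Um(R^2)$ mismatch in (3), resolved by passing to $R/Rc$, and in (4) the identification of the correct trichotomy together with the observation $c\in Rac+Rbc$ that forces $c\in J(R)$ in the degenerate case; the remaining steps are the routine unit-lifting bookkeeping shared by (1)--(3).
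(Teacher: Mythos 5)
Your proof is correct. Parts (1)--(3) follow the paper's route exactly: the forward directions lift a unit and apply the stable-range hypothesis (for (3), via $sr(R/Rc)=1$, which is precisely the paper's argument), and the converses feed the surjectivity hypothesis into Propositions \ref{PR1} and \ref{PR2}. In part (4) your case analysis genuinely differs from the paper's, and to your advantage: the paper splits on $c\in J(R)$ versus $c\notin J(R)$ and, in the latter case, invokes part (3) to assert that \emph{both} maps $U(R/Rac)\rightarrow U(R/Rc)$ and $U(R/Rbc)\rightarrow U(R/Rc)$ are surjective --- which, read literally, requires $ac\notin J(R)$ and $bc\notin J(R)$, neither of which follows from $c\notin J(R)$ alone (e.g.\ $R=\mathbb{Z}$, $(a,b,c)=(1,0,5)$ gives $U(\mathbb{Z})\rightarrow U(\mathbb{Z}/5\mathbb{Z})$ non-surjective). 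Your trichotomy --- at least one of $ac,bc$ lies outside $J(R)$, or else $c=sac+tbc\in J(R)$ and units lift along $R/Rac\rightarrow R/Rc$ because the kernel ideal sits in the Jacobson radical --- needs only \emph{one} factor map to surject, which is all that surjectivity of the product homomorphism requires; it is the same underlying mechanism, executed with the correct case split.
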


\begin{proof}
Parts (1) and (2) follow from definitions.

We first check the `if' part of (3). If $(a,b,c)\in J_{3}(R)$, then $a+Rb+Rc$
is a unit of $R/(Rb+Rc)$ and thus is the image of a unit of $R/Rc$, which is
of the form $a+rb+Rc$ with $r\in R$. Hence $(a+rb,c)\in Um(R^{2})$. From
this and Proposition \ref{PR2} it follows that $asr(R)=1$.

To check the `only if' part of (3), let $a+Rb+Rc\in U(R/(Rb+Rc))$. Thus $(a,b,c)\in Um(R^{3})$. As $Rc\not\subseteq J(R)$ and we are assuming that $asr(R)=1$, it follows that $sr(R/Rc)=1$, hence there exists $r\in R$ such that 
$a+rb+Rc\in U(R/Rc)$ maps to $a+Rb+Rc\in U(R/(Rb+Rc))$. Thus the homomorphism $U(R/Rc)\rightarrow U(R/(Rb+Rc))$ is surjective.

For part (4), note that for $c\in J(R)$ the Diagram (\ref{EQ1}) is a pushout. 
If $c\notin J(R)$ and $(a,b)\in Um(R^2)$, then 
$U(R/Rbc)\rightarrow U(R/(Rbc+Rc))=U(R/Rc)$ and 
$U(R/Rac)\rightarrow U(R/(Rac+Rc))=U(R/Rc)$ are surjective homomorphisms 
(see part (3)),  hence Diagram (\ref{EQ1}) is again a pushout. 
Thus $R$ is a $U_2$ ring.
\end{proof}

\begin{example}
\normalfont
\label{EX4} Let $(a,b,c)\in Um(R^3)$ with $(b,c)\in Um(R^2)$. Writing $a-1=eb+fc$ with $e,f\in R$, for $r:=-e$ we have $(a+rb,c)\in Um(R^2)$.
\end{example}

\begin{example}
\normalfont\label{EX5} Suppose $R$ is a semilocal ring (e.g., a valuation domain). 
As for each $b\in R$, the homomorphism $U(R)\rightarrow U(R/Rb)$ is
surjective, from Corollary \ref{C5}(1), it follows that $sr(R)=1$.
\end{example}

\begin{example}
\normalfont\label{EX6} Let $R$ be a noetherian domain of dimension 1. For $(b,c)\in R^2$ with $c\neq 0$, the rings $R/Rc$ and $R/(Rb+Rc)$ are artinian, hence the homomorphism $U(R/Rc)\rightarrow U(R/(Rb+Rc))$ is surjective. From
this and Corollary \ref{C5}(2), it follows that $fsr(R)=1.5$;
thus $sr(R)\le 2$ (see Corollary \ref{C4}(2)).
\end{example}

An argument similar to the one of Example \ref{EX6} shows that each B\'{e}zout domain which is a filtered union of Dedekind domains has stable range
1.5.

\begin{example}
\normalfont\label{EX7} We have $sr(\mathbb{Z})=2$ and $fsr(\mathbb Z)=1.5$ 
and the ring $\mathbb{Z}[x]/(x^{2})$ has
almost stable range 1 but does not have stable range 1.5.
\end{example}

\subsection{On arbitrary stable range}

\label{S24}

If $n\geq 2$ we do not have a unit interpretation of the stable range $n$
similar to Corollary \ref{C5} but this is replaced by standard projective modules
considerations recalled here in the form required in the sequel. Firstly, if $P_1$ and $P_2$ are two projective $R$-modules of rank $1$ such that $P_1\oplus P_2\cong R^2$, then by taking determinants it follows that $P_1\otimes_R P_2\cong R$, hence $[P_1]=-[P_2]\in Pic(R)$. Secondly, from the definition of stable ranges we have:

\begin{fact}
\label{F1} If $n\in\mathbb{N}$ and $sr(R)\le n$, then for each $a\in R$ the
reduction modulo $Ra$ map of sets $Um(R^n)\rightarrow Um\bigl((R/Ra)^n\bigr)$ is
surjective.
\end{fact}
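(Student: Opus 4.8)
The plan is to lift a given unimodular vector over $R/Ra$ to an $(n+1)$-tuple over $R$ whose extra coordinate is $a$ itself, and then to invoke $sr(R)\le n$ to absorb that extra coordinate without disturbing the reduction modulo $Ra$.

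First I would take an arbitrary $(\bar a_1,\dots,\bar a_n)\in Um\bigl((R/Ra)^n\bigr)$ and pick any lifts $a_1,\dots,a_n\in R$. Choosing a witness to unimodularity over $R/Ra$ and lifting it, I obtain $b_1,\dots,b_n\in R$ with $\sum_{i=1}^n a_ib_i\equiv 1\pmod{Ra}$, i.e. $\sum_{i=1}^n a_ib_i=1+ra$ for some $r\in R$. The tuple $(b_1,\dots,b_n,-r)$ then witnesses that $(a_1,\dots,a_n,a)\in Um(R^{n+1})$, so I have manufactured an $(n+1)$-tuple whose distinguished last entry is precisely the modulus $a$.

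The key step is to apply the hypothesis $sr(R)\le n$ to this unimodular $(n+1)$-tuple, taking the last coordinate $a$ in the role of $b$ in Definition \ref{def3}(1). This produces $(r_1,\dots,r_n)\in R^n$ such that $(a_1+ar_1,\dots,a_n+ar_n)\in Um(R^n)$. Since each correction $ar_i$ lies in $Ra$, we have $a_i+ar_i\equiv a_i\equiv \bar a_i\pmod{Ra}$, so this unimodular vector reduces to $(\bar a_1,\dots,\bar a_n)$, which establishes surjectivity of the reduction map.

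I do not expect a serious obstacle here: the entire content is the observation that the failure to lift unimodularity from $R/Ra$ to $R$ is captured by a single extra coordinate equal to $a$, exactly the kind of coordinate the stable range condition is designed to eliminate. The only points requiring care are bookkeeping ones, namely matching the last coordinate $a$ with the distinguished element $b$ in the definition of reducibility, and verifying that the reduction step preserves residues modulo $Ra$ (which is automatic because the corrections belong to $Ra$).
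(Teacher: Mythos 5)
Your argument is correct and is precisely the standard unwinding of the definition that the paper relies on (the paper states this Fact without proof, as an immediate consequence of the definition of stable range). The only point you leave implicit is that $sr(R)\le n$ guarantees reducibility of unimodular $(n+1)$-tuples even when $sr(R)<n$ strictly, which is the standard fact that the stable range condition propagates upward in $n$; this is harmless here, since the paper takes it for granted as well.
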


For $n\in\mathbb{N}$, let $(a_1,\ldots,a_n,b)\in Um(R^{n+1})$ with 
$b\notin Z(R)$. We consider short exact sequences of $R$-modules $0\rightarrow R\xrightarrow{b} R\xrightarrow{\pi} R/Rb\rightarrow 0$ and $0\rightarrow
Q\rightarrow R^n\xrightarrow{f} R/Rb\rightarrow 0$ where $f$ maps the
elements of the standard basis of $R^n$ to $a_1+Rb,\ldots,a_n+Rb$ and $Q:=Ker(f)$. If $Q^+\rightarrow R^n$ and $Q^+\rightarrow R$ define the pullback of $f$ and $\pi$, then we have short exact sequences $0\rightarrow Q\rightarrow
Q^+\rightarrow R$ and $0\rightarrow R\rightarrow Q^+\rightarrow
R^n\rightarrow 0$ which imply that $Q^+$ is a free $R$-module of rank $n+1$
and $Q$ is a projective $R$-module of rank $n$ generated by $n+1$ elements;
moreover, if $n=1$, then $Q\cong R$.

We consider $R$-linear maps $g:R^{n}\rightarrow R$ such that $\pi \circ g=f$. Then $(a_{1},\ldots ,a_{n},b)$ is reducible iff we can choose $g $ to be surjective. Thus, if $sr(R)\le n$, then we can choose $g$ to be
surjective.

If $A\in \mathbb{M}_{n}(R)$ is equivalent to ${Diag}(1,1,\ldots
,1,b) $ and $Q_A$ is the $R$-submodule $Q$ of $R^{n}$, then we
can choose $g$ to be surjective and hence $(a_{1},\ldots ,a_{n},b)$ is
reducible.

In this paragraph we assume that $(a_{1},\ldots ,a_{n},b)$ is reducible and
that $g$ is chosen to be surjective. Then ${Ker}(g)$ is a projective $R$-module of rank $n-1$ generated by $n$ elements, we have a short exact
sequence $0\rightarrow {Ker}(g)\rightarrow Q\rightarrow Rb\rightarrow 0$,
and $0\rightarrow Q\rightarrow R^{n}\xrightarrow{f}R/Rb\rightarrow 0$
is the direct sum of the two projective resolutions $0\rightarrow R\xrightarrow{b}R\xrightarrow{\pi}R/Rb\rightarrow 0$ and $0\rightarrow {Ker}(g)\rightarrow {Ker}(g)\rightarrow 0\rightarrow 0$. If $n=2$, then ${Ker}(g)\cong R$.
Thus if $n=2$ and the $R$-submodule $Q$ of $R^{2}$ is $Q_{A}$ for some $A\in 
\mathbb{M}_{2}(R)$, then $A$ is equivalent to ${Diag}(1,b)$.
Similarly, if $n\geq 3$ and ${Ker}(g)\cong R^{n-1}$, then $Q$ is a free $R$-module of rank $n$, so if the $R$-submodule $Q$ of $R^{n}$ is $Q_{A}$
for some $A\in \mathbb{M}_{n}(R)$, then $A$ is equivalent to ${Diag}(1,1,\ldots ,1,b)$.

\subsection{Projective modules}

\label{S25}

Let $A\in Um(\mathbb{M}_{2}(R))$ and $\bar{R}:=R/R\det(A)$. The cokernel of $L_A$, denoted by $E_A$, is annihilated by $\det(A)$ and hence we can view it as an $\bar{R}$-module isomorphic to $C_{\bar A}$, where $\bar A$ is the reduction of $A$ modulo $R\det(A)$. 

Locally in the Zariski
topology of ${Spec}\bar{R}$, one of the entries of $\bar{A}$ is a unit and
hence the matrices $\bar{A}$ and ${Diag}(1,0)$ are equivalent. Thus $Q_{\bar{A}}$
is a projective $\bar{R}$-module of rank $1$ generated by two elements and we have two short exact sequences of projective $\bar{R}$-modules $0\rightarrow Q_{\bar{A}}\rightarrow \bar{R}^2\rightarrow C_{\bar{A}}\rightarrow 0$ and $0\rightarrow K_{\bar{A}}\rightarrow
\bar{R}^{2}\rightarrow Q_{\bar{A}}\rightarrow 0$ that split. From the existence of $\bar{R}$-linear isomorphisms $\bar{R}^{2}\cong K_{\bar{A}}\oplus Q_{\bar{A}}\cong K_{\bar{A}}\oplus C_{\bar{A}}$ it follows that $K_{\bar{A}}\cong C_{\bar{A}}$ are the dual of $Q_{\bar{A}}$. Thus $K_{\bar{A}}\cong\bar{R}$ iff $Q_{\bar{A}}\cong\bar{R}$ (equivalently, $C_{\bar{A}}\cong\bar{R}$), hence $[K_{\bar{A}}]=[C_{\bar{A}}]=-[Q_{\bar{A}}]\in {Pic}_2(R)$.

\subsection{Universal test matrices}

\label{S26}

In this subsection we assume that $R$ is a Hermite ring, i.e., for every
pair $(p,q)\in R^{2}$ there exists $(r,s,t)\in R^3$ such that $p=rs$, $q=rt$ and $(s,t)\in Um(R^{2})$. If moreover $R$ is an integral domain (i.e., if $R$ is
a B\'{e}zout domain), then $r$ is unique up to a multiplication with a unit
of $R$ and is called the greatest common divisor of $p$ and $q$ and one
writes $r=\gcd (x,y)$. Our convention is $\gcd (0,0)=0$ (so that we can
still write $0=0\cdot 1$ with $\gcd (1,1)=1$).

Let $A\in Um(\mathbb{M}_{2}(R))$ and $M\in {SL}_2(R)$ be such that 
$B:=MA=\left[ 
\begin{array}{cc}
g & u \\ 
0 & h\end{array}\right] $ is upper triangular. We write $g=ac$ and $h=bc$ with $(a,b)\in
Um(R^{2})$ and $c\in R$. As $B\in Um(\mathbb{M}_{2}(R))$ and $Rg+Rh=Rc$, we
have $(c,u)\in Um(R^{2})$. Let $a^{\prime },b^{\prime },c^{\prime
},d^{\prime }\in R$ be such that $aa^{\prime }+bb^{\prime }=cc^{\prime
}+uu^{\prime }=1$. For $d\in R$, $A$ and $B$ are equivalent to 
$$C:=\left[ 
\begin{array}{cc}
1 & db^{\prime } \\ 
0 & 1\end{array}\right] B\left[ 
\begin{array}{cc}
1 & da^{\prime } \\ 
0 & 1\end{array}\right]  =\left[ 
\begin{array}{cc}
ac & u+cd(aa^{\prime }+bb^{\prime }) \\ 
0 & bc\end{array}\right] =\left[ 
\begin{array}{cc}
ac & u+cd \\ 
0 & bc\end{array}\right].$$ 
Here the role of $u+cd$ is that of an arbitrary element of $R$
whose reduction modulo $Rc$ is the `fixed' unit $u+Rc\in U(R/Rc)$. Each
unimodular matrix of the form 
\begin{equation*}
D:=\left[ 
\begin{array}{cc}
aa^{\prime }cc^{\prime } & u \\ 
0 & bb^{\prime }cc^{\prime }\end{array}\right]
\end{equation*}will be called a \emph{companion test matrix} associated to $A$.

Note that $D$ is the image of the `first universal test matrix for Hermite
rings' 
\begin{equation*}
\mathcal{D}:=\left[ 
\begin{array}{cc}
x(1-yz) & y \\ 
0 & (1-x)(1-yz)\end{array}\right] \in Um(\mathbb{M}_{2}(\mathbb{Z}[x,y,z]))
\end{equation*}via the homomorphism $\phi _{D}:\mathbb{Z}[x,y,z]\rightarrow R$ which maps $x $, $y$, and $z$ to $aa^{\prime }$, $u$, and $u^{\prime }$ (hence $1-x$
maps to $bb^{\prime }=1-aa^{\prime }$ and $1-yz$ maps to $1-uu^{\prime
}=cc^{\prime }$). Note that $\mathcal{D}$ is equivalent to the matrix 
\begin{equation*}
\mathcal{E}:=\left[ 
\begin{array}{cc}
1 & 0 \\ 
z(x-1)(1-yz) & 1\end{array}\right] \mathcal{D}\left[ 
\begin{array}{cc}
1 & 0 \\ 
xz & 1\end{array}\right] =\left[ 
\begin{array}{cc}
x & y \\ 
0 & (1-x)(1-yz)^{2}\end{array}\right]
\end{equation*}which is the image of the `second universal test matrix for Hermite rings'
\begin{equation*}
\mathcal{F}:=\left[ 
\begin{array}{cc}
x & y \\ 
0 & (1-x)(1-yz)\end{array}\right] \in Um(\mathbb{M}_{2}(\mathbb{Z}[x,y,z]))
\end{equation*}via the endomorphism of $\mathbb{Z}[x,y,z]$ that fixes $x$ and $y$ and maps $z$ to $2z-yz^{2}$. See Corollary \ref{C13} for the usage of universal test
matrix in this paragraph.

The `universal test matrix for all rings' is (see Corollary \ref{C10}) 
\begin{equation*}
\mathcal{G}:=\left[ 
\begin{array}{cc}
x & y \\ 
0 & 1-x-yz\end{array}\right] \in Um(\mathbb{M}_{2}(\mathbb{Z}[x,y,z])).
\end{equation*}

\section{Criteria on extending $2\times 2$ matrices}

\label{S3}

For an element $v$ of an $R$-algebra, $(v)$ will be the principal ideal generated by it and $\bar{v}$ will be its reductions via some given surjective ring homomorphisms. 

Subsection \ref{S30} proves Lemma \ref{L1}. Subsections \ref{S31} and \ref{S32} introduce the $R$-algebras and the statements (respectively) that are required to study when a unimodular matrix in $\mathbb M_2(R)$ is simply extendable and prove Theorems \ref{TH7} and \ref{TH8} (respectively). Direct applications of Theorems \ref{TH7} and \ref{TH8} to ${SE}_2$, ${E}_2$, and ${SE}_2^{\triangle}$ rings are included in Subsections \ref{S33} and \ref{S34}. 

\subsection{Proof of Lemma \ref{L1}}

\label{S30}

To check the `only if' part of part (1), let $A\in\mathbb M_2(R)$ be extendable, with $A^+\in SL_3(R)$ an extension of it. If $A^+_0$ is obtained from $A^+$ by replacing the $(3,3)$ entry with $0$, then the reductions of $A^+$ and $A^+_0$ modulo $R\det(A)$ have the same determinant $1$, and it follows that $A$ modulo $R\det(A)$ is simply extendable. To check the `if' part of (1), let $B\in\mathbb M_3(R)$ be such that its reduction modulo $R\det(A)$ is a simple extension of the reduction of $A$ modulo $R\det(A)$. Hence there exists $w\in R$ such that $\det(B)=1+w\det(A)$. If $A^+\in\mathbb M_3(R)$ is obtained from $B$ by subtracting $w$ from its $(3,3)$ entry, then $A^+$ is an extension of $A$ as $\det(A^+)=\det(B)-w\det(A)=1$. Thus part (1) holds. Part (2) is a simple computation, while part (3) follows directly from part (2). Thus Lemma \ref{L1} holds.

\subsection{Five $R$-algebras}

\label{S31}

For $\upsilon =(a,b,c,d)\in Um(R^{4})$ and its associated matrix $A:=\left[ 
\begin{array}{cc}
a & b \\ 
c & d\end{array}\right] $ we will consider five $R$-algebras as follows.

The $R$-algebra
\begin{equation*}
U_{\upsilon }:=R[x,y,z,w]/(1-ax-by-cz-dw)
\end{equation*}
represents (parametrizes) unimodular relations $ax+by+cz+dw=1$ in $R$-algebras.
The $R$-algebra
\begin{equation*}
E_{\upsilon }:=R[x,y,z,w,v]/\bigl(1-axw-bxz-cyw-dyz-v(ad-bc)\bigr),
\end{equation*}
represents extensions $\left[ 
\begin{array}{ccc}
a & b & y\\ 
c & d & -x\\
-z & w & v
\end{array}\right]$ of (images of) $A$ in rings of $2\times 2$ matrices with entries in $R$-algebras. Similarly, the $R$-algebra
\begin{equation*}
SE_{\upsilon }:=R[x,y,z,w]/(1-axw-bxz-cyw-dyz)=E_{\upsilon }/(\bar{v}),
\end{equation*}
represents simple extensions of (images of) $A$ in rings of $2\times 2$ matrices with entries in $R$-algebras. The last two $R$-algebras 
\begin{equation*}
WZ_{\upsilon }:=R[x,y,z,w]/\bigl(1-ax-by-cz-dw+(ad-bc)(xw-yz)\bigr)
\end{equation*}and
\begin{equation*}
Z_{\upsilon }:=R[x,y,z,w]/(1-ax-by-cz-dw,xw-yz)=WZ_{\upsilon }/(\bar{x}\bar{w}-\bar{y}\bar{z})=U_{\upsilon }/(\bar{x}\bar{w}-\bar{y}\bar{z})
\end{equation*}
represent matrices $B$ as in Definition \ref{def2-}(2) and (1) (respectively) for (images of) $A$ in rings of $2\times 2$ matrices with entries in $R$-algebras.

Note that the polynomial $1-ax-by-cz-dw+(ad-bc)(xw-yz)\in R[x,y,z,w]$ is the
determinant of a $2\times 2$ matrix and hence it has many decompositions of
the form $f_{1,1}f_{2,2}-f_{1,2}f_{2,1}$, e.g.,
\begin{equation*}
WZ_{\upsilon }=R[x,y,z,w]/\bigl((1-ax-cz)(1-by-dw)-(ay+cw)(bx+dz)\bigr).
\end{equation*}
For example, if $c=0$, then 
\begin{equation}  \label{EQ6}
\begin{split}
WZ_{\upsilon}=R[x,y,z,w]/\bigl((1-ax)(1-dw)-y(b+adz)\bigr) \\
=R[x,y,z,w]/\bigl((1-ax)(1-by-dw)-ay(bx+dz)\bigr),
\end{split}\end{equation}
and these two identities led us to ${V}_2$ rings and to Criterion \ref{CR3}
(respectively).

If $b=c$, then all these five $R$-algebras have an involution defined by
fixing $x$ and $w$ and by interchanging $y$ and $z$, and we have 
\begin{equation*}
WZ_{\upsilon}/(\bar y-\bar z)= R[x,y,w]/\bigl((1-ax-by)(1-by-dw)-(bx+dy)(ay+bw)\bigr),
\end{equation*}
\begin{equation*}
SE_{\upsilon}/(\bar y-\bar z)= R[x,y,w]/\bigl[
(1-axw)-y\bigl(b(x+w)+dy\bigr)\bigr].
\end{equation*}

We have an arrow diagram of $R$-algebra homomorphisms 

\begin{equation}\label{EQ7}
\xymatrix@R=10pt@C=21pt@L=2pt{
WZ_{\upsilon} \ar[r]^{\epsilon} & Z_{\upsilon}\ar[d]^{\rho} & U_{\upsilon}\ar[l]_{\varepsilon}\\
E_{\upsilon} \ar[r]^{\psi} & SE_{\upsilon}\\}
\end{equation}
defined as follows. The horizontal homomorphisms are epimorphisms defined by identifications 
$WZ_{\upsilon }/(\bar{x}\bar{w}-\bar{y}\bar{z})=Z_{\upsilon }=U_{\upsilon }/(\bar{x}\bar{w}-\bar{y}\bar{z})$
and $SE_{\upsilon }=E_{\upsilon }/(\bar{v})$, and $\rho $ is defined by mapping
$\bar{x}$, $\bar{y}$, $\bar{z}$, $\bar{w}$ to $\bar{x}\bar{w}$, $\bar{x}\bar{z}$, $\bar{y}\bar{w}$, $\bar{y}\bar{z}$ (respectively). A key part of the essence of Theorem \ref{TH3} is the existence of Diagram (\ref{EQ7}).

The $R$-module 
$$P_{\upsilon}:=\{(x,y,z,w)\in R^4|ax+by+cz+dw=0\}$$ 
is the 
kernel of the $R$-linear map $R^4\to R$ that maps  $[x,y,z,w]^T$ to 
$ax+by+cz+dw$. As $\upsilon\in Um(R^4)$, there exists $(a^{\prime},b^{\prime},c^{\prime},d^{\prime})\in R^4$ such that $aa^{\prime}+bb^{\prime}+cc^{\prime}+dd^{\prime}=1$ and hence the mentioned $R$-linear 
map is surjective. Thus $P_{\upsilon}$ and its dual $P^*_{\upsilon}$ are projective $R$-modules of rank $3$ with the property that $P_{\upsilon}\oplus R\cong P_{\upsilon}^*\oplus R\cong R^4$. From \cite{lam}, Ch. III, Sect. 6, Thm. 6.7 (1) it follows that $P_{\upsilon}\cong P_{\upsilon}^*$.

\begin{theorem}
\label{TH7} The following properties hold:

\medskip \textbf{(1)} For $f\in \{a,b,c,d\}$, 
the $R_{f}$-algebra $(U_{\upsilon })_{f}$ is a polynomial $R_{f}$-algebra in $3$ indeterminates. Also, the $R$-algebra $U_{\upsilon }$ is isomorphic to the 
symmetric $R$-algebra of $P_{\upsilon}$ (thus the homomorphism $R\rightarrow U_{\upsilon }$ is smooth of relative 
dimension $3$). Moreover, if $R$ is a Hermite ring, then $P_{\upsilon}\cong R^3$ and 
hence the $R$-algebra $U_{\upsilon }$ is a polynomial $R$-algebra in $3$ indeterminates.

\smallskip \textbf{(2)} The $R$-algebra $Z_{\upsilon }$ is smooth of
relative dimension $2$.

\smallskip \textbf{(3)} The $R$-algebra $E_{\upsilon}$ is smooth of relative
dimension $4$ and the $R$-algebra $(WZ_{\upsilon})_{1-(ad-bc)(xw-yz)}$ is smooth
of relative dimension $3$ (thus, if $ad-bc\in J(R)$, then the $R$-algebra $WZ_{\upsilon}$ is smooth of relative dimension $3$).

\smallskip \textbf{(4)} The morphism of schemes ${Spec}
(SE_{\upsilon})\rightarrow {Spec} (Z_{\upsilon})$ defined by $\rho$
is a $\mathbb{G}_{m,Z_{\upsilon}}$-torsor and hence smooth of relative
dimension $1$ (thus the $R$-algebra ${SE}_{\upsilon}$ is smooth of relative
dimension $3$).

\smallskip \textbf{(5)} Assume $ad=bc$. Then there exists a projective $R$-module $Q_{\upsilon}$ of rank $2$ such that the $R$-algebra $Z_{\upsilon }$ is isomorphic to the symmetric $R$-algebra of $Q_{\upsilon}$. Also $WZ_{\upsilon}=U_{\upsilon}$ is isomorphic to the 
symmetric $R$-algebra of $P_{\upsilon}$ and there exists an isomorphism $Q_{\upsilon}\oplus R\cong P_{\upsilon}$. Moreover, if $R$ is a Hermite ring, then $Q_{\upsilon}\cong R^2$, the $R$-algebra $Z_{\upsilon }$ is a polynomial $R$-algebra in $2$ indeterminates, and the $\mathbb{G}_{m,Z_{\upsilon}}$-torsor of part (4) is the pullback of a $\mathbb{G}_{m,R}$-torsor.

\smallskip \textbf{(6)} Assume that $c=b$ and $ad=b^2$ (i.e., $A$ is
symmetric with zero determinant). Then $(a,d)\in Um(R^2)$ and there exist
two canonical $R$-algebra identifications $(SE_{\upsilon})_a\cong
R_a[x^{\prime},y,z,w^{\prime}]/(1-x^{\prime}w^{\prime})$ and $(SE_{\upsilon})_d\cong
R_d[x,y^{\prime},z^{\prime},w]/(1-y^{\prime}z^{\prime})$, where $x^{\prime}:=ax+by$, $w^{\prime}:=w+\dfrac{b}{a}z$, $y^{\prime}:=y+\dfrac{b}{d}x$, and $z^{\prime}:=dz+bw$.
\end{theorem}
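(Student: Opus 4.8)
The common engine is the Jacobian criterion, applied on the cover of $\mathrm{Spec}\,R$ by $D(a),D(b),D(c),D(d)$, together with the identification of the relevant algebras as symmetric algebras or quotients that are torsors. For $E_\upsilon$ (part (3)) the point is immediate: writing the defining relation as $w(ax+cy)+z(bx+dy)+v(ad-bc)=1$ and observing that $ax+cy,\ bx+dy,\ ad-bc$ are precisely $-\partial_w,-\partial_z,-\partial_v$ of that relation, one gets $w\partial_w+z\partial_z+v\partial_v=-1$ in $E_\upsilon$, so the gradient is unimodular and $E_\upsilon$ is smooth of relative dimension $4$. The same bookkeeping for $WZ_\upsilon$ gives $x\partial_x+y\partial_y+z\partial_z+w\partial_w=-\bigl(1-(ad-bc)(xw-yz)\bigr)$, a unit exactly after inverting $1-(ad-bc)(xw-yz)$, which yields the asserted smoothness of $(WZ_\upsilon)_{1-(ad-bc)(xw-yz)}$. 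For part (1), solving the single linear relation for the variable paired with $f$ shows $(U_\upsilon)_f$ is a polynomial $R_f$-algebra in the other three variables; globally $\mathrm{Spec}(U_\upsilon)=\{ax+by+cz+dw=1\}$ is a torsor under the vector group attached to $P_\upsilon$ (its $S$-points are $P_\upsilon\otimes_R S$), trivial because unimodularity of $\upsilon$ supplies a section, so $U_\upsilon\cong\mathrm{Sym}(P_\upsilon)$ using $P_\upsilon\cong P_\upsilon^*$; over a Hermite ring $sr(R)\le 2$ forces the stably free rank-$3$ module $P_\upsilon$ to be free, giving the polynomial conclusion.

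For part (2) I would use a clean Pl\"{u}cker certificate instead of a critical-point analysis. The two rows of the Jacobian of the defining pair are $r_1=(a,b,c,d)$ and $r_2=(w,-z,-y,x)$, and smoothness of relative dimension $2$ is equivalent to the six $2\times 2$ minors $p_{kj}$ of $\left[\begin{smallmatrix}a&b&c&d\\ w&-z&-y&x\end{smallmatrix}\right]$ generating the unit ideal of $Z_\upsilon$. Setting $v=(x,y,z,w)$, the Laplace identity $\sum_j p_{kj}v_j=\langle r_2,v\rangle (r_1)_k-\langle r_1,v\rangle (r_2)_k$ specializes on $Z_\upsilon$, where $\langle r_1,v\rangle=1$ and $\langle r_2,v\rangle=2(xw-yz)=0$, to $\sum_j p_{kj}v_j=-(r_2)_k$; hence each of $x,y,z,w$ lies in the minor ideal $I$, and then $1=ax+by+cz+dw\in I$. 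Part (4) is cleanest via the $\mathbb{G}_m$-grading on $SE_\upsilon$ with $x,y$ in weight $1$ and $z,w$ in weight $-1$: the defining relation $x(aw+bz)+y(cw+dz)=1$ sits in weight $0$ and exhibits $1\in S_1\cdot S_{-1}$, the standard criterion for $\mathrm{Spec}(SE_\upsilon)\to\mathrm{Spec}\bigl((SE_\upsilon)_0\bigr)$ to be a $\mathbb{G}_m$-torsor; checking that the map $\rho$ of Diagram~(\ref{EQ7}) identifies $Z_\upsilon$ with the weight-$0$ subalgebra (surjectivity is clear on generators, injectivity on $D(a),\dots,D(d)$) gives the torsor over $Z_\upsilon$, and composing with part (2) yields relative dimension $3$.

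Parts (5) and (6) exploit $\det A=0$. Part (6) is a direct substitution: from $c=b$ and $ad=b^2$ one first gets $(a,d)\in Um(R^2)$ (a maximal ideal containing $a,d$ would contain $b^2$, hence $b$, hence all of $\upsilon$), and over $R_a$ the change of variables $x'=ax+by$, $w'=w+\tfrac{b}{a}z$ satisfies $x'w'=axw+bxz+byw+dyz$, turning the relation into $1-x'w'$; the symmetric substitution $y'=y+\tfrac{b}{d}x$, $z'=dz+bw$ works over $R_d$. For part (5), $ad-bc=0$ makes $WZ_\upsilon=U_\upsilon\cong\mathrm{Sym}(P_\upsilon)$ on the nose, and the defining relation of $SE_\upsilon$ reads $(x,y)\,A\,(w,z)^{T}=1$; factoring $A=\alpha\beta^{T}$ locally (possible as $\det A=0$) rewrites this as $(\alpha^{T}u)(\beta^{T}v)=1$ with $u=(x,y)^T$, $v=(w,z)^T$, so the weight-$1$ element $\alpha^{T}u$ is an invertible section trivializing the torsor of part (4) locally; hence $Z_\upsilon$ is locally a polynomial algebra in $2$ variables and is globally the symmetric algebra of a rank-$2$ projective $Q_\upsilon$, which over a Hermite ring is stably free, hence free since $sr(R)\le 2$.

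The two places demanding care are the following. First, in part (5) the isomorphism $Q_\upsilon\oplus R\cong P_\upsilon$ is not formal: one must glue the local weight-$1$ trivializations into a genuine rank-$2$ summand, which I would do by comparing weight decompositions on $D(a),\dots,D(d)$ and invoking $P_\upsilon\cong P_\upsilon^*$ together with the splitting $P_\upsilon\oplus R\cong R^4$. Second, the passage in part (3) from smoothness of the localization to smoothness of all of $WZ_\upsilon$ needs $1-(ad-bc)(xw-yz)$ to be a unit of $WZ_\upsilon$; this is transparent when $ad-bc$ is nilpotent (in particular when $\det A=0$, which is the case relevant to the applications), but merely $ad-bc\in J(R)$ does not suffice, since $J(R)\,WZ_\upsilon\subseteq J(WZ_\upsilon)$ fails for such polynomial-type extensions. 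Indeed, for $R=\mathbb{Z}_{(p)}$ and $\upsilon=(1,0,0,p)$ one has $ad-bc=p\in J(R)$, yet $WZ_\upsilon\otimes_R\mathbb{Q}$ is singular at $(x,y,z,w)=(1,0,0,1/p)$; so I would state and prove the global smoothness of $WZ_\upsilon$ under the hypothesis that $ad-bc$ is nilpotent rather than just in $J(R)$.
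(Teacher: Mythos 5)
Your overall strategy coincides with the paper's: cover $\textup{Spec}\,R$ by the principal opens attached to $a,b,c,d$, apply the Jacobian criterion, and identify the relevant algebras as symmetric algebras of projective modules. Within that frame several of your local arguments are genuinely different and, in places, cleaner. For part (2) the paper argues by contradiction at a maximal ideal $\mathfrak n$ (proportionality of the two Jacobian rows mod $\mathfrak n$, paired against $(x,y,z,w)$, forces $1\equiv 0$), whereas your Laplace identity $\sum_j p_{kj}v_j=\langle r_2,v\rangle(r_1)_k-\langle r_1,v\rangle(r_2)_k$ shows directly that the minor ideal contains $x,y,z,w$ and hence $1$; same computation, packaged globally. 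For part (4) the paper declares the torsor property a "standard exercise"; your grading argument ($x,y$ in weight $1$, $z,w$ in weight $-1$, $1\in S_1S_{-1}$) is a correct way to do that exercise, provided you also spell out the identification $(SE_\upsilon)_0=Z_\upsilon$, which rests on the determinantal presentation $R[xw,xz,yw,yz]\cong R[X,Y,Z,W]/(XW-YZ)$. Your observation about part (3) is correct and worth recording: the parenthetical claim that $ad-bc\in J(R)$ forces $WZ_\upsilon$ to be smooth is false as stated, since for $R=\mathbb{Z}_{(p)}$ and $\upsilon=(1,0,0,p)$ the generic fibre $WZ_\upsilon\otimes_R\mathbb{Q}$ has a singular rational point at $(1,0,0,1/p)$ (there $G$ and all four partials vanish, so $G\in\mathfrak m^2$ and the local ring is not regular); nilpotence of $ad-bc$ is the hypothesis that actually makes $1-(ad-bc)(xw-yz)$ a unit. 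This does not damage the rest of the paper, which only invokes the smoothness of $Z_\upsilon$, $E_\upsilon$, $SE_\upsilon$ and of the localization of $WZ_\upsilon$.

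The one place where your proposal falls short of the full statement is part (5). First, passing from "$(Z_\upsilon)_f$ is a polynomial $R_f$-algebra in two variables" to "$Z_\upsilon$ is the symmetric algebra of a rank-$2$ projective module" is exactly the Bass--Connell--Wright theorem \cite{BCW}; you use it tacitly here (and in part (1)), but it must be invoked, and the paper needs its refinement (\cite{BCW}, Cor. 4.3) identifying $Q_\upsilon$ with the conormal module $J_\upsilon/J_\upsilon^2\cong R^4/W$, where $W=R(a,b,c,d)+R(d',-c',-b',a')$ is a free rank-$2$ direct summand of $R^4$; that explicit description is what produces the split exact sequence $0\rightarrow R\rightarrow P_\upsilon\rightarrow Q_\upsilon\rightarrow 0$, which you acknowledge needing but do not supply. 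Second, you do not address the final claim of (5), that for $R$ Hermite the $\mathbb{G}_{m,Z_\upsilon}$-torsor of part (4) is pulled back from a $\mathbb{G}_{m,R}$-torsor. Your local trivializations $\alpha^{T}u$ only show the torsor is trivial over each basic open of $\textup{Spec}\,R$; to descend the resulting line bundle on $\textup{Spec}(Z_\upsilon)$ to $\textup{Spec}\,R$ one needs ${Pic}(R)\rightarrow {Pic}(R_1)$ to be an isomorphism for polynomial $R$-algebras $R_1$, which the paper obtains by reducing to $R$ reduced, observing that a reduced Hermite ring is seminormal (its localizations are valuation domains), and citing \cite{GH+}. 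Without that input this part of the statement remains unproved, since homotopy invariance of ${Pic}$ fails for general base rings.
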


\begin{proof}
(1) As $\upsilon \in Um(R^{4})$, we have ${Spec} R=\cup _{f\in
\{a,b,c,d\}}{Spec}R_{f}$. To check that the $R_{f}$-algebra $(U_{\upsilon })_{f}$ is a polynomial $R_{f}$-algebra we can
assume that $f=a$ and by replacing $(R,x)$ with $(R_{f},a^{-1}x)$ we can
assume that $a=1$, in which case we have $U_{\upsilon }\cong R[y,z,w]$.
From \cite{BCW}, Thm. 4.4 it follows that $U_{\upsilon }$ is isomorphic to the 
symmetric $R$-algebra of a projective $R$-module $P$ of rank $3$. The fact that 
$P\cong P_{\upsilon}$ follows from the facts that $P_{\upsilon}\cong P^*_{\upsilon}$ 
and that under the substitution $(x^{\prime},y^{\prime},z^{\prime},w^{\prime})=(a^{\prime},b^{\prime},c^{\prime},d^{\prime})+(x,y,z,w)$, $U_{\upsilon}=R[x^{\prime},y^{\prime},z^{\prime},w^{\prime}]/(ax^{\prime}+by^{\prime}+cz^{\prime}+dw^{\prime})$.

If $R$ is a Hermite ring, then $P_{\upsilon}$, being stably free, is free (\cite{WW}, Cor. 3.2); hence $P_{\upsilon}\cong R^3$. Thus the $R$-algebra $U_{\upsilon }$ is a polynomial $R$-algebra in $3$ indeterminates.

(2) It suffices to show that if $\mathfrak{n}\in Max(R[x,y,z,w])$ 
contains $xw-yz$ and $1-ax-by-cz-dw$, then, denoting $\kappa
:=R[x,y,z,w]/\mathfrak{n}$, the $\kappa $-vector space 
\begin{equation*}
\kappa \delta x\oplus \kappa \delta y\oplus \kappa \delta x\oplus \kappa
\delta w/(\kappa \delta (ax+by+cz+dw)+\kappa \delta (xw-yz))
\end{equation*}has dimension $2$ (see \cite{SGA1}, Exp. II, Thm. 4.10); here $\delta $ is
the differential operator denoted in an unusual way in order to avoid
confusion with the element $d\in R$. To check this, it suffices to show that
the assumption that the reduction of the matrix 
\begin{equation*}
E:=\left[ 
\begin{array}{cccc}
-a & -b & -c & -d \\ 
w & -z & -y & x\end{array}\right]
\end{equation*}modulo $\mathfrak{n}$ does not have rank $2$, leads to a contradiction; as $E $ modulo $\mathfrak{n}$ has unimodular rows, there exists $\alpha \in U(\kappa )$ such that $(-a,-b,-c,-d)-\alpha
(w,-z,-y,x)\in \mathfrak{n}^{4}\subseteq R^{4}$. From this, as $xw-yz,1-ax-by-cz-dw\in\mathfrak{n}$, it follows that $2\alpha (xw-yz)$
is congruent to $0$ modulo $\mathfrak{n}$, which implies that $0$ and $1$
are congruent modulo $\mathfrak{n}$, a contradiction. Thus part (2) holds.

(3) For $E_{\upsilon}$, based on \cite{SGA1}, Exp. II, Thm. 4.10, it suffices to 
show that for 
$$F(x,y,z,w,v):=1-axw-bxz-cyw-dyz-v(ad-bc)\in R[x,y,z,w,v],$$ 
$F$ and its partial derivatives $F_x$, $F_y$, $F_z$, $F_w$ and $F_v$ generate 
$R[x,y,z,w,v]$, but this follows from the identity 
$$1=F-vF_v+xF_x+yF_y.$$ 

For $WZ_{\upsilon}$, based on loc. cit. it suffices to show that for 
$$G(x,y,z,w):=1-ax-by-cz-dw+(ad-bc)(xw-yz)\in R[x,y,z,w],$$ 
$G$ and its partial derivatives $G_x$, $G_y$, $G_z$ and $G_w$ generate an ideal of $R[x,y,z,w]$ which contains $1-(ad-bc)(xw-yz)$, but this follows from the identity
$$1-(ad-bc)(xw-yz)=G-xG_x-yG_y-zG_z-wG_w.$$

(4) The $\mathbb{G}_{m,Z_{\upsilon }}$-action 
\begin{equation*}
{Spec} (SE_{\upsilon }[u,u^{-1}])={Spec} (Z_{\upsilon
}[u,u^{-1}])\times _{{Spec}(Z_{\upsilon })}{Spec} (SE_{\upsilon
})\rightarrow {Spec} (SE_{\upsilon })
\end{equation*}is given by the $R$-algebra homomorphism $SE_{\upsilon }\rightarrow
SE_{\upsilon }[u,u^{-1}]$ that maps $\bar{x}$, $\bar{y}$, $\bar{z}$, and $\bar{w}$ to $u\bar{x}$, $u\bar{y}$, $u^{-1}\bar{z}$, and $u^{-1}\bar{w}$
(respectively); the fact that it makes the morphism ${Spec}
(SE_{\upsilon })\rightarrow {Spec} (Z_{\upsilon })$ a $\mathbb{G}_{m,Z_{\upsilon }}$-torsor is a standard exercise.

(5) Part (5) for $WZ_{\upsilon}$ follows from part (1) as $ad=bc$ implies
that $WZ_{\upsilon}=U_{\upsilon}$. Similar to part (1), to check part (5) for $Z_{\upsilon}$ it suffices to show that for each $f\in \{a,b,c,d\}$ the $R_{f}$-algebra $(Z_{\upsilon })_{f}$ is a polynomial $R_{f}$-algebra in $2$ indeterminates. To
check this we can assume that $f=a$, and so the $R_a$-algebra $(Z_{\upsilon })_{a}$ is isomorphic to 
\begin{equation*}
R_{a}[y,z,w]/\bigl(yz-a^{-1}w(1-by-cz-dw)\bigr),
\end{equation*}which via the change of indeterminates $(y_1,z_1,w_1):=(ay+cw,az+bw,aw)$ is isomorphic, as $ad=bc$, to $R_{f}[y_1,z_1,w_1]/(y_1z_1-w_1)\cong R_{f}[y_1,z_1]$.

As $Z_{\upsilon}$ is a symmetric $R$-algebra, there exists an $R$-algebra epimorphism (retraction or augmentation in the terminology of \cite{BCW}) $Z_{\upsilon}\rightarrow R$ and hence there exists a quadruple $\zeta:=(a^{\prime},b^{\prime},c^{\prime},d^{\prime})\in R^4$ such that $1-aa^{\prime}-bb^{\prime}-cc^{\prime}-dd^{\prime}=0=a^{\prime}d^{\prime}-b^{\prime}c^{\prime}$. With the substitution $(x^{\prime},y^{\prime},z^{\prime},w^{\prime})=(a^{\prime},b^{\prime},c^{\prime},d^{\prime})+(x,y,z,w)$, we identify
$$Z_{\upsilon}=R[x^{\prime},y^{\prime},z^{\prime},w^{\prime}]/(ax^{\prime}+by^{\prime}+cz^{\prime}+dw^{\prime},a^{\prime}w^{\prime}-c^{\prime}y^{\prime}-b^{\prime}z^{\prime}+d^{\prime}x^{\prime}-x^{\prime}w^{\prime}+y^{\prime}z^{\prime}).$$
Endowing $R^4$ with the standard inner product 
$$\langle(x_1,x_2,x_3,x_4)\cdot (y_1,y_2,y_3,y_4)\rangle:=\sum_{i=1}^4 x_iy_i,$$ we have $\zeta\perp (d^{\prime},-c^{\prime},-b^{\prime},a^{\prime})$ and $\langle \zeta,(a,b,c,d)\rangle=1$. Therefore the sum $W:=R(a,b,c,d)+ R(d^{\prime},-c^{\prime},-b^{\prime},a^{\prime})$ is a direct sum (so $W\cong R^2$) and a direct summand of $R^4$ (i.e., and $R^4/W$ is a projective $R$-module of rank $2$).
Let 
$$Z^{\prime}_{\upsilon}:=R[x^{\prime},y^{\prime},z^{\prime},w^{\prime}]/(ax^{\prime}+by^{\prime}+cz^{\prime}+dw^{\prime},a^{\prime}w^{\prime}-b^{\prime}z^{\prime}-c^{\prime}y^{\prime}+d^{\prime}x^{\prime}).$$
Note that ${Spec} R=\cup _{f\in
\{a,b,c,d\}}{Spec}R_{ff'}$ and for $f\in\{a,d\}$ (resp.\ $f\in\{b,c\}$), the $R_{ff'}$-algebra $(Z^{\prime}_{\upsilon})_{ff'}$ is isomorphic to $R_{ff'}[y',z']$ (resp.\ $R_{ff'}[x',w']$). Let $J_{\upsilon}$ and $J^{\prime}_{\upsilon}$ be the ideals of $Z_{\upsilon}$ and $Z^{\prime}_{\upsilon}$ (respectively) generated by the images of $x^{\prime}$, $y^{\prime}$, $z^{\prime}$, $w^{\prime}$. The $R$-modules $Q_{\upsilon}:=J_{\upsilon}/J^2_{\upsilon}$ and $Q_{\upsilon}:=J^{\prime}_{\upsilon}/(J^{\prime}_{\upsilon})^2$ are identified via the third isomorphism theorem with the $R$-module 
$$(x',y',z',w')/[(ax^{\prime}+by^{\prime}+cz^{\prime}+dw^{\prime},a^{\prime}w^{\prime}-b^{\prime}z^{\prime}-c^{\prime}y^{\prime}+d^{\prime}x^{\prime})+(x',y',z',w')]$$ 
which is a quotient of ideals of $R[x',y',z',w']$ isomorphic to $R^4/W$. From \cite{BCW}, Cor. 4.3 and the above part on isomorphisms of the localizations $(Z_{\upsilon })_{f}$ and $(Z^{\prime}_{\upsilon})_{ff'}$ that involve indeterminates that are linear (not necessary homogeneous) polynomials in $x',y',z,w'$, it follows that $Z_{\upsilon}$ and $Z^{\prime}_{\upsilon}$ are isomorphic to the 
symmetric $R$-algebras of $Q_{\upsilon}$ and $Q_{\upsilon}^{\prime}$ (respectively), and in particular they are isomorphic. As $W$ is a direct summand of $R^4$, there exists a split short exact sequence $0\rightarrow R\rightarrow P_{\upsilon}\rightarrow Q_{\upsilon}\rightarrow 0$ of $R$-modules. The second part of part (5) follows from the last two sentences. 

If $R$ is Hermite, then $P_{\upsilon}\cong R^3$ (see part (1)), therefore $Q_{\upsilon}$ is stably free, hence free by \cite{WW}, Cor. 3.2. Based on the equivalence between $\mathbb{G}_m$-torsors and line bundles, to complete the proof of part (5) it suffices to show that for each polynomial $R$-algebra $R_1$, the functorial homomorphism ${Pic}(R)\rightarrow {Pic}(R_1)$ is an isomorphism. To check this we can assume that $R$ is reduced. As $R$ is a reduced Hermite ring, for each $\mathfrak p\in {Spec}R$, the local ring $R_{\mathfrak p}$ is a valuation domain. Thus $R$ is a normal ring (i.e., all its localizations $R_{\mathfrak p}$ are integral domains that are integrally closed in their fields of fractions) and hence also a seminormal ring in the sense of \cite{GH+}. From this and \cite{GH+}, Thm. 1.5 it follows that ${Pic}(R)\rightarrow {Pic}(R_1)$ is an isomorphism.

(6) As $(a,b,c,d)\in Um(R^4)$ and $b=c$, we have $(a,b,d)\in Um(R^3)$ and
thus also $(a,b^2,d)=(a,ad,d)\in Um(R^2)$. This implies that $(a,d)\in
Um(R^2)$. The existence of the mentioned identifications is straightforward.
\end{proof}

\subsection{Ten statements}

\label{S32}

For $\upsilon =(a,b,c,d)\in Um(R^{4})$ and its associated matrix $A=\left[ 
\begin{array}{cc}
a & b \\ 
c & d\end{array}\right] $ we also consider the following ten statements:

\medskip \circled{\textup{1}} The matrix $A$ is equivalent to the diagonal
matrix ${Diag}(1,\det(A))$.

\smallskip \circled{\textup{2}} The matrix $A$ is simply extendable.

\smallskip \circled{\textup{3}} There exists $(e,f)\in R^2$ such that $(ae+cf,be+df)\in Um(R^{2})$ (note that $(e,f)\in Um(R^2)$).

\smallskip \circled{\textup{4}} There exists $(x,y,z,w)\in R^4$ such that $ax+by+cz+dw=1$ and the matrix $\left[ 
\begin{array}{cc}
x & y \\ 
z & w\end{array}\right] $ is non-full.

\smallskip \circled{\textup{5}} There exists $(x,y,z,w)\in R^4$ such that $ax+by+cz+dw=1$ and $xw-yz=0$.

\smallskip \circled{\textup{6}} There exists $B\in Um(\mathbb{M}_{2}(R)) $ such that $(ad-bc)B+A\in Um(\mathbb{M}_{2}(R))$ and both $B$ and $(ad-bc)B+A$ have zero determinant.

\smallskip \circled{\textup{7}} There exists $C\in Um(\mathbb{M}_2(R))$ congruent to $A$ modulo $R(ad-bc)$ and $\det(C)=0$.

\smallskip \circled{\textup{8}} There exists $B\in \mathbb{M}_2(R)$ such that both $B$ and $(ad-bc)B+A$ have zero determinant.

\smallskip \circled{\textup{9}} There exists $(x,y,z,w)\in R^4$ such that $ax+by+cz+dw-(ad-bc)(xw-yz)=1$.

\smallskip \circled{\textup{10}} There exists $C\in \mathbb{M}_2(R)$
congruent to $A$ modulo $R(ad-bc)$ and $\det(C)=0$.

\begin{theorem}
\label{TH8} The following implications hold: 
\begin{equation*}
\circled{\textup{1}}\Leftrightarrow \circled{\textup{2}}\Leftrightarrow \circled{\textup{3}}\Leftrightarrow \circled{\textup{4}}\Rightarrow \circled{\textup{5}}\Leftrightarrow\circled{\textup{6}}\Leftrightarrow\circled{\textup{7}}\Leftrightarrow\circled{\textup{8}}\Rightarrow 
\circled{\textup{9}}\Rightarrow\circled{\textup{10}}.
\end{equation*}
Moreover, if $R$ is reduced, then $\circled{\textup{9}}\Leftrightarrow\circled{\textup{10}}$.
\end{theorem}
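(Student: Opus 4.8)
The plan is to treat three blocks---$\{\circled{1},\circled{2},\circled{3},\circled{4}\}$, $\{\circled{5},\circled{6},\circled{7},\circled{8}\}$, and the tail $\circled{9}\Rightarrow\circled{10}$---and to glue them by the one-way bridges $\circled{4}\Rightarrow\circled{5}$ and $\circled{8}\Rightarrow\circled{9}$. For the first block I would close the cycle $\circled{1}\Rightarrow\circled{2}\Rightarrow\circled{4}\Rightarrow\circled{3}\Rightarrow\circled{1}$. Here $\circled{1}\Rightarrow\circled{2}$ is Example~\ref{EX1}. For $\circled{2}\Rightarrow\circled{4}$ I would read off a simple extension the relation $axw+bxz+cyw+dyz=1$ and set $(x',y',z',w'):=(xw,xz,yw,yz)$; its matrix is the column--row product of $(x,y)^{T}$ and $(w,z)$, hence non-full (this is exactly the homomorphism $\rho$ of Diagram~(\ref{EQ7})). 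For $\circled{4}\Rightarrow\circled{3}$ I would write the non-full matrix as $(p,q)^{T}(r,s)$ and regroup $apr+bps+cqr+dqs=r(ap+cq)+s(bp+dq)=1$, so $(e,f):=(p,q)$ works. Finally $\circled{3}\Rightarrow\circled{1}$: since $(e,f)\in Um(R^{2})$, complete it to the first column of some $N\in GL_{2}(R)$; then $A^{T}N$ has unimodular first column $(ae+cf,be+df)^{T}$, and clearing first the column and then the row reduces $A^{T}$ to $\Diag(1,\det A)$ (the surviving entry being a unit multiple of $\det A$), so $A$ is equivalent to $\Diag(1,\det A)$ by transposition (Lemma~\ref{L1}(3)).

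The bridge $\circled{4}\Rightarrow\circled{5}$ is immediate, a non-full matrix having zero determinant. For the second block I would use two inputs. First, a determinant identity: with $\Delta:=ad-bc$ and $B=[\begin{smallmatrix}x&y\\z&w\end{smallmatrix}]$ one has $\det(A+\Delta B)=\Delta\bigl(1+(aw+dx-bz-cy)+\Delta(xw-yz)\bigr)$, and in the adjugate-twisted form, for $M=[\begin{smallmatrix}w&-z\\-y&x\end{smallmatrix}]$, $\det(A-\Delta M)=\Delta\bigl(1-(ax+by+cz+dw)+\Delta(xw-yz)\bigr)$, i.e.\ $\Delta$ times the defining polynomial of $WZ_{\upsilon}$. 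Second, an automatic-unimodularity lemma: if $\det B=0$ then $A+\Delta B\in Um(\mathbb{M}_{2}(R))$, because modulo the ideal $I$ generated by the entries of $A+\Delta B$ one gets $\Delta\equiv\Delta^{2}\det B=0$, whence $a,b,c,d\in I$ and $I=R$. With these, $\circled{5}\Rightarrow\circled{6}$ (take $B=-M$, unimodular with $\det B=0$, and $C=A+\Delta B$, unimodular with $\det C=0$), $\circled{6}\Rightarrow\circled{8}$ (forget unimodularity of $B$), and $\circled{8}\Rightarrow\circled{7}$ (set $C=A+\Delta B$: the lemma gives $C\in Um$ while $\det C=0$ is the hypothesis) are all routine.

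The tail uses the same identity. For $\circled{9}\Rightarrow\circled{10}$, a $WZ_{\upsilon}$-point $(x,y,z,w)$ gives $C:=A-\Delta M$ with $\det C=\Delta\cdot 0=0$ and $C\equiv A\pmod{R\Delta}$. For $\circled{8}\Rightarrow\circled{9}$ I would pass to $(\xi,\eta,\zeta,\omega):=(-w,z,y,-x)$, which has $\xi\omega-\eta\zeta=\det B=0$ and $WZ$-value $P_{0}:=1+(aw+dx-bz-cy)$ satisfying $\Delta P_{0}=\det(A+\Delta B)=0$; then the correction $(\xi,\eta,\zeta,\omega)+P_{0}(a',b',c',d')$, where $aa'+bb'+cc'+dd'=1$, has $WZ$-value $0$ because every new term carries a factor $\Delta P_{0}=0$, giving $\circled{9}$. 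The same correction proves $\circled{10}\Rightarrow\circled{9}$ (hence the reduced equivalence): extract $(x_{0},y_{0},z_{0},w_{0})$ from $C=A-\Delta M$, note $\Delta P_{0}=\det C=0$, and correct as above.

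This leaves the genuine difficulty: closing the second block, i.e.\ recovering $\circled{5}$ from $\circled{7}$ (equivalently from $\circled{6}$ or $\circled{8}$). One must manufacture a true $Z_{\upsilon}$-point---a $(x,y,z,w)$ with $ax+by+cz+dw=1$ \emph{and} $xw-yz=0$---out of a unimodular, determinant-zero matrix $C$ congruent to $A$. The correction trick above restores the unimodular relation but destroys the vanishing of $xw-yz$, so unlike the $WZ_{\upsilon}$ case the quadratic constraint cannot be discarded. I expect this to be the main obstacle. The intended resolution is to exploit the unimodularity of $C$ (which is genuinely stronger than $\det C=0$ with $C\equiv A$) to perform the correction \emph{inside} the annihilator of $\Delta$, thereby preserving $xw-yz=0$; structurally this reflects that $Z_{\upsilon}$ is smooth of relative dimension $2$ (Theorem~\ref{TH7}(2)) and that the $\mathbb{G}_{m,Z_{\upsilon}}$-torsor $\mathrm{Spec}(SE_{\upsilon})\to\mathrm{Spec}(Z_{\upsilon})$ of Theorem~\ref{TH7}(4) governs precisely the passage between the simply-extendable statements and the $Z_{\upsilon}$-point statements.
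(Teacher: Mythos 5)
Your treatment of the outer layers is sound and in places cleaner than the paper's: the cycle $\circled{\textup{1}}\Rightarrow\circled{\textup{2}}\Rightarrow\circled{\textup{4}}\Rightarrow\circled{\textup{3}}\Rightarrow\circled{\textup{1}}$ is the paper's argument (the two identities of \cref{EQ8}) reorganized; your determinant identity $\det(A+\Delta B)=\Delta\bigl(1+aw+dx-bz-cy+\Delta\det B\bigr)$ is correct; your ``automatic unimodularity'' lemma is valid and yields $\circled{\textup{8}}\Rightarrow\circled{\textup{7}}$ more directly than the paper, which routes it through $\circled{\textup{8}}\Rightarrow\circled{\textup{5}}\Rightarrow\circled{\textup{6}}\Rightarrow\circled{\textup{7}}$; and the perturbation $(\xi,\eta,\zeta,\omega)\mapsto(\xi,\eta,\zeta,\omega)+P_{0}(a',b',c',d')$ does give $\circled{\textup{8}}\Rightarrow\circled{\textup{9}}$ and $\circled{\textup{10}}\Rightarrow\circled{\textup{9}}$, since every disturbed term of the $WZ$-polynomial carries the factor $\Delta P_{0}=0$. (If that last computation survives scrutiny it proves $\circled{\textup{10}}\Rightarrow\circled{\textup{9}}$ \emph{without} reducedness, which the authors state in Remark \ref{rem1}(3) that they do not know; I see no error, but you should check it against that remark before relying on it.)

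The genuine gap is exactly where you place it, and your proposed fix does not work. Closing the second block requires $\circled{\textup{8}}\Rightarrow\circled{\textup{5}}$ and $\circled{\textup{7}}\Rightarrow\circled{\textup{6}}$, and the ``correct inside the annihilator of $\Delta$'' idea fails because the quadratic constraint $xw-yz=0$ in $\circled{\textup{5}}$ is \emph{not} multiplied by $\Delta$: starting from a quadruple with $xw-yz=0$ and linear value $1-g_{0}$ where $\Delta g_{0}=0$, any correction by $g_{0}(a',b',c',d')$ restores the linear relation but changes $xw-yz$ by $g_{0}M+g_{0}^{2}N$, and $g_{0}\in\mathrm{Ann}(\Delta)$ gives no control over these terms. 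This is not a technicality. The paper's proof of $\circled{\textup{8}}\Rightarrow\circled{\textup{5}}$ first settles the case $\Delta\notin Z(R)$ by reversing the computation, then reduces the general case to a noetherian ring, lifts along the smooth map $R\rightarrow Z_{\upsilon}$ to replace $R$ by $R/N(R)$, splits $\mathrm{Min}\,R$ into the minimal primes where $\Delta$ vanishes and those where it does not, handles the first piece via the symmetric-algebra description of $Z_{\upsilon}$ from Theorem \ref{TH7}(5) (lifting a linear map on a rank-$2$ projective module) and the second via the non-zero-divisor case, and glues with a pullback square. The implication $\circled{\textup{7}}\Rightarrow\circled{\textup{6}}$ needs a further argument that your sketch does not even identify as a goal ($\circled{\textup{6}}$ demands that $B$ itself be unimodular of determinant zero): the paper writes $C=AD$ with $D=I_2+A^{*}B$, corrects $B$ so that $\det(D)=0$ while keeping $D$ unimodular, and then replaces $B$ by a matrix $B_{1}$ constructed from the splitting $R^{2}\cong K_{D}\oplus Q_{D}$. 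Invoking the smoothness of $Z_{\upsilon}$ and the $\mathbb{G}_{m,Z_{\upsilon}}$-torsor of Theorem \ref{TH7}(4) is a heuristic for why such an argument should exist, not a proof; as written, the central equivalences $\circled{\textup{5}}\Leftrightarrow\circled{\textup{6}}\Leftrightarrow\circled{\textup{7}}\Leftrightarrow\circled{\textup{8}}$ remain unproved.
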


\begin{proof}
See Example \ref{EX1} for the implication $\circled{\textup{1}}\Rightarrow \circled{\textup{2}}$. For $(e,f,s,t)\in R^4$ we have two identities 
\begin{equation}  \label{EQ8}
\det \left[ 
\begin{array}{ccc}
a & b & f \\ 
c & d & -e \\ 
-t & s & 0\end{array}\right] =(be+df)t+(ae+cf)s=a(es)+b(et)+c(fs)+d(ft).
\end{equation}The equivalence $\circled{\textup{2}}\Leftrightarrow \circled{\textup{3}}$
follows from the first identity. The equivalence $\circled{\textup{3}}\Leftrightarrow \circled{\textup{4}}$ follows from the second identity.
Clearly, $\circled{\textup{4}}\Rightarrow \circled{\textup{5}}$, $\circled{\textup{6}}\Rightarrow \circled{\textup{7}}\wedge\circled{\textup{8}}$, and $\circled{\textup{7}}\vee\circled{\textup{8}}\Rightarrow\circled{\textup{10}}$. 

To check $\circled{\textup{3}}\Rightarrow \circled{\textup{1}}$, let $(e,f)\in
R^2$ be such that there exists $(s,t)\in Um(R^{2})$ with $(ae+cf)s+(be+df)t=1$. Then $(e,f),(s,t)\in Um(R^{2})$ and thus there exists $M\in {SL}_{2}(R)$ whose first row is $[e\;f]$ and there exists $N\in {SL}_{2}(R)$ whose first column is $[s\;t]^{T}$. The matrix $MAN$ has
determinant $\det (A)$ and its $(1,1)$ entry is $1$, thus it is equivalent to
the matrix ${Diag}(1,\det (A))$ and statement $\circled{\textup{1}}$
holds.

The implication $\circled{\textup{5}}\Rightarrow \circled{\textup{6}}$
follows from the fact that if $x,y,z,w\in R$ are such that $ax+by+cz+dw=1$
and $xw-yz=0$, then for $\delta :=ad-bc$ and $B:=\left[ 
\begin{array}{cc}
-w & z \\ 
y & -x\end{array}\right] \in Um(\mathbb{M}_{2}(R))$, the determinant of the matrix 
$$C:=A+\delta
B=\left[ 
\begin{array}{cc}
a-(ad-bc)w & b+(ad-bc)z \\ 
c+(ad-bc)y & d-(ad-bc)x\end{array}\right] $$ is equal to 
\begin{equation}
\delta (1-ax-by-cz-dw)+\delta ^{2}(xw-zy)=0  \label{EQ9}
\end{equation}and the relation $C\in Um(\mathbb{M}_{2}(R))$ follows from the identity 
\begin{equation*}
x(a-\delta w)+y(b+\delta z)+z(c+\delta y)+w(d-\delta x)=ax+by+cz+dw-2\delta
(xw-yz)=1.
\end{equation*}

If $\delta\notin Z(R)$, then the arguments of the
previous paragraph can be reversed and hence it follows that $\circled{\textup{8}}\Rightarrow \circled{\textup{5}}$.

To check that $\circled{\textup{8}}\Rightarrow \circled{\textup{5}}$ in
general, by replacing $R$ with a finitely generated $\mathbb{Z}$-subalgebra $S$
of $R$ such that we have $A,B\in Um(\mathbb{M}_{2}(S))$, we can
assume that $R$ is noetherian. Thus the set ${Min}R=\{\mathfrak{p}
_{1},\ldots ,\mathfrak{p}_{j}\}$ has a finite number of elements $j\in\mathbb N$. Statement $\circled{\textup{5}}$ holds if and only of there exists an $R$-algebra epimorphism (retraction) $Z_{\upsilon}\rightarrow R$. As the homomorphism $R\rightarrow Z_{\upsilon }$ is smooth (see Theorem \ref{TH7}(2)), each $R$-algebra homomorphism $Z_{\upsilon }\rightarrow R/N(R)$ lifts to an $R$-algebra epimorphism $Z_{\upsilon }\rightarrow R$. Hence, by replacing $R$ with $R/N(R)$, we can assume that $N(R)=\cap
_{i=1}^{j}\mathfrak{p}_{i}=0$. Let $\delta _{i}$ be the image of $\delta $
in $R/\mathfrak{p}_{i}$. Based on Theorem \ref{TH7}(5) we can assume that $\delta \neq 0$, and hence there exists an $i\in \{1,\ldots ,j\}$ such that $\delta _{i}\neq 0$. We can assume that the indexing of the minimal prime
ideals is such that there exists $j^{\prime }\in \{1,\ldots ,j\}$ such that $\delta _{1},\ldots ,\delta _{j^{\prime }}$ are all nonzero and $\delta
_{j^{\prime }+1},\ldots \delta _{j}$ are all zero. If $\mathcal I_{1}:=\cap
_{i=1}^{j^{\prime }}\mathfrak{p}_{i}$ and 
$\mathcal I_{2}:=\cap _{i=j^{\prime }+1}^{j}\mathfrak{p}_{i}$, we have 
$\mathcal I_{1}\cap \mathcal I_{2}=0$ and $\delta\in \mathcal I_2$. As the 
image of $\delta $ in $R/\mathcal I_{1}$ is not a zero divisor, from the previous paragraph 
it follows that there 
exists an $R$-algebra homomorphism $h_{1}:Z_{\upsilon }\rightarrow R/\mathcal I_{1}$; 
let $h_{1,2}:Z_{\upsilon
}\rightarrow R/(\mathcal I_{1}+\mathcal I_{2})$ be induced by $h_{1}$. As $A$ modulo $\mathcal I_2$ has zero determinant, $Z_{\upsilon
}/\mathcal I_{2}Z_{\upsilon }$ is the symmetric algebra of a projective $R/\mathcal I_{2}$-module $Q_{2}$ of rank $2$ (see Theorem \ref{TH7}(5)), hence the $R$-algebra homomorphism $h_{1,2}$ is
uniquely determined by an $R/\mathcal I_{2}$-linear map $l_{1,2}:Q_{2}\rightarrow
R/(\mathcal I_{1}+\mathcal I_{2})$. If $l_{2}:Q_{2}\rightarrow R/\mathcal I_{2}$ is an $R/\mathcal I_{2}$-linear
map that lifts $l_{1,2}$ and if $h_{2}:Z_{\upsilon }\rightarrow R/\mathcal I_{2}$ is
the $R$-algebra homomorphism uniquely determined by $l_2$, then $h_{2}$
lifts $h_{1,2}$. As $\mathcal I_{1}\cap \mathcal I_{2}=0$, the natural diagram
\begin{equation*}
\xymatrix@R=10pt@C=16pt{
R \ar[r] \ar[d] & R/\mathcal I_1 \ar[d]\\
R/\mathcal I_2 \ar[r] & R/(\mathcal I_1+\mathcal I_2)\\}
\end{equation*} is a pullback, hence there
exists a unique $R$-algebra homomorphism $Z_{\upsilon }\rightarrow R$ that
lifts both $h_{1}$ and $h_{2}$. Thus statement $\circled{\textup{5}}$ holds.

We conclude that statements $\circled{\textup{5}}$, $\circled{\textup{6}}$ and $\circled{\textup{8}}$ are equivalent. Clearly, $\circled{\textup{5}}\Rightarrow\circled{\textup{9}}$, hence $\circled{\textup{8}}\Rightarrow\circled{\textup{9}}$.

The implication $\circled{\textup{9}}\Rightarrow\circled{\textup{10}}$
follows from the fact that if $(x,y,z,w)\in R^4$ is such that $1-ax-by-cz-dw+(ad-bc)(xw-yz)=0$, then Equation (\ref{EQ9}) holds, thus, with 
$C:=A+\delta \left[ 
\begin{array}{cc}
-w & z \\ 
y & -x\end{array}\right]$ as above, we have $\det(C)=0$. Note that the entries of $C$ satisfy the 
linear identity
\begin{equation}\label{EQ10}
\begin{split}
x(a-\delta w)+y(b+\delta z)+z(c+\delta
y)+w(d-\delta x) \\
=ax+by+cz+dw-2\delta(xw-yz)=1-\delta(xw-yz).
\end{split}
\end{equation}

If $R$ is reduced (i.e., $N(R)=0$), the proof that $\circled{\textup{10}}\Rightarrow\circled{\textup{9}}$ is similar to the proof of the implication $\circled{\textup{8}}\Rightarrow\circled{\textup{5}}$ in the case when $N(R)=0$.

We are left to prove that $\circled{\textup{7}}\Rightarrow\circled{\textup{6}}$. As $\circled{\textup{5}}\Leftrightarrow\circled{\textup{6}}$, as above we argue that it suffices to prove that $\circled{\textup{7}}\Rightarrow\circled{\textup{6}}$ holds when $R$ is noetherian and reduced.

Recall that $\delta=\det(A)$ and we write $C=A+\delta B=AD$, where 
$D:=I_2+A^*B$ with $A^*\in\mathbb M_2(R)$ being the adjugate of $A$. 

As $C=AD\in Um(\mathbb M_2(R))$ we have $D\in Um(\mathbb M_2(R))$. We check that the converse holds as well, i.e., if $E\in Um(\mathbb M_2(R))$ 
and $AE$ is congruent to $A$ modulo $R\delta$, then $AE\in Um(\mathbb M_2(R))$. 
Let $J$ be the ideal of $R$ generated by the entries of $AE$. To prove that $J=R$ 
it suffices to show that if $\mathfrak m\in {Max}R$, then $J\not\subseteq\mathfrak m$. 
Clearly this holds if $\delta\in\mathfrak m$. But if $\delta\notin\mathfrak m$, then $A$ modulo $\mathfrak m$ is invertible, hence $AE$ modulo $\mathfrak m$ is nonzero 
as this is so for $E$ modulo $\mathfrak m$ and it follows that $J\not\subseteq\mathfrak m$.

As $\circled{\textup{7}}\Rightarrow\circled{\textup{10}}$, and as the proof of $\circled{\textup{10}}\Rightarrow\circled{\textup{9}}$ is similar to the proof of the implication $\circled{\textup{8}}\Rightarrow\circled{\textup{5}}$, it follows that there exists $B^{\prime}\in\mathbb M_2(R)$ such that for $D^{\prime}:=I_2+A^*B^{\prime}$ we have $\det(D')=0$ and $B^{\prime}$ and $B$ are congruent modulo each $\mathfrak m\in {Max}R$ which does not contain $\delta$ (see the above part involving $\delta\in I_2$). Thus, if $J^{\prime}$ is the ideal of $R$ generated by the entries of $D^{\prime}$, then $J^{\prime}$ is not contained in a $\mathfrak m\in {Max}R$ which does not contain $\delta$, and from Equation (\ref{EQ10}) it follows that $J^{\prime}$ is not contained in any maximal ideal which does not contain $1-\delta\det(B^{\prime})$. Hence $J^{\prime}$ is not contained in any $\mathfrak m\in {Max}R$, therefore $J^{\prime}=R$, i.e., $D^{\prime}$ is unimodular.

From the last two paragraphs it follows that, by replacing $(B,D)$ with $(B^{\prime},D^{\prime})$, we can assume that $\det(D)=0$. As $D=I_2+A^*B$ has zero determinant, it follows that $B\in Um(\mathbb M_2(R))$. 

Thus to complete the proof that $\circled{\textup{7}}\Rightarrow\circled{\textup{6}}$, it suffices to show that we can replace $B$ by a matrix $B_1\in Um(\mathbb M_2(R))$ with $\det(B_1)=0$ and such that for $D_1:=I_2+A^*B_1$ we have $\det(D_1)=0$ and $D_1\in Um(\mathbb M_2(R))$ (so $C_1:=AD_1$ is congruent to $A$ modulo $R\delta$, is unimodular by the above converse, and $\det(C_1)=0$). Recall that $K_D$ and $Q_D$ are projective $R$-modules of rank $1$ and the short exact sequence $0\rightarrow K_D\rightarrow R^2\rightarrow Q_D\rightarrow 0$ splits, i.e., it has a section $s_D:Q_D\rightarrow R^2$ (see Subsection \ref{S25}). Let $B_1\in\mathbb M_2(R)$ be the unique matrix such that $K_{D}\subseteq K_{B_1-B}$ and $s_D(Q_D)\subseteq K_{B_1}$. As $K_D$ is a direct summand of $R^2$ of rank $1$ and as for all $t\in K_{D}$ we have $A^*B_1(t)=A^*B(t)=-t$, it follows firstly that $K_D\subseteq K_{D_1}$, secondly that
$$Q_{B_1}=B_1(K_D)=B(K_D)$$ is a direct summand of $R^2$ of rank $1$ isomorphic to $K_D$, and thirdly that $K_{B_1}$ is a projective $R$-module of rank $1$ which is a direct summand of $R^2$; as
$$Q_{D_1}=D_1(s_D(Q_D))=s_D(Q_D)\subseteq K_{B_1},$$
it follows that $K_{B_1}=s_D(Q_D)$. We conclude that $B_1,D_1\in Um(\mathbb M_2(R))$ and $\det(B_1)=\det(D_1)=0$.
\end{proof}

\begin{remark}\label{rem1}
\textbf{(1)} As $\circled{\textup{2}}\Leftrightarrow \circled{\textup{3}}$ and as a $2\times 2$ matrix has a
(simple) extension iff its transpose has it, it follows that
statement $\circled{\textup{3}}$ holds iff there exists $(e^{\prime
},f^{\prime })\in R^2$ such that $(ae^{\prime }+bf^{\prime },ce^{\prime
}+df^{\prime })\in Um(R^{2})$.

\smallskip \textbf{(2)} The matrix $A$ is extendable (simply extendable) iff the $R$-algebra homomorphism $R\rightarrow E_{\upsilon }$ (resp.\ $R\rightarrow SE_{\upsilon }$) has a retraction. Similarly, statement $\circled{\textup{9}}$ holds iff the $R$-algebra homomorphism $R\rightarrow WZ_{\upsilon }$ has a retraction. 

\smallskip \textbf{(3)} We do not know if the implication $\circled{\textup{10}}\Rightarrow\circled{\textup{9}}$ is true when $R$ is not reduced. Modulo this, Theorem \ref{TH8} is optimal. This is so, as in general 
statement \circled{\textup{7}} does not imply \circled{\textup{4}}
 (see Theorem \ref{TH4} for the case of Dedekind domains which are not 
 \textsl{PID}s) and statement \circled{\textup{9}} does not imply 
 \circled{\textup{7}} even if $R$ is regular (see Example \ref{EX14} that involves, based on Theorem \ref{TH9}(2), a regular ring $\mathcal E_2$ of dimension $9$). 

\smallskip \textbf{(4)} Assume that $R$ is a ${WJ}_{2,1}$ ring. As $(a,b,c,d)\in Um(R^{4})$, the equation $1-ax-by-cz-dw=0$ has
solutions in $R^{4}$, thus from the definition of a ${WJ}_{2,1}$ ring it follows 
that statement $\circled{\textup{5}}$ holds.

\smallskip \textbf{(5)} If $A^+=\left[ 
\begin{array}{ccc}
a & b & f \\ 
c & d & -e \\ 
-t & s & 0\end{array}\right]$ is a simple extension of $A=\left[ 
\begin{array}{cc}
a & b \\ 
c & d\end{array}\right]\in Um(\mathbb{M}_2(R))$, then the characteristic polynomial $\chi_{A^+}$ of $A^+$ is of the form 
\begin{equation*}
x^3-{Tr}(A)x^2+\nu_{A^+}x-1
\end{equation*}
(see Section \ref{S1} for ${Tr}(A)=a+d$ and $\nu_{A^+}=\det(A)+es+ft$).
Thus the set of characteristic polynomials of simple extensions of $A$ is
in bijection to the subset $\nu_A\subseteq R$ introduced in Section \ref{S1}
and we have 
\begin{equation*}
\nu_A=\{\det(A)+es+ft|(e,f,s,t)\in R^4,\;a(es)+b(et)+c(fs)+d(ft)=1\}.
\end{equation*}
We include simple examples to show that $\nu_A$ can be an affine subset that is or is not an ideal of $R$. If $R=\mathbb{Z}$, $A={Diag}(7,11)$, $\{(8+11k,-5-7k)|k\in\mathbb{Z}\}$ is the solution set 
of the equation $7es+11ft=1$ in 
$es$ and $ft$ and $\nu_A=4\mathbb{Z}$. Also, $\nu_{Diag(1,d)}=2+R(d-1)$. 

\smallskip \textbf{(6)} For each $\mathfrak m\in {Max}R$, the image of $A$ in $Um(\mathbb M_2(R_{\mathfrak m}))$ is simply extendable (for instance, see Corollary \ref{C6}(2)) below). Thus, if $ab=bc$ and $Z_{\upsilon}$ is a polynomial $R$-algebra in $2$ indeterminates, Quillen Patching Theorem (see \cite{qui}, Thm. 1) implies (cf. last part of the proof of Theorem \ref{TH7}(5)) that the $\mathbb{G}_{m,Z_{\upsilon}}$-torsor of Theorem \ref{TH7}(4) is the pullback of a $\mathbb{G}_{m,R}$-torsor.
\end{remark}

\begin{example}
\normalfont\label{EX8} We check elementarily that each B\'{e}zout domain is
a $\Pi _{2}$ domain. Let $A\in Um(\mathbb{M}_{2}(R))$ with $\det (A)=0$. As $A$
is equivalent to an upper triangular matrix, based on Lemma \ref{L1}(3) we can
assume that $A=\left[ 
\begin{array}{cc}
a & b \\ 
0 & d\end{array}\right] $ is upper triangular with $ad=0$ and we will only assume that a
certain divisor of $b$ is not a zero divisor. Let $p,q,r\in R$ be such that $1=ap+bq+dr$. We have $(1-ap)(1-dr)=bq$ and hence, if either $\gcd (b,1-ap)$
or $\gcd (b,1-dr)$ is not a zero divisor, there exists $(e,s,t,v)\in R^4$ such
that we can write $b=tv$, $1-ap=st$, $1-dr=ev$. Then $a=a(1-dr)=aev$ and $be=evt=t-dtr$, hence $be+dtr=t$. Thus the ideal $Rae+R(be+dtr)$ contains $t$
and $a$, hence it is $R$ as $1=ap+st$. From this and Theorem \ref{TH8} (applied to $f:=tr$) it follows that $A$ is simply extendable.
\end{example}

\subsection{Applications to ${SE}_2$ and ${E}_2$ rings}

\label{S33}

\begin{corollary}
\label{C6} \textbf{(1)} The ring $R$ is an ${SE}_2$ ring iff for
each $(a,b,c,d)\in Um(R^{4})$ the equivalent statements $\circled{\textup{1}}
$, $\circled{\textup{2}}$, $\circled{\textup{3}}$ and $\circled{\textup{4}}$
hold.

\smallskip \textbf{(2)} Each semilocal ring is an ${SE}_2$ ring.
\end{corollary}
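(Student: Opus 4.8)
The plan for part (1) is to read it off directly from Theorem \ref{TH8}. Under the identification $\mathbb{M}_2(R)\cong R^4$ sending $A$ to its entry quadruple, a matrix $A$ lies in $Um(\mathbb{M}_2(R))$ exactly when $(a,b,c,d)\in Um(R^4)$, and statement $\circled{\textup{2}}$ is by definition the assertion that $A$ is simply extendable. Thus $R$ is an $SE_2$ ring iff $\circled{\textup{2}}$ holds for every $(a,b,c,d)\in Um(R^4)$, and the chain of equivalences $\circled{\textup{1}}\Leftrightarrow\circled{\textup{2}}\Leftrightarrow\circled{\textup{3}}\Leftrightarrow\circled{\textup{4}}$ supplied by Theorem \ref{TH8} immediately upgrades this to the requirement that all four statements hold. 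No extra work is needed here.

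For part (2) I would prove that every $(a,b,c,d)\in Um(R^4)$ satisfies statement $\circled{\textup{3}}$ and then invoke part (1). First I would record that, $R$ being semilocal with maximal ideals $\mathfrak{m}_1,\dots,\mathfrak{m}_k$, the Chinese Remainder Theorem gives $R/J(R)\cong\prod_{i=1}^k k_i$ with each $k_i:=R/\mathfrak{m}_i$ a field. Since property (3) of Subsection \ref{S21} lets unimodularity of a pair be tested modulo $J(R)$, it suffices to find $(e,f)\in R^2$ whose reduction makes $(ae+cf,be+df)$ unimodular over $R/J(R)$. Interpreting $(ae+cf,be+df)$ as $A^{T}$ applied to the column $(e,f)^{T}$, in each residue field $k_i$ the reduction $A_i$ is nonzero (as $(a,b,c,d)\in Um(R^4)$), so $A_i^{T}$ is a nonzero endomorphism of $k_i^2$ and some standard basis vector $(\bar e_i,\bar f_i)$ lies outside $\ker A_i^{T}$. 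Assembling these witnesses across the product $\prod_i k_i$ and lifting produces the desired $(e,f)\in R^2$: in each factor the image is nonzero, hence unimodular over the field $k_i$, so the whole pair is unimodular over $R/J(R)$ and therefore over $R$.

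The step carrying the real content is the passage from ``nonzero'' to ``unimodular'', and the semilocal hypothesis is exactly what makes it painless: because $R/J(R)$ decomposes as a finite product of fields, I can select the witness $(e,f)$ independently in each $k_i$ rather than having to find a single vector avoiding the left kernels of finitely many matrices inside one field (which could fail over very small fields). I do not anticipate a genuine obstacle; the only point to watch is that the finiteness of $\mathrm{Max}\,R$ is essential, since it is what turns $R/J(R)$ into the finite product of fields on which the componentwise choice and subsequent assembly are legitimate.
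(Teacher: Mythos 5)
Your proof is correct and follows essentially the same route as the paper: part (1) is read off from Theorem \ref{TH8}, and part (2) verifies statement $\circled{\textup{3}}$ by exploiting the finitely many maximal ideals of a semilocal ring. In fact your careful formulation --- choosing $(e,f)$ residue field by residue field via $R/J(R)\cong\prod_i k_i$ and concluding that the \emph{pair} $(ae+cf,be+df)$ is unimodular --- is the more robust one, since the paper's one-line justification that one of the two elements can itself be made a unit can fail (e.g., $a=(1,0)$, $d=(0,1)$, $b=c=0$ in a product of two fields), even though the pair is still unimodular there.
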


\begin{proof}
Part (1) follows from definitions and Theorem \ref{TH8}. Part (2)
follows from part (1) and the fact that, as $R$ is semilocal, for each $(a,b,c,d)\in Um(R^{4})$ there exists $(e,f)\in R^2$ such that the
intersection $U(R)\cap \{ea+cf,be+df\}$ is nonempty.
\end{proof}

\begin{corollary}
\label{C7} Let $(a,b,c,d)\in Um(R^{4})$. The following are equivalent:

\medskip \textbf{(1)} The matrix $\left[ 
\begin{array}{cc}
a & b \\ 
c & d\end{array}\right] $ is extendable.

\smallskip \textbf{(2)} There exists $(e,f)\in R^2$ such that $(ae+cf,be+df,ad-bc)\in Um(R^3)$.

\medskip 
Similarly, (1) with simply extendable is equivalent to (2) with $(e,f)\in Um(R^2)$. Moreover, 
if $sr(R)\le 2$, then the extendable and simply extendable properties on a matrix in $\mathbb M_2(R)$ are equivalent.
\end{corollary}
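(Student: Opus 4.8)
The plan is to reduce everything to Theorem \ref{TH8} and Lemma \ref{L1}. Throughout write $\delta:=ad-bc=\det(A)$, $\bar R:=R/R\delta$, and, for a candidate pair $(e,f)$, set $w_1:=ae+cf$ and $w_2:=be+df$.

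For the equivalence $(1)\Leftrightarrow(2)$, I would first invoke Lemma \ref{L1}(1): $A$ is extendable iff its reduction $\bar A$ modulo $R\delta$ is simply extendable. Over $\bar R$ the quadruple $(\bar a,\bar b,\bar c,\bar d)$ is unimodular and $\det\bar A=0$, so Theorem \ref{TH8} (equivalence $\circled{\textup{2}}\Leftrightarrow\circled{\textup{3}}$) says $\bar A$ is simply extendable iff there is $(\bar e,\bar f)$ with $(\bar w_1,\bar w_2)\in Um(\bar R^2)$. Translating this unimodularity over $\bar R$ back to $R$ is exactly the statement that $Rw_1+Rw_2+R\delta=R$, i.e.\ $(w_1,w_2,\delta)\in Um(R^3)$, which is statement $(2)$. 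The converse is the same computation read backwards, by reducing a witness of $(2)$ modulo $R\delta$. This settles $(1)\Leftrightarrow(2)$.

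For the simply extendable version I would argue the two implications separately. The forward one is immediate: by Theorem \ref{TH8} ($\circled{\textup{2}}\Leftrightarrow\circled{\textup{3}}$ over $R$), simple extendability of $A$ yields $(e,f)$ with $(w_1,w_2)\in Um(R^2)$, and since $w_1,w_2\in Re+Rf$ this already forces $(e,f)\in Um(R^2)$, whence $(w_1,w_2,\delta)\in Um(R^3)$. The converse is the crux. Given $(e,f)\in Um(R^2)$ with $(w_1,w_2,\delta)\in Um(R^3)$, I would complete $(e,f)$ to a matrix $N\in SL_2(R)$ with first column $[e\ f]^T$ and set $B:=A^TN$. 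Then $B$ is equivalent to $A^T$, its first column is $[w_1\ w_2]^T$, and $\det B=\det A^T=\delta$. The key observation is that the determinant of a $2\times 2$ matrix is an $R$-linear combination of the entries of its first column, so $\delta\in Rw_1+Rw_2$; combined with $(w_1,w_2,\delta)\in Um(R^3)$ this gives $Rw_1+Rw_2=R$. Hence the first column of $B$ is unimodular, so $B$ is equivalent to $\Diag(1,\delta)=\Diag(1,\det B)$, i.e.\ $B$ satisfies statement $\circled{\textup{1}}$ of Theorem \ref{TH8} and is therefore simply extendable; by Lemma \ref{L1}(3) so are $A^T$ and $A$. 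The step where the hypothesis $(e,f)\in Um(R^2)$ is genuinely used — and which separates this case from the merely extendable one — is precisely the completion of $(e,f)$ to an element of $SL_2(R)$; I expect this to be the main conceptual point of the argument.

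Finally, for the ``moreover'' part, simple extendability always implies extendability, so only the reverse needs $sr(R)\le 2$. From $A$ extendable I would again pass to $\bar A$, which is simply extendable over $\bar R$ and hence admits a witness $(\bar e,\bar f)\in Um(\bar R^2)$ with $(\bar w_1,\bar w_2)\in Um(\bar R^2)$. Since $sr(R)\le 2$, Fact \ref{F1} (with $n=2$ and $a=\delta$) makes the reduction $Um(R^2)\to Um(\bar R^2)$ surjective, so I can lift $(\bar e,\bar f)$ to some $(e,f)\in Um(R^2)$; the corresponding $(w_1,w_2)$ are then unimodular modulo $R\delta$, i.e.\ $(w_1,w_2,\delta)\in Um(R^3)$. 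This is exactly statement $(2)$ realized by a \emph{unimodular} witness, so the equivalence just proved for the simply extendable version shows $A$ is simply extendable. The one subtle step here is this lifting: it is what upgrades the arbitrary witness $(e,f)$ provided by extendability into the unimodular witness demanded by simple extendability, and it is the only place where the stable range bound enters.
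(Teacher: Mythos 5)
Your proof is correct, and it follows the same overall skeleton as the paper's (reduce everything to Lemma \ref{L1} and Theorem \ref{TH8}, then use the stable range hypothesis to upgrade a witness), but the crux --- deducing $(ae+cf,be+df)\in Um(R^2)$ from $(e,f)\in Um(R^2)$ together with $(ae+cf,be+df,ad-bc)\in Um(R^3)$ --- is handled by a genuinely different mechanism. The paper argues locally: for each $\mathfrak m\in \textup{Max}\,R$ it splits on whether $ad-bc\in\mathfrak m$, using invertibility of $A$ modulo $\mathfrak m$ in the second case. You instead complete $(e,f)$ to $N\in SL_2(R)$ and read off the global identity $(ad-bc)(eh-fg)=(ae+cf)(bg+dh)-(be+df)(ag+ch)$, which shows directly that $ad-bc\in R(ae+cf)+R(be+df)$ and hence that the triple's unimodularity collapses to that of the pair. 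Your identity-based argument is more explicit and avoids localization; the paper's is more routine but needs no completion to $SL_2(R)$. One small economy you missed: once $(ae+cf,be+df)\in Um(R^2)$ is established, this \emph{is} statement \circled{\textup{3}} for $A$ itself, so Theorem \ref{TH8} gives simple extendability immediately --- the detour through $B=A^TN$, statement \circled{\textup{1}} and Lemma \ref{L1}(3) re-proves the implication $\circled{\textup{3}}\Rightarrow\circled{\textup{1}}$ that Theorem \ref{TH8} already contains. For the final assertion, your use of Fact \ref{F1} to lift $(\bar e,\bar f)$ and the paper's direct application of the stable range definition to the triple $(e',f',ad-bc)$ are two phrasings of the same step.
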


\begin{proof}
The 'extendable' case and the implication $(1)\Rightarrow (2)$ in the 'simply extendable' case follow from Lemma \ref{L1}(1) and Theorem \ref{TH8}. We check that if there exists $(e,f)\in Um(R^2)$ such that $(ae+cf,be+df,ad-bc)\in Um(R^3)$, then $A$ is simply extendable. Based on Theorem \ref{TH8} it suffices to show
that we have $(ae+cf,be+df)\in Um(R^{2})$. Let $I:=R(ae+cf)+R(be+df)$ and $\mathfrak{m}\in {Max}R$. If $ad-bc\in \mathfrak{m}$, then $(ae+cf,be+df,ad-bc)\in Um(R^{3})$ implies that 
$I\not\subseteq \mathfrak{m}$. If $ad-bc\notin \mathfrak{m}$, then $A$ modulo $\mathfrak{m}$ is invertible, hence $I\subseteq \mathfrak{m}$ iff $Re+Rf\subseteq \mathfrak{m}$; from this and $(e,f)\in Um(R^{2})$ we infer that 
$I\not\subseteq \mathfrak{m}$. As $I$ is not contained in any maximal ideal of 
$R$, it follows that $(ae+cf,be+df)\in Um(R^{2})$.

To complete the proof it suffices to show that if $sr(R)\le 2$, each extendable $\left[ 
\begin{array}{cc}
a & b \\ 
c & d\end{array}\right]\in Um(\mathbb M_2(R))$ is simply extendable. 
As $(1)\Leftrightarrow (2)$, let $(e^{\prime},f^{\prime})\in R^2$ be such that 
$(ae^{\prime}+cf^{\prime},be^{\prime}+df^{\prime},ad-bc)\in Um(R^3)$. 
Thus $(e^{\prime},f^{\prime},ad-bc)\in Um(R^3)$. As $sr(R)\le 2$, there exists $(q,r)\in R^2$ such that 
$$(e,f):=\bigl(e^{\prime}+q(ad-bc),f^{\prime}+r(ad-bc)\bigr)\in
Um(R^2).$$ 
As the ideals of $R$ generated by $ae+cf,be+df,ad-bc$ and by $ae^{\prime}+cf^{\prime},be^{\prime}+df^{\prime},ad-bc$ are equal, 
we have $(ae+cf,be+df,ad-bc)\in Um(R^3)$. Thus from the previous paragraph it follows that $A$ is 
simply extendable.
\end{proof}

\begin{corollary}
\label{C8} The following properties hold:

\medskip \textbf{(1)} The ring $R$ is an ${E}_2$ ring iff for
each $(a,b,c,d)\in Um(R^{4})$ there exists $(e,f)\in R^2$ such that $(ae+cf,be+df,ad-bc)\in Um(R^3)$.

\smallskip \textbf{(2)} The ring $R$ is an ${SE}_2$ ring iff for
each $(a,b,c,d)\in Um(R^{4})$ there exists $(e,f)\in Um(R^2)$ such that $(ae+cf,be+df,ad-bc)\in Um(R^3)$.

\smallskip \textbf{(3)} If $sr(R)\leq 2$, then $R$ is an $SE_{2}$
ring iff $R$ is an $E_{2}$ ring.
\end{corollary}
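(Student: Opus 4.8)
The plan is to deduce all three parts directly from Corollary \ref{C7} together with the definitions of $E_2$ and $SE_2$ rings; the essential work has already been carried out in Corollary \ref{C7}, so essentially no new computation is required.

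For part (1), I would recall that by Definition \ref{def2}(2) the ring $R$ is an $E_2$ ring iff $\Theta_R$ is surjective, i.e. (by Definition \ref{def1}) iff every $A\in Um(\mathbb{M}_2(R))$ is extendable. Writing the entries of such an $A$ as $(a,b,c,d)\in Um(R^4)$, the equivalence $(1)\Leftrightarrow(2)$ of Corollary \ref{C7} states that $A$ is extendable iff there exists $(e,f)\in R^2$ with $(ae+cf,be+df,ad-bc)\in Um(R^3)$. Quantifying this equivalence over all $A\in Um(\mathbb{M}_2(R))$, equivalently over all $(a,b,c,d)\in Um(R^4)$, gives part (1).

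For part (2), the same argument applies verbatim, now invoking the ``simply extendable'' half of Corollary \ref{C7}: the ring $R$ is an $SE_2$ ring iff every $A\in Um(\mathbb{M}_2(R))$ is simply extendable, and loc. cit. asserts that this holds iff there exists a pair $(e,f)\in Um(R^2)$ with $(ae+cf,be+df,ad-bc)\in Um(R^3)$. The only point requiring attention is that, compared with part (1), the pair $(e,f)$ must now be unimodular rather than arbitrary; quantifying over all $(a,b,c,d)\in Um(R^4)$ then yields part (2).

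Finally, part (3) is immediate from the closing assertion of Corollary \ref{C7}, according to which, when $sr(R)\le 2$, a matrix in $\mathbb{M}_2(R)$ is extendable iff it is simply extendable. Hence the two conditions ``every matrix in $Um(\mathbb{M}_2(R))$ is extendable'' and ``every matrix in $Um(\mathbb{M}_2(R))$ is simply extendable'' coincide, i.e. $R$ is an $E_2$ ring iff it is an $SE_2$ ring. Since every ingredient has already been established, there is no genuine obstacle here; the only care needed is in correctly matching the two flavors of the criterion in Corollary \ref{C7} (arbitrary $(e,f)\in R^2$ versus unimodular $(e,f)\in Um(R^2)$) to the two classes of rings.
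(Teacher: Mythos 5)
Your proposal is correct and coincides with the paper's own proof, which likewise derives all three parts directly from the definitions of $E_2$ and $SE_2$ rings together with the corresponding three assertions of Corollary \ref{C7}. No issues.
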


\begin{proof}
All three parts follow directly from definitions and the corresponding three sentences of Corollary \ref{C7}.
\end{proof}

\begin{example}
\normalfont\label{EX9} Let $A=\left[ 
\begin{array}{cc}
a & b \\ 
c & d\end{array}\right]\in Um (\mathbb{M}_2(R))$. In many simple cases one can easily
prescribe $(e,f)\in R^{2}$ such that statement $\circled{\textup{3}}$ holds,
and hence conclude that $A$ is simply extendable. We include four such cases as follows.

\textbf{(1)} If $\{a,b,c,d\}\cap U(R)\neq \emptyset $, then we can take $(e,f)\in R^2$
such that $\{e,f\}=\{0,1\}$. E.g., if $a\in U(R)$, then $\left[ 
\begin{array}{ccc}
a & b & 0 \\ 
c & d & -1 \\ 
0 & a^{-1} & 0\end{array}\right] $ is a simple extension of $A$.

\textbf{(2)} If $\{(a,b),(a,c),(b,d),(c,d)\}\cap Um(R^{2})\neq \emptyset $,
then we can take $(e,f,e^{\prime },f^{\prime })\in R^4$ such that $1\in
\{ae+cf,be+df,ae^{\prime }+bf^{\prime },ce^{\prime }+df^{\prime }\}$. E.g., 
if $(a,b)\in Um(R^{2})$ and $s,t\in R$ are such that $as+bt=1$,
then $\left[ 
\begin{array}{ccc}
a & b & 0 \\ 
c & d & -1 \\ 
-t & s & 0\end{array}\right] $ is a simple extension of $A$.

\textbf{(3)} If at least two of the entries $a$, $b$, $c$ and $d$ are in $J(R)$ (e.g., they are $0$), then we can take $e,f$ such that $1\in
\{ae+bf,ce+df,ae+bf+ce+df\}$. E.g., if $c=d=0$ and $s,t\in R$ are
such that $as+bt=1$, then $\left[ 
\begin{array}{ccc}
a & b & 0 \\ 
0 & 0 & -1 \\ 
-t & s & 0\end{array}\right] $ is a simple extension of $A$. Similarly, if $b=c=0$ and $e,f\in R$
are such that $ae+df=1$, then $\left[ 
\begin{array}{ccc}
a & 0 & f \\ 
0 & d & -e \\ 
-1 & 1 & 0\end{array}\right] $ is a simple extension of $A$.

\textbf{(4)} If $a,b,c,d,f,q\in R$ are such that $aq+df=1$, then a simple
extension of $\left[ 
\begin{array}{cc}
a & ab \\ 
ac & d\end{array}\right] $ is $\left[ 
\begin{array}{ccc}
a & ab & f \\ 
ac & d & -q+cf(1-b) \\ 
-1 & 1-b & 0\end{array}\right] $.
\end{example}

\subsection{Applications to upper triangular matrices}

\label{S34}

In three of the last four examples, the $(2,3)$ entry of the simple extensions is $-1 $, i.e., we can choose $e=1$. Such extensions relate to
stable ranges 1 and 1.5, as the following result shows.

\begin{corollary}
\label{C9} The following properties hold:

\medskip \textbf{(1)} We have $sr(R)=1$ iff each upper triangular matrix $A\in Um(\mathbb{M}_{2}(R))$ has a simple extension $A^{+} $ whose $(2,3)$
entry is $-1$.

\smallskip \textbf{(2)} We have $fsr(R)=1.5$ iff each upper 
triangular matrix $A\in Um(\mathbb{M}_{2}(R))$ with nonzero $(1,1)$ entry has 
a simple extension $A^{+} $ whose $(2,3)$ entry is $-1$.

\smallskip \textbf{(3)} We have $asr(R)=1$ iff each upper 
triangular matrix $A\in Um(\mathbb{M}_{2}(R))$, whose $(1,1)$ entry does not belong to $J(R)$, has  a simple extension $A^{+} $ whose $(2,3)$ entry is $-1$.
\end{corollary}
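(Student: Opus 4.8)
The plan is to collapse all three equivalences to a single statement about the stable ranges of the principal quotient rings $R/Ra$, where $a$ is the $(1,1)$-entry of the triangular matrix. First I would pin down the shape of a simple extension. If $A=\begin{bmatrix} a & b\\ 0 & d\end{bmatrix}\in Um(\mathbb{M}_2(R))$ — equivalently $(a,b,d)\in Um(R^3)$ — then a simple extension is a matrix $\begin{bmatrix} a & b & f\\ 0 & d & -e\\ -t & s & 0\end{bmatrix}$ of determinant $1$, whose determinant equals $(be+df)t+aes$ by the first identity in \cref{EQ8}. Prescribing the $(2,3)$ entry to be $-1$ forces $e=1$, so such an extension exists iff there is an $f\in R$ with $(a,b+df)\in Um(R^2)$; reducing modulo $Ra$, this holds iff the unimodular pair $(\bar b,\bar d)\in Um\bigl((R/Ra)^2\bigr)$ is reducible, i.e. iff $\bar b+\bar d\bar f\in U(R/Ra)$ for some $f$.

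The uniformizing step is the remark that, for a fixed $a$, as $(b,d)$ runs over all pairs with $(a,b,d)\in Um(R^3)$ the reductions $(\bar b,\bar d)$ run over the whole of $Um\bigl((R/Ra)^2\bigr)$: any unimodular pair of $R/Ra$ lifts arbitrarily to such a triple, since the maximal ideals of $R$ containing $a$ correspond to those of $R/Ra$ while a maximal ideal missing $a$ already contains no obstruction. Consequently, for each $a$ the property ``every upper triangular $A\in Um(\mathbb{M}_2(R))$ with $(1,1)$-entry $a$ has a simple extension with $(2,3)$ entry $-1$'' is equivalent to $sr(R/Ra)\le 1$ (vacuously so when $a\in U(R)$, i.e. $R/Ra=0$).

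From here I would read off the three parts by letting $a$ range over the prescribed set. In (1) the hypothesis is the conjunction of these properties over all $a\in R$; taking $a=0$ gives $sr(R)=sr(R/R0)\le 1$ at once, while the converse $sr(R)=1\Rightarrow sr(R/Ra)\le 1$ for all $a$ is the standard fact that stable range does not increase under quotients. In (3), $a$ ranges over $R\setminus J(R)$; since $Ra\subseteq J(R)$ iff $a\in J(R)$, the resulting condition matches the definition of $asr(R)=1$ (Definition \ref{def3}(3)): one direction is that definition specialized to the ideals $Ra$, and for the other, given $I\not\subseteq J(R)$ one chooses $a\in I\setminus J(R)$ and uses that $R/I$ is a quotient of $R/Ra$.

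I expect part (2) to need the most care, because there $a$ runs over $R\setminus\{0\}$ and I must match ``$sr(R/Ra)\le 1$ for all $a\ne 0$'' with $fsr(R)=1.5$. This is precisely where it matters that Definition \ref{def3}(2) is phrased with a full unimodular triple $(a,b,c)\in Um(R^3)$ with $c\ne 0$, rather than with a pair $(a,b)$ already unimodular over $R$. Unwinding it, $(a,b,c)\in Um(R^3)$ is exactly a unimodular pair $(\bar a,\bar b)$ in $R/Rc$, and the conclusion $(a+br,c)\in Um(R^2)$ is exactly reducibility of that pair, so $fsr(R)=1.5$ says verbatim that $sr(R/Rc)\le 1$ for every $c\ne 0$ (alternatively one invokes Proposition \ref{PR1}). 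With this identification the three equivalences are complete.
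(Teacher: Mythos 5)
Your proposal is correct and follows essentially the same route as the paper's proof: both reduce, via the determinant identity of Equation (\ref{EQ8}) with $e=1$, to the condition that some $f\in R$ gives $(b+df,a)\in Um(R^{2})$, and then match this against the three stable-range notions. The only cosmetic difference is that you uniformize everything through ``$sr(R/Ra)\le 1$'' and the fact that stable range does not increase under quotients, whereas the paper matches the condition directly against Definition \ref{def3}(2) and Propositions \ref{PR1} and \ref{PR2}; both forms of the bookkeeping are valid.
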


\begin{proof}
Let $A=\left[ 
\begin{array}{cc}
a & b \\ 
0 & d\end{array}\right] \in Um(\mathbb{M}_{2}(R))$. If $a=0$, then $(b,d)\in Um(R^{2})$ and
from Equation (\ref{EQ8}) it follows that $A$ has a simple extension with
the $(2,3)$ entry $-1$ iff there exists $(f,t)\in R^{2}$ such that $bt+dft=1$ and hence iff there exists $f\in R$ such that $b+df\in
U(R)$. Thus all these matrices $A$ with $a=0$ have a simple extension with
the $(2,3)$ entry $-1$ iff $sr(R)=1$. 

Similarly, if $a\neq 0$ (resp.\ $a\notin J(R)$), then from Equation (\ref{EQ8}) it follows that $A$ has a simple extension with
the $(2,3)$ entry $-1$ iff there exists $(e,f,t)\in R^{3}$ such that $ae+bt+dft=1$ and hence iff there exists $f\in R$ such that $(b+df,a)\in
Um(R^2)$. From the definition of stable range 1.5 (resp.\ almost stable range 1) applied to $(b,c,a)\in Um(R^{3})$ it follows that all these matrices with $a\neq 0$ (resp.\ $a\notin J(R)$) have a simple extension with the $(2,3)$ entry $-1$ iff $fsr(R)=1.5$ (resp.\ $asr(R)=1$).
\end{proof}

\noindent \textbf{Proof of Corollary \ref{C3}(1).} We check that if $asr(R)=1$ then each matrix $A=\left[ 
\begin{array}{cc}
a & b \\ 
0 & c\end{array}\right] \in Um(\mathbb{M}_{2}(R))$ is simply extendable. Based on Example \ref{EX9}(3) we can assume that $a\notin J(R)$ and this case follows from Corollary \ref{C9}(3).

\begin{corollary}
\label{C10} A ring $R$ is an ${SE}^{\triangle}_{2}$ ring iff for all
homomorphisms $\phi :\mathbb{Z}[x,y,z]\rightarrow R$, the image of the
matrix $\mathcal{G}\in Um(\mathbb{M}_{2}(\mathbb{Z}[x,y,z]))$ (see
Subsection \ref{S26}) in $Um(\mathbb{M}_{2}(R))$ via $\phi $ is simply
extendable.
\end{corollary}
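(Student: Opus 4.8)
The plan is to trade the matrix $\mathcal{G}$ for the extendability criterion of Theorem \ref{TH8} and then to realize every relevant matrix as a specialization of $\mathcal{G}$ up to equivalence. First I would record that, by Definition \ref{def2}(5) together with the transpose invariance of simple extendability in Lemma \ref{L1}(3), $R$ is an $SE^{\triangle}_2$ ring iff every upper triangular $A=\begin{bmatrix} a & b \\ 0 & d\end{bmatrix}\in Um(\mathbb{M}_2(R))$ is simply extendable, and that by the equivalence $\circled{\textup{2}}\Leftrightarrow\circled{\textup{3}}$ of Theorem \ref{TH8} this amounts to asking that for every $(a,b,d)\in Um(R^3)$ there exist $(e,f)\in R^2$ with $(ae,be+df)\in Um(R^2)$. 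The forward implication of the corollary is then immediate: for any $\phi:\mathbb{Z}[x,y,z]\to R$ the image $\phi(\mathcal{G})=\begin{bmatrix}\phi(x) & \phi(y) \\ 0 & 1-\phi(x)-\phi(y)\phi(z)\end{bmatrix}$ is upper triangular and lies in $Um(\mathbb{M}_2(R))$, since the relation $x\cdot 1+y\cdot z+(1-x-yz)\cdot 1=1$ maps to $1$; so if $R$ is an $SE^{\triangle}_2$ ring it is simply extendable.

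For the reverse implication I would fix such an $A$ together with a unimodular certificate $ap+bq+dr=1$ and try to produce a $\phi$ for which $\phi(\mathcal{G})$ is equivalent to $A$; by Lemma \ref{L1}(3) simple extendability would then pass from $\phi(\mathcal{G})$ to $A$. Because the $(2,2)$ entry of any $\phi(\mathcal{G})$ must equal $1-\phi(x)-\phi(y)\phi(z)$, this reduces to the key claim that the equivalence class of $A$ contains an upper triangular matrix $\begin{bmatrix}\alpha & \beta \\ 0 & \gamma\end{bmatrix}$ with $\alpha+\gamma\equiv 1\pmod{R\beta}$: given such a representative, choosing $s$ with $\alpha+\beta s+\gamma=1$ and setting $(\phi(x),\phi(y),\phi(z))=(\alpha,\beta,s)$ finishes the argument. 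It is worth noting that the representatives with $\alpha+\beta+\gamma=1$ are automatically simply extendable, via $(e,f)=(1,1)$, so a non-simply-extendable $A$ can only be matched by a representative whose coefficient $s$ is a non-unit; this signals that the free parameter $z$ of $\mathcal{G}$ is genuinely needed. To manufacture a good representative I would drive the relation $ap+bq+dr=1$ through non-triangular transvections: right multiplication of $A$ by $\begin{bmatrix}1 & 0 \\ t & 1\end{bmatrix}$ blends the columns into $\begin{bmatrix} a+bt & b \\ dt & d\end{bmatrix}$, after which a left transvection that re-triangularizes alters the factorization of $\det A=ad$ across the diagonal, with $z$ absorbing the resulting discrepancy from $1$.

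The hard part will be establishing this key claim in full generality, that is, reconciling the rigid certificate pattern $(1,z,1)$ carried by $\mathcal{G}$ with an arbitrary certificate $(p,q,r)$. The obstruction shows up already in the naive attempts: triangular-preserving equivalences only rescale the diagonal by units and shift the off-diagonal entry modulo $Ra+Rd$, so on their own they cannot create the congruence $\alpha+\gamma\equiv 1\pmod{R\beta}$; and any attempt to force $1-\alpha-\beta s\equiv d$ directly runs into the fact that the images of $a,b,d$ are merely units, not $\equiv 1$, in the pertinent residue rings. One must therefore leave the triangular world and use the whole relation at once, and the delicate point is to arrange the re-triangularizing transvection so that the divisibility it requires actually holds; here I would also use the extra freedom afforded by passing to the transpose of $A$. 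I expect this equivalence construction to be the technical crux of the corollary, the remainder being the bookkeeping provided by Theorem \ref{TH8} and Lemma \ref{L1}.
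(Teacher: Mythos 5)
Your forward direction is fine, and your reduction of the $SE_2^{\triangle}$ property to statement $\circled{\textup{3}}$ of Theorem \ref{TH8} for triangular matrices is also correct. The problem is the reverse direction: your entire argument funnels into the ``key claim'' that the equivalence class of $A$ contains an upper triangular representative $\left[\begin{smallmatrix}\alpha & \beta\\ 0 & \gamma\end{smallmatrix}\right]$ with $\alpha+\gamma\equiv 1\pmod{R\beta}$, and you never prove that claim --- you only describe why the naive attempts fail and declare it the technical crux. As it stands this is a genuine gap, not a deferred routine verification: over a general ring there is no evident reason why such a representative should exist, and the transvection manipulations you sketch do not produce one.

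The gap is also unnecessary, because the paper's proof never matches $A$ up to equivalence with a specialization of $\mathcal{G}$. Writing $A=\left[\begin{smallmatrix}a & b\\ 0 & c\end{smallmatrix}\right]$ and choosing $(a',b',c')$ with $aa'+bb'+cc'=1$, one sends $(x,y,z)\mapsto(aa',b,b')$, so that $\phi(\mathcal{G})=\left[\begin{smallmatrix}aa' & b\\ 0 & cc'\end{smallmatrix}\right]$, since $1-aa'-bb'=cc'$. This matrix is in general \emph{not} equivalent to $A$, but that does not matter: by hypothesis it is simply extendable, so by Theorem \ref{TH8} there exists $(e,f')\in R^2$ with $(aa'e,\,be+cc'f')\in Um(R^2)$, and setting $f:=c'f'$ gives $(ae,\,be+cf)\in Um(R^2)$ because the ideal $Rae+R(be+cf)$ contains $a'(ae)=aa'e$ and $be+cc'f'=be+cf$, hence contains $Raa'e+R(be+cc'f')=R$. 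Theorem \ref{TH8} then makes $A$ simply extendable. The lesson is that you only need statement $\circled{\textup{3}}$ for $\phi(\mathcal{G})$ to imply statement $\circled{\textup{3}}$ for $A$, which is a one-line divisibility observation, not an equivalence of matrices. To salvage your route you would have to actually establish the key claim, and I do not see how to do that without essentially rediscovering this trick.
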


\begin{proof}
The `only if' part is obvious. To prove the `if' part, let $A=\left[ 
\begin{array}{cc}
a & b \\ 
0 & c\end{array}\right] \in Um(\mathbb{M}_{2}(R))$. Let $(a^{\prime },b^{\prime },c^{\prime
})\in R^3$ be such that $aa^{\prime }+bb^{\prime }+cc^{\prime }=1$. We consider the homomorphism $\phi :\mathbb{Z}[x,y,z]\rightarrow R$ that maps $x$, $y$ and $z$ to $aa^{\prime }$, $b$ and $b^{\prime }$ (respectively). The image of the matrix $\mathcal{G}$ via it, is the matrix $B=\left[ 
\begin{array}{cc}
aa^{\prime } & b \\ 
0 & cc^{\prime }\end{array}\right] \in Um(\mathbb{M}_{2}(R))$, hence $B$ is simply extendable. From
Theorem \ref{TH8} it follows that there exists $(e,f^{\prime})\in R^2$ such
that $(aa^{\prime }e,be+cc^{\prime }f^{\prime })\in Um(R^{2})$. Denoting $f:=c^{\prime }f^{\prime }\in R$, as $(aa^{\prime }e,be+cf)\in Um(R^{2})$, it
follows that $(ae,be+cf)\in Um(R^{2})$ and from Proposition \ref{TH8} it
follows that $A$ is simply extendable.
\end{proof}

\section{The proofs of the general results and applications}

\label{S4}

\subsection{Proof of Theorem \protect\ref{TH1}}

\label{S41}

\begin{proposition}
\label{PR3} Let $A\in Um(\mathbb{M}_{2}(R))$. Then the following properties
hold:

\medskip \textbf{(1)} If $\det (A)=0$, then $A$ is simply extendable iff it is non-full.

\smallskip \textbf{(2)} The matrix $A$ is extendable iff its reduction modulo $R\det(A)$ is
non-full.

\smallskip \textbf{(3)} If $A$ is extendable, then statement $\circled{\textup{10}}$ holds (hence, if $R$ is reduced, statement $\circled{\textup{9}}$ holds as well).
\end{proposition}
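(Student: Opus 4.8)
The plan is to obtain all three parts from Theorem \ref{TH8} and Lemma \ref{L1}, with part (1) carrying the whole weight; parts (2) and (3) will then be short formal consequences.

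For part (1) I assume $\det(A)=0$ and argue both implications through statements \circled{\textup{1}}, \circled{\textup{2}}, \circled{\textup{3}} of Theorem \ref{TH8}. For the `if' direction I first record that a \emph{unimodular} non-full matrix has a column--row factorization $(a,b,c,d)=(pr,ps,qr,qs)$ in which \emph{both} $(p,q)$ and $(r,s)$ lie in $Um(R^2)$: the ideal generated by the four entries $pr,ps,qr,qs$ is the product ideal $(Rp+Rq)(Rr+Rs)$, and this equals $R$ iff each factor ideal does. Choosing $(e,f)\in R^2$ with $pe+qf=1$ (possible since $(p,q)\in Um(R^2)$) then gives $(ae+cf,be+df)=(r(pe+qf),s(pe+qf))=(r,s)\in Um(R^2)$, i.e. statement \circled{\textup{3}}; by $\circled{\textup{3}}\Leftrightarrow\circled{\textup{2}}$ the matrix $A$ is simply extendable. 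For the `only if' direction I use $\circled{\textup{2}}\Rightarrow\circled{\textup{1}}$ to write $A$ as equivalent to $\Diag(1,\det(A))=\Diag(1,0)$, which is patently non-full; since a factorization $B=uv$ of a matrix as a $2\times 1$ column times a $1\times 2$ row yields the factorization $(Mu)(vN)$ of $MBN$, non-fullness is preserved under equivalence and hence $A$ is non-full.

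For part (2) I set $\bar R:=R/R\det(A)$ and let $\bar A$ be the reduction of $A$; it is unimodular and $\det(\bar A)=0$. By Lemma \ref{L1}(1), $A$ is extendable iff $\bar A$ is simply extendable, and by part (1) applied over $\bar R$ the latter holds iff $\bar A$ is non-full. For part (3) I assume $A$ extendable, so by part (2) there is a column--row factorization $(\bar a,\bar b,\bar c,\bar d)=(\bar p\bar r,\bar p\bar s,\bar q\bar r,\bar q\bar s)$ of $\bar A$ over $\bar R$; lifting $\bar p,\bar q,\bar r,\bar s$ arbitrarily to $p,q,r,s\in R$ and setting $C$ to be the column--row product with entries $pr,ps,qr,qs$ produces a matrix with $\det(C)=0$ whose reduction modulo $R\det(A)$ is $\bar A$, so $C$ is congruent to $A$ modulo $R(ad-bc)$ and statement \circled{\textup{10}} holds. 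The parenthetical claim follows at once from the equivalence $\circled{\textup{9}}\Leftrightarrow\circled{\textup{10}}$ for reduced rings recorded at the end of Theorem \ref{TH8}.

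No step poses a genuine difficulty once Theorem \ref{TH8} is in hand; the one point deserving care, which makes the whole argument run, is the identification of the ideal generated by $\{pr,ps,qr,qs\}$ with the product $(Rp+Rq)(Rr+Rs)$, since this is what upgrades an abstract non-full decomposition into one with both factor-vectors unimodular (and, read backwards over $\bar R$, drives the lift in part (3)).
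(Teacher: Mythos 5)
Your proof is correct and follows essentially the same route as the paper's: both directions of part (1) rest on the equivalences $\circled{\textup{1}}\Leftrightarrow\circled{\textup{2}}\Leftrightarrow\circled{\textup{3}}$ of Theorem \ref{TH8} together with the observation that a unimodular column--row product has both factor-vectors unimodular, and parts (2) and (3) are then deduced exactly as in the paper via Lemma \ref{L1}(1) and lifting the factorization. The only cosmetic difference is that in the `only if' direction of (1) you cite $\circled{\textup{2}}\Rightarrow\circled{\textup{1}}$ and the invariance of non-fullness under equivalence, where the paper re-derives the same equivalence to a non-full matrix by an explicit computation with $M,N\in SL_2(R)$.
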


\begin{proof}
To prove the `if' part of (1), we write $A=\left[ 
\begin{array}{c}
l \\ 
m\end{array}\right] \left[ 
\begin{array}{cc}
n & q\end{array}\right] $. As $A\in Um(\mathbb{M}_{2}(R))$, it follows that $(l,m),(n,q)\in
Um(R^{2})$ and so there exist pairs $(e,f)$ and $(s,t)\in R^{2}$ such that $el+fm=sn+tq=1 $. Thus $\left[ 
\begin{array}{cc}
e & f\end{array}\right] A=\left[ 
\begin{array}{cc}
n & q\end{array}\right]$, and from Theorem \ref{TH8} it follows that $A$ is simply
extendable, a simple extension of it being $\left[ 
\begin{array}{ccc}
ln & lq & f \\ 
mn & mq & -e \\ 
-t & s & 0\end{array}\right] $.

To prove the `only if' part of (1), we assume that $\det (A)=0$ and $A$ is
simply extendable. From the second part of Corollary \ref{C7} it follows that there exists $(e,f)\in Um(R^{2})$ such that $(ae+cf,bf+df)\in Um(R^{2})$, hence there exist pairs $(p,r)$ and $(s,t)\in Um(R^{2})$ such that $ep+fr=(ae+cf)s+(be+df)t=1$. The
matrices $M:=\left[ 
\begin{array}{cc}
e & f \\ 
-p & r\end{array}\right] $ and $N:=\left[ 
\begin{array}{cc}
s & -be-df \\ 
t & ae+cf\end{array}\right] $ have determinant $1$ and a simple computation shows that, as $\det
(A)=0$, we have $MAN=\left[ 
\begin{array}{cc}
1 & 0 \\ 
w & 0\end{array}\right]=\left[ 
\begin{array}{c}
1 \\ 
w\end{array}\right] \left[ 
\begin{array}{cc}
1 & 0\end{array}\right] $ for some $w\in R$. Hence $A=(M^{-1}\left[ 
\begin{array}{c}
1 \\ 
w\end{array}\right] )(\left[ 
\begin{array}{cc}
1 & 0\end{array}\right] N^{-1})$ is non-full. Thus part (1) holds.

Part (2) follows from part (1) and Lemma \ref{L1}(1).

To prove part (3) we first note that part (2) implies that there exist $\bar{l},\bar{m},\bar{n},\bar{q}\in R/R\det (A)$ such that $A$ modulo $R\det (A)$
is $\left[ 
\begin{array}{c}
\bar{l} \\ 
\bar{m}\end{array}\right] \left[ 
\begin{array}{cc}
\bar{n} & \bar{q}\end{array}\right] $. If $l,m,n,q\in R$ lift $\bar{l},\bar{m},\bar{n},\bar{q}$
(respectively), then $B:=\left[ 
\begin{array}{c}
l \\ 
m\end{array}\right] \left[ 
\begin{array}{cc}
n & q\end{array}\right] $ is congruent to $A$ modulo $R\det (A)$ and $\det (B)=0$. Thus 
statement \circled{\textup{10}} holds, hence part (3) holds.
\end{proof}

We are now ready to prove Theorem \ref{TH1}. Recall from Subsection \ref{S25} that if $A\in Um(\mathbb M_2(R))$ has zero determinant, then $K_A$ and $Q_A$ are projective $R$-modules of rank $1$ dual to each other and $K_A\oplus Q_A\cong R^2$. 

From the previous paragraph it follows that $(4)\Rightarrow (3)$. The
implication $(3)\Rightarrow (2)$ is obvious. The equivalence $(1)\Leftrightarrow (2)$ follows from Proposition \ref{PR3}(1).

The implication $(1)\Rightarrow (4)$ follows from the fact that each projective $R $-module of rank $1$ generated by $2$ elements is isomorphic to $Q_{A}$
for some idempotent $A\in \mathbb{M}_{2}(R) $ of rank $1$ (and hence
unimodular of determinant $0$), and applying part (1) to $A$, it follows that $Q_A$
is isomorphic to $R$. Thus Theorem \ref{TH1} holds.

\begin{example}
\normalfont\label{EX10} For $A\in Um(\mathbb{M}_{2}(R))$ with $\det
(A)=0$, we consider the $R$-submodule $K_{A}^{\prime }:=R\left[ 
\begin{array}{c}
-b \\ 
a\end{array}\right] +R\left[ 
\begin{array}{c}
-d \\ 
c\end{array}\right] $ of $K_{A}$. For each $\mathfrak{m}\in {Max}R$, $K_{A}^{\prime }\not\subseteq\mathfrak{m}\mathbb{M}_{2}(R)$, hence $K_{A}^{\prime }\not\subseteq\mathfrak{m}K_{A}$. This implies that $K_{A}=K_{A}^{\prime }$. For $(e,f)\in R^{2}$, statement \circled{\textup{3}}
holds iff $K_{A}$ is free having $e\left[ 
\begin{array}{c}
-b \\ 
a\end{array}\right] +f\left[ 
\begin{array}{c}
-d \\ 
c\end{array}\right] $ as a generator.
\end{example}

\begin{example}
\normalfont\label{EX11} Let $x$ be an indeterminate and let $k\in \mathbb{N}$. Let $q:=4k+1$, $r:=2k+1$ and $\theta :=i\sqrt{q}\in \mathbb{C}$. We check
that $\mathbb{Z}[x]$ \emph{is a }${\Pi}_{2}$\emph{\ ring which is not an} $E_{2}$ \emph{ring}. As $\mathbb{Z}[x]$ is a UFD, it is also a Schreier domain and hence a ${\Pi}_{2}$
ring (see Section \ref{S1}). Thus it suffices to show that the matrix 
\begin{equation*}
A:=\left[ 
\begin{array}{cc}
r & 1-x \\ 
1+x & 2\end{array}\right] \in Um(\mathbb{M}_{2}(\mathbb{Z}[x]))
\end{equation*}is not extendable. As $\det (A)=x^{2}+q$, based on Lemma \ref{L1}(1), it
suffices to show that the image $B:=\left[ 
\begin{array}{cc}
r & 1-\theta \\ 
1+\theta & 2\end{array}\right] \in \mathbb{M}_{2}(\mathbb{Z}[\theta ])$ of $A$, via the composite
homomorphism 
$\mathbb{Z}[x]\rightarrow \mathbb{Z}[x]/(x^{2}+q)\rightarrow \mathbb{Z}[\theta ]$
that maps $x$ to $\theta $, is not simply extendable, equivalently, it is
full. A standard argument on norms shows that the element $2\in\mathbb{Z}[\theta ]$
is irreducible, hence its only decompositions into products of two elements are $2=u (2u^{-1})$ with $u\in U(\mathbb Z[\theta ])$. Thus, as $2u^{-1}$ divides
neither $1-\theta$ nor $1+\theta$, $B$ is full. We
also conclude that the integral domain $\mathbb{Z}[\theta ]$ is not a ${\Pi}_{2}$ ring. If $4k+1$ is square free, then $\mathbb{Z}[\theta ] $
is a Dedekind domain but not a \textsl{PID}.
\end{example}

\begin{remark}\label{rem2}
\textbf{(1)} The equivalent statements \circled{\textup{1}} to \circled{\textup{4}} are clearly stable
under similarity (inner automorphisms of the $R$-algebra $\mathbb{M}_2(R)$)
but in general they are not stable under arbitrary $R$-algebra automorphisms
of $\mathbb{M}_2(R)$. To check this, let $R$ be such that there
exists an $R$-module $P$ such that $P\oplus P=R^2$ but $P\ncong R$; so $[P]\in {Pic}(R)$ has order $2$. Then the idempotent $A$ of $\mathbb{M}_2(R)$ which is a projection of $R^2$ on the first
copy of $P$ along the second copy of $P$ satisfies $Q_A=P$. Hence $A$ is not
simply extendable but its image under the $R$-algebra automorphism $\mathbb{M}_2(R)={End}_R(P\oplus P)\cong \mathbb{M}_2(R)$ defined by $End_R(P)\cong R$ maps $A$ to $Diag(1,0)$. Concrete
example: if $R$ is a Dedekind domain with ${Pic}(R)\cong\mathbb{Z}/2\mathbb{Z}$ (such as $\mathbb{Z}[\sqrt{-5}]$), then we can take $P$ to be an
arbitrary maximal ideal of $R$ whose class in ${Pic}(R)$ is
nontrivial.

\smallskip \textbf{(2)} The fact that if ${Pic}(R)$ is trivial then $R$ is a $\Pi_2$ ring also follows easily from Theorem \ref{TH7}(3) and (4) via the
equivalence between $\mathbb{G}_m$-torsors and line bundles.
\end{remark}

\begin{corollary}
\label{C11} If $R$ is a $\Pi_2$ ring, then for each $A=\left[ 
\begin{array}{cc}
a & b \\ 
c & d\end{array}\right] \in Um(\mathbb{M}_{2}(R))$, statements $\circled{\textup{1}}$ to $\circled{\textup{8}}$ are equivalent.
\end{corollary}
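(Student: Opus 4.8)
The plan is to piggyback on Theorem \ref{TH8}, which already supplies, for an arbitrary ring, the chain
$$\circled{\textup{1}}\Leftrightarrow \circled{\textup{2}}\Leftrightarrow \circled{\textup{3}}\Leftrightarrow \circled{\textup{4}}\Rightarrow \circled{\textup{5}}\Leftrightarrow\circled{\textup{6}}\Leftrightarrow\circled{\textup{7}}\Leftrightarrow\circled{\textup{8}}.$$
Thus the eight statements already split into two equivalence blocks, with the first block implying the second, and the only thing the $\Pi_2$ hypothesis has to achieve is to close the loop by producing a single backward implication from the second block to the first. I would prove $\circled{\textup{5}}\Rightarrow\circled{\textup{4}}$.

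So suppose $\circled{\textup{5}}$ holds: choose $(x,y,z,w)\in R^4$ with $ax+by+cz+dw=1$ and $xw-yz=0$. The key (and essentially only) observation is that the matrix $M:=\left[\begin{array}{cc} x & y \\ z & w\end{array}\right]$ is automatically unimodular. Indeed, the relation $ax+by+cz+dw=1$ exhibits $1$ inside the ideal generated by the entries $x,y,z,w$, which is exactly what $M\in Um(\mathbb{M}_2(R))$ means (the $R$-linear functional sending the column $[x,y,z,w]^T$ to $ax+by+cz+dw$ carries $M$ to $1$). Moreover $\det M=xw-yz=0$.

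Now the $\Pi_2$ hypothesis enters through Theorem \ref{TH1}: its equivalence $(1)\Leftrightarrow(2)$ says that in a $\Pi_2$ ring every zero-determinant matrix in $Um(\mathbb{M}_2(R))$ is non-full. Applying this to $M$ shows that $M$ is non-full, and a non-full $M$ witnessing the relation $ax+by+cz+dw=1$ is precisely statement $\circled{\textup{4}}$. Combining $\circled{\textup{5}}\Rightarrow\circled{\textup{4}}$ with the chain from Theorem \ref{TH8} then yields the equivalence of $\circled{\textup{1}}$ through $\circled{\textup{8}}$.

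I do not expect a genuine obstacle here: the whole argument rests on the single remark that the matrix realizing $\circled{\textup{5}}$ is forced to be unimodular, after which the $\Pi_2$ property upgrades ``zero determinant'' to ``non-full'' for free. The only point requiring care is to invoke the correct form of the hypothesis, namely the non-fullness characterization $(1)\Leftrightarrow(2)$ of Theorem \ref{TH1}, rather than the mere extendability appearing in Definition \ref{def2}(1); it is exactly this reformulation that converts the $\Pi_2$ condition into the statement needed to pass from $\det M = 0$ to the non-fullness of $M$.
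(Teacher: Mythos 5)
Your proof is correct, but it closes the loop between the two equivalence blocks of Theorem \ref{TH8} at a different joint than the paper does. The paper proves $\circled{\textup{7}}\Rightarrow\circled{\textup{2}}$: it takes the unimodular zero-determinant matrix $B$ congruent to $A$ modulo $R\det(A)$ supplied by $\circled{\textup{7}}$, uses the $\Pi_2$ hypothesis to make $B$ simply extendable, extracts from Theorem \ref{TH8} a pair $(e,f)\in Um(R^2)$ with $(ae+cf,be+df,ad-bc)\in Um(R^3)$, and then invokes the second part of Corollary \ref{C7} to conclude that $A$ itself is simply extendable. You instead prove $\circled{\textup{5}}\Rightarrow\circled{\textup{4}}$, applying the $\Pi_2$ hypothesis not to a matrix congruent to $A$ but to the ``dual'' witness matrix $\left[\begin{array}{cc} x & y \\ z & w\end{array}\right]$, whose unimodularity is immediate from $ax+by+cz+dw=1$ and whose zero determinant is the content of $\circled{\textup{5}}$; the non-fullness form $(1)\Leftrightarrow(2)$ of Theorem \ref{TH1} then delivers $\circled{\textup{4}}$ on the nose. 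Your route is shorter and more self-contained: it bypasses Corollary \ref{C7} entirely and needs nothing beyond Theorem \ref{TH1} and the already-established chain of Theorem \ref{TH8}, whereas the paper's route has the side benefit of exhibiting an explicit simple extension of $A$ built from data attached to $B$. Both arguments are complete; each of the two backward implications suffices to merge the blocks $\circled{\textup{1}}$--$\circled{\textup{4}}$ and $\circled{\textup{5}}$--$\circled{\textup{8}}$.
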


\begin{proof}
Based on Theorem \ref{TH8}, it suffices to prove that $\circled{\textup{7}}\Rightarrow \circled{\textup{2}}$. We can assume that $\det(A)\neq 0$. Let $B=\left[ 
\begin{array}{cc}
a^{\prime} & b^{\prime} \\ 
c^{\prime} & d^{\prime}\end{array}\right]\in Um(\mathbb{M}_{2}(R))$ be such that $\det(B)=0$ and $B$ is congruent to $A$ modulo $R\det(A)$. As $R$ is a $\Pi_2$ ring, $B$ is simply extendable. From this and Theorem \ref{TH8} it follows that there exists $(e,f)\in Um(R^2)$ such that $(a^{\prime}e+c^{\prime}f,b^{\prime}e+d^{\prime}f)\in Um(R^2)$ and so $(a^{\prime}e+c^{\prime}f,b^{\prime}e+d^{\prime}f, ad-bc)\in Um(R^2)$. Hence, as $B$ is congruent to $A$ modulo $R\det(A)$, it follows that $(ae+cf,be+df, ad-bc)\in Um(R^2)$. Second part of Corollary \ref{C7} implies that $A$ is simply extendable.
\end{proof}

\begin{corollary}
\label{C12} If $R$ is a product of pre-Schreier domains of stable range at most $2$, then for each $A=\left[ 
\begin{array}{cc}
a & b \\ 
c & d\end{array}\right] \in Um(\mathbb{M}_{2}(R))$, statements \circled{\textup{1}} to 
$\circled{\textup{10}}$ are equivalent.
\end{corollary}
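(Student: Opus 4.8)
The plan is to show that $R$ is a reduced $\Pi_2$ ring and then to close the implication cycle of Theorem \ref{TH8} by establishing the one missing arrow, $\circled{\textup{10}}\Rightarrow\circled{\textup{7}}$, using the pre-Schreier (non-full) structure together with the hypothesis $sr(R)\le 2$. Being a product of integral domains, $R$ is reduced. Moreover, each factor being a pre-Schreier domain, every zero-determinant matrix over a factor is non-full (see Subsection \ref{S22}); assembling these column--row decompositions coordinatewise shows that every $A\in\mathbb M_2(R)$ with $\det(A)=0$ is non-full, so by Proposition \ref{PR3}(1) every such unimodular $A$ is simply extendable and hence $R$ is a $\Pi_2$ ring. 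Consequently Corollary \ref{C11} applies and makes statements $\circled{\textup{1}}$ through $\circled{\textup{8}}$ equivalent, while Theorem \ref{TH8} already supplies $\circled{\textup{8}}\Rightarrow\circled{\textup{9}}\Rightarrow\circled{\textup{10}}$. Thus it suffices to prove $\circled{\textup{10}}\Rightarrow\circled{\textup{7}}$.

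For this key step I would write $\delta:=ad-bc$ and take a witness $C\in\mathbb M_2(R)$ of $\circled{\textup{10}}$, so that $\det(C)=0$ and $C$ is congruent to $A$ modulo $R\delta$. By the previous paragraph $C$ is non-full, i.e. $C=PQ$ with $P$ a $2\times 1$ column of entries $l,m$ and $Q$ a $1\times 2$ row of entries $n,q$. Reducing modulo $R\delta$ and setting $\bar R:=R/R\delta$, the matrix $\bar C=\bar A$ lies in $Um(\mathbb M_2(\bar R))$; since the ideal generated by the entries of $\bar C=\bar P\bar Q$ equals the product $(\bar l,\bar m)(\bar n,\bar q)$, and a product of two ideals equal to the unit ideal forces each factor to be the unit ideal, we get $(\bar l,\bar m),(\bar n,\bar q)\in Um(\bar R^2)$. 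As $sr(R)\le 2$, Fact \ref{F1} (with $n=2$ and $a=\delta$) shows that the reduction map $Um(R^2)\to Um(\bar R^2)$ is surjective, so I can lift $(\bar l,\bar m)$ and $(\bar n,\bar q)$ to unimodular pairs $(l',m'),(n',q')\in Um(R^2)$.

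The outer product $C'$ of the column with entries $l',m'$ and the row with entries $n',q'$ then satisfies $\det(C')=0$, reduces to $\bar P\bar Q=\bar A$ modulo $R\delta$ (so $C'$ is congruent to $A$ modulo $R\delta$), and is unimodular because the ideal generated by its entries is $(l',m')(n',q')=R$; hence $C'$ witnesses $\circled{\textup{7}}$. Combining everything, the cycle $\circled{\textup{1}}\Leftrightarrow\cdots\Leftrightarrow\circled{\textup{8}}\Rightarrow\circled{\textup{9}}\Rightarrow\circled{\textup{10}}\Rightarrow\circled{\textup{7}}$ closes, so all ten statements are equivalent (the reducedness of $R$ alternatively yields $\circled{\textup{9}}\Leftrightarrow\circled{\textup{10}}$ directly from Theorem \ref{TH8}, but this is not needed). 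The main obstacle is exactly the passage $\circled{\textup{10}}\Rightarrow\circled{\textup{7}}$, namely promoting a congruent zero-determinant matrix to a \emph{unimodular} one: the pre-Schreier hypothesis provides the rank-one column--row factorization, and the bound $sr(R)\le 2$ is precisely what lets the two factor vectors, already unimodular after reduction modulo $R\delta$, be lifted to unimodular vectors over $R$; without such stable-range control the lift may fail.
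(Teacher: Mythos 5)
Your proof is correct, but the key implication is routed differently from the paper's. The paper first reduces to a single pre-Schreier domain factor and then proves $\circled{\textup{10}}\Rightarrow\circled{\textup{2}}$ directly: the witness $B$ of $\circled{\textup{10}}$ is non-full, hence the reduction of $A$ modulo $R\det(A)$ is non-full, hence simply extendable by Theorem \ref{TH1}, hence $A$ is extendable by Lemma \ref{L1}(1), and finally $sr(R)\le 2$ upgrades extendable to simply extendable via the last part of Corollary \ref{C7}. You instead prove $\circled{\textup{10}}\Rightarrow\circled{\textup{7}}$ by taking the column--row factorization of the witness, observing that its factors become unimodular after reduction modulo $R\det(A)$, and lifting them to unimodular pairs via Fact \ref{F1} --- this is exactly the lifting trick the paper uses in the proof of Theorem \ref{TH3}(1) to show that an $E_2$ ring of stable range at most $2$ is a $Z_2$ ring --- and you then let Corollary \ref{C11} supply $\circled{\textup{7}}\Rightarrow\circled{\textup{2}}$. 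The two arguments deploy the hypothesis $sr(R)\le 2$ at different points (you in Fact \ref{F1}, the paper in Corollary \ref{C7}), and yours has the small advantage of working with the product ring directly rather than reducing to a factor; conversely, if the stated hypothesis is read as each \emph{factor} having stable range at most $2$, you should add the (easy, componentwise) remark that the product then also has stable range at most $2$, since Fact \ref{F1} is applied to $R$ itself. Everything else --- the coordinatewise assembly of non-fullness, the deduction that $R$ is a reduced $\Pi_2$ ring, and the unimodularity of the lifted outer product via the product of unit ideals --- is sound.
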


\begin{proof} We can assume that $R$ is a pre-Schreier domain and $sr(R)\le 2$. Thus $R$ is reduced and a $\Pi_2$ ring. Therefore, based on Corollary \ref{C11} and Theorem \ref{TH8}, it suffices to show that \circled{\textup{10}} implies \circled{\textup{2}}. Let $B\in\mathbb{M}_{2}(R)$ be such that $\det(B)=0$ and $B$ is congruent to $A$ modulo $R\det(A)$. As $B$ is a pre-Schreier domain, $B$ is non-full, hence $A$ modulo $R\det(A)$ is non-full. Thus $A$ modulo $R\det(A)$ is simply extendable (see Theorem \ref{TH1}) and we infer that $A$ is extendable (see Lemma \ref{L1}(1)). From this and the last part of Corollary \ref{C7} it follows that $A$ is simply extendable.
\end{proof}

\subsection{An application of Theorem \ref{TH1}}

\label{S44}

We recall the following well-known lemma.

\begin{lemma}
\label{L2} Let $A\in Um(\mathbb{M}_{2}(R))$, let $t\in R$ be such that $\det
(A)\in Rt$ and let $\hat{R}$ be the $t$-adic completion of $R$. Then there
exists $B\in Um(\mathbb{M}_{2}(\hat{R}))$ whose reduction modulo $\hat{R}t$ is the reduction of $A$ modulo $Rt$ and $\det(B)=0$.
\end{lemma}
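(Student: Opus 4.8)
The plan is to lift the reduction of $A$ modulo $Rt$ to a zero-determinant unimodular matrix over $\hat R$ by $t$-adic successive approximation. Since $\det(A)\in Rt$, the reduction of $A$ modulo $Rt$ has zero determinant, and I want to construct a sequence $B_1,B_2,\ldots\in\mathbb M_2(R)$ with $B_1=A$, with $B_{n+1}\equiv B_n\pmod{t^n}$, and with $\det(B_n)\in Rt^n$ for every $n$. Its $t$-adic limit $B\in\mathbb M_2(\hat R)$ will then reduce to the reduction of $A$ modulo $Rt$ and satisfy $\det(B)=0$.

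For the inductive step I would use the quadratic expansion of the determinant of a $2\times 2$ matrix: for $M,N\in\mathbb M_2(R)$, writing $M^{*}$ for the adjugate of $M$, one has
\[
\det(M+N)=\det(M)+\Tr(M^{*}N)+\det(N).
\]
Assuming $\det(B_n)=t^n\gamma_n$ with $\gamma_n\in R$, I seek $B_{n+1}=B_n+t^nE_n$ with $E_n\in\mathbb M_2(R)$; then $\det(B_{n+1})=t^n\gamma_n+t^n\Tr(B_n^{*}E_n)+t^{2n}\det(E_n)$, so for $n\ge 1$ it suffices to choose $E_n$ with $\Tr(B_n^{*}E_n)\equiv-\gamma_n\pmod t$. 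Since $B_n\equiv A\pmod t$, the entries of $B_n^{*}$ are congruent modulo $t$ to those of $A^{*}$, hence (being, up to signs, the entries of the unimodular matrix $A$) they generate $R/Rt$; as $\Tr(B_n^{*}E_n)$ is precisely the $R$-linear form in the entries of $E_n$ whose coefficients are the entries of $B_n^{*}$, the congruence is solvable. This produces $B_{n+1}$ with $\det(B_{n+1})\in Rt^{n+1}$ and, because $t^nE_n\in t\,\mathbb M_2(R)$, with $B_{n+1}\equiv B_n\equiv A\pmod t$.

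To conclude, the sequence $(B_n)$ is $t$-adically Cauchy, so $B:=\lim_n B_n\in\mathbb M_2(\hat R)$ exists and satisfies $B\equiv A\pmod{\hat R t}$; since $\hat R/\hat R t=R/Rt$, the reduction of $B$ modulo $\hat R t$ is exactly the reduction of $A$ modulo $Rt$. Using $B-B_n\in t^n\mathbb M_2(\hat R)$ together with the same determinant expansion, $\det(B)\in t^n\hat R$ for every $n$, and as $\hat R$ is $t$-adically separated, $\bigcap_n t^n\hat R=0$, whence $\det(B)=0$. Finally $B$ is unimodular: $t$ is topologically nilpotent in $\hat R$, so $1+\hat R t\subseteq U(\hat R)$ and thus $\hat R t\subseteq J(\hat R)$; by property (3) of Subsection \ref{S21}, unimodularity of $B$ is equivalent to unimodularity of its reduction modulo $\hat R t$, which holds since $A\in Um(\mathbb M_2(R))$.

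The only genuinely delicate point is the solvability of the linear congruence $\Tr(B_n^{*}E_n)\equiv-\gamma_n\pmod t$ at each stage while simultaneously preserving the class of $B_n$ modulo $t$; this is exactly where unimodularity of $A$ enters, guaranteeing that the coefficients of the relevant linear form generate $R/Rt$. Once this is arranged, the $t$-adic convergence, the vanishing of $\det(B)$ via separatedness, and the unimodularity of the limit via $t\in J(\hat R)$ are all formal consequences.
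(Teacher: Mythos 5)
Your proof is correct and follows essentially the same route as the paper's: a $t$-adic successive approximation in which the linear term $\Tr(B_n^*E_n)$ of the determinant expansion is made to cancel the current defect, solvable because unimodularity of $A$ makes the entries of $B_n^*$ generate $R$ modulo $Rt$, followed by passage to the limit and the observation that $\hat{R}t\subseteq J(\hat{R})$ gives unimodularity of $B$. The only (immaterial) difference is that you raise the exponent by one at each step, whereas the paper doubles it ($Rt^{2^n}\to Rt^{2^{n+1}}$) by exploiting the quadratic term as well.
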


\begin{proof}
Let $B_0:=A$. By induction on $n\in \mathbb{N}$, we show that there exists $B_{n}\in 
\mathbb{M}_{2}(R)$ congruent to $B_{n-1}$ modulo $Rt^{2^{n-1}}$ and $\det (B_{n})\in
Rt^{2^{n}}$. For $n=1$, let $s\in R$ be such that $\det (A)=st$. Writing $A=\left[ 
\begin{array}{cc}
a & b \\ 
c & d\end{array}\right] $, for $B_{1}:=A+t\left[ 
\begin{array}{cc}
x & y \\ 
z & w\end{array}\right] \in \mathbb{M}_{2}(R)$, $\det (B_{1})$ is congruent modulo $Rt^{2}$
to $st+(dx+cy+bz+aw)t$. As $A\in Um(\mathbb{M}_{2}(R))$, the linear
equation $dx+cy+bz+aw=-s$ has a solution $(x,y,z,w)\in
R^{4}$ and for such a solution we have $\det (B_{1})\in Rt^{2}$. The passage
from $n$ to $n+1$ follows from the case $n=1$ applied to $B_{n}$ and $Rt^{2^{n+1}}$ instead of to $A$ and $Rt^{2}$. This completes the induction.

As $t\in J(\hat{R})$, the limit $B\in \mathbb{M}_{2}(\hat{R})$ of the sequence $(B_n)_{n\ge 1}$ exists. We have $\det(B)=0$. As $t\in J(\hat{R})$, we have $B\in Um(\mathbb M_2(\hat{R}))$. 
\end{proof}

Lemma \ref{L2} also follows from the smoothness part of Theorem \ref{TH7}(2) via a standard lifting argument.

\begin{proposition}
\label{PR4} Let $A\in Um(\mathbb{M}_{2}(R))$. If the $\det (A)$-completion $\hat{R}$ of $R$ is a ${\Pi}_{2}$ ring, then $A$ is extendable.
\end{proposition}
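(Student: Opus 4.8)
The plan is to descend everything modulo $R\det(A)$ and exploit Lemma \ref{L1}(1), which says that $A$ is extendable over $R$ iff its reduction $\bar A$ modulo $R\det(A)$ is simply extendable. Set $t:=\det(A)$, so that $\det(A)\in Rt$ holds trivially and, by hypothesis, the $t$-adic completion $\hat R$ of $R$ is a $\Pi_2$ ring; recall that the canonical map induces an isomorphism $R/Rt\cong\hat R/\hat Rt$, so that $\bar A$ may be viewed over either quotient.

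First I would apply Lemma \ref{L2} with this choice of $t$ to produce a matrix $B\in Um(\mathbb{M}_2(\hat R))$ with $\det(B)=0$ and whose reduction modulo $\hat Rt$ equals $\bar A$ (under the identification above). Since $\hat R$ is a $\Pi_2$ ring and $B$ is unimodular of zero determinant, the equivalence $(1)\Leftrightarrow(2)$ of Theorem \ref{TH1} shows that $B$ is non-full over $\hat R$; that is, $B$ is the product of a $2\times 1$ column $v$ and a $1\times 2$ row $u$ with entries in $\hat R$.

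Next I would push this column--row decomposition down modulo $\hat Rt$: the reduction $\bar B$ of $B$ is then the product of $\bar v$ and $\bar u$, hence non-full over $\hat R/\hat Rt=R/Rt$. By construction $\bar B=\bar A$, so $\bar A$ is non-full over $R/Rt$. Since $\det(A)=t$, the reduction $\bar A$ has zero determinant over $R/Rt$, so Proposition \ref{PR3}(1) yields that $\bar A$ is simply extendable. Finally, Lemma \ref{L1}(1) gives that $A$ is extendable over $R$, completing the argument.

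Once the two lemmas are in place this is essentially bookkeeping, and I expect no genuine obstacle. The only point requiring slight care is the identification $R/Rt\cong\hat R/\hat Rt$ together with the observation that a column--row factorization over $\hat R$ reduces to one over the common quotient $R/Rt$, so that non-fullness transfers verbatim from $\bar B$ to $\bar A$; everything else is a direct chaining of Lemma \ref{L2}, Theorem \ref{TH1}, Proposition \ref{PR3}(1), and Lemma \ref{L1}(1).
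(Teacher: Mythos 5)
Your argument is correct and follows the paper's proof essentially verbatim: both apply Lemma \ref{L2} to produce $B\in Um(\mathbb{M}_2(\hat R))$ of zero determinant lifting the reduction of $A$, use the $\Pi_2$ hypothesis on $\hat R$, and conclude via Lemma \ref{L1}(1). The only cosmetic difference is that the paper transfers ``simply extendable'' from $B$ down to $\hat R/\hat R\det(A)=R/R\det(A)$ directly, whereas you transfer the equivalent property ``non-full'' (via Theorem \ref{TH1}) and then reconvert with Proposition \ref{PR3}(1); the content is identical.
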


\begin{proof}
As $\hat{R}$ is a ${\Pi}_{2}$ ring, each $B\in Um(\mathbb{M}_2(\hat{R}))$ as in Lemma \ref{L2} is simply extendable; hence its reduction modulo $\hat{R}\det(A)$, i.e., $A$ modulo $R\det(A)$, is also simply extendable. Thus the proposition follows from Lemma \ref{L1}(1).
\end{proof}

\subsection{Proof of Theorem \protect\ref{TH2}}

\label{S410}

The equivalence of statements (1), (2) and (3) follows from the equivalence of statements \circled{\textup{6}}, \circled{\textup{7}} and \circled{\textup{8}} (see Theorem \ref{TH8}). 

To check that $(4)\Rightarrow (1)$, let $A\in Um(\mathbb M_2(R))$. To show that there exists $B\in Um(\mathbb M_2(R))$ congruent to $A$ modulo $R\det(A)$ and with $\det(B)=0$, we can assume that $\det(A)\neq 0$; in this case the existence of $B$ follows from statement (4) applied to $a:=\det(A)$ and to the image of $A$ in $\{\bar{A}\in Um(\mathbb M_2(R/Ra))|\det(\bar{A})=0\}$. 

We are left to check that if $sr(R)\le 4$, then $(1)\Rightarrow (4)$. Let $a\in R$ be nonzero and let $\bar A\in Um(\mathbb M_2(R/Ra))$ with $\det(\bar{A})=0$. As $sr(R)\le 4$, there exists $A\in Um(\mathbb M_2(R))$ whose reduction modulo $Ra$ is $\bar A$; we have $\det(A)\in Ra$. As we are assuming $R$ is a ${Z}_2$ ring, there exists $B\in Um(\mathbb M_2(R))$ such that $\det(B)=0$ and $B$ is congruent to $A$ modulo $R\det(A)$, hence also modulo $Ra$. Thus statement (4) holds and so Theorem \ref{TH2} holds. 

\subsection{Proof of Theorem \protect\ref{TH3}}

\label{S42}

To prove part (1), let $A\in Um(\mathbb{M}_{2}(R))$. If $R$
is an $E_2$ ring, then firstly from Proposition \ref{PR3}(3) it follows that there exists $B\in\mathbb{M}_{2}(R)$ congruent to $A$ modulo $R\det(A)$ and with $\det(B)=0$; 
hence $R$ is a ${WZ}_2$ ring. Secondly, with the notation of the proof of Proposition \ref{PR3}(3), as $A$ unimodular it follows that $(\bar{l},\bar{m}),(\bar{n},\bar{q})\in Um(R/R\det (A))$. From this and Fact 
\ref{F1}, if $sr(R)\leq 2$, we can assume that $(l,m),(n,q)\in Um(R^{2})$, so
$B:=\left[ 
\begin{array}{c}
l \\ 
m\end{array}\right] \left[ 
\begin{array}{cc}
n & q\end{array}\right] \in Um(\mathbb{M}_{2}(R))$ is congruent to $A$ modulo $R\det(A)$ and has zero determinant; thus $R$ is a ${Z}_{2}$ ring.
If moreover $R$ is an ${SE}_{2}$ ring, then $A$ is simply
extendable and Theorem \ref{TH8} implies that statement $\circled{\textup{6}}$ holds; thus $R$ is a ${Z}_{2}$ ring. Hence part
(1) holds.

Based on part (1), to prove part (2), we can assume $R$ is a pre-Schreier
domain and it suffices to show that if $R$ is a ${WZ}_{2}$ ring then it is also a 
$E_2$ ring. As $R$ is a 
$\Pi _{2}$ domain, it suffices to show that if $A\in Um(\mathbb{M}_{2}(R))$ has
nonzero determinant, then $A$ is extendable, but the argument for this is similar to the one of the proof of Corollary \ref{C12} involving a matrix $B$ (as $R$ is a pre-Schreier domain, each $B\in\mathbb M_2(R)$ of zero determinant is non-full). Thus part (2) holds. 

Part (3) follows from Corollary \ref{C11}, as for each $A\in Um(\mathbb M_2(R))$, statements \circled{\textup{2}} and \circled{\textup{7}} are equivalent. 

For part (4), let $(a,b,c)\in Um(R^{2})$. In this paragraph we assume that $R$ is an ${SE}^{\triangle}_{2}$
ring; hence the matrix $\left[ 
\begin{array}{cc}
a & b \\ 
0 & c\end{array}\right] $ is simply extendable. From this and Theorem \ref{TH8}, it
follows that statement \circled{\textup{5}} holds, thus there exists $(x,y,z,w)\in Um(R^{4})$ such that $xw=yz$ and $1=ax+by+cw$. Hence the fact that $R$ is a $V_2$ ring follows from the computation 
\begin{equation*}
(1-ax)(1-cw)=1-ax-cw+acxw=by+acyz=y(b+acz).
\end{equation*}
For the remaining part of the proof of part (4) we will assume that $(a,b)\in Um(R^{2})$. For a fixed unit $\bar{u}\in U(R/Rc)$,
let $u\in R$ be such that $\bar{u}=u+(c)$ and consider the matrix $\left[ 
\begin{array}{cc}
ac & u \\ 
0 & bc\end{array}\right] \in Um(\mathbb{M}_{2}(R))$.

If $R$ is a $\Pi _{2}$ and a $WV_2$ ring, by applying the definition of a $WV_2$ ring to the triple $(ab,u,ac)\in Um(R^{3})$ it follows that there
exists a nonuple $(x,y,z_1,z_2,w,q,r,s,t)\in R^{9}$ such that (see Equation (\ref{EQ3})) 
\begin{equation*}
D:=\left[ 
\begin{array}{cc}
s-acx & qru+acz_1+bcz_2 \\ 
y & t-bcw\end{array}\right] \in Um(\mathbb{M}_{2}(R))
\end{equation*}has zero determinant and moreover $(ac,sq),(bc,rt)\in Um(R^2)$. From Theorem \ref{TH1}, as we are assuming $R$ is a $\Pi _{2}$ ring, it follows that $D$
is non-full and thus we can decompose $qru+acz_1+bcz_2=u_{a}u_{b}$, with $u_{a}\in R$ dividing $s-acx$ and $u_{b}\in R$ dividing $t-bcw$. Hence $(q+Rac)^{-1}\cdot (u_{a}+Rac)\in U(R/Rac)$ and $(r+Rbc)^{-1}\cdot (u_{b}+Rbc)\in U(R/Rbc)$ are such that the product of their images in $U(R/Rc)$ is $\bar{u}$. Thus $R$ is a $U_{2}$ ring.

We are now assuming that $R$ is an ${SE}_2^{\triangle}$ domain, and we want to prove that $R$ is a $U_2$ ring. For $(a,b)\in Um(R^2)$ and $c\in R$, to check that the natural product
homomorphism $U(R/Rac)\times U(R/Rbc)\rightarrow U(R/Rc)$ is surjective we can assume that $abc\neq 0$. As the matrix $\left[ 
\begin{array}{cc}
ac & u \\ 
0 & bc\end{array}\right] $ is simply extendable, from Theorem \ref{TH1} it follows that the
reduction modulo $(abc^{2})$ of $\left[ 
\begin{array}{cc}
ac & u \\ 
0 & bc\end{array}\right] $ is non-full and hence the system of congruences 
\begin{equation*}
xy\equiv ac\mod abc^{2},\;\,xw\equiv u\mod abc^{2},\;\,yz\equiv 0\mod abc^{2},\;\,zw\equiv bc\mod abc^{2}
\end{equation*}has a solution $(x,y,z,w)\in R^{4}$. From the second congruence it follows
that $(xw,c)\in Um(R^2)$. From this and the first and fourth
congruences, it follows that there exists $(y^{\prime },z^{\prime })\in R^2$ such
that $y=cy^{\prime }$ and $z=cz^{\prime }$. Thus the above first system of
congruences is equivalent to a second pne
\begin{equation*}
xy^{\prime }\equiv a\mod abc,\;\;xw\equiv u\mod abc^{2},\;\;y^{\prime
}z^{\prime }\equiv 0\mod ab,\;\;z^{\prime }w\equiv b\mod abc.
\end{equation*}As $(a,b)\in Um(R^{2})$, from the last two congruences it follows firstly that 
$(z^{\prime },a)\in Um(R^{2})$ and secondly that there exists $y^{\prime
\prime }\in R$ such that $y^{\prime }=ay^{\prime \prime }$. Thus the second
system of congruences is equivalent to a third one 
\begin{equation*}
xy^{\prime \prime }\equiv 1\mod bc,\;\;xw\equiv u\mod abc^{2},\;\;y^{\prime
\prime }z^{\prime }\equiv 0\mod b,\;\;z^{\prime }w\equiv b\mod abc.
\end{equation*}From the first and third congruences it follows firstly that $(y^{\prime
\prime },b)\in Um(R^{2})$ and secondly that there exists $z^{\prime \prime
}\in R$ such that $z^{\prime }=bz^{\prime \prime }$. Thus the third system
of congruences is equivalent to a fourth one
\begin{equation*}
xy^{\prime \prime }\equiv 1\mod bc,\;\;xw\equiv u\mod abc^{2},\;\;z^{\prime
\prime }w\equiv 1\mod ac
\end{equation*}that has only three congruences. Thus $x\in
U(R/Rbc)$ and $w+Rac\in U(R/Rac)$ are such that the product of their images in $R/Rc$ is $\bar{u}$. Hence $R$ is a $U_2$ ring and so part (4) holds.

Part (5) follows from Fact \ref{F1} applied to $n=4$ and definitions.

Part (6) follows from the fact that each $A\in Um(\mathbb{M}_2(R))$ is
extendable as its reduction modulo $R\det(A)$ is simply extendable (see
Lemma \ref{L1}(1) and the hypothesis that $R/R\det(A)$ is a $\Pi_2$ ring). Thus Theorem \ref{TH3} holds.

\subsection{Proof of Theorem \protect\ref{TH4}}

\label{S43}
Let $R$ be an integral domain of dimension 1.

To prove part (1), let $A\in Um(\mathbb{M}_{2}(R))$ with $\det (A)\neq 0$. 
The ring $\bar{R}:=R/\det(A)R$ has dimension $0$ and hence $Pic(\bar{R})$ is trivial. 
From this and Theorem \ref{TH1} it follows that $\bar{R}$ is a $\Pi_2$ ring and therefore the reduction $\bar{A}$ of $A$ modulo $R\det (A)$ is simply extendable. From Theorem \ref{TH1} it follows that $Q_{\bar{A}}\cong R/R\det (A)$. As $R
$ is an integral domain, we have two projective resolutions $0\rightarrow R\det(A)\rightarrow R\rightarrow Q_{\bar{A}}\rightarrow 0$ and (see Subsection \ref{S42}) $0\rightarrow \det(A)R^2\rightarrow Q_A\rightarrow Q_{\bar{A}}\rightarrow 0$. From
this and Subsection \ref{S24} applied to $b=\det(A)$ (recall that $sr(R)\le 2$), it follows that $A$ is equivalent to ${Diag}(1,\det (A)) $ and hence is simply extendable (see Theorem \ref{TH8}).

Each triangular matrix
in $Um(\mathbb{M}_{2}(R))$ has either two zero entries or a nonzero
determinant. From this, Example \ref{EX9}(3) and part (1)
it follows that $R$ is an ${SE}^{\triangle}_{2}$ ring. From this and Theorem \ref{TH3}(4) it follows that $R$ is a ${V}_{2}$ ring. To check that $R$ is a 
$Z_2$ ring we have to show that statement \circled{\textup{7}} holds for each 
$A\in Um(\mathbb{M}_{2}(R))$. We can assume that $\det(A)\neq 0$, in which case statement  \circled{\textup{7}} follows from 
part (1) and Theorem \ref{TH8}. Thus part (2) holds.

The first `iff' of part (3) follows from part (1) and definitions. The isomorphism classes of projective $R$-modules of rank 1 are the isomorphisms classes of nonzero ideals of $R$ which locally in the Zariski topology are principal; as for each $a\in R\setminus\{0\}$, $\dim(R/Ra)=0$ and hence $Pic(R/Ra)$ is trivial, all such nonzero ideals are generated by 2 elements. From this and Theorem \ref{TH1} it follows that $R$ is a $\Pi_2$ domain iff $Pic(R)$ is trivial. Hence part (3) holds. As PIDs are precisely Dedekind domains with trivial Picard groups, part (4) follows from the second `iff' of part (3). Thus Theorem \ref{TH3} holds.

\begin{remark}\label{rem3} The existence of an extension of a matrix in $Um(\mathbb{M}_{2}(R))$ does not depend only on \emph{the set} of its entries. This is so
as, referring to Example \ref{EX11}, the matrix $\left[ 
\begin{array}{cc}
1+\theta & 1-\theta \\ 
r & 2\end{array}\right] \in \mathbb{M}_{2}(\mathbb{Z}[\theta ])$ has the same entries as $B$, has nonzero determinant, and \emph{it is} extendable (see Theorem \ref{TH4}(1)).
\end{remark}

\subsection{Proof of Theorem \protect\ref{TH5}}

\label{S45}

The implications $(1)\Rightarrow (2)\Rightarrow (3)\Rightarrow (4)$ hold for all rings (see Example \ref{EX1} and Theorem \ref{TH3}(1) and (3)). 
As the identity $\mathbb{M}_{2}(R)=RUm(\mathbb{M}_{2}(R))$ holds for the Hermite ring $R$, the implication $(2)\Rightarrow (1)$ follows from Theorem \ref{TH8}. 
The implication $(4)\Rightarrow (2)$ is a particular case of Corollary \ref{C8}(3) as $sr(R)\le 2$. 
Thus
statements (1) to (4) are equivalent.

As each ${SE}_{2}$ ring is an ${SE}^{\triangle}_{2}$ ring and a $\Pi _{2}$
ring and each ${V}_{2}$ ring is a ${WV}_{2}$ ring, from Theorem \ref{TH3}(4) it follows that $(2)\Rightarrow (5)$ and $(6)\Rightarrow (7)$. Clearly, 
$(5)\Rightarrow (6)$. We have $(7)\Leftrightarrow (8)$, as $(8)$ just translates 
the definition of a $U_{2} $ ring for the unit $d+Rc\in U(R/Rc)$. Clearly, $(8)\Rightarrow (9)$. 

To check that $(9)\Rightarrow (8)$, let $(a,b),(c,d)\in Um(R^2)$. If $(e,f,s,t)\in R^4$ is such that $ae+bf=cs+dt=1$, then by applying (9) to $(ae,d)$ and $sc\in 1+Rd$ it follows that there exists $t_1\in R$ such that we can decompose $d+t_1sc=d_1d_2$ with $(ae,d_1),(1-ae,d_2)=(b,d_2)\in Um(R_2)$ and thus $(a,d_1),(b,d_2)\in Um(R_2)$ and statement (8) holds by taking by taking $t:=t_1s$.

The equivalence $(4)\Leftrightarrow (10)$ follows from Theorem \ref{TH3}(5) and (6).

We check that $(7)\Rightarrow (4)$. We have to show that,
if $R$ is a $U_{2}$ ring, then each $A\in Um(\mathbb{M}_{2}(R))$ is
extendable. Based on Lemma \ref{L1}(3), we can assume that $A$ is upper
triangular, and hence we can write $A=\left[ 
\begin{array}{cc}
ac & u \\ 
0 & bc\end{array}\right] $ where $a,b,c,u\in R$ are such that $\det (A)=abc^{2}$, $(a,b)\in
Um(R^{2})$ and $u\in R$ is an arbitrary representative of a fixed unit $u+Rc\in U(R/Rc)$. Hence, as we are assuming $R$ is a $U_{2}$ ring, we can
assume there exists a product decomposition $u=de$ where $d,e\in R$ are such
that $d+Rac\in U(R/Rac)$ and $e+Rbc\in U(R/Rbc)$. Let $d^{\prime },e^{\prime
}\in R$ be such that $d^{\prime }+Rac$ is the inverse of $d+Rac$ and $e^{\prime }+Rac$ is the inverse of $e+Rbc$. Let $f_{a},f_{b}\in R$ be such
that $dd^{\prime }=1+acf_{a}$ and $ee^{\prime }=1+bcf_{b}$. The matrix 
\begin{equation*}
B:=\left[ 
\begin{array}{cc}
ac(1+bcf_{b}) & u \\ 
abc^{2}d^{\prime }e^{\prime } & bc(1+acf_{a})\end{array}\right] =\left[ 
\begin{array}{cc}
acee^{\prime } & de \\ 
abc^{2}d^{\prime }e^{\prime } & bcdd^{\prime }\end{array}\right] \in \mathbb{M}_{2}(R)
\end{equation*}is congruent to $A$ modulo $R\det (A)$ and is non-full. Thus $A$ modulo $R\det (A)$ is non-full, hence simply extendable (see Theorem \ref{TH1}) and
from Lemma \ref{L1}(1) it follows that $A$ is extendable.

Thus the ten statements (1) to (10) of Theorem \ref{TH5} are equivalent.

We are left to check that $(11)\Rightarrow (7)$ when $R$ is a B\'{e}zout
domain. Given a unit $d+Rc\in U(R/Rc)$ and $(a,b)\in Um(R^{2})$, let $a^{\prime
}:=\gcd (a,c)$ and $b^{\prime }:=\gcd (b,c)$. As $\gcd (a,b)=1$, there
exists $c^{\prime }\in R$ such that $c=a^{\prime }b^{\prime }c^{\prime }$.
From the definition of a $W_2$ ring applied to the triple $(a^{\prime },b^{\prime },c^{\prime
})$ it follows that there exist units $e+Rc,f+Rc\in U(R/Rc)$ such that their
images in $U(R/Ra^{\prime })\times U(R/Rb^{\prime })$ are $(d+Ra^{\prime
},1+Rb^{\prime })$ and $(1+Ra^{\prime },d+Rb^{\prime })$ (respectively).
Then $(d+Ra,1+Rb)\in R/Rab\cong R/Ra\times R/Rb$ and $e+Rc\in R/Rc$ (resp.\ $(1+Ra,d+Rb)\in R/Rab\cong R/Ra\times R/Rb$ and $f+Rc\in R/Rc$) map to the
same element in $R/Ra^{\prime }b^{\prime }\cong R/Ra^{\prime }\times
R/Rb^{\prime }$ and hence, as $abc^{\prime }$ is the least common multiple
of $c$ and $ab$, there exists $(g,h)\in R^2$ such that $g+Rabc^{\prime }$ (resp.\ $h+Rabc^{\prime }$) reduces to both of them. As $abc^{\prime }+Rabc\in R/Rabc$
is nilpotent, the homomorphism $U(R/Rabc)\rightarrow U(R/Rabc^{\prime })$ is
onto and we conclude that $(e+Rc)(f+Rc)\in {Im}(U(R/Rac)\rightarrow U(R/Rc)){Im}(U(R/Rbc)\rightarrow U(R/Rc))$. Hence we can assume that $d-1\in
Ra^{\prime }b^{\prime }$, i.e., the images of $d+Rc$ and $1+Rab$ in $Ra^{\prime}b^{\prime}$ are equal, which implies that $d+Rc$ belongs to ${Im}(U(R/Rabc^{\prime
})\rightarrow U(R/Rc))$ and thus also to ${Im}(U(R/Rabc)\rightarrow
U(R/Rc))$. Hence Theorem \ref{TH5} holds.

\begin{corollary}
\label{C13} Let $R$ be a Hermite ring. Then $R$ is an \textsl{EDR}
iff for all homomorphisms $\phi :\mathbb{Z}[x,y,z]\rightarrow R$,
the image of $\mathcal{D}\in Um(\mathbb{M}_{2}(\mathbb{Z}[x,y,z]))$
(equivalently, $\mathcal{F}\in Um(\mathbb{M}_{2}(\mathbb{Z}[x,y,z]))$, see
Subsection \ref{S26}) in $Um(\mathbb{M}_{2}(R))$ via $\phi $ is extendable.
\end{corollary}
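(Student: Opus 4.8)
The plan is to reduce everything to the equivalence $(1)\Leftrightarrow(4)$ of Theorem \ref{TH5}: a Hermite ring is an \textsl{EDR} iff it is an $E_2$ ring, i.e.\ iff every $A\in Um(\mathbb{M}_2(R))$ is extendable. The \emph{only if} direction is then immediate: if $R$ is an \textsl{EDR}, then by $(1)\Rightarrow(2)$ of Theorem \ref{TH5} it is an $SE_2$ ring, so \emph{every} unimodular $2\times 2$ matrix, in particular every image of $\mathcal{D}$ and of $\mathcal{F}$, is simply extendable and hence extendable. All the content is therefore in the \emph{if} direction, which I would prove for $\mathcal{D}$ first.

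So assume every image of $\mathcal{D}$ is extendable and let $A\in Um(\mathbb{M}_2(R))$ be arbitrary. Since $R$ is Hermite, by Lemma \ref{L1}(3) I may replace $A$ by the equivalent upper triangular matrix $B=\left[\begin{smallmatrix} ac & u\\ 0 & bc\end{smallmatrix}\right]$ with $(a,b)\in Um(R^2)$, $c\in R$ and $(c,u)\in Um(R^2)$ produced exactly as in Subsection \ref{S26}; as $A$ and $B$ share the same double coset, $A$ is extendable iff $B$ is. Fixing $a',b',c',u'$ with $aa'+bb'=1$ and $cc'+uu'=1$, I attach to $B$ its companion test matrix $D=\left[\begin{smallmatrix} aa'cc' & u\\ 0 & bb'cc'\end{smallmatrix}\right]$, which by Subsection \ref{S26} is the image of $\mathcal{D}$ under $x\mapsto aa'$, $y\mapsto u$, $z\mapsto u'$; by hypothesis $D$ is extendable.

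The heart of the argument, and the step I expect to be the main obstacle, is to transfer extendability from $D$ back to $B$: the two matrices have different determinants and are not equivalent, so this cannot be done by a change of basis. Here I would invoke Proposition \ref{PR3}(2): $D$ extendable means its reduction modulo $R\det(D)$ is non-full, and analyzing the resulting column-row decomposition (using $aa'+bb'=1$ and $(cc',u)\in Um(R^2)$, in the spirit of the non-fullness computations of Subsection \ref{S26} and the cokernel description of Theorem \ref{TH1}) should yield a factorization $u=de$ with $(d,aa'cc'),(e,bb'cc')\in Um(R^2)$. Since $aa'cc'=ac\cdot a'c'$ and $bb'cc'=bc\cdot b'c'$ are multiples of $ac$ and $bc$, this forces $(d,ac),(e,bc)\in Um(R^2)$; choosing $d',e'$ with $dd'\equiv 1\pmod{ac}$ and $ee'\equiv 1\pmod{bc}$, the non-full matrix $\left[\begin{smallmatrix} e\\ bcd'\end{smallmatrix}\right]\left[\begin{smallmatrix} ace' & d\end{smallmatrix}\right]$ is congruent to $B$ modulo $R\det(B)=R(abc^2)$, so $B$ modulo $R\det(B)$ is non-full and $B$ (hence $A$) is extendable by Proposition \ref{PR3}(2). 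Thus $R$ is an $E_2$ ring and, by Theorem \ref{TH5}, an \textsl{EDR}.

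For the parenthetical equivalence of $\mathcal{D}$ and $\mathcal{F}$ I would not compare the two test matrices directly but route the implication through the \textsl{EDR} property. Recall from Subsection \ref{S26} that $\mathcal{D}$ is equivalent, via elementary matrices over $\mathbb{Z}[x,y,z]$, to $\mathcal{E}$, and that $\mathcal{E}$ is the image of $\mathcal{F}$ under the endomorphism of $\mathbb{Z}[x,y,z]$ fixing $x,y$ and sending $z\mapsto 2z-yz^2$. Hence for any $\phi\colon\mathbb{Z}[x,y,z]\to R$ the matrix $\phi(\mathcal{D})$ is equivalent to $\phi(\mathcal{E})$, which is itself an image of $\mathcal{F}$; since extendability is invariant under equivalence (Lemma \ref{L1}(3)), \emph{all images of $\mathcal{F}$ extendable} implies \emph{all images of $\mathcal{D}$ extendable}, which by the previous paragraphs implies $R$ is an \textsl{EDR}, which in turn (by the \emph{only if} direction) implies all images of $\mathcal{F}$ are extendable. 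This closes the loop, showing that the $\mathcal{D}$-condition and the $\mathcal{F}$-condition are each equivalent to $R$ being an \textsl{EDR}, and completes the proof.
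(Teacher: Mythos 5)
Your overall architecture is sound: the only-if direction, the reduction of an arbitrary $A$ to its companion test matrix $D=\phi_D(\mathcal D)$, the final non-full matrix $\left[\begin{smallmatrix} e\\ bcd'\end{smallmatrix}\right]\left[\begin{smallmatrix} ace' & d\end{smallmatrix}\right]$ (which is indeed congruent to $B$ modulo $Rabc^2$ once one has $u\equiv de$ with $(d,ac),(e,bc)\in Um(R^2)$), and the loop closing the $\mathcal D$/$\mathcal F$ equivalence are all fine. The gap is exactly where you flag it: the sentence ``analyzing the resulting column-row decomposition \dots should yield a factorization $u=de$ with $(d,aa'cc'),(e,bb'cc')\in Um(R^2)$'' is the entire content of the corollary, and it is not a routine verification. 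Worse, the relations a decomposition $\bar D=\left[\begin{smallmatrix}\bar l\\ \bar m\end{smallmatrix}\right]\left[\begin{smallmatrix}\bar n&\bar q\end{smallmatrix}\right]$ modulo $R\det(D)$ actually gives are $\bar l\bar n=\overline{aa'cc'}$, $\bar m\bar q=\overline{bb'cc'}$, $\bar u=\bar l\bar q$: the factor $\bar l$ of $\bar u$ \emph{divides} $aa'cc'$, so there is no reason for it to be coprime to $ac$ (already over $\mathbb Z$, with $a=2$, $b=1$, $c=3$, $u=2$, one can choose a decomposition of $\bar D$ with $l=2$, which is not a unit modulo $ac=6$). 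The coprimality that does hold is the crossed one, $(q,ac),(l,bc)\in Um(R^2)$, and even that requires a maximal-ideal chase using the unimodularity of $(\bar l,\bar m)$ and $(\bar n,\bar q)$ together with $aa'+bb'=1$, $cc'+uu'=1$ and $(c,u)\in Um(R^2)$; none of this is in your text.

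The paper sidesteps this difficulty by two moves you do not make. First it normalizes $c$: replacing $c$ by $cs$ where $cs+tu=1$, one may assume $c=1-tu$, so the image of $\mathcal D$ is $\left[\begin{smallmatrix}a(1-tu)&u\\0&(1-a)(1-tu)\end{smallmatrix}\right]$. Second, it uses the equivalence $\mathcal D\sim\mathcal E$ to pass to $\left[\begin{smallmatrix}a&u\\0&(1-a)(1-uw)\end{smallmatrix}\right]$, whose $(1,1)$ entry is $a$ alone; the column-row decomposition modulo the determinant $a(1-a)c^2$ then gives $\bar u=\bar u_1\bar u_2$ with $\bar u_1\mid\bar a$ and $\bar u_2\mid 1-\bar a$, from which $(u_1,(1-a)c),(u_2,ac)\in Um(R^2)$ fall out by a one-line argument. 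This exhibits $u+Rc$ in the image of $\rho_{a,c}$, i.e., shows $R$ is a $U_2$ ring, and the corollary follows from $(1)\Leftrightarrow(7)$ of Theorem \ref{TH5} rather than from $(1)\Leftrightarrow(4)$ as you propose. Your route can be repaired along these lines, but as written its key step is asserted rather than proved, and the coprimality is asserted for the wrong factors.
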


\begin{proof}
The `only if' part is obvious. To prove the `if' part, based on Theorem \ref{TH5}, it suffices to prove that $R$ is a $U_{2}$ ring, i.e., for all $a,c\in R$, the reduction product homomorphism $\rho _{a,c}:U(R/Rac)\times
U(R/R(1-a)c))\rightarrow U(R/Rc)$ is surjective (see Remark \ref{rem0}). Let $u\in R$ be such that $(u,c)\in Um(R^2)$. We want to prove that $u+Rc\in {Im}(\rho _{a,c})$. Let $s,t\in R$ be such that $cs+tu=1$. As $u+Rcs\in U(R/Rcs)$, it suffices to prove that $u+Rcs\in {Im}(\rho _{a,cs})$. Thus by replacing $c$ with $cs$ we can assume that $c=1-tu$. Hence $A=\left[ 
\begin{array}{cc}
a(1-tu) & u \\ 
0 & (1-a)(1-tu)\end{array}\right] \in Um(\mathbb{M}_{2}(R))$ is the image of $\mathcal{D}$ via the
homomorphism $\phi :\mathbb{Z}[x,y,z]\rightarrow R$ that maps $x$, $y$, and $z$ to $a$, $u$, and $t$ (respectively) and so $A$ is extendable. As $\mathcal{D}$
is equivalent to $\mathcal{E}$ (see Subsection \ref{S26}), it follows that $A $ is equivalent to the matrix $B:=\left[ 
\begin{array}{cc}
a & u \\ 
0 & (1-a)(1-uw)\end{array}\right] \in Um(\mathbb{M}_{2}(R))$, where $w:=2t-ut^{2}$. From Lemma \ref{L1}(3)
it follows that $B$ is extendable and hence its reduction modulo $Ra(1-a)(1-uw)=Ra(1-a)c^{2}$ is non-full (see Proposition \ref{PR3}(2)).
We will denote by $\bar{\ast}$ the reduction modulo $Ra(1-a)c^{2}$ of $\ast$, where $\ast $ is either $R$ or an element of $R$. We have a
product decomposition $\bar{u}=\bar{u_{1}}\bar{u}_2$ with $\bar{u}_{1}=u_{1}+Ra(1-a)c^{2}\in \bar{R}$ dividing $\bar{a}$ and $\bar{u}_{2}=u_{2}+Ra(1-a)c^{2}\in\bar R$ dividing $(1-\bar{a})(1-\bar{w}\bar{u})$ and hence
also $1-\bar{a}$. Thus there exists $(d,e,f)\in R^3$ such that $u_{1}u_{2}=u+a(1-a)c^{2}d$, $u_{1}$ divides $a+a(1-a)c^{2}e$ and $u_{2}$
divides $1-a+a(1-a)c^{2}f$. It follows that $\bigl(u_{1},c(1-a)\bigr),(u_{2},ca)\in
Um(R^{2})$, and therefore we have an identity $u+Rc=u_{1}u_{2}+Rc=\rho _{a,c}\bigl(u_{2}+Rac,u_{1}+R(1-a)c\bigr).$
\end{proof}

\begin{corollary}
\label{C14} Let $R$ be a B\'{e}zout domain. Then $R$ is an \textsl{EDD} iff for all $(a,u,t)\in R^3$ with $u\neq 0$ there exists $(s,l,z)\in R^3$ such that 
\begin{equation}\label{EQ11}
(1-su-la)^2+l-sul-l^2a-z(s+t-sut)=0.
\end{equation}
\end{corollary}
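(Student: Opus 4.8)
The plan is to specialize Corollary \ref{C13} and then read off statement \circled{5} of Theorem \ref{TH8} through an explicit parametrization. Since $R$ is a B\'ezout domain (hence a Hermite ring), Corollary \ref{C13} says $R$ is an \textsl{EDD} iff for every $\phi:\mathbb Z[x,y,z]\to R$ the image of $\mathcal D$ is extendable. Writing $(a,u,t):=(\phi(x),\phi(y),\phi(z))$, this image is the upper triangular matrix $A_{a,u,t}:=\left[\begin{array}{cc} a(1-ut) & u \\ 0 & (1-a)(1-ut)\end{array}\right]\in Um(\mathbb M_2(R))$. When $u=0$ this is $\Diag(a,1-a)$, which is always simply extendable (take $x=y=z=w=1$ in \circled{5}), so the condition is vacuous there and one may restrict to $u\neq 0$.

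Because a B\'ezout domain is a pre-Schreier domain of stable range at most $2$, Corollary \ref{C12} applies to $A_{a,u,t}$ and makes statements \circled{1}--\circled{10} all equivalent; as $sr(R)\le 2$, extendable and simply extendable coincide, so $A_{a,u,t}$ is extendable iff statement \circled{5} holds, i.e. iff there is $(x,y,z,w)\in R^4$ with $a(1-ut)x+uy+(1-a)(1-ut)w=1$ and $xw=yz$. Thus the whole corollary reduces to showing that, for fixed $(a,u,t)$ with $u\neq 0$, this system is solvable iff \cref{EQ11} is solvable in $(s,l,z)$.

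Next I would parametrize the linear relation. A particular solution is $(x_0,y_0,w_0)=(1,t,1)$, and the kernel of the surjection $R^3\to R$, $(x,y,w)\mapsto a(1-ut)x+uy+(1-a)(1-ut)w$, contains $k_1:=(-u,\,1-ut,\,-u)$ and $k_2:=(1-a,\,0,\,-a)$ (a direct check). The crucial point is that $k_1,k_2$ generate the whole kernel: the three $2\times 2$ minors of the matrix with rows $k_1,k_2$ are $-(1-a)(1-ut)$, $u$ and $-a(1-ut)$, whose gcd is $1$ (as $\gcd(a,1-a)=1$ and $\gcd(u,1-ut)=1$). Hence $Rk_1+Rk_2$ is a rank $2$ direct summand of $R^3$ contained in the rank $2$ summand $\ker$, so the two coincide. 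Writing $(x,y,w)=(1,t,1)+sk_1+lk_2$ yields $x=1-su+l(1-a)$, $y=s+t-sut$, $w=1-su-la$; the linear relation then holds identically, and the equation $xw=yz$ becomes $(1-su-la)(1-su+l(1-a))=z(s+t-sut)$, which is exactly \cref{EQ11} after the factorization $(1-su-la)^2+l-sul-l^2a=(1-su-la)(1-su+l(1-a))$.

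The main obstacle is the exhaustiveness of this parametrization. The forward implication (\cref{EQ11}$\Rightarrow$\circled{5}) is a one-line substitution check requiring nothing about the kernel; the substance is the converse, where an arbitrary solution of the linear relation must be captured by some $(s,l)$, with the matching $z$ inherited from $xw=yz$. This is precisely the unimodular-minors computation together with the summand argument above, so that step is self-contained once the minors are shown coprime. The only genuinely nontrivial input is the equivalence ``extendable $\Leftrightarrow$ \circled{5}'', which is outsourced to Corollary \ref{C12} (and is where the B\'ezout/pre-Schreier and $sr\le 2$ hypotheses are used); everything else is routine algebra.
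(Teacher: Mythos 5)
Your proof is correct and follows essentially the same route as the paper's: reduce via Corollary \ref{C13} to the universal test matrix, pass to statement $\circled{\textup{5}}$ (the paper invokes Corollary \ref{C11}, you invoke Corollary \ref{C12} --- either suffices since a B\'ezout domain is a $\Pi_2$ domain of stable range at most $2$), and parametrize the solutions of the linear relation to land on exactly $x=1-su+l(1-a)$, $y=s+t-sut$, $w=1-su-la$. The only cosmetic difference is that you prove exhaustiveness of the parametrization in one step via the unimodular-minors/direct-summand argument for the rank-$2$ kernel, whereas the paper eliminates $y$ by working modulo $Ru$ first and then solves $ax+(1-a)w=1-su$; both give the same formulas and the same Equation (\ref{EQ11}).
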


\begin{proof}
As in the proof of Corollary \ref{C13}, the Hermite ring $R$ is an \textsl{EDR} iff for all $(a,u,t)\in R^3$ the matrix $A=\left[ 
\begin{array}{cc}
a(1-tu) & u \\ 
0 & (1-a)(1-tu)\end{array}\right] \in Um(\mathbb{M}_{2}(R))$ is simply extendable and, due to Corollary \ref{C11}, iff there exists $(x,y,z,w)\in R^4$ such that 
$$1-a(1-tu)x-yu-w(1-a)(1-tu)=0=xw-yz.$$
If $u=0$, then $A$ is diagonal and hence simply extendable (see Example \ref{EX9}(3)).
Thus we can assume $u\neq 0$. The equation $1-a(1-tu)x-yu-w(1-a)(1-tu)=0$ can be rewritten as $(1-tu)(ax-aw+w)=1-yu$ and working modulo $Ru$ with $u\neq 0$ it follows that its general solution is $ax+w(1-a)=1-su$ and $y=s+t-sut$ with $s\in R$. As $R$ is an integral domain, the general solution of $ax+w(1-a)=1-su$ is $x=1-su+l(1-a)$ and $w=1-su-la$ and the equation $xw-yz=0$ becomes
$$[1-su+l(1-a)](1-su-la)-z(s+t-sut)=0,$$
from which the corollary follows.
\end{proof}

\subsection{Proof of Theorem \protect\ref{TH6} and applications}

\label{S46}

We will first prove Theorem \ref{TH6}. Let $\rho _{a,c}$ be as
above, the reduction product homomorphism. Similar to the proof of Corollary \ref{C13}, it suffices to prove that for all $(u,c)\in Um(R^2)$ and $a\in R$, we
have $u^{2}+Rc\in Im(\rho _{a,c})$. To check this we can assume that $c\neq
0 $. As in the mentioned proof, we can assume that there exists $t\in R$
such that $c=1-tu$. As we are assuming that $R$ is a $({WSU})_{2}$ ring, there
exists $M=\left[ 
\begin{array}{cc}
x & y \\ 
z & w\end{array}\right] \in {GL}_{2}(R)$ such that $M\left[ 
\begin{array}{cc}
ac & u \\ 
0 & (1-a)c\end{array}\right] $ is a symmetric matrix, i.e., the equation 
\begin{equation}
zac=xu+y(1-a)c  \label{EQ12}
\end{equation}holds. From this equation it follows that $c$ divides $xu$. Thus, as $(u,c)\in Um(R^{2})$, $c$ divides $x$. Let $s\in R$ be such that $x=cs$.

Let $\mu:=\det (M)=csw-yz\in U(R)$. Thus $(cs,yz)\in Um(R^{2})$. 

We first assume that $c\notin Z(R)$. Dividing Equation
(\ref{EQ12}) by $c$ it follows that 
\begin{equation}
za+y(a-1)=su.  \label{EQ13}
\end{equation}

We check that the assumption $(s,a)\notin Um(R^2)$ leads to a contradiction. This assumption implies that there exists $\mathfrak m\in {Max} R$ such that $Rs+Ra\subseteq\mathfrak m$. From this and Equation (\ref{EQ13}) it follows that $y(a-1)\in\mathfrak m$. As $a-1\not\in\mathfrak m$, it follows that $y\in\mathfrak m$, hence $(s,yz)\notin Um(R^2)$, a contradiction. 

Thus $(s,a)\in Um(R^2)$, i.e., $s+Ra\in U(R/Ra)$.

From Equation (\ref{EQ13}) it follows that the images of $(-y+Rc)(u+Rc)^{-1}\in U(R/Rc)$ and $s+Ra\in U(R/Ra)$ in $U(R/(Ra+Rc))$ are the same and 
hence there exists a unit of the quotient ring $R/(Ra\cap Rc)$ that maps to 
$(-y+Rc)(u+Rc)^{-1}\in U(R/Rc)$. 
As $(Ra\cap Rc)/Rac$ is a nilpotent ideal of $R/Rac$, the homomorphism $U(R/Rac)\rightarrow U(R/(Ra\cap Rc))$ is surjective and we conclude that $(-y+Rc)(u+Rc)^{-1}\in {Im}(\rho _{a,c})$.

Due to the symmetry of Equation (\ref{EQ13}) in $az$ and $y(a-1)$, similar
arguments give $s+R(1-a)\in U\bigl(R/R(1-a)\bigr)$ and $(z+Rc)(u+Rc)^{-1}\in {Im}(\rho _{a,c})$. Clearly $\mu+Rc\in {Im}(\rho _{a,c})$. As $\mu+Rc=(-y+Rc)(z+Rc)$, it follows that 
\begin{equation*}
(u+Rc)^{2}=[(-y+Rc)(u+Rc)^{-1}]^{-1}[(z+Rc)(u+Rc)^{-1}]^{-1}(\mu+Rc)\in {Im}(\rho _{a,c}).
\end{equation*}

We will use a trick with annihilators to show that an analogue of Equation (\ref{EQ13}) always holds (i.e., holds even if $c\in Z(R)$). As $(c,u),(c,yz)\in Um(R^2)$, it follows that $(c,uyz)\in Um(R^2)$. This implies
that the annihilator of $c$ in $R$ is contained in $Ruyz$. From this and the
identity $c[za+y(a-1)-su]=0$ it follows that there exists $q\in R$ such that 
$za+y(a-1)-su=quyz$. If $s_1:=s+qyz$, then $(s_1,yz)\in
Um(R^2)$ and the identity $za+y(a-1)=s_1u$ holds. Thus, using $s_1$ and the
last identity instead of $s$ and Equation (\ref{EQ13}), as above we
argue that $(u+Rc)^{2}\in {Im}(\rho _{a,c})$. Thus part (1) holds.

The first part of part (2) follows from part (1) and definitions. The second part of
part (2) follows from the first part of part (2) and Theorem \ref{TH5} (recall that $SU_2$ rings are Hermite rings). Thus
Theorem \ref{TH6} holds.

\begin{corollary}
\label{C15} For a B\'{e}zout domain the following statements are equivalent:

\medskip \textbf{(1)} The ring $R$ is an $(SU)_2$ (resp.\ $(SU^{\prime})_2$) ring.

\smallskip \textbf{(2)} For all $(a,b),(c,u)\in Um(R^2)$ with $abc\neq 0$,
if $(a^{\prime},b^{\prime})\in R^2$ such that $aa^{\prime}+bb^{\prime}=1$ is given, then there exists a triple $(s,l,w)\in R^3$ with the property that $csw+(a^{\prime}su-bl)(b^{\prime}su+al)\in U(R)$ (resp.\ $csw+(a^{\prime}su-bl)(b^{\prime}su+al)=1$).\end{corollary}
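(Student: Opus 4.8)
The plan is to reduce the statement, in both the $(SU)_2$ and the $(SU')_2$ case, to the symmetrizability of a single upper triangular unimodular matrix and then to solve the resulting problem explicitly. Recall from Definition \ref{def4} that $R$ being an $(SU)_2$ (resp. $(SU')_2$) ring means that every $A\in\mathbb{M}_2(R)$ can be made symmetric by right (equivalently left, equivalently two-sided) multiplication by a matrix in $GL_2(R)$ (resp. $SL_2(R)$); I will call such an $A$ \emph{symmetrizable}. Using the two-sided description, symmetrizability is preserved under replacing $A$ by $NAN'$ with $N,N'\in GL_2(R)$ (resp. $SL_2(R)$), since $P(NAN')Q$ is symmetric iff $(PN)A(N'Q)$ is; it is also preserved under multiplying $A$ by a nonzero scalar.

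First I would cut down the class of matrices to be tested. Since $R$ is a GCD domain, any nonzero $A\in\mathbb{M}_2(R)$ equals $rA_0$ with $r$ the gcd of its entries and $A_0\in Um(\mathbb{M}_2(R))$ (collectively coprime entries are unimodular over a GCD domain), while the zero matrix is already symmetric; hence by scale invariance $R$ is an $(SU)_2$ (resp. $(SU')_2$) ring iff every unimodular matrix is symmetrizable. Next, because $R$ is a Hermite ring, Subsection \ref{S26} provides $M\in SL_2(R)$ with $MA=\left[\begin{smallmatrix} ac & u\\ 0 & bc\end{smallmatrix}\right]$, where $(a,b),(c,u)\in Um(R^2)$. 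As the reduction is by an $SL_2(R)$ matrix, it preserves symmetrizability in both cases, so it suffices to decide when the triangular matrix $T:=\left[\begin{smallmatrix} ac & u\\ 0 & bc\end{smallmatrix}\right]$ is symmetrizable. When $abc=0$ (so $a$, $b$ or $c$ vanishes, in which case the corresponding off-factor entry is a unit) one checks directly, by exhibiting an explicit $M\in SL_2(R)$, that $T$ is symmetrizable unconditionally; these degenerate cases therefore impose no constraint, which is why the corollary only quantifies over $abc\neq 0$.

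The core step is the computation for $abc\neq 0$. Writing $M=\left[\begin{smallmatrix} x & y\\ z & w\end{smallmatrix}\right]$, the product $MT$ is symmetric iff $xu+bcy=acz$. Since the right-hand side and $bcy$ are divisible by $c$ and $(c,u)\in Um(R^2)$, this forces $c\mid x$; writing $x=cs$ and cancelling $c$ (legitimate as $R$ is a domain and $c\neq 0$) the condition becomes the linear equation $az-by=su$. Fixing $(a',b')$ with $aa'+bb'=1$, its general solution is $z=a'su-bl$ and $y=-(b'su+al)$ with $l\in R$ a free parameter, while $w$ remains free; this sets up a bijection between matrices $M$ with $MT$ symmetric and triples $(s,l,w)\in R^3$, under which $\det M=xw-yz=csw+(a'su-bl)(b'su+al)$. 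Hence $T$ is symmetrizable in the $(SU)_2$ sense (some $M\in GL_2(R)$) iff this expression lies in $U(R)$ for some $(s,l,w)$, and in the $(SU')_2$ sense (some $M\in SL_2(R)$) iff it equals $1$ for some $(s,l,w)$, which is exactly statement (2).

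Two bookkeeping points remain. The statement is phrased for an arbitrary given $(a',b')$ with $aa'+bb'=1$; I would note that any two such choices differ by $(a',b')\mapsto(a'+bk,\,b'-ak)$, and that substituting this into $(a'su-bl)(b'su+al)$ merely replaces $l$ by $l-ksu$, so the existence of $(s,l,w)$ is independent of the choice of $(a',b')$, making the ``if $\ldots$ is given, then there exists'' formulation unambiguous. Finally, I would record that every pair $(a,b),(c,u)\in Um(R^2)$ with $abc\neq 0$ actually occurs: the matrix $T$ is itself unimodular, so the forward implication applies to it directly, while for the converse each unimodular $A$ reduces to some such $T$. The genuine content is the short computation of the third paragraph; the main obstacle is organizational rather than mathematical, namely keeping every reduction inside $SL_2(R)$ so that the $(SU')_2$ (determinant $=1$) version is handled simultaneously with the $(SU)_2$ version, together with the routine dispatch of the degenerate cases $abc=0$.
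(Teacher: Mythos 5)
Your proposal is correct and follows essentially the same route as the paper: reduce via the Hermite/$SL_2$ triangularization of Subsection \ref{S26} to a matrix $\left[\begin{smallmatrix} ac & u\\ 0 & bc\end{smallmatrix}\right]$, observe that symmetry of $MT$ forces $c\mid x$, cancel $c$, and parametrize the solutions of $za-yb=su$ by $l$ so that $\det M=csw+(a'su-bl)(b'su+al)$. Your extra checks (degenerate cases $abc=0$, independence of the choice of $(a',b')$, scalar reduction to unimodular matrices) are correct refinements of steps the paper treats more tersely.
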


\begin{proof}
If $A_1,A_2\in\mathbb M_2(R)$ are equivalent, then there exists $M_1\in {GL}_2(R)$ such that $M_1A_1$ is symmetric iff there exists $M_2\in {GL}_2(R)$ such that $M_2A_2$ is symmetric. This is so, as for $M,N,M_1\in {GL}_2(R)$ such that $A_2=MA_1N$ and $M_1A_1$ is symmetric, by denoting $M_2:=N^TM_1M^{-1}\in {GL}_2(R)$, $M_2A_2=N^T(M_1A_1)N$ is symmetric. Also, if $M,N,M_1\in {SL}_2(R)$, then $M_2\in {SL}_2(R)$. 

Based on the previous paragraph, Subsection \ref{S26} and Example \ref{EX9}(3), the ring $R$ is an $(SU)_2$ (resp.\ $(SU^{\prime})_2$) ring iff for all $(a,b),(c,u)\in Um(R^2)$ with $abc\neq 0$ there exists $M=\left[ 
\begin{array}{cc}
x & y \\ 
z & w\end{array}\right]\in {GL}_2(R)$ (resp.\ $M=\left[ 
\begin{array}{cc}
x & y \\ 
z & w\end{array}\right]\in {SL}_2(R)$) such that the matrix $M\left[ 
\begin{array}{cc}
ac & u \\ 
0 & bc\end{array}\right] $ is symmetric, i.e., the equation $zac=xu+ybc$ holds. As in the
proof of Theorem \ref{TH6}, we write $x=cs$ with $s\in R$ and the identity $za=su+yb$ holds. The general solution of the equation $za=su+yb$
in the indeterminates $y,z$ is $z=a^{\prime}su-bl$ and $y=-b^{\prime}su-al$
with $l\in R$. Thus the corollary follows from the fact that $\det(M)=xw-yz=csw+(a^{\prime}su-bl)(b^{\prime}su+al)$.
\end{proof}

\begin{remark}\label{rem4}
\textbf{(1)} Referring to statement
(2) of Corollary \ref{C15} with $b=1-a$, we can take $a^{\prime}=1=b^{\prime}$, and hence one is searching for $(s,l,w)\in R^3$
such that $csw+(su+al-l)(su+al)\in U(R)$ (resp.\ and $csw+(su+al-l)(su+al)=1$,
 which for $c=1-tu$ with $t\in R$ is closely related  to Equation (\ref{EQ11}) via a substitution of the form $z:=-w$).

\smallskip \textbf{(2)} We recall from \cite{ZR}, Sect. 1 that $R$ is said to have 
\emph{square stable range 1}, and one writes $ssr(R)=1$, if for each $(a,b)\in Um(R^{2})$, there exists $r\in R$ such that $a^{2}+rb\in U(R)$. 
Clearly, if $sr(R)=1$, then $ssr(R)=1$. If $ssr(R)=1$, then for all $b\in R$, the cokernel of the reduction homomorphism 
$U(R)\rightarrow U(R/Rb)$ is an elementary abelian 2-group and hence $R$ is
a $WU_{2}$ ring. Moreover, if $(c,u)\in Um(R^{2})$, let $w\in R$ be such
that $u^{2}+cw\in U(R)$. Then $M:=\left[ 
\begin{array}{cc}
c & -u \\ 
u & w\end{array}\right] \in {GL}_{2}(R)$ and the product matrix $M\left[ 
\begin{array}{cc}
ac & u \\ 
0 & (1-a)c\end{array}\right] $ is symmetric. Hence, if $R$ is a Hermite ring with $ssr(R)=1$,
then each upper triangular matrix in $\mathbb{M}_{2}(R)$ has a companion
test matrix (see Subsection \ref{S26}) equivalent to a symmetric matrix; thus, from this and 
the proof of Corollary \ref{C13} it follows that $R$ is an \textsl{EDR} iff it is an ${SE}_2^{sym}$ ring.
\end{remark}

\begin{example}
\normalfont\label{EX12} Assume that $R$ is a Hermite ring and a $WU_{2}$
ring and let $A\in Um(\mathbb{M}_{2}(R))$.
Let $\bar{R}:=R/R\det (A)$ and let $\bar{A}\in Um(\mathbb{M}_{2}(\bar{R}))$
be the reduction of $A$ modulo $R\det(A)$. We know that $K_{\bar{A}}$ and $Q_{\bar{A}}$
are dual projective $\bar{R}$-modules of rank $1$ generated by two elements. We check that 
\begin{equation*}
\lbrack K_{\bar{A}}]=\lbrack Q_{\bar{A}}]\in {Pic}_{2}(\bar{R})[2]
\end{equation*}(equivalently, $K_{\bar{A}}^{2}$ or $Q_{\bar{A}}^{2}$ is a free $R$-module
of rank $2$). To simplify the notation, by replacing $(\bar{R},\bar{A})$
with $(R,A)$, we can assume that $\det (A)=0$. To check that $K_{A}^{\otimes
2}:=K_{A}\otimes _{R}K_{A}$ is isomorphic to $R$ we can assume that $R$ is
reduced. Based on Subsection \ref{S26}, we can assume that $A=\left[ 
\begin{array}{cc}
ac & u \\ 
0 & bc\end{array}\right] \in \mathbb{M}_{2}(R)$ with $a,b,c\in R$ such that $abc^{2}=0$ and $(a,b),(c,u)\in Um(R^{2})$; as $R$ is reduced, we have $abc=0$. Let $(s,t)\in
R^{2}$ be such that $sc+tu=1$. By Example \ref{EX10} we have $K_{A}=Rv_{1}+Rv_2$, where $v_{1}:=\left[ 
\begin{array}{c}
-u \\ 
ac\end{array}\right] $ and $v_2:=\left[ 
\begin{array}{c}
-bc \\ 
0\end{array}\right] $. As $(a,b)\in Um(R^{2})$, we have ${Spec}R={Spec}R_{a}\cup {Spec}R_{b}$. Moreover, $(K_{A})_{a}=R_{a}v_{1}$ and $(K_{A})_{b}=R_{b}(tv_{1}+\dfrac{s}{b}v_2)$. Over $R_{ab}$ we have $c=0$ and hence $v_{1}=u(tv_{1}+\dfrac{s}{b}v_2)$. Thus $(K_{A}^{\otimes
2})_{a}=R(v_{1}\otimes v_{1})$, $(K_{A}^{\otimes 2})_{b}=R\bigl((tv_{1}+\dfrac{s}{b}v_2)\otimes (tv_{1}+\dfrac{s}{b}v_2)\bigr)$, and over $R_{ab}$ we have an identity 
$$v_{1}\otimes v_{1}=u^{2}\bigl((tv_{1}+\dfrac{s}{b}v_2)\otimes (tv_{1}+\dfrac{s}{b} v_2)\bigr).$$ 
As $R$ is a $WU_{2}$ ring (see Theorem \ref{TH8}), there
exist units $u_{a}\in U(R/Rbc)$ and $u_{b}\in U(R/Rac)$ such that the product
of their images in $U(R/Rc)$ is $u^{2}$. As $R_{a}$ is a localization of $R/Rbc$ and $R_{b}$ is a localization of $R/Rac$, we will denote also by $u_{a}$ and $u_{b}$ their images in $R_{a}$ and $R_{b}$ (respectively). Thus $u_{a}^{-1}(v_{1}\otimes v_{1})$ and $u_{b}^{-1}\bigl((tv_{1}+\dfrac{s}{b}v_2)\otimes (tv_{1}+\dfrac{s}{b}v_2)\bigr)$ coincide over $R_{ab}$ and hence
there exists an element $v\in K_{A}^{\otimes 2}$ whose images in $(K_{A}^{\otimes 2})_{a}$ and $(K_{A}^{\otimes 2})_{b}$ are equal to $u_{a}^{-1}(v_{1}\otimes v_{1})$ and $u_{b}^{-1}\bigl((tv_{1}+\dfrac{s}{b}v_2)\otimes (tv_{1}+\dfrac{s}{b}v_2)\bigr)$ (respectively). Thus $Rv=K_{A}^{\otimes 2}$, hence $K_{A}^{\otimes 2}\cong R$.
\end{example}

\subsubsection{Proof of Corollary \protect\ref{C2}.} To prove Corollary \ref{C2}(1), we assume that $R$ is a Hermite ring such that ${Pic}_2(R/Ra)[2]$ is trivial for all nonzero $a\in R$. Each $U_2$ ring is a $WU_2$ ring. From Example \ref{EX12} it follows that if $R$ is a $WU_2$ ring, then for each $A\in Um(\mathbb M_2(R))$, the reduction $\bar{A}$ of $A$ modulo $R\det(A)$ is such that $K_{\bar A}\cong R/R\det(A)$, hence $A$ is extendable (see Theorem \ref{TH1} and Lemma \ref{L1}(1)). From the last two sentences and Theorem \ref{TH5} it follows that Corollary \ref{C2}(1) holds. Corollary \ref{C2}(2) follows from Corollary \ref{C2}(1) and Theorem \ref{TH6}, as each $(SU)_2$ ring is a $(WSU)_2$ ring and (see \cite{lor}, Prop. 3.1) a Hermite ring. Thus Corollary \ref{C2} holds. 

\subsection{Proof of Criterion \protect\ref{CR1}}\label{S47}

Based on Theorem \ref{TH8}, to prove part (1) it suffices to prove that
given $(a,b,c,e,f)\in R^{5}$ with $ac=b^{2}$ and $(a,c)\in Um(R^{2})$, we
have $(ae-bf,be-cf)\in Um(R^{2})$ if $ae^{2}-cf^{2}\in U(R)$, and the
converse holds if ${char}(R)=2$. The first part follows from the
fact that $ae^{2}-cf^{2}=e(ae-bf)+f(be-cf)$. For the second part we assume
that ${char}(R)=2$ and $(ae-bf,be-cf)\in Um(R^{2})$. To prove that $ae^{2}-cf^{2}\in U(R)$ it suffices to show that the assumption that there
exist $\mathfrak{m}\in {Max}R$ such that $ae^{2}-cf^{2}\in \mathfrak{m}$, leads to a contradiction. By replacing $R$ with $R/\mathfrak{m}$, we can
assume that $R$ is a field and we know that $a$ and $c$ are not both zero, $ae-bf$ and $be-cf$ are not both zero, $ac=b^{2}$ and $ae^{2}=cf^{2}$;
hence also $e$ and $f$ are not both $0$. If $acef=0$, then by the symmetry in 
$(a,c)$ and $(e,f)$ we can assume that $ae=0$; if $a=0$, then $b=0$, $c\neq 0
$, and hence $f=0$, thus $ae-bf$ and $be-cf$ are both zero, a contradiction,
and if $e=0$, then $f\neq 0$, hence $c=0$, and by the symmetry in the pair $(a,c)$, we similarly reach a contradiction. Thus we can assume that $acef\neq
0$, in which case $b^{2}=ac=e^{-2}c^2f^{2}=f^{-2}a^2e^2$; as ${char}(R)=2$ and $R$ is a field, it follows that $b=e^{-1}cf=f^{-1}ae$, hence 
$ae-bf$ and $be-cf$ are both zero, a contradiction. Thus part (1) holds.

To prove part (2), we note that, as $b\notin Z(R)$, for each $\mathfrak{m}\in {Max}R$, $b$
is a nonzero element of $R_{\mathfrak{m}}$. We recall that, as $R$ is a reduced Hermite ring, each $R_{\mathfrak{m}}$ is a valuation domain.

Let $d\in R
$ and $(a^{\prime},b^{\prime})\in Um(R^2)$ be such that $a=da^{\prime}$ 
and $b=db^{\prime}$. As $d$ divides $a$ and $(a,c)\in Um(R^2)$, 
we have $(d,c)\in Um(R^2)$. Thus also $(d^2,c)\in Um(R^2)$. From this, 
as $d^2$ divides $ac$, it follows that $d^2$ divides $a$, hence there exists 
$u\in R$ such that $a=d^2u$. As $b$ is a nonzero element of the valuation domain $R_{\mathfrak{m}}$, it
is easy to see that the image of $u$ in $R_{\mathfrak{m}}$ is a unit 
for all $\mathfrak{m}\in {Max}R$. Hence $u\in U(R)$.

By symmetry, there exist $g\in R$ and $v\in U(R)$
such that $c=g^2v$.

To complete the proof that the matrix $A$ is extendable, based on part (1)
it suffices to show that there exists $(e,f)\in R^{2}$ such that $ae^{2}-cf^{2}=ae^{2}+cf^{2}\in U(R)$. Replacing $A$ by $u^{-1}A$ (see
Lemma \ref{L1}(3)), we can assume that $u=1$. Thus $a=d^{2}$. From this and the
identities $ac=b^{2}$ and $c=g^{2}v$, a simple argument shows that 
there exists $w\in R$ such that $v=w^2$. By replacing $(v,g)$ with $(1,gw)$, 
we can assume that $v=1$ and $c=g^{2}$. As $(a,c)=(d^{2},g^{2})\in Um(R^{2})$, it follows that $(d,g)\in Um(R^{2})$,
hence there exists $(e,f)\in Um(R^{2})$ such that $de+gf=1$, and so $ae^{2}+cf^{2}=1\in U(R)$. Thus part (2) holds and so Criterion \ref{CR1}
holds.

\subsection{Proof of Criterion \protect\ref{CR2}}

\label{S48}

As $R$ is an $(SU)_{2}$ ring, it is a Hermite ring, and it is an $E_{2}$ ring iff it is an $E_{2}^{sym}$ ring. From this and Theorem \ref{TH5} it follows that $R$ is an 
\textsl{EDR} iff it is an $E_{2}^{sym}$ ring. Let $A=\left[ 
\begin{array}{cc}
a & b \\ 
b & c\end{array}\right] \in Um(\mathbb{M}_{2})$. Based on Lemma \ref{L1}(1) and Criterion \ref{CR1}(1), $A$ is extendable if there exists $(e,f)\in R^{2}$ such that $ae^{2}-cf^{2}+R(ac-b^{2})\in U(R/[R(ac-b^{2})])$, equivalently, if $(ae^{2}-cf^{2},ac-b^{2})\in Um(R^{2})$, and the converse holds if ${char}(R)=2$; hence Criterion \ref{CR2} holds.

\subsection{Proof of Criterion \protect\ref{CR3}}

\label{S49}

As $R$ is a Hermite ring, it is an \textsl{EDR} iff it is an ${SE}^{\triangle}_{2}$ ring and thus (see Corollary \ref{C10}) iff for
all $a,b,s\in R$ the matrix $A=\left[ 
\begin{array}{cc}
a & b \\ 
0 & 1-a-bs\end{array}\right] $ is simply extendable. As $sr)R)\le 2$, based on the last part of Corollary \ref{C7}, we
can replace simply extendable by extendable. As each B\'{e}zout domain 
is a Schreier domain, based on Corollary \ref{C12}, we can replace extendable 
by statement \circled{\textup{9}} holding for $\upsilon :=(a,b,0,1-a-bs)$, 
i.e., Equation
(\ref{EQ6}) gives for each such $\upsilon$ an equation 
\begin{equation}
\lbrack 1-(1-a-bs)w-by](1-ax)=ay[(1-a-bs)z+bx]  \label{EQ14}
\end{equation}in the indeterminates $x$, $y$, $z$ and $w$ which has a solution in $R^{4}$.
To solve this equation, we note that, as $(a,1-ax)\in Um(R^{2})$, $a$
divides $1-(1-a-bs)w-by$.

Thus, as $R$ is an integral domain, Equation (\ref{EQ14}) has a solution in 
$R^{4}$ iff the system of two equations 
\begin{equation}
1-(1-a-bs)w-by=ta\;\;{and}\;\;t(1-ax)=y[(1-a-bs)z+bx]  \label{EQ15}
\end{equation}in the indeterminates $t,x$, $y$, $z$ and $w$ has a solution in $R^{5}$
(this holds even if $a=0$!).

The first equation $1-(1-a-bs)w-by=ta$, rewritten as $w=1+a(w-t)+b(sw-y)$, has the general solution given by $w=1-aq-br$, $t=q+w=1+q-aq-br$ and $y=r+sw=r+s(1-aq-br)$, where $(q,r)\in R^2$ can be arbitrary.

The second equation can be rewritten as $t=x(at+by)+yz(1-a-bs)$ and has a
solution iff $t\in R(at+by)+Ry(1-a-bs)$. As $1=(1-a-bs)w+(at+by)$, we have $t\in R(at+by)+Ry(1-a-bs)$ iff $t\in R(at+by)+Ry=Rat+Ry$.

From the last two paragraphs it follows that System (\ref{EQ15}) has a
solution in $R^{5}$ iff there exists $(q,r)\in R^2$ as mentioned in
statement (2). Thus $(1)\Leftrightarrow (2)$.

If $(1-a,b)\in Um(R^2)$, then we can choose $(q,r)\in Um(R^2)$ such that $t=1+q(1-a)+rb=0$, hence $t=0\in Rat+Ry=Ry$, and for $e:=1$ and $f:=0$, we
have $(a,e),(be+af,1-bs-a)\in Um(R^2)$. Thus to prove that $(2)\Leftrightarrow (3)$ we can assume that $(1-a,b)\notin Um(R^2)$, and hence always $t\neq 0$. 

For $t_{1}:=\gcd (r-qs,t)$ we write $t=t_{1}t_{2}$ and $r-qs=\alpha t_{1}$ with $(\alpha ,t_{2})\in
Um(R^{2})$; thus $y=r+s(t-q)=st+t_{1}\alpha $. We would like to find 
$(q,r)\in R^{2}$
such that there exists $(g,h)\in R^{2}$ with the property that $gy+hat=t$,
equivalently, $tgs+t_{1}g\alpha +tah=t$, and thus we must have $g\in
Rt_{2}$. Writing $g=t_{2}\beta $ with $\beta \in R$, we want to find $(q,r)\in
R^{2}$ such that there exists $(\beta ,h)\in R^{2}$ satisfying $1=\beta
(st_{2}+\alpha)+ha$, equivalently, such that $(a,st_{2}+\alpha )\in
Um(R^{2})$. Replacing $r=qs+\alpha t_{1}$ in $t$, it follows that $t=1+q-qbs-t_{1}\alpha b-aq$, hence there exists $\gamma \in R$ such that $1+q(1-bs-a)=t_{1}\gamma $; we have $t_{2}=\gamma -\alpha b$ and thus $e:=st_{2}+\alpha =s\gamma +(1-bs)\alpha $. As 
$(\alpha ,t_{2})\in Um(R^{2})$, it follows that $(\alpha ,\gamma )\in Um(R^{2})$. There exists a unique $f\in R$ such that $\alpha =e-fs$ and $\gamma =be+f(1-bs)$; in fact we have $f=t_{2}=\gamma -\alpha b$ and thus $R=R\alpha +R\gamma =Re+Rf$, i.e., $(e,f)\in Um(R^{2})$. There exists $q\in R$ such that $be+f(1-bs)$ divides $1+q(1-bs-a)$ iff $\bigl(be+f(1-bs),1-bs-a\bigr)\in Um(R^{2})$ and hence iff $(be+af,1-bs-a)\in Um(R^{2})$. Thus $(2)\Leftrightarrow (3)$. 
So Criterion \ref{CR3} holds.

\section{Explicit computations for integral domains}

\label{S5}

Let $A:=\left[ 
\begin{array}{cc}
a & b \\ 
c & d\end{array}\right] \in Um(\mathbb{M}_{2}(R))$ be such that we can write $a=ga^{\prime }$, $c=gc^{\prime }$, $b=hb^{\prime }$, $d=hd^{\prime }$ with $a^{\prime
},b^{\prime },c^{\prime },d^{\prime },g,h\in R$ and $(a^{\prime
},c^{\prime }),(b^{\prime },d^{\prime })\in Um(R^{2})$. We have $(g,h)\in
Um(R^{2})$. Let $e^{\prime },f^{\prime }\in R$ be such that $a^{\prime
}e^{\prime }+c^{\prime }f^{\prime }=1$. Let $l:=b^{\prime }c^{\prime
}-a^{\prime }d^{\prime }\in R$ and $m:=b^{\prime }e^{\prime }+d^{\prime
}f^{\prime }\in R $; note that $\det (A)=-ghl$. As $[\begin{array}{cc}
l & m\end{array}]=[\begin{array}{cc}
b^{\prime } & d^{\prime }\end{array}]\left[ 
\begin{array}{cc}
c^{\prime } & e^{\prime } \\ 
-a^{\prime } & f^{\prime }\end{array}\right] $, the matrix $\left[ 
\begin{array}{cc}
c^{\prime } & e^{\prime } \\ 
-a^{\prime } & f^{\prime }\end{array}\right] $ has determinant 1, and 
$({b}^{\prime }{,d}^{\prime })\in Um(R^2)$, it follows that $(l,m)\in Um(R^2)$. 

Let $(e,f,w)\in R^3$ be such that $ae+cf=gw$. If $R$ is an
integral domain and $g\neq 0$, there exists $v\in R$ such that $(e,f)=(we^{\prime
}+c^{\prime }v,wf^{\prime }-a^{\prime }v)$, hence
\begin{equation*}
be+df=h(b^{\prime }e+d^{\prime }f)=h(b^{\prime }we^{\prime }+b^{\prime
}c^{\prime }v+d^{\prime }wf^{\prime }-a^{\prime }d^{\prime }v)=hw(b^{\prime}e^{\prime }+d^{\prime }f^{\prime })+hv(b^{\prime }c^{\prime }-a^{\prime}d^{\prime })
\end{equation*}is equal to $h(wm+vl)$. Thus $(ae+cf,be+df)=\bigl(gw,h(wm+vl)\bigr)$.

\begin{proposition}
\label{PR5} Let $R$ be an integral domain. Let $A=\left[ 
\begin{array}{cc}
a & b \\ 
c & d\end{array}\right] \in Um(\mathbb{M}_{2}(R))$ be such that the above notation $g,h,a^{\prime },b^{\prime },c^{\prime },d^{\prime }$ applies and let $(e^{\prime },f^{\prime },l,m)\in R^{4}$ be obtained as above. 
We assume $g\neq 0$.

\medskip \textbf{(1)} The matrix $A$ is simply extendable iff
there exists $(w,v)\in R^2$ such that $(g,wm+vl),(w,hvl)\in Um(R^{2})$, in which
case a simple extension of $A$ is 
\begin{equation*}
\left[ 
\begin{array}{ccc}
a & b & wf^{\prime }-a^{\prime }v \\ 
c & d & -we^{\prime }-c^{\prime }v \\ 
-t & s & 0\end{array}\right]
\end{equation*}where $s,t\in R$ are such that $gws+h(wm+vl)t=1$.

\smallskip \textbf{(2)} If the intersection $\{(g,l),(g,m),(h,l),(h,m)\}\cap
Um(R^{2})$ is nonempty (e.g., if $hlm=0$), then $A$ is simply extendable
and $w,v\in R$ are given by formulas.
\end{proposition}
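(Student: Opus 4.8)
The plan is to deduce everything from the equivalence $\circled{\textup{2}}\Leftrightarrow\circled{\textup{3}}$ of Theorem \ref{TH8} together with the identity $(ae+cf,be+df)=(gw,h(wm+vl))$ established in the computation preceding the proposition. For part (1), I first observe that, since $a^{\prime}e^{\prime}+c^{\prime}f^{\prime}=1$, the assignment $(w,v)\mapsto(e,f):=(we^{\prime}+c^{\prime}v,wf^{\prime}-a^{\prime}v)$ is a bijection of $R^2$ onto itself, with inverse $(e,f)\mapsto(a^{\prime}e+c^{\prime}f,f^{\prime}e-e^{\prime}f)$; the two linear maps are mutually inverse because the relevant $2\times2$ matrices have determinant $-1$ and their product is $I_2$ by the relation $a^{\prime}e^{\prime}+c^{\prime}f^{\prime}=1$. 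Hence, by $\circled{\textup{2}}\Leftrightarrow\circled{\textup{3}}$, the matrix $A$ is simply extendable iff there exists $(w,v)\in R^2$ with $(gw,h(wm+vl))\in Um(R^2)$. The remaining task is to rewrite this single condition using the four basic properties of Subsection \ref{S21}: splitting off the factor $g$ gives $(gw,h(wm+vl))\in Um(R^2)$ iff $(g,h(wm+vl))\in Um(R^2)$ and $(w,h(wm+vl))\in Um(R^2)$; the first reduces, since $(g,h)\in Um(R^2)$, to $(g,wm+vl)\in Um(R^2)$; and the second, since $wm+vl\equiv vl\pmod{Rw}$, reduces to $(w,hvl)\in Um(R^2)$. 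The explicit simple extension then comes from the first identity of \cref{EQ8} applied with $f=wf^{\prime}-a^{\prime}v$, $e=we^{\prime}+c^{\prime}v$ and a pair $(s,t)$ solving $gws+h(wm+vl)t=1$, which exists exactly because the two displayed conditions hold.

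For part (2) the strategy is to exhibit explicit $(w,v)$ meeting the two conditions of part (1). The key simplification is that taking $w=1$ trivializes the second condition, since $(1,hvl)\in Um(R^2)$ automatically, so only $(g,m+vl)\in Um(R^2)$ must be arranged. If $(g,m)\in Um(R^2)$, take $v=0$. If $(g,l)\in Um(R^2)$, pick $p,q$ with $pg+ql=1$ and take $v=(1-m)q$, so that $m+vl=1-(1-m)pg\equiv1\pmod{Rg}$; this settles the two ``$g$-cases'' with the announced formulas.

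The two ``$h$-cases'' I would reduce to the ``$g$-cases'' by the column-interchange equivalence $A\mapsto A^{\prime\prime}$ obtained by swapping the two columns (Lemma \ref{L1}(3)), under which $(g,h,l)$ is replaced by $(h,g,-l)$ while $m$ is replaced by the analogous invariant $m_2=a^{\prime}e_2^{\prime}+c^{\prime}f_2^{\prime}$, where $b^{\prime}e_2^{\prime}+d^{\prime}f_2^{\prime}=1$. The case $(h,l)\in Um(R^2)$ becomes the ``$(g,l)$-case'' for $A^{\prime\prime}$ and is handled exactly as above, because then $l$ is a unit modulo $h$. I expect the case $(h,m)\in Um(R^2)$ to be the main obstacle: the interchange turns the relevant quantity into $m_2$ rather than $m$, and the two are linked only through the congruence $mm_2\equiv1\pmod{Rl}$ (which follows from $\det\bigl[\begin{smallmatrix}a^{\prime}&c^{\prime}\\ b^{\prime}&d^{\prime}\end{smallmatrix}\bigr]=-l$). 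To treat it directly I would build $w$ coprime to both $h$ and $l$ from the Bézout data: writing $\mu m+\lambda l=1$ and $qm\equiv1\pmod{Rh}$ (the latter from $(h,m)\in Um(R^2)$), the element $w:=\mu+\lambda ql$ satisfies $w\equiv\mu\pmod{Rl}$ and $wm\equiv1\pmod{Rh}$, whence $(w,h),(w,l)\in Um(R^2)$. One is then left to choose $v$ so that $(g,wm+vl)\in Um(R^2)$ and $(w,v)\in Um(R^2)$ hold simultaneously; because $(l,m)\in Um(R^2)$ forces $(wm,l)\in Um(R^2)$, this is precisely a reducibility step for the pair $(\overline{wm},\bar l)$ in $R/Rg$, and it is the delicate computational heart of the argument where I expect to spend the most care. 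The examples with $hlm=0$ are exactly those in which one of $g,l,m$ is forced to be a unit, so that the whole question collapses to one of the trivial cases and $w,v$ are read off immediately.
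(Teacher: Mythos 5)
Part (1) of your proposal is correct and is essentially the paper's argument: the bijection $(w,v)\mapsto(we'+c'v,wf'-a'v)$, the identity $(ae+cf,be+df)=\bigl(gw,h(wm+vl)\bigr)$, the splitting of the unimodularity condition using $(g,h)\in Um(R^2)$ and $Rw+Rh(wm+vl)=Rw+Rhvl$, and the appeal to $\circled{\textup{2}}\Leftrightarrow\circled{\textup{3}}$ of Theorem \ref{TH8} all match. In part (2), your $(g,m)$ case coincides with the paper's, your $(g,l)$ case is a correct variant (you take $w=1$, $v=(1-m)q$ where the paper takes $(w,v)=(g,1)$), and the column-swap reduction of the $(h,l)$ case to the $(g,l)$ case is legitimate via Lemma \ref{L1}(3), though the paper instead gives the direct choice $w=hq+l$, $v=hp-m$ with $pl+qm=1$.

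The $(h,m)$ case, however, has a genuine gap, and it is exactly where you say you "expect to spend the most care." After fixing $w$ coprime to $h$ and $l$, you are left needing $v$ with $(g,wm+vl)\in Um(R^2)$, and you correctly identify this as the reducibility of the unimodular pair $(\overline{wm},\bar l)$ in $R/Rg$ --- but that is precisely the assertion $sr(R/Rg)=1$ for that pair, which has no reason to hold over an arbitrary integral domain (the whole point of the paper is that such stable-range statements fail in general). By committing to a particular $w$ before choosing $v$, you may have forfeited the solution that does exist. The paper's fix is to choose $w$ and $v$ \emph{simultaneously}: since $(h,m)\in Um(R^2)$ and $(l,m)\in Um(R^2)$ (the latter established in the preamble to the proposition), one has $(hl,m)\in Um(R^2)$, so there exist $w,v'$ with $wm+hv'l=1$; setting $v:=hv'$ gives $wm+vl=1$, which makes $(g,wm+vl)$ unimodular for trivial reasons, while $(w,hvl)=(w,h^2v'l)\in Um(R^2)$ follows from $wm+hv'l=1$. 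You should replace your fixed-$w$ construction by this simultaneous choice (or find another argument that avoids the reducibility step).
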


\begin{proof}
There exists $(e,f)\in R^2$ such that $(ae+cf,be+df)\in Um(R^{2})$ iff
there exists $(w,v)\in R^2$ such that $\bigl(gw,h(wm+vl)\bigr)\in Um(R^{2})$ (see above).
We have $\bigl(gw,h(wm+vl)\bigr)\in Um(R^{2})$ iff $\bigl(g,h(wm+vl)\bigr),\bigl(w,h(wm+vl)\bigr)\in Um(R^{2})$. As $(g,h)\in Um(R^{2})$, we have 
$\bigl(g,h(wm+vl)\bigr)\in Um(R^{2})$ iff $(g,wm+vl)\in Um(R^{2})$;
moreover, $Rw+Rh(wm+vl)=Rw+Rhvl$. Thus $\bigl(gw,h(wm+vl)\bigr)\in Um(R^{2})$ 
iff $(g,wm+vl),(w,hvl)\in Um(R^{2})$. Based on the `iff'
statements of this paragraph and Theorem \ref{TH8}, it follows that part
(1) holds.

To check part (2), we first notice that if $hlm=0$, say $h=0$, then $g=1$
and hence $(g,l),(g,m)\in Um(R^{2})$. Based on part (1) it suffices to show
that in all four possible cases, we can choose $(w,v)\in R^2$ such that $(g,wm+vl),(w,hvl)\in Um(R^{2})$.

If $(g,l)\in Um(R^{2})$, for $(w,v):=(g,1)$ we have $Rg+R(wm+vl)=Rg+Rl=R$.
As $(g,h),(g,l)\in Um(R^{2})$, also $(w,hvl)=(g,hl)\in Um(R^{2})$.

If $(g,m)\in Um(R^{2})$, for $(w,v):=(1,0)$ we have $(g,wm+vl)=(g,m)\in
Um(R^{2})$ and $(w,hvl)=(1,0)\in Um(R^{2})$.

If $(h,m)\in Um(R^{2})$, then $(hl,m)\in Um(R^{2})$ and there exists $(w,v^{\prime })\in R^2$ such that $wm+hv^{\prime }l=1$; so 
$Rw+Rhv^{\prime}l=R $. For $v:=hv^{\prime }$ we have $wm+vl=1$ and 
so $(g,wm+vl)\in
Um(R^{2})$ and $(w,hvl)=(w,h^{2}v^{\prime }l)\in Um(R^{2})$ as $(w,hv^{\prime }l)\in Um(R^{2})$.

If $(h,l)\in Um(R^{2})$, let $(p,q)\in R^2$ be such that $1=pl+qm$. 
For $w:=hq+l$
and $v:=hp-m$ we compute $wm+vl=h(pl+qm)+ml-ml=h$, so $(g,wm+vl)=(g,h)\in Um(R^{2})$. Also, $Rw+Rh=R(hq+l)+Rh=Rl+Rh=R$ 
and $Rw+Rvl$ contains $wm+vl=h$ and hence it contains $Rw+Rh=R$. 
Thus $(w,vl)\in
Um(R^{2}) $. As $(w,h),(w,vl)\in Um(R^{2})$ it follows that $(w,hvl)\in
Um(R^{2})$.
\end{proof}

\begin{remark}\label{rem5}
We include a third proof of Corollary \ref{C3}(2) for B\'{e}zout domains. Assume $R$ is a B\'{e}zout domain with $asr(R)=1$. Based on the equivalence $\circled{\textup{1}}\Leftrightarrow \circled{\textup{2}}$, it suffices to
show that each matrix $A=\left[ 
\begin{array}{cc}
a & b \\ 
c & d\end{array}\right] \in Um(\mathbb{M}_{2}(R))$ is simply extendable. As $R$ is a 
Hermite domain, the notation of this section applies. As $\gcd (g,h)=1$, 
by the symmetry between the pairs $(a,c)$ and $(b,d)$, 
we can assume that $g\notin J(R)$. As $(m,l,g)\in
J_{3}(R)$ and $asr(R)=1$, there exists $v\in R$ such that $\gcd (m+lv,g)=1$.
We take $w:=1$. Hence $\gcd (w,hvl)=1$ and $\gcd (g,wm+lv)=\gcd (g,m+lv)=1$.
From Proposition \ref{PR5}(1) it follows that $A $ is simply extendable.
\end{remark}

\subsection{Numerical examples}

\label{S51}

For statements $\circled{\textup{2}}$ and $\circled{\textup{5}}$ we provide sample 
matrices $A=\left[ 
\begin{array}{cc}
a & b \\ 
c & d\end{array}\right]\in Um(\mathbb M_2(\mathbb Z))$ whose simple extensions $A^+_{(e,f,s,t)}=\left[ 
\begin{array}{ccc}
a & b & f \\ 
c & d & -e \\ 
-t & s & 0\end{array}\right]$ are parameterized by the set (see Equation (\ref{EQ8}))
$$\gamma_A:=\{(e,f,s,t)\in \mathbb{Z}^{4}|a(es)+b(et)+c(fs)+d(ft)=1\}$$
and were (initially) exemplified using a code written for $R=\mathbb Z$ by the second author. We have $\nu_{A^+_{(e,f,s,t)}}=\chi_{A^+}^{\prime}(0)=\det(A)+es+ft$.

(1) If $a=0$ and $d=1+b+c$, then we can take $A^{+}=\left[ 
\begin{array}{ccc}
0 & b & -1 \\ 
c & 1+b+c & -1 \\ 
1 & 1 & 0\end{array}\right].$

Concretely, suppose $(b,c)=(3,2)$; then $d=6$, $\det(A)=-6$, $\gamma_A=\{(e,f,s,t)\in \mathbb{Z}^{4}|3et+2fs+6ft=1\}$ and $\nu_{A}=\{-6+es+ft|(e,f,s,t)\in\gamma_A\}$.

\smallskip To solve the equation $3et+2fs+6ft=1$, let $w:=et+2ft$. We get 
$2fs+3w=1$ with general solution $fs=-1+3k$, $w=1-2k$, where 
$k\in\mathbb{Z}$. The general solution of the equation $et+2ft=1-2k$ is 
$et=2k-1-2l$ and $ft=1-2k+l$, where $l\in\mathbb{Z}$. Let $m:=l-2k+1$. 
It follows that 
\begin{equation*}
ft=m,\; et=1-2k-2m,\; fs=-1+3k,
\end{equation*}
and the only constraint is that $ft=m$ divides $etfs=(3k-1)(1-2k-2m)$, i.e., divides $(3k-1)(2k-1)$. As $es+ft=m+\dfrac{-6k^2+5k-1}{m}-6k+2$, it follows that
\begin{equation*}
\nu_A=\{-4+m-6k+\dfrac{-6k^2+5k-1}{m}|(m,k)\in\mathbb{Z}^2,\; m\;\textup{divides}\; -6k^2+5k-1\}.
\end{equation*}

(2) If $c=0$ and $d=1-a+b$, then we can take $A^{+}=\left[ 
\begin{array}{ccc}
a & b & -1 \\ 
0 & 1-a+b & -1 \\ 
1 & 1 & 0\end{array}\right] $ (cf. Corollary \ref{C9}(2): for simple extensions of upper triangular matrices with nonzero $(1,1)$ entries over rings $R$ with $fsr(R)=1.5$, we can choose $e=1$).

Concretely, suppose $(a,b)=(6,-10)$; hence $d=15$ and $\det(A)=-90$. Thus $\gamma_A=\{(e,f,s,t)\in \mathbb{Z}^{4}|6es-10et-15ft=1\}$.

To solve the equation $6es-10et-15ft=1$, let $w:=2et+3ft$. We get $6es-5w
=1$ with general solution $es=1+5k$, $w =1+6k$, where $k\in\mathbb{Z}$. Then 
$2et+3ft=1+6k$ has the general solution $et=-1-6k+3l$, $ft=1+6k-2l$, where $l\in\mathbb{Z}$. Let $m:=l-2k$. It follows that 
\begin{equation*}
es=1+5k,\; et=-1+3m,\; ft=1+2k-2m
\end{equation*}
are subject to the only constraint that $et=-1+3m$ divides $esft=(1+5k)(1+2k-2m)$, i.e., it divides $(1+5k)(2k+m)$. As $es+ft=2+7k-2m$, it follows that 
\begin{equation*}
\nu_A=\{-88+7k-2m|(m,k)\in\mathbb{Z}^2,\; -1+3m\;\textup{divides}\;
(1+5k)(2+2k+m)\}.
\end{equation*}
For $m=0$ (resp.\ $m=1$) it follows that $\nu_A\supseteq 3+7\mathbb Z$ (resp.\ $\nu_A\supseteq 1+14\mathbb{Z}$.)

If we replace $\mathbb{Z}$ by $R_0:=\mathbb{Z}[\dfrac{1}{21}]$, then $A_0:=A\in\mathbb{M}_2(R_0)$ is similar to $Diag(6,-15)$ and one checks that $\nu_{A_0}=-90+\dfrac{1}{6}+\dfrac{7}{2}+2R_0=-86-\dfrac{1}{3}+2R_0$.

(3) If $A=$ $\left[ 
\begin{array}{cc}
15 & 6 \\ 
10 & 14\end{array}\right] $, we can take $A^{+}=\left[ 
\begin{array}{ccc}
15 & 6 & -2 \\ 
10 & 14 & 1 \\ 
-1 & -1 & 0\end{array}\right] $; indeed we have $\det(A^+)=1\cdot 15-1\cdot 6+2\cdot 10-2\cdot 14=1$. The
entries of $A$ use double products of the primes $2,3,5,7$. We have 
$\gamma_A=\{(e,f,s,t)\in \mathbb{Z}^{4}|15es+6et+10fs+14ft=1\}$, $\det(A)=150$, $\nu_{A^{+}}=149$, and $\nu_{A}=\{150+es+ft|(e,f,s,t)\in\gamma_A\}$.

To solve the equation $15es+6et+10fs+14ft=1$, let $x:=5es+2et$ and $y:=5fs+7ft$, so we get $3x+2y =1$ with general solution $x=1+2k$, $y =-1-3k$, where $k\in\mathbb{Z}$. Then $5es+2et=1+2k$ has the general solution $es=1+2k+2l$, $et=-2(1+2k)-5l$, where $l\in\mathbb{Z}$, and $5fs+7ft=-1-3k$
has the general solution $fs=3(-1-3k)+7r$, $ft=-2(-1-3k)-5r$, where $r\in\mathbb{Z}$. Let $o:=k+l$, so $et=-2+4l-4o-5l=:q$. Thus $es=1+2o$, $et=q$, 
$l=-2-q-4o$, $k=2+q+5o$, therefore $fs=-3-18-9q-45o+7r$ and 
$ft=2+12+6q+30o-5r$. Let $p:=r-6o-q-3$. Thus $ft=-5p+q-1$ and 
$fs=-3o-2q+7p$. As $(es)(ft)=(et)(fs)$, we have an identity $
(1+2o)(q-1-5p)=q(7p-3o-2q)$,
which can be rewritten as 
\begin{equation*}
o(-2-10p+5q)=1+5p-q-2q^2+7pq,
\end{equation*}
and which for $q=2p$ becomes $-2o=6p^2+3p+1$, requiring $p$ to be odd. For $q=2p$, $es+ft=2o+q-5p$ becomes $2o-3p=-6p^2-6p-1$. Thus
\begin{equation*}
\nu_A=\{150+2o+q-5p|(o,q,p)\in\mathbb{Z}^3,\;o(-2-10p+5q)=1+5p-q-2q^2+7pq\}
\end{equation*}
contains the set $\{150-6p^2-6p-1|p-1\in 2\mathbb{Z}\}$.

(4) If $A=\left[ 
\begin{array}{cc}
30 & 42 \\ 
70 & 105\end{array}\right] $, we can take $A^{+}=\left[ 
\begin{array}{ccc}
30 & 42 & 1 \\ 
70 & 105 & 3 \\ 
1 & 1 & 0\end{array}\right] $; indeed we have $\det(A^+)=-3\cdot 30+3\cdot 42+1\cdot 70-1\cdot 105=1$. The
entries of $A$ use triple products of the primes $2,3,5,7$, hence they are not pairwise coprime.

\section{Universal rings, retractions and smooth homomorphisms}

\label{S6}

For $n\in \mathbb{N}$, let $\mathcal{P}_{n}:=\mathbb{Z}[x_{1},y_{1},\ldots
,x_{n},y_{n}]$ and consider the first type of universal `unimodular'
rings 
\begin{equation*}
\mathcal{U}_{n}:=\mathcal{P}_{n}/(\eta _{n})
\end{equation*}where $\eta _{n}:=1-\left( \sum_{i=1}^{n}x_{i}y_{i}\right)$. 
The image of an element $x$ of a ring via a quotient homomorphism 
will be denoted by $\bar {x}$; e.g., for $i\in \{1,\ldots ,n\}$ we 
denote by $\bar{x}_{i}$ and $\bar{y}_{i}$ the images of $x_{i}$ and $y_{i}$ 
(respectively) in quotients of $\mathcal{U}_{n}$.

\begin{example}
\normalfont\label{EX13} As 
$\sum_{i=1}^{4}\bar{x}_{i}\bar{y}_{i}=1$, $X:=\left[ 
\begin{array}{cc}
\bar{x}_{1} & \bar{x}_{2} \\ 
\bar{x}_{3} & \bar{x}_{4}\end{array}\right] \in Um(\mathbb{M}_{2}(\mathcal{U}_{4}))$. The `universal' unimodular $2\times 2$ matrix $X$ is not extendable (if it
were, then every $R$ would be an ${E}_2$ ring, contradicting
Example \ref{EX11}).
\end{example}

Denoting $\vartheta
_{4}:=1-x_{1}y_{1}y_{4}-x_{2}y_{1}y_{3}-x_{3}y_{2}y_{4}-x_{4}y_{2}y_{3}\in\mathcal P_4$, we also consider four extra types of universal rings (cf. Subsection \ref{S31})
\begin{equation*}
\mathcal E_2:=\mathcal{P}_4[w]/\bigl(\vartheta _{4}-w(x_{1}x_{4}-x_{2}x_{3})\bigr),
\end{equation*}\begin{equation*}
\mathcal{SE}_2:=\mathcal{P}_4/(\vartheta _{4})=\mathcal E_2/(\bar{w}),
\end{equation*}\begin{equation*}
\mathcal{WZ}_{2}:=\mathcal{P}_4/\bigl(\eta
_{4}+(x_{1}x_{4}-x_{2}x_{3})(y_{1}y_{4}-y_{2}y_{3})\bigr),
\end{equation*}
\begin{equation*}
\mathcal{Z}_2:=\mathcal{P}_{4}/(\eta_4,y_1y_4-y_2y_3)=\mathcal{U}_{4}/(\bar{y}_{1}\bar{y}_{4}-\bar{y}_{2}\bar{y}_{3})=\mathcal{WZ}_2/(\bar{y}_1\bar{y}_4-\bar{y}_2\bar{y}_3).
\end{equation*}

\begin{example}
\normalfont\label{EX14} The matrix $Y:=\left[ 
\begin{array}{cc}
\bar{x}_{1} & \bar{x}_{2} \\ 
\bar{x}_{3} & \bar{x}_{4}\end{array}\right] \in \mathbb{M}_{2}(\mathcal E_2)$ is the `universal' unimodular $2\times 2$ matrix that is extendable, the `standard' extension of it being 
$Y^+:=\left[ 
\begin{array}{ccc}
\bar{x}_{1} & \bar{x}_{2} & \bar{y}_{2}\\ 
\bar{x}_{3} & \bar{x}_{4} & -\bar{y}_{1}\\
-\bar{y}_{3} & \bar{y}_{4} & \bar{w}
\end{array}\right] \in {SL}_{3}(\mathcal E_2)$. To check that $Y$ is not simply
extendable it suffices to show that there exist $2\times 2$ matrices that are extendable but are not simply extendable. Let 
$R$ be an integral domain such that $sr(R)=3$ and each 
projective $R$-module $P$ with $P\oplus R\cong R^3$ is free. E.g., 
if $\kappa$ is a subfield of $\mathbb{R}$, then $sr(\kappa[x_{1},x_{2}])=3$ (see \cite{vas}, Thm. 8) and Seshadri proved that all 
finitely generated projective modules over it are free (see \cite{ses}, Thm.; 
see also \cite{lan}, Ch. XXI, Sect. 3, Thm. 3.5 for the Quillen--Suslin Theorem). Let 
$(a_{1},a_{2},b)\in Um(R^{3})$ be not reducible; thus $b\neq
0$. We have projective resolutions $0\rightarrow Rb\rightarrow R\rightarrow
R/Rb\rightarrow 0$ and $0\rightarrow P\xrightarrow{g}R^{2}\xrightarrow{f} R/Rb\rightarrow 0$, where the $R$-linear
map $f$ maps the elements of the standard basis of $R^{2}$ to $a_{1}+Rb$ and $a_{2}+Rb$, $P=\Ker(f)$ and the $R$-linear map $g$ is the inclusion. It follows that $P$ is of the type
mentioned (see Subsection \ref{S24}, paragraph after Fact \ref{F1}); thus 
we identify $P=R^{2}$. Let $A\in \mathbb{M}_{2}(R)$ be such that $L_A=g:R^{2}=P\rightarrow R^{2}$; 
we have $A\in Um(\mathbb M_2(R))$ and $R\det(A)=Rb$. The $R/R\det(A)$-module $E_A$ is isomorphic to 
$R/R\det(A)$ (see Subsection \ref{S25} for notation). From this, Subsection 
\ref{S25} and Theorem \ref{TH1} it follows that the reduction of $A$ modulo 
$R\det(A)$ is simply extendable. Thus $A$ is extendable (see Lemma \ref{L1}(1)). 
But $A$ is not simply extendable: if it were, then it would be equivalent to ${Diag}(1,\det(A))$ (see Theorem \ref{TH8}) and it would follow from Subsection 
\ref{S24} that $(a_{1},a_{2},b)\in Um(R^{3})$ is reducible, a contradiction.
If $R$ is a $\Pi_2$ domain (e.g., if $R=\kappa[x_1,x_2]$ with $\kappa$ as above), then 
statement \circled{\textup{9}} holds for $A$ (see Proposition \ref{PR3}(3)) 
but statement \circled{\textup{7}} does not hold for $A$ (see Corollary \ref{C11}).
\end{example}

\begin{theorem}
\label{TH9} The following properties hold:

\smallskip \textbf{(1)} The ring $\mathcal{U}_{n}$ is a regular UFD of
dimension $2n$ and the ring $\mathcal{Z}_{2}$ is a regular domain of
dimension $7$ which is not a UFD.

\smallskip \textbf{(2)} The rings $\mathcal{E}_2$, 
$(\mathcal{WZ}_2)_{\bigl(1-(\bar{x}_1\bar{x}_4-\bar{x}_2\bar{x}_3)(\bar{y}_1\bar{y}_4-\bar{y}_2\bar{y}_3)\bigr)}$ and $\mathcal{SE}_2$ 
are regular of dimensions $9$, $8$ and $8$ (respectively).
\end{theorem}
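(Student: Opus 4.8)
The plan is to treat every ring in the statement as (a localization of) a complete intersection over $\mathbb{Z}$, reading off regularity and dimension from the Jacobian criterion of \cite{SGA1}, Exp. II, Thm. 4.10 exactly as in the proof of Theorem \ref{TH7}, and then to settle the two factoriality claims separately: Nagata's criterion for $\mathcal{U}_n$, and a divisor class group computation for $\mathcal{Z}_2$. Throughout, $\mathcal{P}_n$ is a polynomial ring over $\mathbb{Z}$ in $2n$ variables, so $\dim\mathcal{P}_n=2n+1$ and each nonzero nonunit principal quotient is pure of codimension $1$.

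For $\mathcal{U}_n=\mathcal{P}_n/(\eta_n)$ I would first note that $\eta_n=1-\sum_i x_iy_i$ is irreducible, being of degree $1$ in $y_1$ with leading coefficient $x_1$ coprime to $B:=1-\sum_{i\ge2}x_iy_i$; hence $\mathcal{U}_n$ is a domain of dimension $2n$. Smoothness over $\mathbb{Z}$ is immediate, since the differential $-\sum_i(y_i\,\partial x_i+x_i\,\partial y_i)$ cannot vanish on $V(\eta_n)$ (vanishing forces all $x_i=y_i=0$, whence $\eta_n=1$), so $\mathcal{U}_n$ is regular. For factoriality I would invoke Nagata: for $n\ge2$ the element $\bar x_1$ is prime, because $\mathcal{U}_n/(\bar x_1)\cong\mathcal{U}_{n-1}[y_1]$ is a domain, while $(\mathcal{U}_n)_{\bar x_1}\cong\mathbb{Z}[x_1^{\pm1},x_2,y_2,\dots,x_n,y_n]$ is a localization of a polynomial ring and so a UFD; since $\mathcal{U}_n$ is Noetherian, Nagata's criterion gives that $\mathcal{U}_n$ is a UFD (the case $n=1$, where $\mathcal{U}_1=\mathbb{Z}[x_1^{\pm1}]$, being clear).

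For the three rings of part (2) the engine is a family of Euler-type identities paralleling the one used for $WZ_\upsilon$ in Theorem \ref{TH7}(3). Writing $\Delta:=x_1x_4-x_2x_3$ and $\Delta':=y_1y_4-y_2y_3$, a direct computation gives $1=\vartheta_4-y_1\partial_{y_1}\vartheta_4-y_2\partial_{y_2}\vartheta_4$, and hence for $G:=\vartheta_4-w\Delta$ and $H:=\eta_4+\Delta\Delta'$ the identities $1=G-w\partial_{w}G-y_1\partial_{y_1}G-y_2\partial_{y_2}G$ and $1-\Delta\Delta'=H-\sum_{i=1}^4 y_i\partial_{y_i}H$ (the last using $\sum_i y_i\partial_{y_i}\Delta'=2\Delta'$). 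Each identity shows that the defining polynomial together with its partials generates the unit ideal after passing to the quotient (resp. after inverting $1-\Delta\Delta'$), so $\mathcal{E}_2$, $\mathcal{SE}_2$ and $(\mathcal{WZ}_2)_{1-\Delta\Delta'}$ are smooth over $\mathbb{Z}$; as these are hypersurfaces in polynomial rings of dimension $10$, $9$, $9$, their dimensions are $9$, $8$, $8$ respectively. (Alternatively, $\mathcal{SE}_2$ is the total space of the $\mathbb{G}_m$-torsor of Theorem \ref{TH7}(4) over $\mathcal{Z}_2$, which re-gives its regularity and dimension $8$.)

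The substantive point is $\mathcal{Z}_2$, and here I would exploit its geometric meaning. The presentation $\mathcal{Z}_2=\bigl(\mathbb{Z}[y_1,\dots,y_4]/(\Delta')\bigr)[x_1,\dots,x_4]/(1-\sum_i x_iy_i)$ exhibits $\mathrm{Spec}\,\mathcal{Z}_2$ as the affine hyperplane bundle $\{\sum_i x_iy_i=1\}$ over the punctured determinantal variety $\mathcal{D}^{\circ}=\{\det Y=0,\ Y\neq0\}$, where $Y=(y_{ij})$; indeed $\sum_i x_iy_i=1$ forces $Y\neq0$, and for each rank-one $Y$ such $X$ exists. As $\Delta'$ is an irreducible quadric, $\mathcal{D}^{\circ}$ is integral and regular of dimension $4$, and the bundle is a torsor under the rank-$3$ kernel bundle of $(y_i)\colon\mathcal{O}^4\to\mathcal{O}$; hence $\mathcal{Z}_2$ is an integral regular ring of dimension $7$, and homotopy invariance of the class group along this affine bundle yields $\mathrm{Cl}(\mathcal{Z}_2)\cong\mathrm{Cl}(\mathcal{D}^{\circ})$. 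The main obstacle is showing this is nonzero: passing to the UFD-preserving localization $\mathcal{Z}_2\otimes\mathbb{Q}$, the base $\mathcal{D}^{\circ}$ becomes the punctured affine cone over the Segre quadric $\mathbb{P}^1\times\mathbb{P}^1\hookrightarrow\mathbb{P}^3$, whose class group is $\mathrm{Pic}(\mathbb{P}^1\times\mathbb{P}^1)/\mathbb{Z}\cdot\mathcal{O}(1,1)\cong\mathbb{Z}\neq0$. Thus $\mathcal{Z}_2\otimes\mathbb{Q}$ is regular but not a UFD, and since a localization of a UFD is a UFD, neither is $\mathcal{Z}_2$; concretely the non-principal height-one prime is $(\bar y_1,\bar y_2)$, reflecting the two factorizations $\bar y_1\bar y_4=\bar y_2\bar y_3$ and matching the failure recorded in Example \ref{EX14}.
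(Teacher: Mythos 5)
Your proposal is correct, and for most of the theorem it runs parallel to the paper's argument: the paper obtains the regularity and dimension counts by localizing at the coordinates $\bar{x}_i$ and citing Theorem \ref{TH7}(1)--(4), whose own proof rests on exactly the Euler-type identities you write out (your identities for $\eta_n$, $G$ and $H$ all check out), and it proves that $\mathcal{U}_n$ is a UFD by the identical application of Nagata's criterion to the prime element $\bar{x}_1$ with the same localization $(\mathcal{U}_n)_{\bar{x}_1}$. The genuine divergence is in showing that $\mathcal{Z}_2$ is not a UFD. The paper argues indirectly: if $\mathcal{Z}_2$ were a UFD it would be a noetherian Schreier domain, hence by Corollary \ref{C12} statements $\circled{\textup{1}}$ to $\circled{\textup{10}}$ would hold for the tautological unimodular quadruple with $ad=bc$ over $\mathcal{Z}_2$, and by the universal property of $\mathcal{Z}_2$ they would then hold over every ring --- contradicting the non-extendable matrix over $\mathbb{Z}[\theta]$ of Example \ref{EX11}. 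You instead compute the divisor class group directly, exhibiting $\textup{Spec}\,\mathcal{Z}_2$ as a torsor under a rank-$3$ vector bundle over the punctured cone on the Segre quadric and concluding $\operatorname{Cl}(\mathcal{Z}_2\otimes\mathbb{Q})\cong\mathbb{Z}\neq 0$; this also re-derives the integrality of $\mathcal{Z}_2$, which the paper instead checks by covering $\textup{Spec}\,\mathcal{Z}_2$ with the localizations $(\mathcal{Z}_2)_{\bar{y}_i}$. Your route yields strictly more information (the class group itself over $\mathbb{Q}$ and the explicit non-principal height-one prime $(\bar{y}_1,\bar{y}_2)$) and is independent of Example \ref{EX11} and of the extension machinery, at the cost of importing standard but external geometric inputs --- homotopy invariance of $\operatorname{Cl}$ along Zariski-locally trivial affine bundles (not just products, since your torsor need not be globally trivial), the class group of the affine cone over a projectively normal variety, and invariance of $\operatorname{Cl}$ under removal of closed sets of codimension at least $2$. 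The paper's contradiction argument stays entirely within its own toolkit and, as a bonus, explains the failure of factoriality as being forced by the existence of non-extendable unimodular matrices.
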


\begin{proof}
(1) As $(\bar{x}_{1},\ldots ,\bar{x}_{n})\in Um(\mathcal{U}_{n}^{n})$, for
the regularity part it suffices to show that for $f\in \{x_{1},\ldots
,x_{n}\}$, $(\mathcal{U}_{n})_{\bar{f}}$ and, when $n=4$, $(\mathcal{Z}_2)_{\bar{f}}$, are regular of dimensions $2n$ and $7$ (respectively). For $n=4$
this follows from Theorem \ref{TH7}(1) and (2) (respectively), applied to $R $ being the regular ring $\mathbb{Z}[x_{1},x_{2},x_{3},x_{4}][f^{-1}]$ of
dimension $5$ and $\upsilon $ being $(x_{1},x_{2},x_{3},x_{4})$. The
general case $n\in \mathbb{N}$ is argued as in the case $n=4$.

As $\mathcal{P}_n$ is a UFD, no prime integer divides $\eta _{n}$,  and all partial 
degrees of $\eta_{n}$ are $1$, it follows that $\eta _{n}$ is a prime element of $\mathcal{U}_{n}$, i.e.,  $\mathcal{U}_{n}=\mathcal{P}_{n}/(\eta _{n})$ is an integral domain. As $\mathcal{U}_{1}\cong \mathbb{Z}[x_{1}][x_{1}^{-1}]$ is a UFD, to prove that $\mathcal{U}_{n}$ is a UFD we
can assume $n\geq 2$. By the same arguments it follows that 
\begin{equation*}
\mathcal{U}_{n}/(\bar{x}_{1})\cong \mathbb{Z}[y_{1},x_{2},y_{2},\ldots
,x_{n},y_{n}]/(1-\sum_{i=2}^{n}x_{i}y_{i})\cong \mathcal{U}_{n-1}[z]
\end{equation*}is an integral domain (here $z$ is an indeterminate), i.e., $\bar{x}_{1}$ is
a prime element of $\mathcal{U}_{n}$. The localization $(\mathcal{U}_{n})_{\bar{x}_{1}}$ is isomorphic to $\mathbb{Z}[x_{1},x_{2},y_{2},\ldots
,x_{n},y_{n}][x_{1}^{-1}]$ and hence is a UFD. Based on the last two
sentences and Nagata's criterion for a UFD (see \cite{mat}, Thm. 20.2), we
conclude that $\mathcal{U}_{n}$ is a UFD if $n\geq 2$.

As $\mathcal{Z}_2/(\bar{y}_1-1,\bar{y_2}-1,\bar{y_3}-1,\bar{y}_4-1)\simeq\mathbb Z[x_1,x_2,x_3,x_4]/(1-x_1-x_2-x_3-x_4)\simeq\mathbb Z[x_1,x_2,x_3]$ is an integral domain and $\mathcal Z_2$ is regular, to check that $\mathcal{Z}_2$ is an integral
domain it suffices to check that for each $f\in\{y_1,y_2,y_3,y_4\}$, $(\mathcal{Z}_2)_{\bar f}$ is an integral domain, which is true as it is the
localization of a polynomial algebra over $\mathbb{Z}$ in $6$
indeterminates; e.g., if $f=y_1$, then $(\mathcal{Z}_2)_{\bar{f}}\cong\mathbb{Z}[\bar y_1,\bar x_2,\bar y_2,\bar x_3,\bar y_3,\bar x_4][\bar
y_1^{-1}]$.

Finally, we show that the assumption that $\mathcal{Z}_{2}$ is a UFD leads
to a contradiction. As $\mathcal{Z}_{2}$ is noetherian, it follows that $\mathcal{Z}_{2}$ is a Schreier domain (see \cite{coh}, Thm.
2.3). Thus for all $(a,b,c,d)\in Um(\mathcal{Z}_{2}^{4})$ with $ad=bc$, statements $\circled{\textup{1}}$ to $\circled{\textup{10}}$ hold (see Corollary \ref{C12}), and
from this, due to the universal construction of $\mathcal{Z}_{2}$, it
follows that for each ring $R$ and every $(a,b,c,d)\in Um(R^{4})$ with $ad=bc $, the mentioned ten statements hold. But, referring to Example \ref{EX11}, 
for $(2k+1,1-\theta ,1+\theta ,2)\in Um(\mathbb{Z}[\theta ])$, statement $\circled{\textup{2}}$ fails, a contradiction.

(2) Similar to the case of $\mathcal{Z}_2$ in part (1), the regularity of
part (2) follows from Theorem \ref{TH7}(1) to (4) applied to $U_{\upsilon} $ equal to $(\mathcal{U}_4)_{\bar{f}}$ for some $f\in \{\bar{x}_{1},\ldots,\bar{x}_{4}\} $.
\end{proof}

\begin{remark}\label{rem8}
There exists a universal algebra
`behind' (i.e., controlling via extensions of homomorphisms) every stable
range type of condition. For example, with $p,s,t,u$ as indeterminates, for
Kaplansky's condition introduced in \cite{kap}, Thm. 5.2, the universal
algebra is the $\mathcal{U}_{3}$-algebra $\mathcal{K}_{3}:=\mathcal{U}_{3}[p,q,s,t]/(1-ps\bar{x}_{1}-pt\bar{x}_{2}-qt\bar{x}_{3}).$
Similarly, for statement $\circled{\textup{3}}$, the universal algebra is
the $\mathcal{U}_{4}$-algebra 
\begin{equation*}
\mathcal{K}_{4}:=\mathcal{U}_{4}[p,q,s,t]/(1-ps\bar{x}_{1}-pt\bar{x}_{2}-qs\bar{x}_{3}-qt\bar{x}_{4}).
\end{equation*}With $n\in \{3,4\}$, as in the proof of Theorem \ref{TH7}(3), one checks
that $\mathcal{K}_{n}$ is a smooth $\mathcal{U}_{n}$-algebra of relative
dimension $3$. From Theorems \ref{TH5} and \ref{TH8}, if $n=4$,
and from \cite{kap}, Thm. 5.2, if $n=3$, it follows that a B\'{e}zout domain 
$R$ is an \textsl{EDD} iff each homomorphism $\mathcal{U}_{n}\rightarrow R$ extends to a homomorphism 
$\mathcal{K}_{n}\rightarrow R$.
\end{remark}

\section{Stability properties}

\label{S7}

In order to provide more examples of rings which are or are not ${E}_2$ 
or ${SE}_2$ rings, we include the following stability properties.

\begin{proposition}
\label{PR6} \textbf{(1)} Let $I$ be an ideal of $R$ contained in $J(R)$
(e.g., $R$ is complete in the $I$-adic topology). Then $R$ is $E_{2}$ (or $SE_{2}$ or $U_2$) ring iff so is $R/I$.

\smallskip \textbf{(2)} Let $n\in \mathbb{N}$. Then $R$ is $E_{2}$ (or $SE_{2}$) ring iff so is the ring of formal power series $R[[x_{1},\ldots ,x_{n}]]$.

\smallskip \textbf{(3)} Let $R\rightarrow S$ be an inclusion of rings that
has a retraction. If $R$ is not an ${E}_2$ (resp.\ ${SE}_2$) ring,
then $S$ is not an ${E}_2$ (resp.\ ${SE}_2$) ring.

\smallskip \textbf{(4)} If $I$ is a nil ideal of $R$, then $R$ is $Z_{2}$ ring 
iff so is $R/I$.

\smallskip \textbf{(5)} Let $I$ be an ideal of $R$ contained in $J(R)$. If $R/I$ is a $\Pi_2$ ring, then so is $R$, and the converse holds if $I$ is a
nil ideal.
\end{proposition}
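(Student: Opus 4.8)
The plan is to reduce each assertion to an explicit characterization already proved: $E_2$ and $SE_2$ to the unimodular--triple criterion of Corollary \ref{C8}, $Z_2$ to the retraction description of statement \circled{\textup{5}} in Theorem \ref{TH8}, $\Pi_2$ to the free-module description of Theorem \ref{TH1}(4), and $U_2$ to Definition \ref{def2+}(1). The two recurring tools will be: (i) when $I\subseteq J(R)$, a tuple in $R^n$ is unimodular iff its reduction modulo $I$ is (Subsection \ref{S21}(3)), so unimodular data lift freely across $R\to R/I$; and (ii) when $I$ is nil, the smoothness of the structure algebras in Theorem \ref{TH7} together with lifting of idempotents/projectives supplies the exact lifts that (i) cannot. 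For parts (1)--(2) in the $E_2$ and $SE_2$ cases I would run both directions through Corollary \ref{C8}: given $(a,b,c,d)\in Um((R/I)^4)$ lift the entries arbitrarily, note the lift lies in $Um(R^4)$ since $I\subseteq J(R)$, apply the criterion of $R$ to obtain $(e,f)$ (resp. $(e,f)\in Um(R^2)$) with $(ae+cf,be+df,ad-bc)\in Um(R^3)$, and reduce; conversely solve in $R/I$ and lift $(e,f)$ back, again using \ref{S21}(3) to preserve both unimodularities. Part (2) is then the special case $S:=R[[x_1,\dots,x_n]]$, $I:=(x_1,\dots,x_n)$ of part (1), since $1+h\in U(S)$ for all $h\in I$ gives $I\subseteq J(S)$ while $S/I\cong R$. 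Part (3) is the contrapositive of Corollary \ref{C8}: push a unimodular quadruple of $R$ into $S$, solve there (if $S$ is $E_2$ or $SE_2$), and apply the retraction $r\colon S\to R$, using $r\circ\iota=\mathrm{id}_R$ to see that $r$ fixes $a,b,c,d$ and sends the solution to one over $R$.

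The $U_2$ case of part (1) needs a genuine diagram chase. The key observation is that for any ideal $K$ the surjection $R/K\to R/(K+I)$ has kernel the image of $I$, which lies in $J(R/K)$, so $U(R/K)\to U(R/(K+I))$ is onto; applying this with $K\in\{Rac,Rbc,Rc\}$ turns the $U_2$ square of $R$ into that of $R/I$ with surjective vertical maps, whence surjectivity of the product homomorphism descends. For the converse I would show that the image $H$ of $U(R/Rac)\times U(R/Rbc)\to U(R/Rc)$ both surjects onto $U(R/(Rc+I))$ (via the $R/I$ hypothesis and the unit lifts above) and contains the kernel $N$ of $U(R/Rc)\to U(R/(Rc+I))$; the latter holds because any $1+j\in N$ with $j$ in the image of $I$ is the reduction of the unit $\overline{1+j}\in U(R/Rac)$, so $N\subseteq H$ and hence $H=U(R/Rc)$.

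Part (4) is where the real work lies, and I expect it to be the main obstacle. The easy direction ($R$ a $Z_2$ ring $\Rightarrow R/I$ is) uses only $I\subseteq J(R)$ (a nil ideal lies in $N(R)\subseteq J(R)$): lift a matrix, produce $B$ from the $Z_2$ property, reduce. For the converse I would use that, by Theorem \ref{TH8}, $R$ is a $Z_2$ ring iff statement \circled{\textup{5}} holds for every $\upsilon\in Um(R^4)$, i.e. iff the smooth structure map $R\to Z_{\upsilon}$ of Theorem \ref{TH7}(2) admits a retraction---and every $R$-algebra map $Z_{\upsilon}\to R$ is automatically a retraction. A retraction $Z_{\bar\upsilon}\to R/I$ over $R/I$ composes to an $R$-algebra map $Z_{\upsilon}\to R/I$. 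Writing the nil ideal as the filtered union $I=\bigcup J_\lambda$ of its finitely generated (hence nilpotent) subideals gives $R/I=\varinjlim R/J_\lambda$; since $Z_{\upsilon}$ is finitely presented over $R$, the map $Z_{\upsilon}\to R/I$ factors through some $Z_{\upsilon}\to R/J_\lambda$, and formal smoothness along the nilpotent ideal $J_\lambda$ lifts it to $Z_{\upsilon}\to R$. This yields statement \circled{\textup{5}} over $R$ for every $\upsilon$, so $R$ is a $Z_2$ ring. The delicate points are the finite-presentation/colimit reduction from nil to nilpotent and the bookkeeping that the lift is an $R$-algebra map.

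Finally, for part (5) I would use the characterization of Theorem \ref{TH1}(4): $\Pi_2$ means every rank-$1$ projective module generated by $2$ elements is free. If $R/I$ is $\Pi_2$ and $I\subseteq J(R)$, then for such a $P$ over $R$ the module $P/IP$ is free of rank $1$; lifting its generator and invoking Nakayama ($I\subseteq J(R)$) shows $R\to P$ is onto, and since $P$ is projective of rank $1$ it follows that $P\cong R$. Conversely, if $I$ is nil and $R$ is $\Pi_2$, I would realize a rank-$1$ projective $\bar P$ over $R/I$ generated by $2$ elements as the image of an idempotent in $\mathbb{M}_2(R/I)$, lift that idempotent across the nil ideal to $E\in\mathbb{M}_2(R)$, and set $P:=E R^2$; as $I\subseteq N(R)$ lies in every prime, ranks are preserved, so $P$ is rank $1$ and generated by $2$ elements. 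Then $P\cong R$ by hypothesis, whence $\bar P\cong P/IP\cong R/I$ is free and $R/I$ is $\Pi_2$.
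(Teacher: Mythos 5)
Your proposal is correct and, for parts (1)--(4), follows essentially the same route as the paper: unimodular data and units lift across $R\to R/I$ when $I\subseteq J(R)$ (Subsection \ref{S21}(3)), the $E_2$/$SE_2$ claims are run through the criterion of Theorem \ref{TH8}/Corollary \ref{C8}, part (2) is the special case $I=(x_1,\ldots,x_n)$, part (3) is the retraction applied to a solution over $S$, and part (4) is exactly the paper's argument that a retraction $Z_{\bar\upsilon}\to R/I$ lifts to a retraction $Z_\upsilon\to R$ by the smoothness of Theorem \ref{TH7}(2) --- your reduction from nil to nilpotent via finitely generated subideals and finite presentation of $Z_\upsilon$ is a legitimate substitute for the paper's terser ``we can assume $R$ is noetherian''. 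Your expanded justification of the $U_2$ case (the image contains $\ker\bigl(U(R/Rc)\to U(R/(Rc+I))\bigr)$ because $1+j$ lifts to a unit of $R/Rac$, and surjects onto $U(R/(Rc+I))$) is the correct unpacking of the paper's one-line remark that $U(R/K)$ is the full preimage of $U(R/(K+I))$. The only genuine divergence is in part (5): for the descent direction the paper invokes uniqueness of projective covers (Bourbaki), whereas you use Nakayama to produce a surjection $R\to P$ and then split it, killing the rank-zero complement; for the converse along a nil ideal the paper cites Bourbaki's lifting of finitely generated projectives (after descending to finitely generated $\mathbb{Z}$-subalgebras), whereas you lift an idempotent of $\mathbb{M}_2(R/I)$ across the nil ideal $\mathbb{M}_2(I)$. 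Both of your replacements are standard and self-contained (note that a finitely generated subideal of a nil ideal of a commutative ring is nilpotent, so $\mathbb{M}_2(I)$ is indeed nil and idempotents lift), and they buy a more elementary proof at the cost of redoing by hand facts the paper outsources to references.
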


\begin{proof}
(1) For $E_{2}$ and $SE_{2}$ this follows from the property (3) of
Subsection \ref{S21} and Theorem \ref{TH8}. For $U_2$ this follows
as for each ideal $J$ of $R$, $U(R/J)$ is the inverse image of $U\bigl(R/(J+I)\bigr)$ via the epimorphism $R/J\rightarrow R/(J+I)$.

(2) Follows from part (1), in the special case $I=(x_{1},\ldots ,x_{n})$.

(3) Let $A\in Um(\mathbb{M}_{2}(R))$ be not extendable (resp.\ not
simply extendable). We show that the assumption that $A$ has an extension
(resp.\ a simple extension) $A^{+}\in SL_3(S)$, leads to a
contradiction. But if $\tau :S\rightarrow R$ is the retraction, then the matrix $\tau
(A^{+})\in SL_3(R)$, obtained from $A^+$ by applying $\tau$ to its entries, is an extension (resp.\ a simple extension) of $A$, a contradiction.

(4) Based on the equivalence $\circled{\textup{5}}\Leftrightarrow\circled{\textup{7}}$ (see Theorem \ref{TH8}) it suffices to show that for each $\upsilon \in Um(R^{2})$, every $R$-algebra
homomorphism ${Z}_{\upsilon }\rightarrow R/I$ lifts to a retraction ${Z}_{\upsilon }\rightarrow R$. To check this, we can assume that $R$ is
noetherian, and the statement follows from the smoothness part in Theorem \ref{TH7}(2).

(5) Let $P$ be a projective $R$-module of rank $1$ generated by two
elements. If $R/I$ is a $\Pi _{2}$ ring, then from Theorem \ref{TH1} it
follows that $P/IP$ is free of rank $1$. As $R\rightarrow R/I$ and $P\rightarrow P/IP$ are projective covers (see \cite{bou}, Sect. 9, Prop. 10
and Cor. 2 after it), from the uniqueness up to isomorphisms of projective
covers (see \cite{bou}, Sect. 9, Cor. 1 after Prop. 8), it follows that
there exists an $R$-linear isomorphism $R\rightarrow P$. Thus from Theorem \ref{TH1} we infer that $R$ is a $\Pi _{2}$ ring. From \cite{bou}, Sect. 9,
Prop. 11 it follows that the converse holds if $I$ is a nilpotent ideal and actually
even if $I$ is only a nil ideal, as our projective modules are finitely
generated, hence finitely presented and thus are defined over finitely
generated $\mathbb{Z}$-subalgebras of $R$.
\end{proof}

\begin{example}
\normalfont\label{EX15} If $R$ is an \textsl{EDR}, then from Example \ref{EX1} and Proposition \ref{PR6}(2) it follows that for all $n\in \mathbb{N}$, $R[[x_{1},\ldots ,x_{n}]]$ is an $SE_{2}$ ring.
\end{example}

\begin{example}
\normalfont\label{EX16} From Example \ref{EX11} and Proposition \ref{PR6}(3) it follows that for all $n\in \mathbb{N}$ the polynomial ring $\mathbb{Z}[x_{1},\ldots ,x_{n}]$ is not an $E_2$ ring.
\end{example}

\begin{example}
\normalfont\label{EX17} Let $R$ be a Hermite ring which is a $\Pi _{2}$ ring
of Krull dimension at most 1. We check that $R$ is an \textsl{EDR}. Based on
Proposition \ref{PR6}(1) and Theorem \ref{TH5} we can assume that $R$ is
reduced. Based on Theorem \ref{TH5} and Corollary \ref{C11} it suffices
to show that for each matrix $A=\left[ 
\begin{array}{cc}
a & b \\ 
c & d\end{array}\right] \in \mathbb{M}_{2}(R)$, statement \circled{\textup{5}} holds for it.
As $R$ is a $\Pi _{2}$ ring, we can assume $\det (A)\neq 0$. The set ${Min}A\setminus V(R\det (A))$ is clopen in ${Min}A$ and hence there
exist radical ideals $\mathcal I_{1}$ and $\mathcal I_{2}$ such that ${Min}A\setminus
V(R\det (A))={Min}A\cap V(\mathcal I_{1})$ and ${Min}A\cap V(R\det (A))={Min}A\cap V(\mathcal I_{2})$; as $R$ is reduced, we have $\mathcal I_{1}\cap \mathcal I_{2}=0$. The
element $\det (A)+\mathcal I_{1}\in R/\mathcal I_{1}$ is not contained in any minimal prime ideal of 
$R/\mathcal I_{1}$ which is not maximal, and hence $S:=R/(\mathcal I_{1}+R\det (A))$ has Krull
dimension $0$. The Hermite ring $S/N(S)$ is a von Newman regular ring and
thus it is a semihereditary ring (equivalently, it is reduced with compact
minimum spectrum, see \cite{end}, Thm. 2 and \cite{que}, Prop. 10). It is
well-known that these imply that $S$ is an \textsl{EDR} (e.g., see \cite{LLS}, Thm. 2.4). From this and Corollary \ref{C11} it follows that there
exists an $R$-algebra homomorphism $h_{1}:Z_{\upsilon }\rightarrow R/\mathcal I_{1}$. As the image of $\det (A)$ in $R/\mathcal I_{2}$ is $0$, as in the part of
the proof of Theorem \ref{TH8} that pertains to the implication $\circled{\textup{8}}\Rightarrow \circled{\textup{5}}$, we conclude that
statement \circled{\textup{5}} holds for $A$.
\end{example}

\begin{example}
\normalfont\label{EX18} Let $R$ be a B\'{e}zout ring with the property that $\bar R:=R/N(R)$ is a semihereditary ring. We check that $R$ is a $\Pi_2$
ring. Each projective $\bar R$-module $\bar P$ of rank $1$ generated by $2$
elements is isomorphic to a direct sum $\bar R/\bar Ra\oplus \bar R/R\bar b$
where $a,b\in\bar R$ (see \cite{sho}, Prop.) and from the rank $1$ property
it follows first that $(a,b)\in Um(\bar R^{2})$ and secondly that $\bar
Ra\cap\bar Rb=0$. Thus $\bar P\cong \bar R$ and from Theorem \ref{TH1} it
follows that $\bar R$ is a $\Pi _{2}$ ring. From Proposition \ref{PR6}(5)
it follows that $R$ itself is a $\Pi_2$ ring. In \cite{LLS}, Thm. 2.4 it is
proved that $\bar R$ is a Hermite ring and in \cite{cou}, Thm. III. 3 (see
also \cite{zab2}, Thm. 2.2.1) it is proved that $R$ itself is a Hermite ring.

We assume that moreover $R$ has Krull dimension $2$ and for each $a\in \bar R\setminus Z(\bar R)$, $\bar R/\bar Ra$ is a 
$\Pi _{2}$ ring. As $Z(\bar R)$ is the union of minimal
prime ideals of $\bar R$, $\bar R/\bar Ra$ has Krull dimension at most $1$. 
As $\bar R/\bar R a$ is a Hermite ring, it follows from Example \ref{EX17} that it is 
an \textsl{EDR}. From this and \cite{sho}, Thm. it follows that $\bar R$ is 
an \textsl{EDR}. From this and Theorem \ref{TH5} and Proposition \ref{PR6}(1) it follows that $R$ itself is an \textsl{EDR}.
\end{example}

\begin{example}
\normalfont\label{EX19} Let $I$ be an ideal of $R$ such that the pair $(R,I)$
is henselian. Each finitely generated projective $R/I$-module lifts to
a projective $R$-module (see {\url{https://stacks.math.columbia.edu/tag/0D4A}}), and the same holds if the rank and the number of generators are
prescribed. Thus, if $R$ is a $\Pi_2$ ring, then $R/I$ is a $\Pi_2$
ring.
\end{example}

\section{Open problems}

\noindent
\textbf{OP1.} Does there exist $E_2$ rings $R$ with $sr(R)\ge 3$? If yes,
is it true that all of them are $SE_2$ rings?

\noindent
\textbf{OP2.} Does there exist a $J_{2,1}$ ring which is not a $\Pi_2$ ring?

\noindent
\textbf{OP3.} Does there exist a Hermite ring which is a $\Pi_2$ ring but not an \textsl{EDR}?

\noindent
\textbf{OP4.} For n>2, one can define similarly $E_n$ and $SE_n$ rings and it is easy to see that each $E_n$ ring is an $E_{n-1}$ ring. Is it true that each $SE_n$ ring is an $SE_{n-1}$ ring?

\noindent
\textbf{OP5.} Does there exist examples of $V_2$ rings which are not $\Pi_2$
rings?

\noindent
\textbf{OP6.} What is the exact `connection' between $WW_2$, $W_2$, $WU_2$
and $U_2$ rings?

\noindent
\textbf{OP7.} Classify all elements $t$ of a B\'{e}zout domain $R$ with the
property that the $t$-adic completion of $R$ is a B\'{e}zout ring.

\noindent
\textbf{OP8.} Can the $\Pi_2$ assumptions of Examples \ref{EX17} and \ref{EX18} be eliminated?

\noindent
\textbf{OP9.} If $sr(R)\le 2$, find conditions on $R$ to guarantee that
each projective $R$-module of rank $1$ is generated by $2$ elements. [Like
this is true if $\dim(R/J(R))\le 1$.]

\noindent
\textbf{OP10.} Classify all rings $R$ such that one of the following five
statements holds: (1) if $R$ is a $\Pi _{2}$ ring, then $R/J(R)$ is a $\Pi
_{2}$ ring, (2) each projective $R/J(R)$-module of rank 1 generated by 2
elements lifts to a projective $R$-module of rank 1; (3) the reduction
homomorphism ${Pic}(R)\rightarrow {Pic}(R/J(R))$ is
surjective; (4) each finitely generated projective module over $R/J(R)$
lifts to a finitely generated projective $R$-module; (5) each projective
module over $R/J(R)$ lifts to a projective $R$-module.

\noindent
\textbf{OP11.} Classify all rings with the property that one of the
following two statements implies the other: (1) the matrix $A\in Um(\mathbb{M}_2(R))$ is extendable; (2) the matrix $A\in Um(\mathbb{M}_2(R))$ is
equivalent to a symmetric matrix.

\noindent
\textbf{OP12.} If $R$ is a reduced Hermite ring of prime characteristic, does 
the assumption that its perfection is an \textsl{EDR} imply that $R$ itself is
 an \textsl{EDR}?
 
\label{S8}

\bigskip \noindent \textbf{Acknowledgement.} The third author would like to
thank SUNY Binghamton for good working conditions and Ofer Gabber for
sharing the argument of the footnote and for pointing out the link of 
Example \ref{EX19} and a glitch in one of the references. Competing interests: the authors declare none.

\hbox{} \hbox{Grigore C\u{a}lug\u{a}reanu\;\;\;E-mail: calu@math.ubbcluj.ro}
\hbox{Address: Department of Mathematics,} 
\hbox{Babe\c{s}-Bolyai
University, Cluj-Napoca, Romania.}

\hbox{} \hbox{Horia F. Pop\;\;\;E-mail: hfpop@cs.ubbcluj.ro} 
\hbox{Address:
Department of Computer Science,} 
\hbox{Babe\c{s}-Bolyai
University, Cluj-Napoca, Romania.}

\hbox{} \hbox{Adrian Vasiu,\;\;\;E-mail: avasiu@binghamton.edu} 
\hbox{Address:
Department of Mathematics and Statistics, Binghamton University,} 
\hbox{P. O. Box
6000, Binghamton, New York 13902-6000, U.S.A.}


\begin{thebibliography}{99}
\bibitem{bas} H. Bass \textsl{K-theory and stable algebra.} Inst. Hautes 
\'{E}tudes Sci. Publ. Math. \textbf{22} (1964), 5--60.

\bibitem{BCW} H. Bass, E. H. Connell, D. Wright \textsl{Locally polynomial
algebras are symmetric algebras.} Invent. Math. \textbf{38} (1977), no. 3,
279--299.

\bibitem{BS} V. A. Bovdi, V. P. Shchedryk \textsl{Commutative B\'{e}zout
domains of stable range 1.5.} Linear Algebra Appl. \textbf{568} (2019),
127--134.

\bibitem{bou} N. Bourbaki \textsl{\'{E}l\'{e}ments de math\'{e}matique. Alg\`{e}bre. Chapitre 8: Modules et anneaux semi-simples.} Springer-Verlag
Berlin Heidelberg, 2012.

\bibitem{car} P. Carbonne \textsl{Anneaux de B\'{e}zout, Hermite et
Kaplansky "universels".} Canad. Math. Bull. \textbf{30} (1987), no.
4, 461--470.

\bibitem{coh} P. M. Cohn \textsl{B\'{e}zout rings and their subrings.} Math. 
Proc. Camb. Philos. Soc. \textbf{64} (1968), 251--264.

\bibitem{cou} P. Couchot \textsl{The $\lambda$-dimension of commutative
arithmetic rings.} Comm. Algebra \textbf{31} (2003), no. 7, 3143--3158.

\bibitem{CP} G. C\u{a}lug\u{a}reanu, H. F. Pop \textsl{On zero determinant
matrices that are full.} Math. Panon. New Series \textbf{27} /NS 
\textbf{1}/ (2021), no. 2, 81--88.

\bibitem{end} S. Endo \textsl{On semi-hereditary rings.} J. Math. Soc. Japan 
\textbf{13} (1961), 109--119.

\bibitem{FL} H. K. Farahat, W. Ledermann \textsl{Matrices with prescribed
characteristic polynomials.} Proc. Edinb. Math. Soc. \textbf{11}
1958/1959, 143--146.

\bibitem{FS} L. Fuchs, L. Salce \textsl{Modules over non-Noetherian domains.}
Math. Surveys Monogr. \textbf{84}, American Mathematical Society, Providence,
RI, 2001.

\bibitem{gat} A. Gatalevich \textsl{B\'{e}zout rings of finite Krull
dimension.} Fundam. Prikl. Mat. \textbf{19} (2014), no. 6, 3--5; translation in J.
Math. Sci. (N.Y.) \textbf{221} (2017), no. 3, 313--314.

\bibitem{GH1} L. Gillman, M. Henriksen \textsl{Rings of continuous functions
in which every finitely generated ideal is principal.} Trans. Amer. Math.
Soc. \textbf{82} (1956), 366--391.

\bibitem{GH2} L. Gillman, M. Henriksen \textsl{Some remarks about elementary
divisor rings.} Trans. Amer. Math. Soc. \textbf{82} (1956), 362--365.

 \bibitem{GH+} R. Gilmer, R. C. Heitmann \textsl{On Pic(R[X]) for R seminormal.} J. Pure Appl. Algebra \textbf{16} (1980), no. 3, 251--257.

\bibitem{hei} R. C. Heitmann \textsl{Generating non-Noetherian modules efficiently.}
Michigan Math. J. \textbf{31} (1984), no. 2, 167--180.

\bibitem{her} D. Hershkowitz \textsl{Existence of matrices with prescribed
eigenvalues and entries.} Linear Multilinear Algebra \textbf{14} (1983),
no. 4, 315--342.

\bibitem{jen} C. U. Jensen \textsl{Arithmetical rings.} Acta Math. Acad. Sei. Hungar. \textbf{17} (1966), 115--123.

\bibitem{kap} I. Kaplansky \textsl{Elementary divisors and modules.} Trans.
Amer. Math. Soc. \textbf{66} (1949), 464--491.

\bibitem{lam} T. Y. Lam \textsl{Serre's problem on projective modules.} 
Springer Monogr. Math., Springer-Verlag, Berlin, 2006.

\bibitem{lan} S. Lang \textsl{Algebra. Revised third edition.} Grad. Texts
in Math., \textbf{211}, Springer-Verlag, New York, 2002.

\bibitem{LLS} M. D. Larsen, W. J. Lewis, T. S. Shores \textsl{Elementary
divisor rings and finitely presented modules.} Trans. Amer. Math. Soc. 
\textbf{187} (1974), 231--248.

\bibitem{LL} J. Liu, D. Li \textsl{The generalized Serre problem over
K-Hermite rings.} J. Syst. Sci. Complex. \textbf{30} (2017), no. 2, 510--518.

\bibitem{lor} D. Lorenzini \textsl{Elementary divisor domains and B\'{e}zout
domains.} J. Algebra \textbf{371} (2012), 609--619.

\bibitem{mat} H. Matsumura \textsl{Commutative ring theory. Second edition.}
Cambridge Stud. Adv. Math., Vol. \textbf{8}, Cambridge University Press,
Cambridge, 1989.

\bibitem{mcg1} W. W. McGovern \textsl{B\'{e}zout rings with almost stable
range 1.} J. Pure Appl. Algebra \textbf{212} (2008), no. 2, 340--348.

\bibitem{mcg2} W. W. McGovern \textsl{Neat rings.} J. Pure Appl. Algebra 
\textbf{205} (2006), no. 2, 243--265.

\bibitem{MM} P. Menal, J. Moncasi \textsl{On regular rings with stable range
2.} J. Pure Appl. Algebra \textbf{24} (1982), no. 1, 25--40.

\bibitem{MR} S. McAdam, D. E. Rush \textsl{Schreier rings.} Bull. Lond.
Math. Soc. \textbf{10}, 1 (1978), 77--80.

\bibitem{rot} M. Roitman \textsl{The Kaplansky condition and rings of almost
stable range 1.} Proc. Amer. Math. Soc. \textbf{141} (2013), no. 9,
3013--3018.

\bibitem{que} Y. Quentel \textsl{Sur la compacit\'{e} du spectre minimal
d'un anneau.} (French) Bull. Soc. Math. France \textbf{99} (1971), 265--272.

\bibitem{qui} D. Quillen \textsl{Projective modules over polynomial rings.} 
Invent. Math. \textbf{36} (1976), 167--171.

\bibitem{rus} D. E. Rush \textsl{B\'{e}zout domains with stable range 1.} J.
Pure Appl. Algebra \textbf{158} (2001), no. 2--3, 309--324.

\bibitem{SGA1} \textsl{Rev\^{e}tements \'{E}tales et groupe fondamental. S\'{e}minaire de G\'{e}om\'{e}trie Alg\'{e}brique du Bois Marie 1960--1961 (SGA 1).} Dirig\'{e} par Alexandre Grothendieck. Augment\'{e} de deux expos\'{e}s de M. Raynaud, Lecture Notes in Math., Vol. \textbf{224},
Springer-Verlag, Berlin-New York, 1971.

\bibitem{ses} C. S. Seshadri \textsl{Triviality of vector bundles over the affine 
space $K^2$.} Proc. Nat. Acad. Sci. U.S.A. \textbf{44} (1958), 456--458.

\bibitem{shc1} V. P. Shchedryk \textsl{Some properties of primitive matrices
over B\'{e}zout B-domain.} Algebra Discrete Math. 2005, no. 2, 46--57.

\bibitem{shc2} V. P. Shchedryk \textsl{Commutative domains of elementary
divisors and some properties of their elements.} Ukrainian Math. J. \textbf{64} (2012), no. 1, 140--155.

\bibitem{shc3} V. P. Shchedryk \textsl{B\'{e}zout rings of stable range 1.5.}
Translation of Ukrainian Math. J. \textbf{67} (2015), no. 6, 849--860.

\bibitem{shc4} V. P. Shchedryk \textsl{B\'{e}zout rings of stable rank 1.5
and the decomposition of a complete linear group into products of its
subgroups.} Ukrainian Math. J. \textbf{69} (2017), no. 1, 113--120;
translation in Ukrainian Math. J. \textbf{69} (2017), no. 1, 138--147.

\bibitem{sho} T. S. Shores \textsl{Modules over semihereditary B\'{e}zout
rings.} Proc. Amer. Math. Soc. \textbf{46}, no. 2 (1974), 211--213.

\bibitem{vas} L. N. Vaser\v{s}te\u{\i}n \textsl{The stable range of rings
and the dimension of topological spaces.} (Russian) Funktsional. Anal. i Prilo\v{z}en. \textbf{5} (1971), no. 2, 17--27. [English Translation: Funct.
Anal. Appl. \textbf{5} (1971), 102--110.]

\bibitem{VS} L. N. Vaser\v{s}te\u{\i}n, A. A. Suslin  \textsl{Serre's problem 
on projective modules over polynomial rings, and algebraic K-theory.} (Russian) 
Izv. Akad. Nauk SSSR Ser. Mat. \textbf{40} (1976), no. 5, 993--1054, 1199.

\bibitem{WW} R. Wiegand, S. Wiegand \textsl{Finitely generated modules over B\'{e}zout rings.} Pacific J. Math. \textbf{58} (1975), no. 2, 655--664.

\bibitem{zab1} B. V. Zabavsky \textsl{Diagonal reduction of matrices over
rings.} Math. Stud. Monog. Ser., Vol.  \textbf{16}, VNTL Publishers, L'viv, 2012.
251 pp.

\bibitem{zab2} B. V. Zabavsky \textsl{Diagonal reduction of matrices over
finite stable range rings.} Mat. Stud. \textbf{41} (2014), no. 1, 101--108.

\bibitem{ZB} B. V. Zabavsky, S. I. Bilavska \textsl{Decomposition of
finitely generated projective modules over B\'{e}zout ring.} Mat. Stud. 
\textbf{40} (2013), no. 1, 104--107.

\bibitem{ZR} B. V. Zabavsky, O. Romaniv \textsl{A B\'{e}zout ring of
stable range 2 which has square stable range 1.} Comm. Algebra \textbf{47}
(2019), no. 12, 5392--5397.

\bibitem{zaf} M. Zafrullah \textsl{On a property of pre-Schreier domains.}
Comm. Algebra \textbf{15} (1987), no. 9, 1895--1920.
\end{thebibliography}
\end{document}